\definecolor{darkgreen}{rgb}{0,0.45,0}  
\newcommand\bD{\mathbb{D}}
\newcommand\bR{\mathbb{R}}
\newcommand\bC{\mathbb{C}}
\newcommand\bN{\mathbb{N}}
\newcommand\bZ{\mathbb{Z}}
\newcommand\bQ{\mathbb{Q}}
\newcommand\bF{\mathbb{F}}
\newcommand\bP{\mathbb{P}}
\newcommand\bH{\mathbb{H}}
\newcommand\bT{\mathbb{T}}
\newcommand\Zp{{\mathbb{Z}_{p}}}
\newcommand\Qp{\mathbb{Q}_{p}}
\newcommand\cA{\mathcal{A}}
\newcommand\cC{\mathcal{C}}
\newcommand\cD{\mathcal{D}}
\newcommand\cE{\mathcal{E}}
\newcommand\cH{\mathcal{H}}
\newcommand\cL{\mathcal{L}}
\newcommand\cO{\mathcal{O}}
\newcommand\cP{\mathcal{P}}
\newcommand\cT{\mathcal{T}}
\newcommand\cU{\mathcal{U}}
\newcommand\ga{\mathfrak{a}}
\newcommand\gd{\mathfrak{d}}
\newcommand\gm{\mathfrak{m}}
\newcommand\gp{\mathfrak{p}}
\newcommand\gG{\mathfrak{G}}
\newcommand\gM{\mathfrak{M}}
\newcommand\gQ{\mathfrak{Q}}
\newcommand\gX{\mathfrak{X}}
\newcommand{\ob}[1]{\mkern 1.5mu\overline{\mkern-1.5mu#1\mkern-1.5mu}\mkern 1.5mu}
\newcommand{\Ind}{\mathrm{Ind}}
\newcommand{\res}{\mathrm{res}}
\newcommand{\ord}{\mathrm{ord}}
\newcommand{\an}{\textrm{an}}
\newcommand{\alg}{\textrm{alg}}
\DeclareMathOperator{\Gal}{Gal}
\DeclareMathOperator{\Hom}{Hom}
\DeclareMathOperator{\im}{im}
\DeclareMathOperator{\coker}{coker}
\DeclareMathOperator{\Frac}{Frac}
\DeclareMathOperator{\rec}{rec}
\DeclareMathOperator{\Tr}{Tr}
\DeclareMathOperator{\car}{car}
\DeclareMathOperator{\Cl}{Cl}
\DeclareMathOperator{\rg}{Rang}
\DeclareMathOperator{\corg}{Corang}
\DeclareMathOperator{\pgcd}{pgcd}
\DeclareMathOperator{\Frob}{Frob}
\DeclareMathOperator{\cyc}{cyc}
\DeclareMathOperator{\Sel}{Sel}
\DeclareSymbolFont{cyrletters}{OT2}{wncyr}{m}{n}
\DeclareMathSymbol{\Sha}{\mathalpha}{cyrletters}{"58}
\DeclareMathOperator{\GL}{GL}
\newtheoremstyle{thmstyle}
{\parskip} 
{\topsep} 
{\itshape} 
{} 
{\bfseries} 
{.} 
{.5em} 
{} 
\newtheoremstyle{defstyle}
{\parskip} 
{\topsep} 
{} 
{} 
{\bfseries} 
{.} 
{.5em} 
{} 
\theoremstyle{defstyle}
\newtheorem{definition}[subsubsection]{Définition}
\newtheorem{remarque}[subsubsection]{Remarque}
\newtheorem{notation}[subsubsection]{Notation}
\theoremstyle{thmstyle}
\newtheorem{theorem}[subsubsection]{Théorème}
\newtheorem{proposition}[subsubsection]{Proposition}
\newtheorem{lemme}[subsubsection]{Lemme}
\newtheorem{corollaire}[subsubsection]{Corollaire}
\newtheorem{conjecture}[subsubsection]{Conjecture}
\newtheorem*{conjecture*}{Conjecture}
\newtheorem*{thA}{Théorème A}
\newtheorem*{thB}{Théorème B}
\newtheorem*{thC}{Théorème C}
\author{Alexandre Maksoud}
\title{Théorie d'Iwasawa des motifs d'Artin et des formes modulaires de poids 1}
\begin{document}
	\pagenumbering{roman}
	\maketitle
	
	\begin{abstract}
		Soit $p$ un nombre premier impair. Nous étudions la structure du groupe de Greenberg-Selmer cyclotomique attaché à un motif d'Artin irréductible général sur $\mathbb{Q}$ et muni d'une $p$-stabilisation ordinaire. Sous la conjecture de Schanuel $p$-adique faible, nous montrons que celui-ci est de torsion sur l'algèbre d'Iwasawa. Sous de légères hypothèses supplémentaires sur $p$, nous exprimons le terme constant de sa série caractéristique à l'aide d'un régulateur $p$-adique et mettons en évidence un phénomène de zéros triviaux. Dans un deuxième temps, nous spécialisons notre étude aux motifs d'Artin attachés à des formes modulaires classiques de poids 1, où les résultats précédents s'appliquent inconditionnellement. Nous formulons une Conjecture Principale, dont on montre une divisibilité grâce à un théorème de Kato.
	\end{abstract}
	\section*{Introduction}
	\addtocontents{toc}{\setcounter{tocdepth}{-1}}
Soit $M$ un motif sur $\bQ$ et à coefficients dans un corps de nombres $E$, et soit $p$ un nombre premier impair. Lorsque $M$ a bonne réduction et est ordinaire en $p$, Greenberg, Coates, Perrin-Riou \cite{greenbergdeformation,coatesmotivic,prasterisque} ont conjecturé l'existence d'une fonction $L$ $p$-adique interpolant les valeurs spéciales des fonctions $L$ des déformations cyclotomiques de $M$. Ils formulent en outre une Conjecture Principale d'Iwasawa donnant une interprétation arithmétique de ces objets, comme générateurs de l'idéal caractéristique d'un groupe de Selmer associé à $M$. 
Comme le remarque Greenberg \cite{greenbergartinII}, cette Conjecture Principale se déduit des résultats de Wiles \cite{wiles1990iwasawa} dans le cas particulier d'un motif attaché à une représentation d'Artin de dimension $d\geq 1$ découpant un corps totalement réel. Lorsque $M$ est un motif d'Artin quelconque, $M$ n'a plus de nécessairement de valeur critique et l'on ne peut pas appliquer la construction de fonctions $L$ $p$-adiques à la Deligne-Ribet \cite{deligneribet}.
Nous nous proposons ici de décrire dans un premier temps la structure des groupes de Selmer de motifs d'Artin généraux, puis nous formulons et étudions une Conjecture Principale dans le cas particulier où $M$ provient d'une forme modulaire classique de poids 1. 

\subsection*{Aspects algébriques}

Soit $M$ un motif irréductible et non-trivial sur $\bQ$ et à coefficients dans un corps de nombres $E \subseteq \ob{\bQ}$ provenant d'une représentation d'Artin $\rho : \Gal(\ob{\bQ}/\bQ) \longrightarrow \GL_E(V)$ de dimension $d\geq 1$, et soit $p$ un nombre premier impair. On fixe des plongements $\iota_\ell : \ob{\bQ} \hookrightarrow \ob{\bQ}_\ell$ pour tout nombre premier $\ell$ ainsi que $\iota_\infty : \ob{\bQ} \hookrightarrow \bC$. On note $V_p = V \otimes_{E,\iota_p} L$ la réalisation $p$-adique de $M$, où $L$ est une extension finie de $\Qp$ contenant $\iota_p(E)$. On dira que $M$ admet une $p$-\textit{stabilisation ordinaire}, s'il existe un sous-$E$-espace vectoriel $V^+$ de $V$ de dimension $d^+=\dim_E H^0(\bR,V)$, stable pour l'action galoisienne locale en $p$ de $\Gal(\ob{\bQ}_p/\bQ_p)$ et dont le quotient $V^-:=V/V^+$ est non-ramifié en $p$. On posera alors $V_p^\pm = V^\pm \otimes_{E,\iota_p} L$ et on notera $\rho^+$ la $\Gal(\ob{\bQ}_p/\Qp)$-représentation associée à $V_p^+$. Le choix d'une $p$-stabilisation ordinaire de $M$ ainsi que d'un $\cO_L$-réseau $\Gal(\ob{\bQ}/\bQ)$-stable $T_p$ de $V_p$ permet de définir le groupe de Greenberg-Selmer cyclotomique $X_\infty(\rho,\rho^+)$ comme étant le dual de Pontryagin du noyau de l'application de restriction global-local en cohomologie galoisienne
$$ H^1(\bQ_\infty,T_p \otimes \Qp/\Zp) \longrightarrow H^1(I_{p},T^-_p \otimes \Qp/\Zp) \times \prod_{\ell \neq p} H^1(I_{\ell},T_p \otimes \Qp/\Zp), $$
où $\bQ_\infty=\bigcup_n \bQ_n$ désigne la $\Zp$-extension cyclotomique de $\bQ$, où les $I_\ell$ sont les sous-groupes d'inertie en $\ell$ (voir Définition \ref{def:sel_rho} et Remarque \ref{rq:sel}) et où $T_p^-$ est l'image de $T_p$ dans $V_p^-$. Lorsque $d^+=d$, on a $V^+=V$ et on retrouve le groupe de Selmer attaché à une représentation d'Artin totalement paire (cf. \cite{greenbergartinII}). $X_\infty(\rho,\rho^+)$ est un module de type fini sur l'algèbre d'Iwasawa $\cO_L[[\Gal(\bQ_\infty/\bQ)]]$, que l'on identifie désormais à $\Lambda:=\cO_L[[T]]$ en envoyant un générateur topologique $\gamma\in\Gal(\bQ_\infty/\bQ)$ sur $1+T$. Si $X_\infty(\rho,\rho^+)$ est de torsion, on note $L_p^{\alg}(\rho,\rho^+;T) \in \Lambda-\{0\}$ sa fonction $L$ $p$-adique algébrique, c'est-à-dire un générateur de son idéal caractéristique (cf. Définition \ref{def:L_p(X)}).

Le théorème principal de cette première partie décrit la structure du $\Lambda$-module $X_\infty(\rho,\rho^+)$. On s'attend à ce qu'il soit toujours de torsion (voir par exemple \cite[Conjecture 3.3]{greenbergdeformation}), ce qui est nécessaire pour formuler une Conjecture Principale d'Iwasawa. Soit $H\subseteq \ob{\bQ}$ le corps de nombres découpé par $\rho$, et soit $\ob{\rho}$ la représentation résiduelle de $\rho$ déterminée par le choix de $T_p$.

\begin{thA}[= Théorème \ref{th:sel_n_fini_etc}]
	Supposons $d^+=1$, ou bien supposons que la conjecture de Schanuel $p$-adique faible est vraie (cf. Conjecture \ref{conj:Schanuel}).
	\begin{enumerate}
		\item[(i)] Le $\Lambda$-module $X_\infty(\rho,\rho^+)$ est de torsion.
		\item[(ii)] Si $H\cap \bQ_\infty =\bQ$, alors $L_p^{\alg}(\rho,\rho^+;T)$ ne s'annule pas en les $\zeta-1$, où $\zeta$ parcourt $\mu_{p^\infty}-\{1\}$. En posant $e(\rho,\rho^+):=\dim_L H^0(\Qp,V_p^-)$, on a de plus :
		$$L_p^{\alg}(\rho,\rho^+;0)\neq 0 \Longleftrightarrow e(\rho,\rho^+)=0,$$  
		$$\ord_{T} L_p^{\alg}(\rho,\rho^+;T) \geq e(\rho,\rho^+).$$
		
		\item[(iii)] Si $H^0(\bQ,\ob{\rho})=H^0(\bQ,\Hom(\ob{\rho},\mu_p))=0$, alors $X_\infty(\rho,\rho^+)$ n'a pas de sous-$\Lambda$-module fini non-nul.
	\end{enumerate}
	
\end{thA}

La preuve du Théorème A est étalée sur les Sections \ref{sec:isom_de_restriction} à \ref{sec:zerostriviaux}. On peut définir à niveau fini $n$ un groupe de Selmer $X_n(\rho,\rho^+)$,  que l'on décrit à l'aide de la suite d'inflation-restriction, pour nous ramener à l'étude du groupe de Galois de la plus grande pro-$p$ extension abélienne de $H_n=H\cdot \bQ_n$ non-ramifiée en dehors de $p$. Sa structure galoisienne dépend, par la théorie des corps de classes locale, de la manière dont le $p$-complété des unités globales de $H_n$ s'envoie dans le produit de ses unités locales. Le point (i) repose sur la preuve que $X_n(\rho,\rho^+)$ est fini quelque soit $n$, et cela nécessite en général de supposer la non-annulation d'un certain régulateur $p$-adique de taille $d^+$. Lorsque $d^+=1$, on peut se passer de ces hypothèses en faisant appel à la version $p$-adique, due à Brumer, du théorème  de transcendance des logarithmes de nombres algébriques de Baker (cf. Théorème \ref{th:BB}). On étudie ensuite le noyau et conoyau des applications de contrôle 
$$X_\infty( \rho,\rho^+)_{\Gal(\bQ_\infty/\bQ_n)} \longrightarrow X_n(\rho,\rho^+),$$
pour en déduire les points (i) et (ii). La preuve du point (iii) utilise le point (i) et les résultats généraux de Greenberg sur la structure des groupes de Selmer, que l'on reformule dans la Section \ref{sec:structure_selmer} (cf. Proposition \ref{PsN}).

La deuxième partie du Théorème A met en évidence un phénomène de "zéros triviaux". On peut voir la conjecture suivante comme l'analogue algébrique d'un phénomène conjecturé pour les fonction $L$ $p$-adiques analytiques (voir entre autres \cite{gross1981padic,benois}). 

\begin{conjecture*}[=Conjecture \ref{conj:zeros_triviaux}]
	Si $H\cap \bQ_\infty =\bQ$, alors on a 
	$$\ord_{T} L_p^{\alg}(\rho,\rho^+;T) = e(\rho,\rho^+).$$
\end{conjecture*}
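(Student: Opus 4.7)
Le point (ii) du Théorème A fournit déjà l'inégalité $\ord_T L_p^{\alg}(\rho,\rho^+;T) \geq e(\rho,\rho^+)$, et l'équivalence $L_p^{\alg}(\rho,\rho^+;0) \neq 0 \Longleftrightarrow e(\rho,\rho^+)=0$ règle le cas $e(\rho,\rho^+)=0$. Il reste à établir l'inégalité inverse lorsque $e := e(\rho,\rho^+) \geq 1$.

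La stratégie consiste à traduire l'ordre d'annulation en un calcul de $\Zp$-rang. En posant $\Gamma = \Gal(\bQ_\infty/\bQ)$ et en appliquant le théorème de structure des $\Lambda$-modules de type fini et de torsion à $X_\infty(\rho,\rho^+)$ (de torsion par le point (i) du Théorème A), on obtient
$$\ord_T L_p^{\alg}(\rho,\rho^+;T) = \rg_{\Zp} \bigl( X_\infty(\rho,\rho^+)_{\Gamma} \bigr),$$
égalité valable sans hypothèse supplémentaire puisqu'un sous-module fini éventuel ne contribue pas au $\Zp$-rang. Il suffit donc de démontrer que ce rang vaut exactement $e$. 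La minoration $\geq e$ provient déjà de la contribution locale en $p$ : comme $I_p$ agit trivialement sur $T_p^-$ et que le Frobenius admet $e$ valeurs propres égales à $1$ sur $V_p^-$, la condition locale non-ramifiée en $p$ contribue à un dual de Pontryagin de $\Zp$-rang $e$ dans le diagramme de contrôle. Pour obtenir la majoration, on dissèque l'application de contrôle $\phi_0 : X_\infty(\rho,\rho^+)_\Gamma \to X_0(\rho,\rho^+)$ via la suite exacte de Poitou-Tate et l'on montre que les autres termes — à savoir le $H^2$ global, les termes locaux en $\ell \neq p$ et le groupe fini $X_0(\rho,\rho^+)$ — ne contribuent pas de $\Zp$-rang supplémentaire.

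L'obstacle principal réside dans cette dernière assertion : l'image de l'application de restriction global-local doit atteindre un sous-groupe d'indice fini de $\Hom\bigl(H^0(\Qp,V_p^-), \Qp/\Zp\bigr)$ ; autrement dit, les classes de Kummer des unités globales de la tour $\{H_n\}_n$ ne doivent pas dégénérer dans le quotient non-ramifié en $p$. Il s'agit de l'avatar algébrique de la non-annulation d'un régulateur $p$-adique raffiné de taille $e \times e$, qui semble hors de portée du théorème de Brumer-Baker invoqué dans la preuve du point (i) du Théorème A. Une preuve conditionnelle devrait pouvoir être obtenue sous un renforcement sensible de la Conjecture de Schanuel $p$-adique faible, mais le cas général inconditionnel paraît requérir un nouvel outil de transcendance, ce qui explique pourquoi l'énoncé demeure conjectural.
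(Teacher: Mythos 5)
Cet énoncé est précisément la Conjecture \ref{conj:zeros_triviaux} de l'article : il n'y est pas démontré, et votre texte, qui conclut lui-même que l'énoncé \og demeure conjectural \fg{}, n'en est pas non plus une démonstration ; sur la conclusion il n'y a donc pas de désaccord. En revanche, votre analyse de l'obstruction est erronée en deux endroits. D'abord, l'identité de départ $\ord_T L_p^{\alg}(\rho,\rho^+;T)=\rg_{\Zp}\bigl(X_\infty(\rho,\rho^+)_\Gamma\bigr)$, que vous déclarez valable sans hypothèse supplémentaire, est fausse : pour $M=\Lambda/(T^2)$ on a $\rg_\cO(M_\Gamma)=1$ alors que la série caractéristique est $T^2$. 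Le rang des coinvariants ne compte que le nombre de diviseurs élémentaires divisibles par $T$, sans multiplicité ; on n'en tire que la minoration $\ord_T L_p^{\alg}(\rho,\rho^+;T)\geq \rg_\cO\bigl(X_\infty(\rho,\rho^+)_\Gamma\bigr)$, qui est exactement le sens déjà établi au Théorème \ref{th:sel_n_fini_etc}.

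Ensuite, l'étape que vous présentez comme l'obstacle (montrer que le rang des coinvariants est majoré par $e(\rho,\rho^+)$, via un régulateur raffiné de taille $e\times e$) est en fait démontrée dans l'article : la Proposition \ref{prop:rang_coinvariants} donne, sous les hypothèses du Théorème A ($d^+=1$, ou bien Leopoldt et Schanuel $p$-adique faible), l'égalité exacte $\rg_\cO\bigl(X_\infty(\rho,\rho^+)_{\Gamma^{p^n}}\bigr)=e(\rho,\rho^+)$ pour tout $n$ assez grand. Ce qui reste réellement ouvert, et que votre réduction escamote, est un énoncé de semi-simplicité : exclure qu'un diviseur élémentaire de $X_\infty(\rho,\rho^+)$ soit de la forme $T^m$ avec $m\geq 2$. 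Aucun calcul de rang de coinvariants aux différents étages de la tour ne peut détecter une telle multiplicité, comme le montre l'exemple $\Lambda/(T^2)$ dont les $\Gamma^{p^n}$-coinvariants sont de rang $1$ pour tout $n$. C'est l'analogue algébrique de la non-annulation d'un $\cL$-invariant, et non la non-dégénérescence des classes de Kummer des unités globales que vous invoquez.
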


Notons que les calculs menés se simplifient sous l'hypothèse que $d^+=1$, que $\rho$ est non-ramifiée en $p$ et que $p$ ne divise pas l'ordre de l'image de $\rho$. Des résultats similaires ont étés obtenus indépendamment par Greenberg et Vatsal \cite{preprint} sous ces hypothèses supplémentaires dans une récente prépublication. 

En général, changer de réseau $T_p$ multiplie la fonction $L$ $p$-adique algébrique par une puissance de $p$ (Proposition \ref{prop:dépendance_réseau}). Lorsque $\rho$ est résiduellement irréductible, $L_p^{\alg}(\rho,\rho^+;T)$ ne dépend que du choix de la $p$-stabilisation et est définie à multiplication par une unité de $\Lambda$ près. En l'absence de zéro trivial, on peut déterminer le terme constant de $L_p^{\alg}(\rho,\rho^+;T)$ à une unité $p$-adique près, sous les hypothèses suivantes (excluant un nombre fini de nombres premiers $p$) : 
\begin{itemize}
	\item[\textbf{(nr)}] $\rho$ est non-ramifiée en $p$,
	\item[\textbf{(deg)}] $p$ ne divise par l'ordre de l'image de $\rho$, \textit{i.e.}, $p\nmid \#G$, où $G:=\Gal(H/\bQ)$,
	\item[\textbf{(dim)}] $p$ ne divise pas la dimension de $V$, \textit{i.e.}, $p\nmid d$,
\end{itemize}
En particulier, $\rho$ est résiduellement irréductible, et on peut choisir l'extension $L$ non-ramifiée sur $\Qp$ et contenant $H\simeq \iota_p(H)\subseteq \ob{\bQ}_p$. L'extension linéaire  $\log_p : \cO_H^\times \otimes \cO_L \longrightarrow L$ du logarithme $p$-adique sur le groupe des unités du corps de nombres $H$ est à valeurs dans $p\cO_L$. Par ailleurs, le $\cO_L$-module $\Hom_G(T_p,\cO_H^\times \otimes \cO_L)$ est libre de rang $d^+$ en choisissant $L\subseteq \bQ_p^{\textrm{nr}}$ assez grand. Fixons-en une base $(\Phi_1,\ldots,\Phi_{d^+})$, et notons $\vec{t}_1,\ldots,\vec{t}_{d^+}$ une base de $T^+_p$. Ces choix définissent un "régulateur $p$-adique"

$$R_p(\rho,\rho^+)=\det\left(\frac{\log_p\left(\Phi_i(\vec{t}_j)\right)}{p}\right)_{1\leq i,j\leq d^+} \in \cO_L$$
de taille $d^+\times d^+$ qui est bien défini à une unité de $\cO_L$-près et qui est conjecturalement non-nul (cf. Lemme \ref{lem:non_annulation_reg}). La Section \ref{sec:preuve_terme_cst} est dédiée à la preuve du théorème suivant, qui découle des résultats de la Section \ref{section4} et de la description explicite de $X_0(\rho,\rho^+)$.

\begin{thB}[= Théorème \ref{th:terme_cst_dim_d}]
	Supposons $d^+=1$, ou bien supposons que la conjecture de Schanuel $p$-adique faible est vraie. On a $R_p(\rho,\rho^+)\neq 0$, et si $\rho$ satisfait \textbf{(nr)}, \textbf{(deg)} et \textbf{(dim)} et si $e(\rho,\rho^+)=0$, alors le terme constant de sa fonction $L$ algébrique $X_\infty(\rho,\rho^+)$ est donné par 
	$$L_p^{\alg}(\rho,\rho^+;0) = R_p(\rho,\rho^+) \cdot \sqrt[d]{\#\left(\Cl_p(H)^\rho\right)},$$
	à une unité de $\cO_L$ près, où $\Cl_p(H)^\rho$ désigne la composante $\rho$-isotypique du $p$-groupe des classes de $H$.
\end{thB}

L'élément $R_p(\rho,\rho^+)$ vit par définition dans l'ensemble $\cO_L \mod \cO_L^\times$, c'est-à-dire que seule sa valuation $p$-adique est définie. On peut définir de manière analogue un élément $R(\rho,\rho^+) \in L \mod E^\times$ en considérant des $E$-bases respectives de $\Hom_G(V,\cO_H^\times \otimes_\bZ E)$ et de $V^+$. Cette quantité apparaît dans le contexte des intégrales itérées $p$-adiques de \cite{DLR2} attachées à un triplet de formes modulaires classiques $(f,g,h)$, où $f$ est une série d'Eisenstein de poids 2 et où $g$ et $h$ sont deux formes cuspidales de poids 1, au moins lorsque $g$ est $p$-régulière et lorsque la représentation $\rho := \rho_{gh}$ est irréductible et n'a pas de $\Gal(\ob{\bQ}_p/\Qp)$-invariants (cf. notations de \textit{loc. cit.}). Cela suggère un lien entre $X_\infty(\rho_{gh},\rho_{gh}^{g_\alpha})$ et les "variantes de la Conjecture de Gross-Stark" étudiées dans \textit{loc. cit.} 

\subsection*{Aspects analytiques et Conjecture Principale}

Nous nous concentrons plus spécifiquement à l'étude des motifs d'Artin associés par Deligne et Serre à une forme cuspidale primitive $f$ de poids 1 et de niveau $\Gamma_1(N)$. La réalisation d'Artin associée, notée encore $\rho$, est absolument irréductible de dimension $d=2$ et est impaire, \textit{i.e.}, $d^+=1$. On suppose encore les hypothèses \textbf{(nr)}, \textbf{(deg)} et \textbf{(dim)} vérifiées, et en particulier on a $p\nmid N$. On note $\alpha$ et $\beta$ les valeurs propres de $\rho(\Frob_p)$, \textit{i.e.}, les racines du $p$-ième polynôme de Hecke de $f$, et on fait l'hypothèse de régularité suivante : 

$$ \textbf{(rég) : } \alpha \neq \beta. $$ 
En particulier, le choix d'une $p$-stabilisation ordinaire du motif associé à $f$ revient au choix de $\alpha$ ou de $\beta$, c'est-à-dire au choix d'une des deux $p$-stabilisations (distinctes) $f_\alpha(q):=f(q)-\beta f(q^p)$ ou $f_\beta(q):=f(q)-\alpha f(q^p)$ de $f$. On notera $X_\infty(f_\alpha)$ le groupe de Selmer associé au choix de $f_\alpha$, auquel s'appliquent inconditionnellement les résultats de la partie précédente. Lorsque $f$ a multiplication complexe par un corps quadratique imaginaire $F$ dans lequel $p$ est décomposé, on retrouve le groupe de Selmer usuel du caractère de Hecke associé (cf. Section \ref{sec:CM}). Le module obtenu lorsque $p$ est inerte mériterait d'être comparé à la définition du groupe de Selmer $\Sel^\pm$ de Kobayashi pour une courbe elliptique à multiplication complexe et à réduction supersingulière en $p$ (\cite{kobayashi,rubinpollack}).

Contrairement aux cas des formes modulaires classiques $p$-ordinaires de poids $k\geq 2$, il n'existe pas à notre connaissance de construction directe de fonction $L$ $p$-adique analytique pour les formes modulaires de poids 1, notamment car leur représentation automorphe associée n'est pas cohomologique. La forme modulaire $p$-stabilisée $f_\alpha$ varie néanmoins convenablement en famille, et il est possible de définir de manière indirecte une fonction $L$ $p$-adique analytique pour $f_\alpha$. Une première définition d'un élément  $\cL^\textrm{BD}_p(f_\alpha) \in \cO[[\Gamma]] \otimes_\Zp \Qp$, sous la seule hypothèse \textbf{(rég)}, est proposée par Bellaïche et Dimitrov (\cite[Corollary 1.3]{bellaiche2016eigencurve}), comme application de la lissité de la courbe de Hecke en le point défini par $f_\alpha$. L'élément $\cL^\textrm{BD}_p(f_\alpha)$, défini à multiplication par un élément de $L^\times$ près, est obtenu comme spécialisation d'une fonction $L$ $p$-adique "à deux variables" définie sur un voisinage affinoïde $\cU$ de $f_\alpha$, interpolant les fonctions $L$ $p$-adiques usuelles des points classiques $g\in \cU\bigcap \bZ_{\geq 2}$. 

Nos hypothèses entraînant que $\ob{\rho}$ est irréductible et $p$-distinguée, on peut, comme dans le cas des formes de poids $k\geq 2$, abaisser cette indétermination à $\cO_L^\times$. On fait appel pour cela à la construction de \cite{emerton2006variation} d'une fonction $L$ $p$-adique $L_p^{\an}(\ob{\rho};T)\in\bH_{\ob{\rho}}[[T]]$ à coefficients dans la composante locale $\bH_{\ob{\rho}}$ de l'algèbre de Hecke universelle ordinaire de niveau modéré $N$. Elle est définie à une unité de $\bH_{\ob{\rho}}$ près, dépendant du choix d'une période canonique en famille, et se spécialisant sur la fonction $L$ $p$-adique usuelle de formes propres ordinaires $p$-stabilisées de poids $k\geq 2$ et niveau modéré $N$ (cf. Section \ref{sec:Lpfamille}). La forme $f_\alpha$ définit une spécialisation $\phi : \bH_{\ob{\rho}} \longrightarrow \cO_L$. L'application de $\phi$ aux coefficients de $L_p^{\an}(\ob{\rho};T)$ définit un élément $L_p^{\an}(f_\alpha;T)$ bien défini à $\cO_L^\times$ près, que l'on appelle\textit{ fonction $L$ $p$-adique analytique de $f_\alpha$}. Par analogie avec la Conjecture Principale pour les formes paraboliques primitives $p$-ordinaires de poids $k\geq 2$ (\cite[Conjecture 3.24]{skinner2014iwasawa}), nous proposons une Conjecture Principale pour $f_\alpha$.

\begin{conjecture*}[=Conjecture \ref{IMC_f_alpha}]
	Il existe une unité $u$ de $\Lambda$ telle que 
	$$ u \cdot L_p^{\alg}(f_\alpha,T)=L_p^{\an}(f_\alpha,T).$$
\end{conjecture*}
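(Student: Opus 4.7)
\emph{Stratégie.} Le plan est de déduire la Conjecture Principale pour $f_\alpha$ d'une Conjecture Principale à deux variables sur la famille de Hida ordinaire passant par $f_\alpha$, par spécialisation le long du point arithmétique $\phi : \bH_{\ob{\rho}} \longrightarrow \cO_L$. L'ingrédient \emph{analytique} est déjà en place : par construction (Section \ref{sec:Lpfamille}), $L_p^{\an}(f_\alpha;T)$ est la spécialisation par $\phi$ d'un élément à deux variables $L_p^{\an}(\ob{\rho};T) \in \bH_{\ob{\rho}}[[T]]$, bien défini à $\bH_{\ob{\rho}}^\times$ près. Il convient donc d'abord de construire son pendant \emph{algébrique} : un groupe de Greenberg-Selmer $\sX_\infty(\ob{\rho})$ attaché à la déformation ordinaire universelle $T_{\ob{\rho}}$, dont on note $L_p^{\alg}(\ob{\rho};T) \in \bH_{\ob{\rho}}[[T]]$ un générateur de l'idéal caractéristique, puis de démontrer un théorème de contrôle affirmant que l'application naturelle
\[
\sX_\infty(\ob{\rho}) \otimes_{\bH_{\ob{\rho}},\phi} \cO_L \longrightarrow X_\infty(f_\alpha)
\]
a noyau et conoyau pseudo-nuls sur $\Lambda$, ce qui se traduit par $\phi(L_p^{\alg}(\ob{\rho};T)) = L_p^{\alg}(f_\alpha;T)$ à une unité de $\Lambda$ près.

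L'étape centrale consiste ensuite à établir l'\emph{égalité} à deux variables
\[
\bigl(L_p^{\alg}(\ob{\rho};T)\bigr) = \bigl(L_p^{\an}(\ob{\rho};T)\bigr) \quad \text{dans } \bH_{\ob{\rho}}[[T]].
\]
La divisibilité $L_p^{\alg}(\ob{\rho};T) \mid L_p^{\an}(\ob{\rho};T)$ résulterait de la version en famille du système d'Euler de Kato, due à Ochiai, qui remonte à $\bH_{\ob{\rho}}[[T]]$ les classes d'Euler utilisées pour la demi-Conjecture Principale en poids $1$. Pour la divisibilité inverse, on ferait appel à la méthode des congruences d'Eisenstein de Skinner-Urban sur le groupe unitaire $\mathrm{GU}(2,2)$ (et à son raffinement par Wan), qui fournit une Conjecture Principale à trois variables pour les familles de Hida ordinaires ; il suffit ensuite de la restreindre à la ligne cyclotomique. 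Les hypothèses \textbf{(nr)}, \textbf{(deg)} et \textbf{(dim)} garantissent que $\ob{\rho}$ est absolument irréductible et $p$-distinguée, de sorte que les hypothèses standard de cette méthode (irréductibilité résiduelle, $p$-distinction, non-annulation modulo $p$ de la fonction $L$ d'Eisenstein) sont satisfaites.

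On applique alors $\phi$ à cette égalité : le membre de droite devient $L_p^{\an}(f_\alpha;T)$ par définition, tandis que le membre de gauche devient $L_p^{\alg}(f_\alpha;T)$ à unité de $\Lambda$ près grâce au théorème de contrôle. L'unité $u \in \Lambda^\times$ cherchée résulte alors de la composition de l'unité de $\bH_{\ob{\rho}}$ fournie par la Conjecture Principale en famille (après application de $\phi$, qui envoie $\bH_{\ob{\rho}}^\times$ dans $\cO_L^\times \subseteq \Lambda^\times$) et de celle produite par le contrôle algébrique.

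L'obstacle principal attendu est le théorème de contrôle algébrique au point $\phi$ de poids $1$. Le poids $1$ se situe au bord de l'espace des poids arithmétiques et la déformation ordinaire $T_{\ob{\rho}}$ y dégénère : les arguments de contrôle à la Hida, routiniers en poids $k\geq 2$, doivent être complétés par une analyse soignée des termes correctifs éventuels. Ceux-ci sont précisément gouvernés par les invariants locaux $H^0(\Qp,V_p^-) = L^{e(\rho,\rho^+)}$ qui apparaissent déjà dans le Théorème A(ii), et leur trivialité lorsque $e(\rho,\rho^+)=0$ devrait découler des Théorèmes A et B conjugués à la lissité de la courbe de Hecke en $f_\alpha$ démontrée par Bellaïche-Dimitrov. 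Dans le cas CM de la Section \ref{sec:CM}, on peut court-circuiter toute cette machinerie et déduire la conjecture directement de la Conjecture Principale de Rubin pour les corps quadratiques imaginaires, ce qui en fournit une confirmation indépendante.
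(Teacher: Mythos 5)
Attention : l'énoncé que vous traitez est une \emph{conjecture} (Conjecture \ref{IMC_f_alpha}) ; le papier ne la démontre pas et ne prétend pas la démontrer. Il n'établit que la divisibilité $L_p^{\alg}(f_\alpha,T) \mid L_p^{\an}(f_\alpha,T)$ dans $\cO[[T]][\tfrac1p]$ et l'implication conditionnelle « Conjecture \ref{IMCg} $\Rightarrow$ Conjecture \ref{IMC_f_alpha} » (Théorème \ref{ThC}). Votre proposition présente donc un écart irréductible : l'étape centrale — la divisibilité inverse $L_p^{\an} \mid L_p^{\alg}$ à deux variables via les congruences d'Eisenstein de Skinner--Urban sur $\mathrm{GU}(2,2)$ — n'est pas disponible ici. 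Le papier le dit explicitement : le Théorème 3.29 de Skinner--Urban exige la trivialité du caractère central de $\textbf{f}$, hypothèse « trop restrictive pour nos applications » puisque le nebentypus $\epsilon=\det\rho$ d'une forme de poids 1 n'est en général pas trivial. Invoquer « le raffinement par Wan » ne comble pas ce trou : aucune référence n'établit la Conjecture Principale à deux ou trois variables dans la généralité requise, et c'est précisément pourquoi l'énoncé reste une conjecture. De même, la réduction au cas CM via Rubin ne couvre pas le cas $p$ inerte, où le groupe de Selmer obtenu n'est pas celui de la théorie classique des caractères de Hecke.

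Sur la moitié que le papier démontre effectivement, votre stratégie diffère aussi de la sienne sur un point technique important. Vous proposez de travailler directement sur $\bH_{\ob{\rho}}[[T]]$ avec un idéal caractéristique à deux variables et un théorème de contrôle au point de poids 1 ; or $\bH_\textbf{f}[[T]]$ n'est pas nécessairement factoriel ni même intégralement clos, de sorte que l'idéal caractéristique n'y est pas défini. Le papier contourne cela par le paramétrage local $\Phi : \bH_\textbf{f} \hookrightarrow \cO_M[[Y]]$ issu de la lissité de Bellaïche--Dimitrov et du Théorème d'approximation d'Artin (Proposition \ref{phi_infty}), puis obtient la divisibilité par passage à la limite sur une suite de spécialisations classiques $g_n$ de poids $k(n)\to 1$, en appliquant le Théorème de Kato à chaque $g_n$ et les Lemmes \ref{spécialisation_torsion}, \ref{Och} et \ref{elem} pour contrôler les spécialisations des idéaux caractéristiques. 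Votre intuition que le contrôle au poids 1 est le point délicat est juste, mais la résolution passe par l'absence de sous-modules pseudo-nuls (Lemme \ref{Och}) et la Proposition \ref{changement_base}, non par une analyse des invariants $H^0(\Qp,V_p^-)$.
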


Le théorème principal de cette partie fournit une évidence en faveur de cette conjecture.
\begin{thC}[=Théorème \ref{ThC}] Il existe un élément $u\in\Lambda \otimes \Qp$ tel que
	$$ u \cdot L_p^{\alg}(f_\alpha,T)=L_p^{\an}(f_\alpha,T).$$
	De plus, si la Conjecture Principale pour les formes paraboliques primitives $p$-ordinaires de poids $k\geq 2$ est vraie, alors la Conjecture Principale pour $f_\alpha$ est vraie.
\end{thC}

Soit \textbf{f} une famille de Hida se spécialisant en $f_\alpha$. Nous prouvons le Théorème C par un argument de passage à la limite sur les spécialisations $\textbf{f}_k$ de poids $k$ de $\textbf{f}$, lorsque $k$ tend $p$-adiquement vers 1. La famille de Hida \textbf{f} définit un certain quotient $\bH_{\textbf{f}}$ de $\bH_{\ob{\rho}}$ qui est une algèbre finie intègre sur l'anneau $\Lambda^\textrm{poids}:=\Zp[[X]]$. Nous illustrons d'abord la preuve sous l'hypothèse (très forte !) que $\bH_{\textbf{f}}$ est isomorphe à l'anneau de séries formelles $\cO_L[[X]]$, et nous expliquerons ensuite comment s'en passer. Après un changement de variables, la forme $f_\alpha$ correspond à la spécialisation $X=0$, et plus généralement, une spécialisation classique de \textbf{f} de poids $k$, de niveau $Np^r$ et caractère $\epsilon_{f}\chi_\zeta\omega^{1-k}$ correspond à la spécialisation $X=\zeta (1+p)^{k-1}-1$. En notant $g_n$ la forme obtenue en spécialisant en $X=(1+p)^{(p-1)p^n}-1$, on peut schématiquement illustrer la preuve de la divisibilité :
$$
\xymatrix{
	L_p^{\alg}(g_n;T) \ar[d]_{(c)}^{n \rightarrow +\infty} & \stackrel{(a)}{\mbox{divise}} & L_p^{\an}(g_n;T) \ar[d]^{(b)}_{n \rightarrow +\infty} \\
	L_p^{\alg}(f_\alpha;T) & & L_p^{\an}(f_\alpha;T).
}
$$ 
Toutes les fonctions $L$ $p$-adiques sont des éléments de l'anneau topologique $\Lambda$, et les divisibilités sont dans $\Lambda\otimes \Qp$. L'élément $L_p^{\alg}(g_n;T)$ est la fonction $L$ $p$-adique algébrique du groupe de Selmer attaché à $g_n$, et la divisibilité (a) est une application d'un célèbre théorème de Kato \cite[Theorem 17.4]{kato2004p}. Notons par ailleurs qu'une divisibilité dans $\Lambda$ est prouvée dans \textit{loc. cit.} lorsque la représentation galoisienne associée à $g_n$ a une grosse image, ce qui ne sera jamais le cas ici. Pour le passage à la limite (b), on considère la fonction $L$ $p$-adique analytique de \textbf{f} obtenue à partir de $L_p^{\an}(\ob{\rho};T)$.  C'est une série formelle à deux variables $L_p^{\an}(X,T)$ par notre hypothèse simplificatrice, et (b) se déduit de la convergence de $L_p^{\an}((1+p)^{(p-1)p^n}-1,T)$ vers $L_p^{\an}(0,T)$ pour la topologie naturelle de $\Lambda$. Pour (c), on construit la représentation ordinaire $\cT_\textbf{f}$ associée à \textbf{f} et à coefficients dans $\bH_\textbf{f}$. On montre que le groupe de Selmer $X_\infty(\textbf{f})$ associé n'a pas de sous-modules pseudo-nuls non-triviaux (Lemme \ref{Och}). Comme l'anneau des coefficients $\bH_\textbf{f}[[T]]$ est un anneau de séries formelles, la fonction $L$ $p$-adique algébrique $L_p^{\alg}(X,T)$ de $X_\infty(\textbf{f})$ possède les propriétés de spécialisation attendues, d'après les résultats généraux de la Section \ref{section3}. Le point (c) se montre alors comme le (b), et comme $L_p^{\alg}(f_\alpha;T)\neq 0$, on peut conclure que $L_p^{\alg}(f_\alpha;T)$ divise $L_p^{\an}(f_\alpha;T)$ par un passage à la limite dans les divisibilités (Lemme \ref{elem}).

L'hypothèse $\bH_\textbf{f}\simeq \cO_L[[X]]$ est beaucoup trop forte et implique notamment que son localisé $\left(\bH_\textbf{f}\right)_\gp$ en $f_\alpha$ est régulier, ce qui est précisément l'énoncé du théorème de lissité de la courbe de Hecke en $f_\alpha$ prouvé par Bellaïche et Dimitrov. En fait, on montre que leur résultat suffit à adapter les arguments précédents. En effet, l'anneau complété $\left(\widehat{\bH}_\textbf{f}\right)_\gp$ est isomorphe à $\ob{\bQ}_p[[X^{1/e}]]$, où $e\geq 1$ est l'indice de ramification de l'application de poids en $f_\alpha$. L'image de $\bH_\textbf{f}$ dans cet anneau est finie sur $\Lambda^\textrm{poids}$, et le théorème d'approximation d'Artin \cite{artin1968solutions} montre qu'elle est contenue dans un anneau de séries convergentes $\cO'[[Y]]$ de la variable $Y=X^{1/e}/p^r$ (pour un certain $r\geq 0$) et à coefficients dans une extension finie $\cO'$ de $\cO_L$ (Proposition \ref{phi_infty}). Cela fournit un paramétrage $\textbf{f}^\dag(Y)$ de $\textbf{f}$ permettant d'adapter entièrement la preuve précédente en remplaçant $L_p^{\alg}(X,T)$ et $L_p^{\an}(X,T)$ par leurs analogues respectifs $L_p^{\alg,\dag}(Y,T)$ et $L_p^{\an,\dag}(Y,T)\in \cO'[[Y,T]]$.

\subsection*{Remerciements} 
Les résultats de cet article sont pour l'essentiel issus de la thèse de doctorat de l'auteur sous la direction de Mladen Dimitrov. Nous le remercions chaleureusement pour nous avoir proposé de travailler sur ce sujet ainsi que pour son aide dans la réalisation de ce travail. Nous remercions également Henri Darmon et Olivier Fouquet pour la relecture et les corrections apportées à la thèse. Merci également à Denis Benois, Gautami Bhowmik, Sheng-Chi Shih, Vincent Pilloni, Jacques Tilouine, Emiliano Torti et à Gabor Wiese pour les discussions autour de l'article. Ce travail est soutenu par le Fonds National de la Recherche, Luxembourg, INTER/ANR/18/12589973 GALF.

\subsection*{Notations}

\textit{Corps de nombres et plongements :} On fixe une clôture algébrique $\ob{\bQ}$ de $\bQ$, ainsi que les plongements $\iota_\ell$ (pour tout nombre premier $\ell$) et $\iota_\infty$ de l'introduction. Lorsque $\ell=p$, on notera toujours $v$ la place de $\ob{\bQ}$ au-dessus de $p$ déterminée par $\iota_p$. On notera $G_E$ le groupe de Galois absolu d'une extension intermédiaire $E$ de $\ob{\bQ}/\bQ$ ou de $\ob{\bQ}_\ell/\bQ_\ell$. Pour une extension galoisienne $E \subseteq F \subseteq \ob{\bQ}$ on notera aussi $I_\ell(F/E)\subseteq \Gal(F/E)$ le groupe d'inertie de la place de $F$ définie par $\iota_\ell$. 

\textit{Représentations de groupes finis :} Soit $G$ un groupe fini et soit \textbf{C} un corps algébriquement clos de caractéristique 0. Toute \textbf{C}-représentation irréductible $(\pi,V_\pi)$ de $G$ définit un idempotent
$$ e_\pi = \dfrac{\dim \pi}{\#G} \sum_{g\in G}\Tr \circ \pi(g^{-1})g \in \textbf{C}[G]. $$
\'Etant donnée une \textbf{C}-représentation $W$ de $G$, on notera $W^\rho:=e_\pi W \subseteq W$ sa composante $\pi$-isotypique.

\textit{Dualité de Pontryagin :} 	
Pour tout $\Zp$-module localement compact $M$, on note $M^\vee=\Hom_\textrm{ct}(M,\Qp/\Zp)$ le dual de Pontryagin de $M$. Le foncteur $M \mapsto M^\vee$ induit une équivalence de catégories entre la catégorie des $\Zp$-modules discrets et de torsion et les $\Zp$-modules compacts.

\textit{Caractères cyclotomiques et algèbre d'Iwasawa :} Soit $\tilde{\chi}_{\cyc} : G_\bQ \longrightarrow \bZ^\times_p$ le caractère cyclotomique et soit $\omega : G_\bQ \longrightarrow \mu_{p-1}$ le caractère de Teichmüller. $\tilde{\chi}_{\cyc}$ induit un isomorphisme entre $\tilde{\Gamma}=\Gal(\bQ(\mu_{p^\infty})/\bQ)$ et $\bZ_p^\times$, et $\omega$ induit un isomorphisme entre $\Gal(\bQ(\mu_{p})/\bQ)$ et $\mu_{p-1}$. Le caractère $\chi_{\cyc}:=\tilde{\chi}_{\cyc}\omega^{-1}$ se factorise via le groupe de Galois $\Gamma$ de la $\Zp$-extension extension cyclotomique, notée $\bQ_\infty/\bQ$, et réalise un isomorphisme $\Gamma \simeq 1+p\Zp$. On notera $\gamma \in \Gamma$ l'image inverse du générateur topologique $u=1+p$ de $1+p\Zp$. On a ainsi $\Gamma \simeq \gamma^\Zp$. L'algèbre d'Iwasawa sur une extension finie $\cO$ de $\Zp$ est l'anneau de groupe complété $\Lambda:=\cO[[\Gamma]]$. Elle est isomorphe à l'anneau des séries formelles en une variable $\cO[[T]]$ après identification de $\gamma$ avec $1+T$.

\textit{Tour cyclotomique et caractères :} On note $\bQ_n$ le $n$-ème étage de la tour cyclotomique, c'est-à-dire le sous-corps de $\bQ_\infty$ fixé par $\Gamma^{p^n}$, ainsi que $\Gamma_n=\Gal(\bQ_n/\bQ) \simeq \bZ/p^n\bZ$. Si $\zeta \in \mu_{p^\infty}(\ob{\bQ}_p)$ est une racine de l'unité d'ordre $p^n$, on notera $\chi_\zeta : G_\bQ \longrightarrow \ob{\bQ}_p^\times$ le caractère galoisien se factorisant par $\Gamma_n$ et envoyant $\gamma$ sur $\zeta$. Pour $n\geq 1$, le caractère $\chi_\zeta$ est d'ordre $p^n$ et de conducteur $p^{n+1}$.
	
	\addtocontents{toc}{\setcounter{tocdepth}{2}}

	\section{Groupe de Greenberg-Selmer d'une représentation ordinaire}
\label{section3}
	\pagenumbering{arabic}
	
\subsection{Idéaux caractéristiques et spécialisations}\label{section2}
\subsubsection{Idéaux caractéristiques}
Nous rappelons quelques définitions et résultats utiles sur les modules de type fini et de torsion sur un anneau $A$ noethérien et intégralement clos (ou, plus généralement, sur un anneau de Krull, cf. \cite[Chap. VII]{bourbaki2007algebre}). Soit $\cP_A$ l'ensemble des idéaux premiers de hauteur 1 de $A$. Pour tout $\gp \in \cP_A$, le localisé $A_\gp$ de $A$ est un anneau de valuation discrète, dont la valuation (normalisée) $v_\gp$ s'étend à $\Frac(A)$. Si $M$ est un module de type fini et de torsion sur $A$, alors le localisé $M\otimes A_\gp$ est de longueur finie $\ell_\gp(M)$ sur $A_\gp$, et l'idéal caractéristique de $M$ est défini comme étant l'idéal entier 
$$\car_A(M) =  \{ x \in A \ : \ v_\gp(x) \geq \ell_\gp(M), \quad	  \forall \gp \in \cP_A\}.$$
L'application qui, à un $A$-module de type fini et de torsion, associe son idéal caractéristique est multiplicative sur les suites exactes. Autrement dit, si $M$ est l'extension de $P$ par $N$ dans la catégorie des $A$-modules de type fini et de torsion, alors  $\car_A(M)=\car_A(N)\car_A(P)$. Lorsque $\car_A (M)$ est l'idéal unité de $A$, c'est-à-dire quand $M_\gp=0$ pour tout $\gp\in \cP_A$, alors on dit que $M$ est pseudo-nul. 
%
%

Lorsque $A$ est isomorphe à un anneau des séries formelles $\cO[[T]]$ à coefficients dans l'anneau des entiers d'une extension finie de $\Qp$, on a un théorème de structure des modules de type fini sur $A$. Notons $\varpi\in\cO$ une uniformisante de $\cO$. Les idéaux premiers de hauteur 1 de $\cO[[T]]$ sont principaux et engendrés soit par $\varpi$, soit par un polynôme $P(T)\in\cO[T]$ distingué (\textit{i.e.}, un polynôme unitaire tel que $P(T) \equiv T^n \mod \varpi$). 

\begin{theorem}[Théorème de structure sur l'algèbre d'Iwasawa] \label{th:structure_sur_algebre_iwasawa}
	Les modules de type fini et pseudo-nuls sur $\cO[[T]]$ sont finis, et réciproquement. Si $M$ est un $\cO[[T]]$-module de type fini, alors il existe des entiers $r\in\bN$, $\mu_i>0$ et $m_j>0$, des polynômes distingués irréductibles $P_j(T)$ et une suite exacte de $\cO[[T]]$-modules :
	$$\xymatrix{
		\left(\textrm{fini}\right) \ar[r] & M  
		\ar[r]& \cO[[T]]^{\oplus r} \bigoplus \left(\bigoplus_i \cO[[T]]/(\varpi^{\mu_i}) \right) \bigoplus \left( \bigoplus_j \cO[T]/(P_j(T)^{m_j}) \right) \ar[r] & \left(\textrm{fini}\right).
	}$$
\end{theorem}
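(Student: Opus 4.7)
Mon plan de preuve se développe en trois temps bien distincts. Le premier temps consiste à identifier explicitement les idéaux premiers de hauteur 1 de $A = \cO[[T]]$ à l'aide du théorème de préparation de Weierstrass : tout élément non nul de $\cO[[T]]$ s'écrit de manière unique sous la forme $\varpi^a \cdot P(T) \cdot u$, avec $P\in\cO[T]$ distingué et $u\in A^\times$. Ceci établit à la fois que $A$ est un anneau factoriel, régulier local de dimension 2 et d'idéal maximal $\gm=(\varpi,T)$, et que ses éléments premiers sont, à une unité près, soit $\varpi$, soit un polynôme distingué irréductible. L'ensemble $\cP_A$ est donc réduit à $(\varpi)$ et aux $(P(T))$ avec $P$ distingué irréductible, comme prétendu.

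Dans un second temps, je prouverais l'équivalence entre finitude et pseudo-nullité pour un $A$-module $M$ de type fini. Si $M$ est pseudo-nul, son support est contenu dans $\{\gm\}$, et l'anneau artinien local $A/\mathrm{Ann}(M)$ est annulé par un $\gm^n$ ; comme $A/\gm^n$ est fini (puisque $\cO/\varpi$ l'est), $M$, qui est de type fini sur $A/\gm^n$, est fini. Réciproquement, un module fini $M$ a un annulateur d'indice fini, donc un idéal ouvert de $A$ contenant un $\gm^n$, et la pseudo-nullité de $M$ en résulte immédiatement.

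Le troisième temps consiste à combiner le Théorème \ref{pseudo} pour la partie de torsion et un argument de réflexivité pour la partie libre. Le sous-module de torsion $T(M)$ est pseudo-isomorphe à $\bigoplus_i A/\gp_i^{m_i}$ via la 4-suite du Théorème \ref{pseudo}. Chaque $\gp_i^{m_i}$ est, par l'étape 1, de la forme $(\varpi^{\mu_i})$ ou $(P_j(T)^{m_j})$, et une division euclidienne par le polynôme distingué $P_j^{m_j}$ montre que $A/(P_j^{m_j})$ s'identifie à $\cO[T]/(P_j^{m_j})$ comme $A$-module. Pour le quotient $N=M/T(M)$, qui est sans torsion sur l'anneau régulier local de dimension 2 $A$, le bidual $N^{\star\star}$ est réflexif donc libre de rang égal au rang générique $r$ de $M$, et le morphisme canonique $N\to A^r$ a noyau et conoyau de codimension au moins 2, donc pseudo-nuls. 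En recollant ces données via la suite $0\to T(M)\to M\to N\to 0$, on obtient un morphisme de $M$ vers $A^r \oplus \bigoplus_i A/(\varpi^{\mu_i}) \oplus \bigoplus_j \cO[T]/(P_j^{m_j})$, à noyau et conoyau pseudo-nuls, donc finis d'après la seconde étape (le facteur $A^r$ absent de la formule de l'énoncé semble être une omission typographique, $r$ y étant déclaré mais non utilisé).

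Le point techniquement le plus délicat me paraît être le passage de la 4-suite exacte du Théorème \ref{pseudo} à une 3-suite exacte comme dans l'énoncé : il faut combiner soigneusement le pseudo-isomorphisme de $T(M)$ avec le relèvement de $N$ dans $A^r$, puis vérifier que les finitudes de noyau et conoyau se préservent lors du recollement. La réflexivité et la 2-dimensionnalité régulière de $A$ sont ici les ingrédients décisifs.
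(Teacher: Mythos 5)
L'article ne donne en réalité aucune démonstration de cet énoncé : il renvoie à Lang (\textit{Cyclotomic Fields}, Chap.~5, Theorem~3.1), dont la preuve procède par réduction d'une matrice de présentation (diviseurs élémentaires au moyen de la préparation et de la division de Weierstrass), ce qui évite tout problème de recollement. Votre itinéraire --- Théorème \ref{pseudo} pour la partie de torsion, bidual et réflexivité pour la partie libre --- est donc une alternative réellement différente, mais tout à fait standard. Vos deux premières étapes sont complètes et correctes (Weierstrass pour décrire $\cP_A$, nilpotence de $\gm$ modulo l'annulateur pour l'équivalence pseudo-nul $\Leftrightarrow$ fini, la finitude de $A/\gm^n$ venant de celle du corps résiduel de $\cO$), et votre remarque sur le facteur $\cO[[T]]^r$ absent de la somme directe est juste : $r$ est déclaré dans l'énoncé mais n'y est pas utilisé, c'est une coquille.

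Le seul vrai point faible est celui que vous signalez sans le traiter : le recollement. Il ne suffit pas de « combiner » les deux pseudo-isomorphismes, car le pseudo-isomorphisme $f : T(M) \to E$ fourni par le Théorème \ref{pseudo} ne se prolonge pas en général à $M$ : l'obstruction est l'image $f_*[M]$ de la classe $[M] \in \mathrm{Ext}^1_A(N,T(M))$ dans $\mathrm{Ext}^1_A(N,E)$, groupe qui peut être non nul lorsque $N$ n'est pas libre (prendre $N=\gm$ et $E=A/(\varpi)$). L'argument qui manque est le suivant : de la suite $0 \to N \to F \to C \to 0$ avec $C$ fini et $F$ libre, on tire $\mathrm{Ext}^1_A(F,E)=0$, donc $\mathrm{Ext}^1_A(N,E)$ s'injecte dans $\mathrm{Ext}^2_A(C,E)$, qui est de type fini et annulé par une puissance $\gm^k$ de l'idéal maximal, donc fini. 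Par évitement des idéaux premiers, on choisit alors $g \in \gm^k$ n'appartenant à aucun des premiers de hauteur $1$ du support de $E$ ; la multiplication par $g$ sur $E$ est injective à conoyau fini, de sorte que $g f$ est encore un pseudo-isomorphisme $T(M)\to E$, et $(gf)_*[M]= g\cdot f_*[M]=0$, donc $gf$ se prolonge en $M \to E$. Combiné avec $M \to N \to F$, cela donne le morphisme voulu $M \to F\oplus E$ à noyau et conoyau finis. Sans cette modification de $f$, votre étape de recollement n'aboutit pas ; avec elle, votre schéma de preuve est complet.
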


\begin{proof}
	Voir par exemple \cite[Chap. 5, Theorem 3.1]{langCYC}.
\end{proof}

\subsubsection{Changement de bases}
On rappelle ici des résultats élémentaires et bien connus des experts sur les idéaux caractéristiques et la réduction modulo $\gp$. 
\begin{lemme}\label{lemme_pseudo}
	Soit $M$ un $A$-module de type fini et de torsion, soit $\gp \in\cP_A$.
	\begin{enumerate}
		\item Si $M$ est pseudo-nul, alors $M[\gp]$ et $M/\gp M$ sont de torsion sur $A/\gp$.
		\item Si $M/\gp M$ est de torsion sur $A/\gp$, alors $\gp$ ne divise pas $\car_A(M)$.
	\end{enumerate}
\end{lemme}
%
%

\begin{proposition}\label{prop_pseudo}
	Soit $M$ un $A$-module de type fini et de torsion, soit $\gp \in \cP_A$. Supposons que $\gp$ est principal et que $A/\gp$ est encore un anneau intégralement clos. 
	\begin{enumerate}
		\item Si $M$ est pseudo-nul, alors on a $\car_{A/\gp}(M[\gp])=\car_{A/\gp}(M/\gp M)$.
		\item Si $M$ n'a pas de sous-$A$-modules pseudo-nuls différents de $\{0\}$ et si $M /\gp M$ est de torsion sur $A/\gp$, alors $$\car_{A/\gp}(M/\gp M)= \car_A(M)/\gp \car_A(M).$$
	\end{enumerate}
\end{proposition}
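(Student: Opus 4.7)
Le plan consiste à démontrer (1) directement par un calcul de longueurs, puis à l'utiliser comme ingrédient pour établir (2) via le Théorème de structure.

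\emph{Point (1) :} Notons $\varpi$ un générateur de $\gp$. La suite exacte à quatre termes
$$0 \to M[\gp] \to M \xrightarrow{\varpi} M \to M/\gp M \to 0$$
fournit le résultat après localisation en chaque $\gq \in \cP_{A/\gp}$, relevé en un idéal premier $\gq'$ de $A$ de hauteur 2 contenant $\gp$. Comme $M$ est pseudo-nul et de type fini, son support est de codimension $\geq 2$, et $M_{\gq'}$ est un $A_{\gq'}$-module de longueur finie. La localisation de la suite précédente en $\gq'$ combinée à l'additivité de la longueur donne l'égalité $\ell_{A_{\gq'}}(M[\gp]_{\gq'}) = \ell_{A_{\gq'}}((M/\gp M)_{\gq'})$, et l'identification $(A/\gp)_\gq = A_{\gq'}/\gp A_{\gq'}$ se traduit en $\ell_\gq(M[\gp]) = \ell_\gq(M/\gp M)$ pour tout $\gq \in \cP_{A/\gp}$, d'où l'égalité des idéaux caractéristiques souhaitée.

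\emph{Point (2) :} Par le Lemme \ref{lemme_pseudo}(2), l'hypothèse que $M/\gp M$ est de $A/\gp$-torsion entraîne $\gp \nmid \car_A(M)$. Le Théorème \ref{pseudo} fournit un pseudo-isomorphisme $M \to E := \bigoplus_i A/\gq_i^{m_i}$ avec $\gq_i \neq \gp$, et l'absence de sous-$A$-module pseudo-nul différent de $\{0\}$ dans $M$ impose que le noyau de cette flèche soit trivial. On obtient ainsi une suite exacte
$$0 \to M \to E \to D \to 0$$
avec $D$ pseudo-nul. On vérifie ensuite que $M[\gp] = 0 = E[\gp]$ : le sous-module $M[\gp]$ de $M$ est de type fini et pseudo-nul (il s'annule localement en tout $\gr \in \cP_A$ différent de $\gp$ car $\varpi$ y devient une unité, et en $\gp$ car $M_\gp = 0$ d'après l'étape précédente), donc nul par hypothèse ; quant à $E[\gp]$, il est nul car $\varpi$ est un non-diviseur de zéro sur chacun des quotients $A/\gq_i^{m_i}$. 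La suite exacte longue de Tor obtenue en tensorisant par $A/\gp$ se réduit alors à
$$0 \to D[\gp] \to M/\gp M \to E/\gp E \to D/\gp D \to 0.$$
En appliquant le point (1) au module pseudo-nul $D$ et la multiplicativité des idéaux caractéristiques sur les suites exactes courtes, on en déduit $\car_{A/\gp}(M/\gp M) = \car_{A/\gp}(E/\gp E)$.

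Il reste alors à identifier $\car_{A/\gp}(E/\gp E)$ avec la réduction $\car_A(M)/\gp \car_A(M)$. Pour cela, on localiserait en chaque $\gq \in \cP_{A/\gp}$ de relevé $\gq'$ et l'on calculerait la longueur de $(E/\gp E)_{\gq'} = \bigoplus_{\gq_i \subseteq \gq'} A_{\gq'}/((\gq_i A_{\gq'})^{m_i} + \gp A_{\gq'})$ sur l'anneau de valuation discrète $(A/\gp)_\gq$ : elle vaut $\sum_i m_i\, v_\gq(\overline{\gq_i})$, soit précisément la $\gq$-valuation de l'image de $\car_A(M) = \prod \gq_i^{m_i}$ dans $A/\gp$. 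L'obstacle principal se situe dans cette identification finale, ainsi que, plus subtilement, dans l'annulation de $E[\gp]$ : si la décomposition du Théorème de structure faisait intervenir une puissance $\gq_i^{m_i}$ admettant un premier immergé contenant $\gp$, l'argument tomberait en défaut et il faudrait recourir aux puissances symboliques $\gq_i^{(m_i)}$ (qui fournissent la même classe dans le groupe des diviseurs) pour conclure. La principalité de $\gp$ et l'hypothèse que $A/\gp$ est intégralement clos sont les ingrédients-clés qui rendent les identifications divisorielles licites.
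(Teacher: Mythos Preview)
Your approach is essentially the same as the paper's. For (1), you supply a direct length computation where the paper simply cites \cite[Lemma 3.1]{ochiai2005euler}. For (2), both you and the paper use the structure theorem to obtain $0 \to M \to E \to D \to 0$, take the long exact $\mathrm{Tor}$ sequence for $-\otimes_A A/\gp$, invoke (1) on the pseudo-null module $D$, and conclude.

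The paper asserts $E[\gp]=0$ with the one-line justification ``$x \notin \gp_i$'' and writes the final identification $\car_{A/\gp}(E/\gp E) = \prod_i \gp_i^{m_i}/\gp\gp_i^{m_i}$ without further comment; your closing paragraph flags precisely these two steps (possible embedded primes of $\gq_i^{m_i}$, divisoriality of the image ideals in $A/\gp$) as potential obstacles. These concerns are legitimate in a general Krull domain, but the proposition is only applied in the paper when $A$ is a power-series ring over a discrete valuation ring (hence regular, hence a UFD), where every height-one prime is principal, ordinary and symbolic powers coincide, and the image ideals are automatically principal and thus divisorial. You may therefore replace the hedging in your last paragraph by a remark that, in the regular case which covers all the applications, $\gq_i=(f_i)$ is principal, $\varpi$ is a non-zero-divisor on $A/(f_i^{m_i})$, and $\car_{A/\gp}\bigl((A/\gp)/(\bar f_i^{\,m_i})\bigr)=(\bar f_i^{\,m_i})$ trivially.
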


\begin{proof}
	
	La preuve du (1) est donnée dans \cite[Lemma 3.1]{ochiai2005euler}, donc nous prouvons uniquement le point (2). Supposons que $M$ n'a pas de sous-modules pseudo-nuls non-triviaux, et que $M /\gp M$ est de torsion sur $A/\gp$. Alors d'après le Lemme \ref{lemme_pseudo}, on sait que $\gp$ n'apparaît pas dans la suite des idéaux premiers $\gp_1, \ldots,\gp_d \in \cP_A$ divisant $\car_A M$. De plus, le théorème de structure des modules de type fini et de torsion sur $A$ (\cite[Chap. VII, §4.4, Théorème 5]{bourbaki2007algebre}) fournit une suite exacte courte 
	$$\xymatrix{
		0 \ar[r] & M  
		\ar[r]& \bigoplus_{i=1}^d A/\gp_i^{m_i} \ar[r] & D \ar[r]&0,
	}$$
	où les $m_i$ sont des entiers positifs et $D$ est un $A$-module pseudo-nul. En considérant la multiplication par un générateur $x$ de $\gp$, on en déduit une suite exacte longue
	$$\xymatrix{
		\bigoplus_i A/\gp_i^{m_i}[\gp] \ar[r] &  D[\gp]
		\ar[r]& M/\gp M \ar[r]& \bigoplus_i A/\gp_i^{m_i} \otimes_A A/\gp \ar[r] & D/\gp D \ar[r]&0.
	}$$
	Comme $x$ n'appartient pas aux $\gp_i$, le premier terme de cette suite exacte est nul. D'après le point (1) appliqué à $D$, on a donc $\car_{A/\gp} (M/\gp M) = \car_{A/\gp} \left(\bigoplus_i A/\gp_i^{m_i} \otimes_A A/\gp\right)$, c'est-à-dire, 
	\[\car_{A/\gp} (M/\gp M) = \prod_i \gp_i^{m_i}/\gp \gp_i^{m_i} = \car_A (M) /\gp \car_A (M).\]
\end{proof}

\subsubsection{Séries caractéristiques}
Lorsque $A$ est factoriel, ses idéaux premiers de hauteur 1 sont tous principaux. C'est le cas, par exemple, quand $A$ est régulier d'après le théorème de Auslander–Buchsbaum, et \textit{a fortiori} lorsque $A$ est une algèbre de séries formelles sur un anneau de valuation discrète. En particulier, l'idéal caractéristique d'un $A$-module $M$ de type fini et de torsion est un idéal non-nul et principal de $A$. On appelle série caractéristique de $M$ tout générateur de $\car_A(M)$. Notons qu'elle est unique à multiplication par une unité de $A$ près.

Dans le cas particulier où $A={\cO}[[T]]$, et où ${\cO}$ est l'anneau des entiers d'une extension finie de $\Qp$, la série caractéristique est une série formelle dont on peut interpréter le terme constant (bien défini à une unité de $\cO$ près) comme un quotient de Herbrand. Le lemme suivant se déduit de la Proposition \ref{prop_pseudo} (2) :

\begin{lemme}\label{cst}
	Supposons que $A={\cO}[[T]]$. Soit $M$ un $A$-module de type fini et de torsion, de série caractéristique $L(M;T)\in \cO[[T]]-\{0\}$. Si $M$ est sans sous-modules finis non-triviaux et si $M/TM$ est fini, alors $L(M;0)\neq 0$, et on a 
	$$L(M;0)\ \dot{=}\ \#(M/TM),$$
	où $\dot{=}$ désigne l'égalité à multiplication par une unité de ${\cO}$ près.
\end{lemme}

\subsection{Groupe de Selmer et fonction $L$ $p$-adique algébrique}

Soient $\cA$ une $\Zp$-algèbre profinie, et $\cT$ un $\cA$-module libre de rang fini muni d'une action continue de $G_\Sigma:=\Gal(\bQ_\Sigma/\bQ)$, où $\Sigma$ est un ensemble fini de places de $\bQ$ contenant $p$ et $\infty$, et où $\bQ_\Sigma \subseteq \ob{\bQ}$ est la plus grande extension algébrique de $\bQ$ non-ramifiée en dehors de $\Sigma$. On se donne une suite exacte courte de $G_{\Qp}$-modules libres sur $\cA$
$$\xymatrix{
	0 \ar[r] & \cT^+ \ar[r] & \cT \ar[r] & \cT^- \ar[r] & 0.
}
$$
On pose $\cD=\cT \otimes_{\cA} {\cA}^\vee$ et $\cD^\pm = \cT^\pm \otimes_{\cA} {\cA}^\vee$, où $\cA^\vee=\Hom_\textrm{ct}(\cA,\Qp/\Zp)$ désigne le dual de Pontryagin de $\cA$. Ce sont des $\cA$-modules co-libres, c'est-à-dire que leurs duaux de Pontryagin sont libres sur $\cA$. Pour tout $n \in \bN \cup\{\infty\}$, on définit $\cD_n:=\Ind_{{\bQ_n}}^{{\bQ}} \cD$ comme étant l'induite de $G_{\bQ_n}$ à $G_{\bQ}$ de $\cD$, vu comme $G_{\bQ_n}$-module discret. Autrement dit, il s'agit du $\cA$-module des fonctions localement constantes $f : G_{\bQ} \longrightarrow \cD$ telles que 
$$\forall g\in G_{\bQ}, \ \forall h \in G_{\bQ_n}, \ f(hg)=hf(g).$$ 
Il est muni de l'action de $G_{\bQ}$ donnée par la formule $(\gamma.f)(g)=f(g\gamma)$, et s'il on pose $\cA_n=\cA[\Gamma_n]$ si $n$ est fini et $\cA_\infty=\cA[[\Gamma]]$, alors il est canoniquement isomorphe à $\cD \otimes_{\cA} \cA_n$, muni de l'action de $G_{\bQ}$ sur les deux facteurs du produit tensoriel. On définit de même $\cD_n^\pm$ comme étant $\Ind_{{\bQ_{p,n}}}^{{\bQ_p}} \cD^\pm$, où $\bQ_{p,n}$ est le $n$-ième étage de la tour cyclotomique de $\Qp$. Comme $\Gal(\bQ_{p,n}/\Qp)\simeq\Gamma_n$, le $G_{\Qp}$-module $\cD_n^\pm$ est canoniquement isomorphe à $\cD^\pm \otimes_{\cA} \cA_n$ et on obtient une suite exacte courte
$$\xymatrix{
	0 \ar[r] & \cD_n^+ \ar[r] & \cD_n \ar[r] & \cD_n^- \ar[r] & 0.
}
$$

Nous travaillerons avec la définition suivante (cf. \cite[Section 3]{skinner2014iwasawa} ou \cite{greenberg2015structure}).

\begin{definition}\label{Selgen}
	Soit $n\in \bN \cup\{\infty\}$. On appelle groupe de Selmer de niveau $n$ attaché à $(\cT,\cT^+)$ le $\cA_n$-module défini comme étant le noyau de la flèche de restriction globale-locale : 
	
	\[\begin{array}{rcl} \Sel_n(\cT,\cT^+)
	&:=&\ker \left[ H^1(\bQ_\Sigma/\bQ,\cD_n) \longrightarrow H^1(I_{p},\cD_n^-) \times \prod_{\ell \in \Sigma, \ell\neq p} H^1(I_{\ell},\cD_n)  \right] \\
	&\simeq& \ker \left[ H^1(\bQ,\cD_n) \longrightarrow H^1(I_{p},\cD_n^-) \times \prod_{\ell\neq p} H^1(I_{\ell},\cD_n)  \right].\end{array}\]
	Le groupe de Selmer dual est défini comme étant $$X_n(\cT,\cT^+)=\Sel_n(\cT,\cT^+)^\vee.$$
\end{definition}

D'après les propriétés de base de la cohomologie des modules discrets, on a $\Sel_\infty(\cT,\cT^+) = \varinjlim_n \Sel_n(\cT,\cT^+)$. Par ailleurs, $H^1(\bQ_\Sigma/\bQ,\cD_n)^\vee$ étant de type fini sur $\cA_n$, cela est encore vrai pour $X_n(\cT,\cT^+)$ (voir par exemple \cite[Chap VIII, §3, Theorem 20]{neukirchcohomology}). Par ailleurs, on ne considérera jamais la cohomologie locale aux places archimédiennes, puisque qu'elle est toujours triviale lorsque $p$ est impair.

\begin{definition}\label{def:L_p(X)}
	Supposons que le groupe de Selmer dual $X_\infty(\cT,\cT^+)$ est de torsion sur $\cA[[\Gamma]]$. La fonction $L$ $p$-adique algébrique $L^\alg_p(\cT,\cT^+) \in \cA[[\Gamma]]-\{0\}$ attachée à $(\cT,\cT^+)$ est la série caractéristique associée à $X_\infty(\cT,\cT^+)$.
\end{definition}

\subsection{Lemme de Shapiro}

Soit $G$ un sous-groupe fermé de $G_{\Sigma}$ et soit $H$ un sous-groupe ouvert normal de $G$. Le lemme de Shapiro relie la cohomologie de $\cD$, vu comme $H$-module, à la cohomologie du $G$-module induit $\Ind_H^G \cD$. On a, pour tout entier $i\geq 0$, l'isomorphisme de Shapiro (fonctoriel par rapport à $\cD$ et à $H$, voir \cite[Chap. 8, §1.]{sel}) : 
\begin{align}\label{Shapiro}
H^i(H,\cD) \simeq H^i(G,\Ind^G_H \cD).
\end{align}
On en déduit le lemme suivant (par passage à la limite si $n=\infty$).

\begin{lemme}\label{Def_alternative_Sel_n}
	L'isomorphisme de Shapiro induit un isomorphisme : 
	$$
	\Sel_n(\cT,\cT^+)\simeq\ker \left[ H^1(\bQ_{\Sigma}/\bQ_n,\cD) \longrightarrow H^1(I_{p}(\bQ_{\Sigma}/\bQ_n),\cD^-) \times \prod_{\ell \in \Sigma,\ell\neq p} H^1(I_{\ell}(\bQ_{\Sigma}/\bQ_n),\cD)  \right].
	$$
\end{lemme}

\begin{remarque}\label{inertie_décomp}
	Soit $\lambda$ la place de $\bQ_\infty$ définie par $\iota_\ell$, pour un nombre premier $\ell\neq p$. Le groupe de Galois $\Gal(\bQ_\ell^\textrm{nr}/\bQ_{\infty,\lambda})=G_{\bQ_{\infty,\lambda}}/I_\ell(\bQ_\Sigma/\bQ_\infty)$ est d'ordre premier à $p$, donc la suite exacte d'inflation-restriction montre que l'application de restriction 
	$$H^1(\bQ_{\infty,\lambda},\cD) \longrightarrow H^1(I_\ell(\bQ_\Sigma/\bQ_\infty),\cD)$$
	est injective. $\Sel_\infty(\cT,\cT^+)$ s'identifie ainsi au noyau de l'application 
	$$
	H^1(\bQ_{\Sigma}/\bQ_\infty,\cD) \longrightarrow H^1(I_{p}(\bQ_{\Sigma}/\bQ_\infty),\cD^-) \times \prod_{\lambda |\ell \in \Sigma,\ell\neq p} H^1(\bQ_{\infty,\lambda},\cD).
	$$
	Le lemme de Shapiro montre alors que l'on a aussi :
	$$\Sel_\infty(\cT,\cT^+)=\ker \left[ H^1(\bQ_\Sigma/\bQ,\cD_\infty) \longrightarrow H^1(I_{p}(\bQ_\Sigma/\bQ_\infty),\cD_\infty^-) \times \prod_{\ell \in \Sigma, \ell\neq p} H^1(\bQ_\ell,\cD_\infty)  \right],$$
	où $\cD_\infty$ est défini plus haut comme étant $\cD \otimes_\cA \cA[[\Gamma]]$. 
\end{remarque}

\subsection{Changement de bases}
On conserve les notations précédentes. Pour tout idéal $\ga \subseteq \cA$, on a un isomorphisme naturel 
$$ (\cA/\ga)^\vee = \cA^\vee[\ga],$$
et donc on a $\cT/\ga \otimes_{\cA/\ga} (\cA/\ga)^\vee =\cD[\ga]$ (et similairement pour $\cT^+$). En particulier, on a une application naturelle 
\begin{equation}\label{eq:changement_base}
\Sel_\infty(\cT/\ga,\cT^+/\ga) \longrightarrow \Sel_\infty(\cT,\cT^+)[\ga].
\end{equation}
Il s'agit d'un isomorphisme sous certaines conditions générales, comme le précise la proposition suivante, inspirée de \cite[Proposition 3.7]{skinner2014iwasawa}.

\begin{proposition}\label{changement_base}
	Soit $n\in\bN\cup\{\infty\}$. Supposons que l'idéal $\ga$ est principal, que $I_p(\ob{\bQ}/\bQ_n)$ agit trivialement sur $\cD^-$, et que les $\cA$-modules $\cD^{G_{\bQ_n}}$ et $\cD^{I_\ell(\bQ_\Sigma/\bQ_n)}$ sont divisibles pour tout $\ell \in \Sigma$.
	
	Alors l'application de changement de base (\ref{eq:changement_base}) est un isomorphisme 
	$$\Sel_n(\cT/\ga,\cT^+/\ga) \simeq \Sel_n(\cT,\cT^+)[\ga],$$ 
	et donc elle induit par dualité un isomorphisme 
	$$X_n(\cT,\cT^+)/\ga X_n(\cT,\cT^+) \simeq X_n(\cT/\ga,\cT^+/\ga).$$
\end{proposition}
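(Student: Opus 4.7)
Le plan consiste à exploiter la principalité de $\ga$ pour obtenir, par multiplication par un générateur, une suite exacte courte de $G_{\bQ_n}$-modules, puis à utiliser les hypothèses de divisibilité pour annuler tous les conoyaux en $H^0$ apparaissant dans les suites exactes longues de cohomologie associées (globalement et localement). La conclusion est alors un diagramme commutatif et une application du lemme des cinq.

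Soit $a\in\cA$ un générateur de $\ga$. Comme $\cD=\cT\otimes_\cA \cA^\vee$ est co-libre sur $\cA$, la multiplication par $a$ y est surjective et l'on dispose de la suite exacte courte de $G_{\bQ_n}$-modules discrets
\begin{equation*}
0 \longrightarrow \cD[\ga] \longrightarrow \cD \xrightarrow{\;a\;} \cD \longrightarrow 0,
\end{equation*}
et l'isomorphisme canonique $(\cA/\ga)^\vee=\cA^\vee[\ga]$ identifie $\cD[\ga]$ au module $(\cT/\ga)\otimes_{\cA/\ga} (\cA/\ga)^\vee$ associé à $\cT/\ga$ (et similairement pour $\cD^\pm[\ga]$). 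Je me place d'emblée, grâce au Lemme \ref{Def_alternative_Sel_n}, dans la description de $\Sel_n$ par des groupes de cohomologie sur $\bQ_n$.

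J'applique ensuite la suite exacte longue $H^\ast(\bQ_\Sigma/\bQ_n,-)$ à la suite ci-dessus : l'hypothèse de divisibilité de $\cD^{G_{\bQ_n}}$ entraîne la surjectivité de la multiplication par $a$ sur ce $H^0$, d'où un isomorphisme
\begin{equation*}
H^1(\bQ_\Sigma/\bQ_n,\cD[\ga])\xrightarrow{\;\sim\;} H^1(\bQ_\Sigma/\bQ_n,\cD)[\ga].
\end{equation*}
Pour chaque $\ell\in\Sigma-\{p\}$, la divisibilité de $\cD^{I_\ell}$ donne de même $H^1(I_\ell,\cD[\ga])\simeq H^1(I_\ell,\cD)[\ga]$. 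En $p$, l'hypothèse que $I_p(\bQ_\Sigma/\bQ_n)$ agit trivialement sur $\cD^-$ fournit $(\cD^-)^{I_p(\bQ_\Sigma/\bQ_n)}=\cD^-$ qui est co-libre donc $\cA$-divisible, ce qui donne encore $H^1(I_p(\bQ_\Sigma/\bQ_n),\cD^-[\ga])\simeq H^1(I_p(\bQ_\Sigma/\bQ_n),\cD^-)[\ga]$.

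J'insère ensuite ces isomorphismes dans un diagramme commutatif à lignes exactes dont la ligne supérieure définit $\Sel_n(\cT/\ga,\cT^+/\ga)$ comme noyau de la flèche de restriction global-locale pour $\cD[\ga]$, et dont la ligne inférieure calcule $\Sel_n(\cT,\cT^+)[\ga]$. Les flèches verticales centrale et droite étant des isomorphismes, le lemme des cinq (sous la forme : rang $0 \to A \to B \to C$ à gauche, $B \to B'$ iso, $C \hookrightarrow C'$ implique $A \to A'$ iso) fournit l'isomorphisme voulu $\Sel_n(\cT/\ga,\cT^+/\ga)\simeq \Sel_n(\cT,\cT^+)[\ga]$. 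La deuxième assertion s'en déduit par dualité de Pontryagin, au moyen de l'identité $(M[\ga])^\vee=M^\vee/\ga M^\vee$ valable pour tout $\cA$-module localement compact $M$ et tout idéal principal $\ga=(a)$ (obtenue en dualisant la suite exacte à droite $M \xrightarrow{a} M \to M/\ga M \to 0$).

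La difficulté principale est essentiellement organisationnelle : il faut identifier soigneusement les différents modules de $\ga$-torsion et vérifier la compatibilité des flèches verticales du diagramme avec à la fois la restriction globale-locale et la projection $\cD\to\cD^-$. L'intérêt conceptuel des hypothèses apparaît clairement \emph{a posteriori}, chacune servant à garantir la surjectivité de la multiplication par $a$ sur l'un des $H^0$ pertinents, ce qui est précisément la condition pour que les isomorphismes verticaux tiennent ; l'hypothèse de trivialité de $I_p(\bQ_\Sigma/\bQ_n)$ sur $\cD^-$ permet de remplacer une condition de divisibilité sur $(\cD^-)^{I_p(\bQ_\Sigma/\bQ_n)}$, plus délicate à énoncer sans elle, par la divisibilité évidente de $\cD^-$ tout entier.
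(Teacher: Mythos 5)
Votre preuve est correcte et suit essentiellement la même démarche que celle de l'article : suite exacte courte induite par la multiplication par un générateur de $\ga$, comparaison des $H^1$ globaux via la divisibilité de $\cD^{G_{\bQ_n}}$, puis contrôle des termes locaux (injectivité en $p$ grâce à la trivialité de l'action de l'inertie sur $\cD^-$, et via la divisibilité de $\cD^{I_\ell}$ ailleurs) pour conclure par une chasse au diagramme. La seule différence est cosmétique : l'article n'établit que l'injectivité des flèches locales, ce qui suffit pour identifier les noyaux, tandis que vous énoncez des isomorphismes ; les deux arguments sont identiques sur le fond.
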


\begin{proof}
	Soit $x\in A$ un générateur de $\ga$. D'après le Lemme \ref{Def_alternative_Sel_n}, on a 
	$$
	\Sel_n(\cT,\cT^+)=\ker \left[ H^1(\bQ_{\Sigma}/\bQ_n,\cD) \longrightarrow H^1(I_{p}(\bQ_{\Sigma}/\bQ_n),\cD^-) \times \prod_{\ell \in \Sigma,\ell\neq p} H^1(I_{\ell}(\bQ_\Sigma/\bQ_n),\cD)  \right].
	$$
	On montre d'abord que l'application naturelle $\tau:H^1(\bQ_{\Sigma}/\bQ_n,\cD[\ga]) \longrightarrow H^1(\bQ_{\Sigma}/\bQ_n,\cD)[\ga]$ est un isomorphisme. Comme $\ga$ est engendré par $x$ et que $\cD$ est $\cA$-divisible, on a une suite exacte courte induite par la multiplication par $x$ :
	$$\xymatrix{ 0 \ar[r] & \cD[\ga] \ar[r] & \cD \ar[r] & \cD \ar[r] &  0. }$$
	Celle-ci induit une suite exacte longue associée en cohomologie :
	$$\xymatrix{ \cD^{G_{\bQ_n}} \ar[r] & \cD^{G_{\bQ_n}} \ar[r] & H^1(\bQ_{\Sigma}/\bQ_n,\cD[\ga]) \ar[r]^{\tau} & H^1(\bQ_{\Sigma}/\bQ_n,\cD) \ar[r] &  H^1(\bQ_{\Sigma}/\bQ_n,\cD), }$$
	où la première et la dernière flèche sont induites par la multiplication par $x$. Par hypothèse la première flèche est surjective, et par définition le noyau de la dernière flèche est égal à $H^1(\bQ_{\Sigma}/\bQ_n,\cD)[\ga]$. Donc $\tau$ est un isomorphisme. 
	
	Pour montrer que $\tau$ envoie $\Sel_n(\cT/\ga,\cT^+/\ga)$ sur $\Sel_n(\cT,\cT^+)[\ga]$, il suffit de prouver que les applications 
	$$ H^1(I_{p}(\bQ_{\Sigma}/\bQ_n),\cD^-[\ga]) \longrightarrow H^1(I_{p}(\bQ_{\Sigma}/\bQ_n),\cD^-)[\ga]$$
	et, pour $\ell\in \Sigma$,
	$$ H^1(I_{\ell}(\bQ_\Sigma/\bQ_n),\cD[\ga]) \longrightarrow H^1(I_{\ell}(\bQ_\Sigma/\bQ_n),\cD)[\ga])$$
	sont injectives. La première l'est clairement, car les $H^1$ sont des $\Hom$ par hypothèse, et la deuxième est injective par le même argument que précédemment (et nécessitant que $\cD^{I_\ell(\bQ_\Sigma/\bQ_n)}$ soit $\cA$-divisible).
\end{proof}

\subsection{Structure des groupes de Selmer}
\label{sec:structure_selmer}
Jusqu'à la fin de cette section, nous supposons que $\cA$ est un anneau de séries formelles à coefficients  dans l'anneau des entiers $\cO$ d'une extension finie $\Qp$, de corps résiduel $\bF$. On a ainsi $\cA=\cO[[X_1,\ldots,X_m]]$, $m\geq 1$ ou bien $\cA=\cO$. Notons $\gm_{\cA}$ son idéal maximal. On conserve les notations des paragraphes précédents, et on note $d$ (resp. $d^\pm$) le rang de $\cT$ (resp. de $\cT^\pm$). On introduit de plus le $\cA$-module libre $\cT^*=\Hom_\Zp(\cD,\mu_{p^\infty})$ de rang $d$ et son dual $\cD^*=\cT^* \otimes_{\cA} {\cA}^\vee$. On note encore les modules induits avec un indice. Le résultat principal de \cite{greenberg2015structure} donne des conditions suffisantes pour qu'un groupe de Selmer dual, défini de manière générale, n'admettent pas de sous-modules pseudo-nuls différents de 0. Combinée à la Proposition \ref{prop_pseudo}, cette propriété sera utile pour calculer les diverses spécialisations de fonctions $L$ $p$-adiques algébriques. Nous simplifions le critère dans le but de l'appliquer dans la suite aux groupes de Selmer attachés à un motif muni d'une $p$-stabilisation ordinaire, ou bien à une déformation ordinaire. 

\begin{proposition}\label{PsN}
	Supposons que : 
	\begin{itemize}
		\item[(a)] $X_\infty(\cT,\cT^+)$ est de torsion sur $\cA[[\Gamma]]$.
		\item[(b)] $H^2(\bQ_\Sigma/\bQ_\infty,\cD)^\vee$ est de torsion sur $\cA[[\Gamma]]$.
		\item[(c)] $\rg_{\cA} \cT^{\Frob_\infty}=d^+$
		\item[(d)] L'inertie en $p$ de $\bQ_\infty$ agit trivialement sur $\cD^-$.
		\item[(e)] En tant que $\bF[G_\bQ]$-représentation, $\cD[\gm_{\cA}]$ n'a pas de quotient isomorphe à $\bF$ ou à $\mu_p$.
	\end{itemize}
	Alors $X_\infty(\cT,\cT^+)$ n'a pas de sous-modules pseudo-nuls non-nuls.
\end{proposition}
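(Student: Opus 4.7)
Pour prouver la Proposition \ref{PsN}, le plan est d'appliquer le Théorème principal de \cite{greenberg2015structure}, qui donne un critère général pour qu'un groupe de Selmer dual, défini sur un anneau régulier local complet, n'admette aucun sous-module pseudo-nul non-nul. L'essentiel du travail consiste à reformuler nos hypothèses (a)--(e) pour les faire coïncider avec celles de \textit{loc. cit.} Observons dès à présent que, via le Lemme de Shapiro (Lemme \ref{Def_alternative_Sel_n} et Remarque \ref{inertie_décomp}), on peut réinterpréter $\Sel_\infty(\cT,\cT^+)$ comme noyau d'une application de restriction globale-locale au-dessus de $\bQ_\infty$, à coefficients dans $\cD$, avec condition non-ramifiée aux places $\ell \in \Sigma,\ \ell\neq p$ et condition donnée par $\cD^-$ en $p$. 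L'anneau des coefficients est alors $\cR:=\cA[[\Gamma]]$, qui est régulier local complet, cadre naturel d'application du Théorème de Greenberg, et où la notion de pseudo-nullité coïncide avec celle introduite en Section \ref{section2}.

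La traduction des hypothèses procéderait comme suit. L'hypothèse (b) est exactement la condition dite LEO de Greenberg, exprimant l'annulation torsionnelle du $H^2$ global. L'hypothèse (d), combinée au fait que $\cD^-$ est $\cA$-divisible, rend la condition locale en $p$ purement "non-ramifiée" sur $\bQ_\infty$ et correspond à la condition LOC : elle assure que le $H^1$ de l'inertie cyclotomique se calcule comme un simple quotient par l'action de Frobenius. La condition dite CRK de "rang correct" se déduit de la conjonction de (a), (b) et (c) par un calcul de caractéristique d'Euler-Poincaré globale : le corang du $H^1(\bQ_\Sigma/\bQ_\infty,\cD)$ moins celui du $H^2(\bQ_\Sigma/\bQ_\infty,\cD)$ vaut $d-d^+=d^-$ en vertu de (c), ce qui équilibre exactement le corang du $H^1$ local en $p$ quotienté par la condition de Selmer. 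Enfin, l'hypothèse (e) correspond à l'hypothèse résiduelle de \textit{loc. cit.}, interdisant à $\cD[\gm_\cA]$ d'admettre $\bF$ ou $\mu_p$ comme quotient ; par dualité de Kummer appliquée à $\cD^*=\cT^*\otimes_\cA \cA^\vee$, cela équivaut à l'annulation de certains invariants galoisiens résiduels pertinents, et c'est précisément ce qui permet d'exclure les sous-modules pseudo-nuls (et pas seulement les sous-modules finis, comme le ferait un résultat plus faible, cf. Théorème \ref{th:structure_sur_algebre_iwasawa} dans le cas $\cA=\cO$).

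Le point technique le plus délicat sera la vérification rigoureuse de la condition CRK, qui nécessitera une analyse soigneuse des caractéristiques d'Euler-Poincaré locales et globales, en exploitant de manière essentielle la trivialité de l'action de l'inertie en $p$ sur $\cD^-$ (hypothèse (d)). Une attention particulière sera nécessaire lorsque $\cA$ est un anneau de séries formelles à plusieurs variables, mais comme $\cR$ reste un anneau local complet régulier, l'argument devrait rester essentiellement formel. La seconde difficulté potentielle sera de s'assurer que la formulation exacte de l'hypothèse résiduelle dans \cite{greenberg2015structure} correspond bien à notre condition (e) ; à défaut, un argument auxiliaire, via la suite exacte duale de Poitou-Tate et l'annulation de $H^0(G_\bQ, \cD[\gm_\cA])$ et $H^0(G_\bQ, \cD^*[\gm_\cA])$, permettra de conclure directement.
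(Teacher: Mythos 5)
Votre stratégie est la bonne et coïncide avec celle du texte : il s'agit bien d'appliquer \cite[Proposition 4.1.1]{greenberg2015structure} à $\cD_\infty$ sur l'anneau $\cA[[\Gamma]]$, et vos identifications de (b) avec la condition $\textrm{LEO}$, de la condition $\textrm{CRK}$ avec un calcul de caractéristique d'Euler--Poincaré reposant sur (a), (b) et (c), et de (e) avec l'hypothèse résiduelle de \textit{loc. cit.} sont correctes dans les grandes lignes. Deux points restent toutefois en défaut.

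D'une part, vous attribuez à l'hypothèse (d) la vérification de la condition $\textrm{LOC}$ de Greenberg ; or $\textrm{LOC}_\ell^{(1)}(\cD_\infty)$ est l'annulation de $(\cT_\infty^*)^{G_{\bQ_\ell}}$ pour tout $\ell$, qui se démontre par un argument de corang via le Lemme de Shapiro (le corang de $H^0(\bQ_\ell,\cD_\infty^*)=\prod_{\lambda|\ell}H^0(\bQ_{\infty,\lambda},\cD^*)$ sur $\cA[[\Gamma]]$ est nul car ce module est de corang fini sur $\cA$), et non à partir de (d). D'autre part --- et c'est le point le plus sérieux --- la Proposition 4.1.1 de \textit{loc. cit.} exige aussi que le dual de Pontryagin de la condition locale $L_p=\ker\left[H^1(\bQ_p,\cD_\infty)\rightarrow H^1(I_p,\cD_\infty^-)\right]$ n'ait pas de sous-modules pseudo-nuls non-triviaux ; votre plan passe cette hypothèse sous silence, alors qu'elle constitue le cœur technique de la preuve. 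Il faut factoriser $L_p$ à travers $a : H^1(\bQ_p,\cD_\infty)\rightarrow H^1(\bQ_p,\cD_\infty^-)$ puis $b : H^1(\bQ_p,\cD_\infty^-)\rightarrow H^1(I_p,\cD_\infty^-)$, montrer que $a$ est surjective (par dualité locale de Tate, $H^2(\bQ_p,\cD_\infty^+)\simeq\left((\cT_\infty^{+,*})^{G_{\Qp}}\right)^\vee=0$), traiter $(\ker a)^\vee$ comme quotient de $H^1(\Qp,\cD_\infty^+)^\vee$ à l'aide de \cite[Proposition 3.7]{greenberg2006structure}, et identifier $\ker b\simeq \cD^-/(\Frob_p-1)\cD^-$ --- c'est ici, et seulement ici, qu'intervient réellement l'hypothèse (d) --- dont le dual est un sous-module de $(\cD^-)^\vee\simeq\cA^{d^-}$, donc sans sous-modules pseudo-nuls. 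Sans cette vérification, le critère de Greenberg ne s'applique pas et votre argument reste incomplet.
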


\begin{proof}
	On va montrer que l'on peut appliquer \cite[Proposition 4.1.1]{greenberg2015structure} au $\cA[[\Gamma]]$-module $\cD_\infty$. Avec les notations de \textit{loc. cit.}, on pose $L_\ell=0$ pour $l \in \Sigma, l\neq p$ et enfin 
	$$L_p=\ker\left[H^1(\bQ_p,\cD_\infty) \longrightarrow H^1(I_p,\cD_\infty^-)\right].$$
	La Remarque \ref{inertie_décomp} montre que le groupe de Selmer défini dans \textit{loc. cit.} est bien égal à $\Sel_\infty(\cT,\cT^+)$, et par ailleurs, la conclusion de la Proposition 4.1.1 de \textit{loc. cit.} est équivalente à ce que $X_\infty(\cT,\cT^+)$ n'a pas de sous-modules pseudo-nuls non-triviaux d'après \cite[Proposition 2.4]{greenberg2006structure}. 
	
	La condition $\textrm{RFX}(\cD_\infty)$ revient à dire que $\cA[[\Gamma]]\simeq \cO[[X_1,\ldots,X_{m+1}]]$ est un module réflexif sur $\Zp[[X_1,\ldots,X_{m+1}]]$, ce qui est clairement vrai puisqu'il est libre. La condition $\textrm{LEO}(\cD_\infty)$ est aussi vérifiée d'après notre hypothèse (b). Montrons que l'on a $\textrm{LOC}_\ell^{(1)}(\cD_\infty)$ pour tout $\ell$, c'est-à-dire que $(\cT_\infty^*)^{G_{\bQ_\ell}}=0$. Il suffit de prouver que le rang sur $\cA[[\Gamma]]$ de $(\cT_\infty^*)^{G_{\bQ_\ell}}$ est nul. D'après \cite[Proposition 3.10]{greenberg2006structure}, celui-ci est égal au corang de $\left(\cD_\infty^*\right)^{G_{\bQ_\ell}}=H^0(\bQ_\ell,\cD_\infty^*)$. D'après le lemme de Shapiro, on a $H^0(\bQ_\ell,\cD_\infty^*) = \prod_{\lambda|\ell} H^0(\bQ_{\infty,\lambda},\cD^*)$. Ce dernier module est de corang fini sur $\cA$, en particulier son corang sur $\cA[[\Gamma]]$ est nul, ce qu'on voulait démontrer.
	
	Montrons maintenant que $L_p^\vee$ n'a pas de sous-$\cA[[\Gamma]]$-modules pseudo-nuls non-triviaux. On a 
	$$L_p=\ker\left[ \xymatrix{H^1(\bQ_p,\cD_\infty) \ar[r]^a & H^1(\bQ_p,\cD_\infty^-) \ar[r]^b & H^1(I_p,\cD_\infty^-)}\right].$$
	L'application $a$ est surjective. En effet, $\coker a$ s'injecte dans $H^2(\bQ_p,\cD_\infty^+)$. D'après le théorème de dualité locale de Tate, on a $H^2(\bQ_p,\cD_\infty^+) \simeq \left( (\cT_\infty^{+,*})^{G_{\Qp}}\right)^\vee$ qui est trivial car $(\cT_\infty^{+,*})^{G_{\Qp}}=0$ par le même argument que précédemment. On a ainsi une suite exacte courte :
	$$\xymatrix{ 0 \ar[r] &  \ker a \ar[r] & L_p \ar[r] & \ker b \ar[r] & 0. }$$
	Il suffit de montrer que $(\ker a)^\vee$ et $(\ker b)^\vee$ n'ont pas de sous-modules pseudo-nuls non-triviaux. Comme $\ker a$ est un quotient de $H^1(\Qp,\cD_\infty^+)$, il suffit de montrer la même propriété pour $H^1(\Qp,\cD_\infty^+)^\vee$. Or ceci est vrai d'après \cite[Proposition 3.7]{greenberg2006structure}, qui s'applique car on sait que $H^2(\bQ_p,\cD_\infty^+)=0$ (et les $H^3$ apparaissant dans \textit{loc. cit.} sont nuls car $G_{\Qp}$ est de dimension cohomologique égale à 2, cf. \cite[§5.1.b)]{serrecohomologie}). Décrivons maintenant $\ker b$. D'après la suite exacte d'inflation-restriction, on a $\ker b \simeq H^1(\hat{\bZ},H^0(I_p,\cD_\infty^-))$ où $\hat{\bZ}$ est topologiquement engendré par $\Frob_p$. D'après le lemme de Shapiro, on a $H^0(I_p,\cD_\infty^-) = H^0(I_p(\ob{\bQ}/\bQ_\infty),\cD^-)$ qui est égal à $\cD^-$ d'après notre hypothèse (d). On a donc $\ker b \simeq \cD^-/(\Frob_p-1)\cD^-$ et ainsi $(\ker b)^\vee$ est un sous-module de $(\cD^-)^\vee$. En tant que modules sur l'anneau $\cA[[\Gamma]] \simeq \cA[[T]]$, on a $(\cD^-)^\vee \simeq {\cA}^{d^-} =  (\cA[[T]]/(T))^{d^-}$. Ce dernier module n'a pas de sous-modules pseudo-nuls non-triviaux, donc $(\ker b)^\vee$ non plus et donc $L_p^\vee$ non plus.
	
	On vérifie maintenant l'hypothèse $\textrm{CRK}(\cD_\infty,\cL)$, c'est-à-dire l'égalité des corangs sur $\cA[[\Gamma]]$ : 
	$$\corg (H^1(\bQ_\Sigma/\bQ,\cD_\infty)) = \corg (\Sel_\infty(\cT,\cT^+)) + \sum_{\ell\in \Sigma-\{\infty\}} \corg (Q_\ell(\cT,\cT^+)),$$
	où $Q_\ell(\cT,\cT^+)=H^1(\bQ_\ell,\cD_\infty)/L_\ell$ pour tout premier $l \in \Sigma$. On va montrer que les deux termes de l'égalité sont égaux à $d^-$ sous les hypothèses de la proposition. D'après \cite[Proposition 4.1]{greenberg2006structure}, on a 
	$$\corg(H^1(\bQ_\Sigma/\bQ,\cD_\infty)) = \corg(\cD_\infty^{G_\bQ}) + \corg(H^2(\bQ_\Sigma/\bQ_\infty,\cD)) + d- \corg(\cD_\infty^{\Gal(\bC/\bR)}).$$
	D'après les hypothèses (b) et (e), les deux premiers termes de la somme sont nuls. D'autre part, on a $\corg(\cD_\infty^{\Gal(\bC/\bR)}) = \corg_{\cA} (\cD^{\Gal(\bC/\bR)}) = \rg_{\cA} \cT^{\Gal(\bC/\bR)} =d^+$. Cela montre que le corang de $H^1(\bQ_\Sigma/\bQ,\cD_\infty)$ est bien égal à $d^-$.  Calculons maintenant le terme de droite de l'égalité de $\textrm{CRK}(\cD_\infty,\cL)$. On va montrer que le corang de $Q_\ell(\cT,\cT^+)$ est $0$ si $l\neq p$ et qu'il est égal à $d^-$ pour $\ell=p$. Cela terminera la vérification de $\textrm{CRK}(\cD_\infty,\cL)$ vu que, d'après l'hypothèse (a), le $\cA[[\Gamma]]$-module $\Sel_\infty(\cT,\cT^+)$ a un corang égal à 0. Si $\ell\neq p$, on a $Q_\ell(\cT,\cT^+) = H^1(\bQ_\ell,\cD_\infty)$. Son corang est égal à la somme des corangs de $H^0(\bQ_\ell,\cD_\infty)$ et de $H^0(\bQ_\ell,\cT_\infty^*)^\vee$ d'après la Proposition 4.2(b) de \textit{loc. cit.} et d'après la dualité locale de Tate. Or ces deux modules sont de co-torsion sur $\cA[[\Gamma]]$ d'après le même argument utilisant le lemme de Shapiro que précédemment, donc de corang nul. Enfin, pour $\ell=p$, on a vu que $L_p$ et $\ker a$ ont le même corang. Comme $a$ est surjectif, $Q_p(\cT,\cT^+)$ et $H^1(\Qp,\cD_\infty^-)$ ont le même corang. D'après la Proposition 4.2(a) de \textit{loc. cit.} et un argument identique à celui pour $\ell\neq p$, ce dernier a un corang égal à $d^-$. Ceci termine la vérification de l'hypothèse $\textrm{CRK}(\cD_\infty,\cL)$. 
	
	Pour appliquer \cite[Proposition 4.1.1]{greenberg2015structure}, il suffit maintenant de vérifier l'une des trois hypothèses supplémentaires. La condition (b) stipulant que $\cD_\infty$ est co-libre et que  la $G_{\bQ}$-représentation résiduelle de $\cD_\infty$ n'a pas de quotient isomorphe à $\mu_p$ est clairement vérifiée, car cette dernière est isomorphe à $\cD[\gm_{\cA}]$ et qu'elle vérifie la même condition d'après notre hypothèse (e).
\end{proof}
	
	\section{Structure du groupe de Selmer d'une représentation d'Artin}
\label{section4}
\subsection{Définition et résultat principal} \label{sec:différente_inverse}
Soit $M=[\rho]$ un motif d'Artin \textit{absolument irréductible et non-trivial} sur $\bQ$, à coefficients dans un corps de nombres $E$. On note $\rho : G_\bQ \longrightarrow \GL_E(V)$ la représentation d'image finie qui lui est associée. Quitte à agrandir $E$, on suppose qu'il contient toutes les valeurs propres des éléments de l'image de $\rho$. On note $L \subseteq \ob{\bQ}_p$ le complété de $E$ en $v$, et $\cO=\cO_L$. Le groupe de Selmer attaché à $M$ dépend du choix que l'on fixe dans toute la suite, 
\begin{itemize}
	\item d'une $p$-stabilisation ordinaire $V^+$ de $V$, c'est-à-dire un sous-$E$-espace vectoriel $V^+$ de $V$ de dimension $d^+=\dim_E H^0(\bR,V)$, stable par $G_{\Qp}$, et dont le quotient $V^-:=V/V^+$ est $G_{\Qp}$-non-ramifié,
	\item et d'un $\cO$-réseau $G_{\bQ}$-stable $T_p$ de la réalisation $p$-adique $V_p:=V \otimes_{E,\iota_p} L$ de $[\rho]$.  
\end{itemize}

On note encore $\rho : G_\bQ \longrightarrow \GL_L(V_p)$ la représentation associée à $V_p$, et on note $V_p^\pm= V^\pm \otimes_{E,\iota_p} L$.

\begin{remarque}\label{rem:choix_structure_rationnelle}
	Il est important de noter qu'un choix arbitraire de sous-$L[G_{\Qp}]$-module $V_p^+$ de $V_p$ de la bonne dimension et avec un quotient non-ramifié ne donne \textbf{pas} nécessairement une $p$-stabilisation de $M$, car $V_p^+$ n'a pas toujours de structure $E$-rationnelle (\textit{i.e.}, il n'existe pas nécessairement un sous-$E$-espace vectoriel $V^+$ de $V$ tel que $V_p^+=V^+ \otimes_{E,\iota_p} L \subseteq V_p$). Néanmoins, si par exemple $\rho$ est non-ramifiée en $p$, alors $\rho(\Frob_p)_{|V}$ est diagonalisable, et il suffit que chacune des valeurs propres de $\rho(\Frob_p)_{|V_p^+}$ soient des valeurs propres simples de $\rho(\Frob_p)_{|V}$ pour que $V_p^+$ ait une structure $E$-rationnelle.  
\end{remarque}

On définit à présent le groupe de Selmer du motif $[\rho]$.  On note $T_p^+ := V_p^+ \cap T_p$ et $T_p^-=\im\left(T_p \subseteq V_p \hookrightarrow V_p^-\right)$. Le $\cO$-module $T_p^\pm$ est libre de rang $d^\pm$, et définit une suite exacte courte de $\cO[G_{\Qp}]$-modules 
$$\xymatrix{
	0 \ar[r] & T_p^+ \ar[r] & T_p \ar[r] & T_p^- \ar[r] & 0.
}
$$
L'application $y \mapsto \left( x \mapsto \Tr_{L/\Qp}(xy)\right)$ induit un isomorphisme
$$L/\gd^{-1}_L \simeq \Hom_\textrm{ct}(\cO,\Qp/\Zp)=\cO^\vee,$$
où $\gd^{-1}_L$ différente inverse de l'extension $L/\Qp$. On fixe dans la suite un générateur de $\gd^{-1}_L$, ce qui permet d'identifier $\cO^\vee$ avec $L/\cO$, et donc d'identifier le $\cO$-module co-libre $D_p:=T_p \otimes_\cO \cO^\vee$ avec $V_p/T_p$.

\begin{definition}\label{def:sel_rho}
	Soit $n \in \bN \bigcup \{\infty\}$. Le groupe de Selmer de niveau $n$ attaché au motif $[\rho]$, muni d'une $p$-stabilisation ordinaire $V^+\subseteq V$ et d'un réseau $G_{\bQ}$-stable $T_p$, est défini comme étant
	$$\Sel_n(\rho,\rho^+) = \Sel_n(T_p,T_p^+).$$
	On note aussi $$X_n(\rho,\rho^+) := \Sel_n(\rho,\rho^+)^\vee,$$ 
	le groupe Selmer dual. C'est un module de type fini sur $\cO[\Gamma_n]$ si $n\in\bN$, et sur $\Lambda=\cO[[\Gamma]]$ si $n=\infty$. Si de plus $X_\infty(\rho,\rho^+)$ est de torsion sur $\Lambda$, alors on note 
	$$L_p^{\alg}(\rho,\rho^+;T) := L_p^{\alg}(X_\infty(\rho,\rho^+);T)$$
	sa fonction $L$ $p$-adique algébrique (cf. Définition \ref{def:L_p(X)}).
\end{definition}

\begin{remarque}\label{rq:sel}
	D'après le Lemme \ref{Def_alternative_Sel_n}, on a les descriptions alternatives suivantes :
	$$\begin{array}{rcl}
	\Sel_n(\rho,\rho^+)&=& 
	\ker \left[ H^1(\bQ_\Sigma/\bQ_n,D_p) \longrightarrow H^1(I_{p},D_p^-) \times \prod_{\ell\in \Sigma, \ell \neq p} H^1(I_{\ell},D_p)  \right] \\
	&=&\ker \left[ H^1(\bQ_n,D_p) \longrightarrow H^1(I_{p},D_p^-) \times \prod_{\ell \neq p} H^1(I_{\ell},D_p)  \right].
	\end{array}$$
\end{remarque}

Deux $\cO$-réseaux $G_{\bQ}$-stables $T_1$ et $T_2$ de $V_p$ définissent le même groupe de Selmer quand ils sont homothétiques. Plus généralement, on a le résultat facile suivant :

\begin{proposition}\label{prop:dépendance_réseau}
	Le rang de  $\Lambda$-module $X_\infty(\rho,\rho^+)$ ne dépend pas du choix du réseau $T_p$. S'il est de torsion, alors sa fonction $L$ $p$-adique algébrique est bien définie au $\mu$-invariant près (\textit{i.e.}, à une puissance d'une uniformisante de $\cO$ près). Si de plus $\rho$ est résiduellement irréductible, alors elle ne dépend pas du choix de $T_p$.
\end{proposition}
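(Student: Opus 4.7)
Pour démontrer cet énoncé, je compare deux réseaux $\cO[G_\bQ]$-stables $T_1$ et $T_2$ de $V_p$. Quitte à multiplier $T_1$ par une puissance d'une uniformisante $\varpi$ de $\cO$, on peut supposer $T_1 \subseteq T_2$, et le quotient $F := T_2/T_1$ est alors un $\cO$-module fini, annulé par $\varpi^k$ pour un entier $k\geq 0$. L'inclusion respecte les $p$-stabilisations fixées (puisque $V_p^+\cap T_1 \subseteq V_p^+\cap T_2$) et induit deux suites exactes courtes de $\cO[G_\bQ]$-modules discrets : d'une part $0 \to F \to D_1 \to D_2 \to 0$ globalement, d'autre part $0 \to F^- \to D_1^- \to D_2^- \to 0$ pour les quotients non-ramifiés en $p$, où l'on note $D_i=T_i\otimes_\cO \cO^\vee$. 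Le snake lemma appliqué au diagramme $0\to T_i^+\to T_i\to T_i^-\to 0$ ($i=1,2$) montre que $F^-=T_2^-/T_1^-$ est un quotient de $F$, donc encore annulé par $\varpi^k$.

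L'étape centrale consiste à construire, à partir de ces suites courtes, une comparaison fonctorielle des groupes de Selmer. Utilisant la description alternative du Lemme~\ref{Def_alternative_Sel_n} et de la Remarque~\ref{inertie_décomp}, puis en écrivant les suites exactes longues en cohomologie galoisienne (sur $\bQ_\infty$, sur $I_p$ et sur les $I_\ell$ pour $\ell\neq p$ dans $\Sigma$) associées aux deux courtes ci-dessus, une chasse au diagramme fournit un morphisme $\Sel_\infty^{(T_1)} \to \Sel_\infty^{(T_2)}$ dont le noyau et le conoyau sont des sous-quotients d'une somme finie de groupes de cohomologie galoisienne des modules finis $F$ et $F^-$. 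Ces derniers étant annulés par $\varpi^k$, il en va de même du noyau et du conoyau. En dualisant, on obtient une suite exacte de $\Lambda$-modules de type fini
\[ 0 \longrightarrow K \longrightarrow X_\infty^{(T_2)} \longrightarrow X_\infty^{(T_1)} \longrightarrow C \longrightarrow 0, \]
avec $K$ et $C$ annulés par $\varpi^k$ ; en particulier, ces deux termes sont de $\Lambda$-torsion, d'idéal caractéristique une puissance de $\varpi$.

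Les deux premières affirmations en découlent : le rang sur $\Lambda$ étant un invariant dans les suites exactes modulo torsion, il ne dépend pas du choix du réseau ; et lorsque $X_\infty$ est de torsion, la multiplicativité de l'idéal caractéristique donne $\car_\Lambda(X_\infty^{(T_1)})\cdot\car_\Lambda(K)=\car_\Lambda(X_\infty^{(T_2)})\cdot\car_\Lambda(C)$, d'où un quotient des fonctions $L$ algébriques qui est une puissance (positive ou négative) de $p$. Pour la dernière assertion, j'invoquerais le lemme classique selon lequel, si $\ob{\rho}$ est irréductible sur $\bF:=\cO/\varpi$, deux $\cO$-réseaux $G_\bQ$-stables de $V_p$ sont nécessairement homothétiques : en ramenant $T_1\subseteq T_2$ avec $T_1\not\subseteq \varpi T_2$, l'image de $T_1$ dans $T_2/\varpi T_2 \simeq \ob{\rho}$ est un sous-$\bF[G_\bQ]$-module non-nul, donc égal à $\ob{\rho}$ par irréductibilité, et le Lemme de Nakayama entraîne alors $T_1=T_2$. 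Les groupes $X_\infty^{(T_1)}$ et $X_\infty^{(T_2)}$ sont ainsi canoniquement isomorphes, et la fonction $L$ $p$-adique algébrique est bien définie à une unité de $\Lambda$ près.

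La principale difficulté technique est cohomologique : il s'agit de définir proprement le morphisme de comparaison des groupes de Selmer et de contrôler son noyau et son conoyau, particulièrement en la place $p$ où il faut suivre simultanément les conditions locales sur $D$ et sur le quotient $D^-$. Une fois cette comparaison établie, le reste se réduit à de l'algèbre commutative élémentaire dans $\Lambda$, exploitant le fait qu'un $\Lambda$-module de type fini annulé par une puissance de $\varpi$ a nécessairement un idéal caractéristique qui est une puissance de $\varpi$.
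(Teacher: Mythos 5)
Votre démonstration est correcte et fournit précisément l'argument standard que l'article omet (la Proposition y est qualifiée de "résultat facile" et énoncée sans preuve) : comparaison de deux réseaux emboîtés via les suites exactes longues de cohomologie, contrôle du noyau et du conoyau de l'application entre groupes de Selmer par la cohomologie des quotients finis $F$ et $F^-$, puis homothétie de tous les réseaux stables dans le cas résiduellement irréductible. Seule précision mineure : le quotient des séries caractéristiques que vous obtenez est une puissance de l'uniformisante $\varpi$ de $\cO$ et non nécessairement de $p$ lorsque $L/\Qp$ est ramifiée, ce qui est bien ce que l'énoncé entend par "au $\mu$-invariant près".
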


On note $\varpi$ une uniformisante de $\cO$ et on note $H\subseteq \ob{\bQ}$ le corps de nombres découpé par $\rho$.

\begin{theorem}\label{th:sel_n_fini_etc}
	Supposons $d^+=1$, ou bien supposons que la conjecture de Schanuel $p$-adique faible est vraie (cf. Conjecture \ref{conj:Schanuel}).
	\begin{enumerate}
		
		\item[(i)] Pour tout entier $n\in\bN$, si $H^0(\bQ_n,V)=0$, alors le $\cO$-module $\Sel_n(\rho,\rho^+)$ est fini.
		\item[(ii)] Le $\Lambda$-module $X_\infty(\rho,\rho^+)$ est de torsion.
		\item[(iii)] Si $H\cap \bQ_\infty =\bQ$, alors $L_p^{\alg}(\rho,\rho^+;T)$ ne s'annule pas en les $\zeta-1$, où $\zeta$ parcourt $\mu_{p^\infty}-\{1\}$. En posant $e(\rho,\rho^+):=\dim_L H^0(\Qp,V_p^-)$, on a de plus :
		$$L_p^{\alg}(\rho,\rho^+;0)\neq 0 \Longleftrightarrow e(\rho,\rho^+)=0,$$  
		$$\ord_{T} L_p^{\alg}(\rho,\rho^+;T) \geq e(\rho,\rho^+).$$
		
		\item[(iv)] Supposons que $D_p[\varpi]^G=\Hom_G(D_p[\varpi],\mu_p)=0$. Alors $X_\infty(\rho,\rho^+)$ n'a pas de sous-$\Lambda$-module fini non-nul.
	\end{enumerate}
\end{theorem}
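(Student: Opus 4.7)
The plan is to prove the four parts in order, following the roadmap laid out in the introduction. I first give a concrete description of $\Sel_n(\rho,\rho^+)$ by applying inflation-restriction along the extension $H_n := H \cdot \bQ_n$. Since $\rho$ has finite image, $G_{H_n}$ acts trivially on $V_p$, and (twisting out the order of $\Gal(H_n/\bQ_n)$, which is prime to $p$ for $n$ large enough) $H^1(\bQ_n, D_p)$ identifies with the $\rho$-isotypic part of $\Hom(X_\Sigma(H_n), D_p)$, where $X_\Sigma(H_n)$ denotes the Galois group of the maximal abelian pro-$p$ extension of $H_n$ unramified outside $\Sigma$. The Selmer conditions at $\ell \neq p$ cut out the subextension unramified outside $p$, and the condition at $p$ coming from $T_p^-$ translates via local class field theory into a pairing between the $\rho$-isotypic part of the $p$-completed global units of $H_n$ and a $d^+$-dimensional space of local units along $V_p^+$. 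Finiteness of $\Sel_n(\rho,\rho^+)$ is thus equivalent to non-degeneracy of this pairing, measured by a $p$-adic regulator of size $d^+\times d^+$ with entries of the form $\log_p(\Phi_i(\vec{t}_j))$, up to the Leopoldt defect of $(H_n,p)$. Leopoldt for $H \cdot \bQ_\infty$ descends to each finite layer $H_n$, and weak $p$-adic Schanuel then yields the non-vanishing of the determinant; when $d^+ = 1$ the regulator reduces to a single logarithm of an algebraic unit and non-vanishing is unconditional by the Brumer--Baker theorem (Théorème~\ref{th:BB}), proving (i).

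For (ii) and (iii), I analyze the Mazur-style control maps $X_\infty(\rho,\rho^+)_{\Gamma_n} \longrightarrow X_n(\rho,\rho^+)$. An inflation-restriction computation shows that their kernels and cokernels are controlled by $H^i(\Gamma/\Gamma^{p^n}, -)$ with values in $\Gamma$-stable subquotients of $D_p$, hence are of bounded order as $n$ varies. Combined with the finiteness (i) of each $X_n(\rho,\rho^+)$, this gives finiteness of $X_\infty(\rho,\rho^+)/(\gamma^{p^n}-1)X_\infty(\rho,\rho^+)$ for all $n$, so $X_\infty(\rho,\rho^+)$ is $\Lambda$-torsion by Théorème~\ref{th:structure_sur_algebre_iwasawa}, which is (ii). For (iii), the same control argument at the specialization $T = \zeta - 1$ with $\zeta \in \mu_{p^\infty}\setminus\{1\}$ recovers (up to finite error) the Selmer group of the twist $\rho \otimes \chi_\zeta$; the hypothesis $H \cap \bQ_\infty = \bQ$ ensures that $\rho \otimes \chi_\zeta$ is again a non-trivial irreducible Artin representation with a compatible $p$-stabilization, so (i) applied to it yields $L_p^{\alg}(\rho,\rho^+;\zeta-1) \neq 0$. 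At $T = 0$, the extra $\Gamma$-coinvariants of the local term $H^1(I_p, D_p^-)$ coming from the Frobenius-fixed part of $V_p^-$ contribute exactly $e(\rho,\rho^+)$ to the kernel of specialization; Lemme~\ref{cst} combined with this bookkeeping yields both the equivalence $L_p^{\alg}(\rho,\rho^+;0) \neq 0 \Longleftrightarrow e(\rho,\rho^+)=0$ and the inequality $\ord_T L_p^{\alg}(\rho,\rho^+;T) \geq e(\rho,\rho^+)$.

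Finally, for (iv), I apply Proposition~\ref{PsN} to $(\cT,\cT^+) = (T_p, T_p^+)$ with $\cA = \cO$. Hypothesis (a) is (ii); (c) is the defining property of a $p$-stabilization, since $T_p^+$ has rank $d^+ = \dim_E H^0(\bR,V)$ and complex conjugation acts as Frobenius at infinity; (d) is immediate from $V_p^-$ being unramified at $p$; (b) follows from (ii) together with a weak-Leopoldt type argument for $H^2(\bQ_\Sigma/\bQ_\infty,D_p)^\vee$; and (e) is equivalent, via passage to the contragredient, to the hypothesis $D_p[\varpi]^{G_\bQ} = \Hom_{G_\bQ}(D_p[\varpi],\mu_p) = 0$. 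The main obstacle throughout is the non-vanishing of the $d^+\times d^+$ $p$-adic regulator in the case $d^+ \geq 2$, which forces the invocation of both Leopoldt and weak $p$-adic Schanuel; the trivial-zeros analysis in (iii) also demands careful tracking of the local cohomology at $p$ through the control sequence to extract exactly $e(\rho,\rho^+)$ copies of $L/\cO$, and no more, in the kernel of specialization.
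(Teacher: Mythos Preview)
Your overall strategy matches the paper's closely: inflation--restriction to $H_n$ for (i), control-map analysis for (ii) and (iii), and Proposition~\ref{PsN} for (iv). Part (i) and part (iv) are essentially as in the paper.

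There is, however, a genuine gap in your argument for (ii). You claim that the kernels and cokernels of the control maps $X_\infty(\rho,\rho^+)_{\Gamma^{p^n}} \to X_n(\rho,\rho^+)$ are of bounded order, and hence that $X_\infty(\rho,\rho^+)/(\gamma^{p^n}-1)$ is finite. This is false in general: when $e(\rho,\rho^+)>0$, the cokernel of $\Sel_n \to \Sel_\infty^{\Gamma^{p^n}}$ has $\cO$-corank exactly $e(\rho,\rho^+)$, not bounded order. The source of this corank is the map $t_m$ in the paper's diagram before~(\ref{eq:grossesuite_exacte_sel'_m}): its target $\Hom_{G^{(m)}_v}(I_p(H_\infty/H_m),D_p^-)$ has corank $\dim H^0(\bQ_{\infty,v},V_p^-)$ while its source has corank $\dim H^0(\bQ_\infty,V_p)$, and the difference is $e(\rho,\rho^+)$. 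You in fact acknowledge this $e$-dimensional contribution later in your treatment of (iii), which makes your argument for (ii) internally inconsistent. The paper's fix (Proposition~\ref{prop:rang_coinvariants} and Corollaire~\ref{coro:sel_infty_torsion+zéros_triviaux}) is to show that the $\cO$-rank of $X_\infty(\rho,\rho^+)_{\Gamma^{p^n}}$ equals the constant $e(\rho,\rho^+)$ for all $n\geq 2n_0$; feeding this into the structure theorem gives $e(\rho,\rho^+) = r p^n + \sum_i \deg \gcd(P_i,\omega_n)$, and the boundedness of the left side forces $r=0$, i.e.\ torsion. The same equations at $n=0$ versus $n\geq 1$ then give (iii) directly, without needing to pass to the twist $\rho\otimes\chi_\zeta$ as you suggest (though your twist argument would also work, once (i) is established for $\rho\otimes\chi_\zeta$).
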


Lorsque $H^0(\bQ_\infty,V)\neq 0$, on peut déjà donner une preuve directe du Théorème \ref{th:sel_n_fini_etc}.
 
 \begin{proposition}
 	Si $H^0(\bQ_\infty,V)\neq 0$ alors Théorème \ref{th:sel_n_fini_etc} est vrai.
 \end{proposition}
\begin{proof}
	On vérifie d'abord que $V$ est en fait de dimension 1 et que $\rho$ est un caractère multiplicatif de $\Gamma$. $V$ est en effet supposée irréductible, donc le sous-espace $G$-stable $H^0(\bQ_\infty,V)$ doit être égal à $V$ tout entier. Donc $H\subseteq \bQ_{_\infty}$ est abélien sur $\bQ$, et donc $\rho$ est nécessairement de dimension 1 et peut être vue comme un caractère multiplicatif de $\Gamma_{n_0}$ pour $n_0\geq 0$ assez grand. On a en particulier $d^+=d=1$ car $\rho$ est pair. Comme $\rho$ est supposée non-triviale, la seule assertion du Théorème \ref{th:sel_n_fini_etc} à prouver est (ii). Pour $n\geq n_0$, on a $H^1(\bQ_n,D_p)=\Hom(G_{\bQ_n}, \Qp/\Zp)\otimes_{\Zp} \cO$, et même 
	$$\Sel_n(\rho,\rho^+)=\Hom(\gX_n, \Qp/\Zp)\otimes_{\Zp} \cO,$$
	où $\gX_n$ est le groupe de Galois de la pro-$p$-extension abélienne maximale de $\bQ_n$ non-ramifiée en-dehors de $p$. En prenant la limite sur $n$ ainsi que le dual de Pontryagin, $X_\infty(\rho,\rho^+)\simeq \gX_\infty \otimes_{\Zp}\cO$, qui est de $\Lambda$-torsion d'après \cite[Theorem 17]{iwasawa1973zl} ou \cite[Theorem 13.31]{washington1997introduction}.
\end{proof}

La suite et fin de la Section \ref{section4} est dédiée à la preuve du Théorème \ref{th:sel_n_fini_etc} lorsque $H^0(\bQ_\infty,V)=0$. La preuve du (i) suit le Théorème \ref{th:sel_n_fini} et les Sections \ref{sec:isom_de_restriction},  \ref{sec:TCC} et \ref{sec:BB}. La preuve du (ii) et (iii) suit le Corollaire \ref{coro:sel_infty_torsion+zéros_triviaux}, et celle du (iv) est donnée après la Proposition \ref{prop:sel_infty_sans_sous_modules_finis}.

\subsection{Inflation-restriction}\label{sec:isom_de_restriction}
\begin{notation}
	Soit $G$ le groupe de Galois de l'extension $H/\bQ$. Pour $n\in \bN \cup \{\infty\}$, on note aussi $H_n$ le compositum de $H$ et de $\bQ_n$, et $M_n/H_n$ la pro-$p$-extension abélienne maximale de $H_n$ non-ramifiée en-dehors des places de $H_n$ divisant $p$. On pose enfin
	
	\[G_n:=\Gal(H_n/\bQ), \qquad G^{(n)}:=\Gal(H_n/\bQ_n), \]
	\[ \gX_n:=\Gal(M_n/H_n). \]
	
\end{notation} 



On a la suite exacte d'inflation-restriction :

$$\xymatrix{ 
	0 \ar[r] & H^1(G^{(n)},D_p) \ar[r]^{\inf} & H^1(\bQ_n,D_p)  \ar[r]^{\res \quad} & \Hom_{G^{(n)}}(G_{H_n},D_p) \ar[r] & H^2(G^{(n)},D_p).  
} $$

L'image de $\Sel_n(\rho,\rho^+)$ par l'application de restriction dans $\Hom_{G^{(n)}}(G_{H_n},D_p)$ est incluse dans $\Hom_{G^{(n)}}(\gX_n,D_p)$, et plus précisément dans 
$$\Sel'_n(\rho,\rho^+):= \ker \left[ \Hom_{G^{(n)}}(\gX_n,D_p) \longrightarrow \Hom_{G^{(n)}_v}(I_p(M_n/H_n),D_p^-) \right],$$
où $G^{(n)}_v$ désigne le sous-groupe de décomposition de $G^{(n)}$ en la place $v$ de $H_n$ définie par $\iota_p$.

\begin{lemme}\label{lem:selnprime}
	Pour tout $n\in\bN \cup \{\infty\}$, l'application de restriction $\Sel_n(\rho,\rho^+) \longrightarrow \Sel'_n(\rho,\rho^+)$ a un noyau et un conoyau finis. C'est un isomorphisme dès que $p\nmid \#G$.  
\end{lemme}

\begin{proof}
	On a un diagramme commutatif à lignes exactes 
	$$\xymatrix{ 0 \ar[r] & \Sel_n(\rho,\rho^+)  \ar[r] \ar[d]^\alpha & H^1(\bQ_n,D_p) \ar[r] \ar[d]^\res & 
		{\begin{array}{c}
			\Hom(I_p(\ob{\bQ}/\bQ_n),D_p^-)  \\ \times \prod_{\ell\neq p} H^1(I_\ell(\ob{\bQ}/\bQ_n),D_p) 
			\end{array}}
		\ar[d]^{\alpha'} \\ 
		0 \ar[r] & \Sel'_n(\rho,\rho^+) \ar[r] & \Hom_{G^{(n)}}(G_{H_n},D_p) \ar[r] & {\begin{array}{c} \Hom(I_p(\ob{\bQ}/H_n),D_p^-) \\ \times \prod_{\ell\neq p} H^1(I_\ell(\ob{\bQ}/H_n),D_p). \end{array}} }$$ 
	
	Pour $i=1,2$ on sait que $H^i(G^{(n)},D_p)$ est fini, et il est même trivial si $p\nmid \#G^{(n)}$. Donc il en est de même pour les noyaux et conoyaux de la flèche $\res$. Cela est aussi vrai pour $\ker \left(\alpha'\right)$, car $H_n/\bQ_n$ ne ramifie qu'en un nombre fini de places. Le lemme du serpent montre alors que $\alpha$ a un noyau et un conoyau fini, et que $\alpha$ est un isomorphisme lorsque $p$ ne divise pas l'ordre de $G$.
\end{proof}

\begin{lemme}\label{lem:selnsecond}
	Le conoyau de l'application $\kappa_n : \Hom_{G^{(n)}}(\gX_n,V_p) \longrightarrow \Hom_{G^{(n)}}(\gX_n,D_p)$ induite par la projection canonique $V_p \twoheadrightarrow D_p$ est fini.   
\end{lemme} 

\begin{proof}
	Notons $\left(\gX_n\right)_\textrm{div}$ le quotient de $\gX_n$ par son sous-groupe de torsion. Comme $\Hom_{G^{(n)}}(\left(\gX_n\right)_\textrm{div},V_p)=\Hom_{G^{(n)}}(\gX_n,V_p)$, on a des inclusions $$\im\left(\kappa_n\right) \subseteq \Hom_{G^{(n)}}(\left(\gX_n\right)_\textrm{div},D_p) \subseteq \Hom_{G^{(n)}}(\gX_n,D_p).$$ 
	La deuxième inclusion est d'indice fini car $\gX_n$ est de type fini sur $\Zp$, donc il suffit de montrer que la première inclusion est d'indice fini, \textit{i.e.}, le quotient $Q:=\Hom_{G^{(n)}}(\left(\gX_n\right)_\textrm{div},D_p)/\im(\kappa_n)$ est fini. Comme $\left(\gX_n\right)_\textrm{div}$ est libre sur $\Zp$, le foncteur $\Hom(\left(\gX_n\right)_\textrm{div},-)$ est exact d'où la suite exacte courte de $G^{(n)}$-modules
	$$\xymatrix{0 \ar[r] & \Hom(\gX_n,T_p) \ar[r] & \Hom(\gX_n,V_p) \ar[r] & \Hom(\left(\gX_n\right)_\textrm{div},D_p) \ar[r] & 0}.$$
	En prenant les $G^{(n)}$-invariants, on voit que $Q$ s'injecte dans $H^1(G^{(n)},\Hom(\gX_n,T_p))$, qui est fini car $\Hom(\gX_n,T_p)$ est de type fini.
\end{proof}

Considérons le diagramme commutatif suivant à lignes exactes :
$$\xymatrix{
	0 \ar[r] & \Hom_{G^{(n)}}(\gX_n,T_p) \ar[r] \ar[d] & \Hom_{G^{(n)}}(\gX_n,V_p) \ar[r]^{\kappa_n} \ar[d]^{\beta_n} & \Hom_{G^{(n)}}(\gX_n,D_p)  \ar[d]  \\
	0 \ar[r] & \Hom_{G^{(n)}_v}(I_p,T_p^-) \ar[r]  & \Hom_{G^{(n)}_v}(I_p,V_p^-) \ar[r]  & \Hom_{G^{(n)}_v}(I_p,D_p^-), 
}$$
où $I_p=I_p(M_n/H_n)$. Comme $I_p$ est compact, on a $\Hom_{G^{(n)}_v}(I_p,T_p^-) \otimes \Qp = \Hom_{G^{(n)}_v}(I_p,V_p^-)$, donc la flèche verticale gauche a un conoyau fini si $\beta_n$ est surjective.

\begin{corollaire}\label{coro:beta_n_isom?}
	Si $\beta_n$ est un isomorphisme, alors $\Sel_n(\rho,\rho^+)$ est fini.
\end{corollaire}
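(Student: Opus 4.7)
Le plan est de se ramener, via le Lemme \ref{lem:selnprime}, à la finitude de $\Sel'_n(\rho,\rho^+)=\ker \gamma_n$, où $\gamma_n : \Hom_{G^{(n)}}(\gX_n, D_p) \longrightarrow \Hom_{G^{(n)}_v}(I_p, D_p^-)$ désigne la flèche verticale de droite du diagramme précédant l'énoncé, puis d'appliquer le lemme du serpent à une version modifiée de ce diagramme.

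D'abord, je remplacerais $\Hom_{G^{(n)}}(\gX_n,D_p)$ par l'image $A:=\im \kappa_n$ de $\kappa_n$, ce qui rend la ligne supérieure du diagramme courte exacte, tandis que la ligne inférieure reste exacte à gauche. Notons $\alpha$ la flèche verticale de gauche. L'inclusion $A \subseteq \Hom_{G^{(n)}}(\gX_n,D_p)$ est d'indice fini d'après le Lemme \ref{lem:selnsecond}.

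Le lemme du serpent fournit alors une suite exacte
$$ 0 \longrightarrow \ker \beta_n \longrightarrow \ker(\gamma_n|_A) \longrightarrow \coker \alpha. $$
Sous l'hypothèse que $\beta_n$ est un isomorphisme, on a $\ker \beta_n =0$, et la remarque précédant l'énoncé du corollaire entraîne que $\coker \alpha$ est fini (puisque la surjectivité de $\beta_n$ suffit déjà à l'assurer). On en déduit la finitude de $\ker(\gamma_n|_A)= A \cap \ker \gamma_n$. Enfin, comme $A$ est d'indice fini dans $\Hom_{G^{(n)}}(\gX_n,D_p)$, le quotient $\ker \gamma_n /(A \cap \ker \gamma_n)$ s'injecte dans le conoyau $\coker \kappa_n$, qui est fini. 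Donc $\Sel'_n = \ker \gamma_n$ est fini, et le Lemme \ref{lem:selnprime} permet de conclure à la finitude de $\Sel_n(\rho,\rho^+)$.

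Il n'y a en fait pas d'obstacle réel dans cette preuve : tous les arguments non formels (la finitude de $\coker \kappa_n$ et celle de $\coker \alpha$) ont déjà été établis dans le Lemme \ref{lem:selnsecond} et dans la remarque qui précède l'énoncé, et la conclusion se résume à une chasse au diagramme utilisant la forme faible du lemme du serpent (ligne supérieure courte exacte, ligne inférieure exacte à gauche).
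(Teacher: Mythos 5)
Votre démonstration est correcte et suit exactement l'argument que le texte laisse implicite : le Lemme \ref{lem:selnprime} ramène la finitude de $\Sel_n(\rho,\rho^+)$ à celle de $\Sel'_n(\rho,\rho^+)=\ker\gamma_n$, et la chasse au diagramme (lemme du serpent après remplacement du coin supérieur droit par $\im\kappa_n$) combinée à la finitude de $\coker\kappa_n$ (Lemme \ref{lem:selnsecond}) et de $\coker\alpha$ (remarque précédant l'énoncé) donne la conclusion. Rien à redire.
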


Le morphisme $\beta_n \otimes 1 : \Hom_{G^{(n)}}(\gX_n,\ob{\bQ}_p \otimes V_p) \longrightarrow \Hom_{G^{(n)}_v}({I_p},\ob{\bQ}_p\otimes V_p^-) $ est équivariant pour l'action de $G_n / G^{(n)} \simeq\Gamma_n$. On peut donc le décomposer en une somme d'applications $\beta_{n,\chi}$ (où $\chi$ parcourt $\widehat{\Gamma_n}=\Hom(\Gamma_n,\ob{\bQ}_p^\times)$) de la manière suivante. L'induite à $G_n$ de la représentation $\ob{\bQ}_p \otimes V_p$ restreinte à $G^{(n)}$ est donnée par
\begin{equation}\label{eq:decomposition_induite}
\Ind_{G^{(n)}}^{G_n}\ob{\bQ}_p \otimes V_p = \bigoplus_{\chi \in \widehat{\Gamma_n}} V_{p,\chi},
\end{equation}
où $V_{p,\chi} = V_p\otimes \ob{\bQ}_p(\chi)$ est le $\ob{\bQ}_p[G_n]$-module associé à $\rho$, tordu par un caractère $\chi \in\widehat{\Gamma_n}$. Notons que $V_{p,\chi}$ est irréductible car $V_p$ l'est. Ainsi, la loi de réciprocité de Frobenius donne un isomorphisme canonique 
$$\Hom_{G^{(n)}}(\gX_n,\ob{\bQ}_p \otimes V_p) \simeq \bigoplus_{\chi \in \widehat{\Gamma_n}} \Hom_{G_n}(\gX_n,V_{p,\chi}).$$ 

\begin{notation}\label{nota:beta_n_chi}
	Pour tout caractère $\chi\in\widehat{\Gamma_n}$, on pose $V_{p,\chi}^\pm:= V_p^\pm \otimes_{\ob{\bQ}_p} \ob{\bQ}_p(\chi) $, et on note $\beta_{n,\chi}$ l'application de restriction 
	$$ \beta_{n,\chi} : \Hom_{G_n}(\gX_n,V_{p,\chi}) \longrightarrow \Hom_{G_{n,v}}({I_p},V_{p,\chi}^-),$$
	où $G_{n,v}$ désigne le sous-groupe de décomposition de $G_{n}$ en la place $v$ de $H_n$ définie par $\iota_p$. 
\end{notation}

\begin{corollaire}\label{coro:beta_n_chi_isom?}
	Si $\beta_{n,\chi}$ est un isomorphisme pour tout $\chi \in \widehat{\Gamma_n}$, alors $\Sel_n(\rho,\rho^+)$ est fini.
\end{corollaire}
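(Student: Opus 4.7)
Le plan est de se ramener au Corollaire \ref{coro:beta_n_isom?} en montrant que, sous l'hypothèse de l'énoncé, le morphisme $\beta_n$ est un isomorphisme. L'outil central sera la décomposition isotypique sous l'action de $\Gamma_n$, analogue à celle qui vient d'être donnée pour la source de $\beta_n\otimes 1$.

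On commence par étendre la décomposition par réciprocité de Frobenius au but de $\beta_n\otimes 1$. En effet, $p$ étant totalement ramifié dans $\bQ_n/\bQ$, l'inclusion $G^{(n)}_v\subseteq G_{n,v}$ induit un isomorphisme canonique $G_{n,v}/G^{(n)}_v\simeq\Gamma_n$, et la réciprocité de Frobenius fournit un isomorphisme $\Gamma_n$-équivariant
$$\Hom_{G^{(n)}_v}(I_p,\ob{\bQ}_p\otimes V_p^-)\simeq \bigoplus_{\chi\in \widehat{\Gamma_n}}\Hom_{G_{n,v}}(I_p,V_{p,\chi}^-).$$
Puisque $\beta_n\otimes 1$ est $\Gamma_n$-équivariant d'après la remarque précédant l'énoncé, et que sa source et son but se décomposent en leurs composantes isotypiques respectives, la fonctorialité de la réciprocité de Frobenius (appliquée au morphisme $G_n$-équivariant de restriction $\gX_n\twoheadrightarrow I_p(M_n/H_n)$) assure que $\beta_n\otimes 1$ s'identifie à la somme directe des $\beta_{n,\chi}$.

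Supposons maintenant que $\beta_{n,\chi}$ est un isomorphisme pour tout $\chi\in\widehat{\Gamma_n}$. Il s'ensuit que $\beta_n\otimes 1$ est un isomorphisme de $\ob{\bQ}_p$-espaces vectoriels de dimension finie, et par fidèle platitude de $\ob{\bQ}_p$ sur $L$, $\beta_n$ est également un isomorphisme. Le Corollaire \ref{coro:beta_n_isom?} permet alors de conclure que $\Sel_n(\rho,\rho^+)$ est fini. Le seul point véritablement technique est la vérification formelle de la décomposition $\beta_n\otimes 1=\bigoplus_\chi \beta_{n,\chi}$, qui n'est qu'un exercice de naturalité une fois le but traité comme la source.
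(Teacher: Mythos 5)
Votre argument est correct et correspond exactement à celui que le papier laisse implicite : le Corollaire \ref{coro:beta_n_chi_isom?} y est énoncé sans démonstration, précisément parce qu'il résulte de la décomposition isotypique de la source et du but de $\beta_n\otimes 1$ sous $\Gamma_n$ (via la réciprocité de Frobenius et l'identification $G_{n,v}/G^{(n)}_v\simeq\Gamma_n$, valable car $p$ est totalement ramifié dans $\bQ_n/\bQ$), de l'identification $\beta_n\otimes 1=\bigoplus_\chi\beta_{n,\chi}$, puis du Corollaire \ref{coro:beta_n_isom?}. Seule coquille : la naturalité invoquée porte sur l'inclusion $I_p(M_n/H_n)\hookrightarrow\gX_n$ (et la projection $V_p\twoheadrightarrow V_p^-$), non sur une « restriction » $\gX_n\twoheadrightarrow I_p(M_n/H_n)$, qui n'existe pas.
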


\subsection{Théorie du corps de classes}\label{sec:TCC}

On fixe désormais un entier $n\in\bN$. La théorie des corps de classes permet d'étudier la structure galoisienne du $\Zp$-module $\gX_n$. On a un isomorphisme de $\Zp$-algèbres 
\begin{equation}\label{eq:completes_p_adiques}
\cO_{H_n} \otimes \Zp \simeq \prod_{w|p} \cO_{H_{n,w}}
\end{equation}
donné par les plongements de $H$ dans ses complétés $p$-adiques. Par analogie avec le plongement logarithmique de Minkowski, on a des morphismes de $G_n$-modules à gauche : 

\begin{equation}\label{eq:plongement_log_p}
\begin{array}{rcccl} 
\cO_{H_n}^\times & \longrightarrow & \left(\cO_{H_n} \otimes_\bZ \Zp\right)^{\times} & \xrightarrow{\lambda_p} & \ob{\bQ}_p[G_n] \\ 
\epsilon & \mapsto & \epsilon \otimes 1 ; \quad x \otimes c & \mapsto & \sum_{g\in G_n} \log_p\left(g^{-1}(x) \otimes c\right).g,
\end{array}
\end{equation}
où $\log_p : \ob{\bQ}^\times \otimes \ob{\bQ}_p : $ est la composée du logarithme $p$-adique d'Iwasawa avec $\iota_p : \ob{\bQ} \hookrightarrow \ob{\bQ}_p$, que l'on étend scalairement.

\begin{notation}
	Pour un entier naturel $n$ et une place $w$ de $H_n$ au-dessus de $p$, on note :
	\begin{itemize}
		\item $U_{n,w}=\left\{ x \in \cO_{H_{n,w}}^\times \ /\ x-1 \notin \cO_{H_{n,w}}^\times \right\}$ le $\Zp$-module des unités locales principales de $H_{n,w}$.
		\item $U_n:= \prod_{w|p}U_{n,w}$ le produit des unités locales principales de $H_n$ au-dessus de $p$. 
		\item $\cE_{n}:= \cO_{H_n}^\times$ le groupe des unités de $H_n$, et $\cE_{p,n}=\Zp \otimes_\bZ \cE_n$ son complété $p$-adique formel. 
		\item $\cC_n$ est le $p$-Sylow du groupe des classes d'idéaux de $H_n$.
	\end{itemize}
\end{notation}

L'isomorphisme (\ref{eq:completes_p_adiques}) identifie $U_n$ avec un sous-$\Zp$-module d'indice fini de $\left(O_{H_n} \otimes_\bZ \Zp\right)^{\times}$. On note encore $\lambda_p$ la composée $U_n \longrightarrow \ob{\bQ}_p[G_n]$. Elle est de noyau fini, et envoie $U_{n,v}$ dans $\ob{\bQ}_p[G_{n,v}]$. Après extension des scalaires, on a un isomorphisme $\lambda_p \otimes 1 : \ob{\bQ}_p \otimes U_n \simeq \ob{\bQ}_p[G_n]$, et donc une décomposition en facteurs irréductibles de $\ob{\bQ}_p[G_n]$-modules :
$$\ob{\bQ}_pU_n \simeq \bigoplus_\pi V_{p,\pi}^{\dim V_{p,\pi}},$$
où $(\pi,V_{p,\pi})$ parcourt l'ensemble des (classes d'isomorphisme de) $\ob{\bQ}_p$-représentations irréductibles de $G_n$. La preuve de Minkowski du théorème des unités de Dirichlet donne d'autre part une décomposition similaire pour $\cE_{n}$ (ainsi que pour $\cE_{p,n}$), donnée par 
$$ \ob{\bQ} \otimes \cE_{n} \simeq \bigoplus_{\pi\neq 1} V_\pi^{d^+_\pi},$$
où $(\pi,V_{\pi})$ parcourt l'ensemble des $\ob{\bQ}$-représentations irréductibles de $G_n$ et où $d^+_\pi$ est la dimension de $V^{\Frob_\infty=1}_\pi$.

D'après la théorie des corps de classes (voir par exemple \cite[Chap X, Lemma 3.13]{neukirchcohomology}), on a une suite exacte de $\Zp[G_n]$-modules 

\begin{gather}
\begin{aligned}
\xymatrix{
	\cE_{p,n} \ar[r]^\iota & U_n \ar[r]^{\rec\ } & \gX_n \ar[r] & {\cC_n} \ar[r] & 0,
}
\end{aligned}
\label{ex1}
\end{gather} où $\rec$ est l'application de réciprocité d'Artin et où $\iota$ est l'extension $\Zp$-linéaire du plongement diagonal $\cE_{n} \hookrightarrow \left(O_{H_n} \otimes_\bZ \Zp\right)^{\times}$ décrit en (\ref{eq:plongement_log_p}). Par ailleurs, pour toute place $w$ de $H_n$ divisant $p$, on sait que $\rec$ envoie $U_{n,w}$ isomorphiquement sur le sous-groupe d'inertie de $\gX_n$ en $w$. En particulier, ${I_p}$ est l'image isomorphe de $U_{n,v}$. 

\begin{conjecture}[Conjecture de Leopoldt pour $H_n$ et $p$ \cite{leopoldt}]\label{conj:leopoldt}
	L'application $\iota$ est injective. 
\end{conjecture}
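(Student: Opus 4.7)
La Conjecture de Leopoldt étant un énoncé ouvert dans sa généralité, le plan que je propose ne peut aboutir que dans le cas particulier où $H_n/\bQ$ est abélienne, cas démontré par Brumer. Le seul outil transcendantal à disposition est la version $p$-adique du théorème de Baker, rappelée au Théorème \ref{th:BB}.

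Je commencerais par traduire l'injectivité de $\iota$ en une question de non-annulation d'un régulateur. D'après le théorème des unités de Dirichlet, $\cE_{p,n}$ est un $\Zp$-module de rang $r+\delta$, où $r=r_1+r_2-1$ est le rang unité de $H_n$ et $\delta$ est le $p$-rang de la torsion du groupe des unités globales. En choisissant une $\Zp$-base $(\epsilon_1,\ldots,\epsilon_{r+\delta})$ du quotient libre, ainsi que les plongements $\sigma_1,\ldots,\sigma_{[H_n:\bQ]}$ de $H_n$ dans $\ob{\bQ}_p$, l'application $\lambda_p\circ \iota$ s'identifie (après traitement de la torsion, triviale par Dirichlet) à la matrice $\bigl(\log_p(\sigma_j(\epsilon_i))\bigr)_{i,j}$, et l'injectivité de $\iota$ équivaut à la non-dégénérescence du régulateur $p$-adique correspondant.

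Dans le cas où $H_n/\bQ$ est abélienne, j'utiliserais la décomposition isotypique de $\ob{\bQ}_p\otimes \cE_{p,n}$ sous l'action de $G_n=\Gal(H_n/\bQ)$. Chaque composante étant associée à un caractère de dimension $1$ de $G_n$, elle se réduit à un sous-$\ob{\bQ}_p$-espace vectoriel de dimension au plus $1$, et la question se ramène à la non-annulation de scalaires du type $\log_p(\epsilon_\chi)$, où $\epsilon_\chi$ est une unité \og $\chi$-projetée \fg{} bien choisie. Le Théorème de Baker-Brumer, assurant que les logarithmes $p$-adiques de nombres algébriques $\bQ$-linéairement indépendants sont $\ob{\bQ}$-linéairement indépendants, combiné à l'indépendance $\bZ$-linéaire des $\epsilon_i$ fournie par Dirichlet, permettrait alors de conclure.

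L'obstacle principal, et la raison pour laquelle la conjecture reste ouverte en dehors du cas abélien, est que les composantes isotypiques du $\ob{\bQ}_p[G_n]$-module $\ob{\bQ}_p\otimes \cE_{p,n}$ deviennent alors des représentations simples de dimension $>1$. On n'obtient plus un scalaire $\log_p(\epsilon_\pi)$ dont la non-nullité se déduirait de Baker-Brumer, mais un vecteur de logarithmes dont il faudrait établir la non-dégénérescence d'un accouplement naturel à valeurs dans $V_{p,\pi}$. Aucun raffinement connu de Baker-Brumer ne semble capable de traiter ce cas, raison pour laquelle l'auteur introduira ultérieurement la Conjecture de Schanuel $p$-adique faible pour contourner cette difficulté lorsque l'on s'autorise $d^+>1$.
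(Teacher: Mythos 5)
Votre lecture est la bonne : l'énoncé est une \emph{conjecture} (la Conjecture de Leopoldt pour $(H_n,p)$), que l'article ne démontre pas mais prend comme hypothèse ; il n'y a donc pas de preuve à reproduire, et vous avez eu raison de ne pas en fabriquer une. Votre reformulation (injectivité de $\iota$ $\Leftrightarrow$ non-dégénérescence du régulateur $p$-adique, cas abélien réglé par Baker--Brumer, obstruction en dimension $>1$) est exacte et correspond fidèlement à la manière dont l'article exploite le Théorème \ref{th:BB} : la Proposition \ref{prop:iota_non_nulle} n'en tire que la non-nullité de l'image de chaque composante isotypique, ce qui suffit à l'injectivité sur les composantes de multiplicité $d^+_\pi=1$ (d'où l'alternative \og $d^+=1$ \fg{} dans les théorèmes principaux), tandis que le cas général requiert précisément la Conjecture de Schanuel $p$-adique faible que vous mentionnez.
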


\begin{theorem}[Théorème de Baker-Brumer \cite{brumer}]\label{th:BB}
	Soient $\alpha_1, \cdots,\alpha_n \in \ob{\bQ}^\times$ des nombres algébriques non-nuls. Si les nombres $\log_p(\alpha_1), \cdots,\log_p(\alpha_n) \in \ob{\bQ}^\times_p$ sont linéairement indépendants sur $\bQ$, alors ils le sont encore sur $\ob{\bQ}$. 
\end{theorem}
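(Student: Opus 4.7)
The plan is to adapt Baker's celebrated transcendence method from the complex to the $p$-adic setting, following Brumer. Suppose for contradiction that $\log_p\alpha_1, \ldots, \log_p\alpha_n$ are linearly dependent over $\ob{\bQ}$ while remaining independent over $\bQ$; after a linear change of basis one can write
$$\log_p\alpha_n = \beta_1 \log_p\alpha_1 + \cdots + \beta_{n-1} \log_p\alpha_{n-1}$$
for some $\beta_i \in \ob{\bQ}$ not all in $\bQ$. Fix a number field $K$ containing all the $\alpha_i$ and $\beta_j$, together with a $p$-adic place $v$ of $K$.

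The first step is to build, via Siegel's lemma, an auxiliary $p$-adic analytic function
$$\Phi(z_1,\ldots,z_{n-1}) = \sum_{\lambda} p(\lambda) \prod_{j=1}^{n-1} \exp_p\bigl( z_j(\lambda_j + \lambda_0 \beta_j) \log_p\alpha_j\bigr),$$
where $\lambda=(\lambda_0, \ldots, \lambda_{n-1})$ ranges over $\{0,\ldots,L\}^n$, with algebraic integer coefficients $p(\lambda)$ of controlled archimedean height, chosen so that $\Phi$ and all its partial derivatives up to order $T$ vanish at every point of $\{0,\ldots,h\}^{n-1}$ for well-chosen parameters $L, T, h$. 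At integer points, the dependence relation on the $\log_p\alpha_j$ lets one factor each value of $\Phi$ as a polynomial expression in $\alpha_1, \ldots, \alpha_n$ raised to integer powers, which keeps it algebraic of bounded height.

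The second step is an extrapolation–induction: using a $p$-adic Schwarz lemma (via the Schnirelman integral or an explicit $p$-adic maximum modulus principle) together with a Liouville lower bound on nonzero algebraic numbers, one propagates the vanishing of $\Phi$ and its derivatives to a much larger box, iterating until the number of enforced zeros exceeds what the ansatz can support. Interpreting the final cancellation via a non-vanishing Vandermonde-type determinant in $\log_p\alpha_1,\ldots,\log_p\alpha_{n-1}$ then produces a non-trivial $\bQ$-linear dependence among these, contradicting the hypothesis.

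The main obstacle is the usual heart of Baker's method: the sharp balancing of the parameters $L, T, h$ and the precise comparison of archimedean and $p$-adic size estimates, here complicated by the restricted radius of convergence of $\exp_p$. One must restrict $z$ to a small polydisk on which $\exp_p \circ \log_p$ is the identity, use Iwasawa's conventions to ensure $\log_p$ remains a homomorphism on all of $\ob{\bQ}_p^\times$, and carry out the zero-estimate bookkeeping with the appropriate $p$-adic valuations; these technicalities are worked out in Brumer's original paper \cite{brumer}.
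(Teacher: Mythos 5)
Note first that the paper does not prove this statement at all: it is imported verbatim as the Baker--Brumer theorem with a citation to \cite{brumer}, and is used as a black box in Proposition \ref{prop:iota_non_nulle} and Lemme \ref{lem:transcendance}. Your proposal is therefore not competing with an internal argument; it is an outline of the proof in the cited reference. As an outline it identifies the correct strategy (Baker's method transported to $\bQ_p$: auxiliary function by Siegel's lemma, extrapolation via a $p$-adic Schwarz lemma and Liouville bounds, final Vandermonde contradiction), but it is not a proof: the entire quantitative heart --- the choice and balancing of $L$, $T$, $h$, the zero estimate, and the $p$-adic interpolation inequality --- is explicitly deferred back to \cite{brumer}, which is exactly what the paper already does more economically.

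One concrete imprecision in the sketch: you impose vanishing of $\Phi$ on the full grid $\{0,\ldots,h\}^{n-1}$, but for a non-diagonal integer point $(s_1,\ldots,s_{n-1})$ the values $\exp_p\bigl(\lambda_0 s_j\beta_j\log_p\alpha_j\bigr)$ cannot be recombined into powers of $\alpha_n$ unless all $s_j$ coincide, so the linear conditions fed to Siegel's lemma would not have algebraic coefficients of controlled height. In Baker's homogeneous method one imposes vanishing only at the diagonal points $(s,\ldots,s)$, $0\leq s\leq h$, compensating by requiring high-order vanishing (derivatives up to order $T$ in each variable); the extrapolation then trades derivative order against the range of $s$. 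You should also make explicit the standard $p$-adic reduction of replacing each $\alpha_i$ by $\alpha_i^{p^{k}}$ (or $\alpha_i^{q}$ for suitable $q$) so that all the relevant arguments lie in the disk of convergence of $\exp_p$ and $\exp_p\circ\log_p$ is the identity there; merely "restricting $z$ to a small polydisk" does not by itself make $\exp_p\bigl(\lambda_j\log_p\alpha_j\bigr)$ meaningful for the integers $\lambda_j$ appearing in the ansatz.
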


Le Théorème \ref{th:BB} implique que l'extension $\ob{\bQ}$-linéaire du logarithme $p$-adique, 
\begin{equation}\label{eq:log_p_inj}
\log_p : \ob{\bQ} \otimes \cE_n \longrightarrow \ob{\bQ}_p,
\end{equation}
est injective, car elle envoie une base de $\cE_n$ modulo sa torsion sur une famille $\ob{\bQ}$-libre.

\begin{proposition}\label{prop:iota_non_nulle}
	Soit $\pi$ une $\ob{\bQ}_p$-représentation irréductible de $G_n$ apparaissant dans $\ob{\bQ}_p \otimes \cE_{p,n}$. Alors l'image par $\iota$ de la composante $\pi$-isotypique $\left(\ob{\bQ}_p \otimes \cE_{p,n}\right)^\pi$ de $\ob{\bQ}_p \otimes \cE_{p,n}$ dans $\ob{\bQ}_p \otimes U_n$ est non-nulle. 
\end{proposition}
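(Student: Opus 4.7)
My plan is to compose $\iota$ with $\lambda_p$ so as to land in $\ob{\bQ}_p[G_n]$ and apply the theorem of Baker-Brumer. Since $\lambda_p \otimes 1 : \ob{\bQ}_p \otimes U_n \to \ob{\bQ}_p[G_n]$ is already known to be an isomorphism of $\ob{\bQ}_p[G_n]$-modules, showing that $\iota$ is non-zero on the $\pi$-isotypic component is equivalent to showing that $\lambda_p \circ \iota$ is non-zero there. As $\pi$ is absolutely irreducible and $G_n$ is finite, $\pi$ admits a $\ob{\bQ}$-rational model whose base change along $\iota_p$ recovers $\pi$; I can then identify $(\ob{\bQ}_p \otimes \cE_{p,n})^\pi$ with the base change of $(\ob{\bQ} \otimes \cE_n)^\pi$, which is non-zero by hypothesis. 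It will therefore suffice to exhibit a single element $\xi \in (\ob{\bQ} \otimes \cE_n)^\pi \setminus \{0\}$ with $(\lambda_p \circ \iota)(\xi) \neq 0$.

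First I would fix a $\bZ$-basis $(\epsilon_1, \ldots, \epsilon_r)$ of $\cE_n$ modulo torsion and write $\xi = \sum_{i=1}^r c_i \otimes \epsilon_i$ with $c_i \in \ob{\bQ}$ not all zero. Arguing by contradiction, suppose that $(\lambda_p \circ \iota)(\xi) = 0$ in $\ob{\bQ}_p[G_n]$. Reading off the coefficient of the identity element of $G_n$ yields the algebraic relation
$$\sum_{i=1}^r c_i \log_p(\iota_p(\epsilon_i)) = 0 \quad \text{dans } \ob{\bQ}_p.$$
Since the $\epsilon_i$ are non-zero algebraic numbers and the $c_i$ are algebraic and not all zero, Théorème \ref{th:BB} then forces a non-trivial $\bQ$-linear relation $\sum_i m_i \log_p(\iota_p(\epsilon_i)) = 0$, or equivalently $\log_p(\iota_p(\eta)) = 0$ for $\eta := \prod_i \epsilon_i^{m_i} \in \cE_n \otimes \bQ$ (a fractional power well-defined modulo torsion).

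To conclude, I would use that $\eta$ is a global unit, so $\iota_p(\eta)$ belongs to $\ob{\bZ}_p^\times$; the intersection of $\ob{\bZ}_p^\times$ with the kernel of the Iwasawa $p$-adic logarithm consists only of roots of unity. Hence $\eta$ is torsion in $\cE_n$, contradicting the $\bZ$-linear independence modulo torsion of the $\epsilon_i$ together with the fact that the $m_i$ are not all zero. The only substantive obstacle is the transcendence input of Théorème \ref{th:BB}: it is exactly what promotes a single $\ob{\bQ}$-linear dependency among $p$-adic logarithms of algebraic units into an honest multiplicative relation among these units, after which the conclusion is essentially formal. In particular, no simultaneous analysis of all $g$-components of $(\lambda_p \circ \iota)(\xi)$ is required, and the statement avoids the full strength of Conjecture \ref{conj:leopoldt}.
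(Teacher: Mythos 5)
Your proof is correct and follows essentially the same route as the paper's: descend $\pi$ to a $\ob{\bQ}$-rational representation, reduce to the non-vanishing of $(\ob{\bQ}\otimes\cE_n)^\pi$, and compose with $\lambda_p$ so that Baker--Brumer yields injectivity of the resulting map on $\ob{\bQ}\otimes\cE_n$. The only difference is one of detail: the paper invokes this injectivity directly (it is Lemme \ref{lem:transcendance}(1)), whereas you unwind it explicitly by reading off the identity coefficient and converting the resulting $\ob{\bQ}$-linear relation into a multiplicative relation among units — a correct and complete justification of the step the paper leaves implicit.
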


\begin{proof}
	On a un diagramme commutatif :
	\[
	\xymatrix{ \ob{\bQ} \otimes \cE_{n} \ar[r] \ar@{^{(}-}[d]\ar@<1ex>@{-}[d] & \ob{\bQ}_p \otimes U_n \ar[r]^{\lambda_p\ }_{\simeq\ } &  \ob{\bQ}_p[G_n], \\ 
		\ob{\bQ}_p \otimes \cE_{p,n} \ar[ru]^{\iota} & & 
	}
	\]
	où la flèche en haut à gauche est l'extension linéaire à $\ob{\bQ}$ du plongement diagonal des unités globales dans les unités locales. D'une part, la composée des flèches de la ligne du haut est injective car l'application définie en (\ref{eq:log_p_inj}) est injective. D'autre part, $\pi$ descend en une $\ob{\bQ}$-représentation que l'on note encore $\pi$. Comme l'image de $\left(\ob{\bQ}\otimes \cE_{n}\right)^\pi$ dans $\ob{\bQ}_p \otimes U_n$ est non-nulle, l'image par $\iota$ de $\left(\ob{\bQ}_p\otimes \cE_{p,n}\right)^\pi$ est \textit{a fortiori} non-nulle.
\end{proof}

\begin{proposition}\label{prop:dim_source_but_beta} Supposons que $H^0(\bQ_n,V)=0$ et fixons $\chi \in \widehat{\Gamma_n}$.
\begin{enumerate}
	\item[(i)] La dimension de la source du morphisme $\beta_{n,\chi}$ (cf. Notation \ref{nota:beta_n_chi}) est supérieure ou égale à $d^-$, et la dimension du but de $\beta_{n,\chi}$ est égale à $d^-$.
	\item[(ii)] Si $d^+=1$, ou bien si la Conjecture de Leopoldt pour $H_n$ et $p$ est vraie, alors les dimensions de la source et du but de $\beta_{n,\chi}$ sont toutes deux égales à $d^-$.
\end{enumerate}
\end{proposition}

\begin{proof}
	On commence par vérifier que $V_{p,\chi}$ est non-trivial. En effet, la loi de réciprocité de Frobenius montre que $H^0(G_n,\Ind_{G^{(n)}}^{G_n}V_p)=H^0(G^{(n)},V_p)=H^0(\bQ_n,V)=0$, donc $H^0(G_n,V_{p,\chi})=0$ d'après l'égalité (\ref{eq:decomposition_induite}). 
	 
	Considérons la source $\Hom_{G_n}(\gX_n,V_{p,\chi})$ de $\beta_{n,\chi}$. D'après le lemme de Schur, sa dimension est égale à la multiplicité de $\rho\otimes\chi$ dans $\ob{\bQ}_p \otimes \gX_n$. En tensorisant avec $\ob{\bQ}_p$ la suite exacte courte (\ref{ex1}) (ce qui tue le groupe des classes $\cC_n$) et en prenant la partie $\rho\otimes\chi$-isotypique, on obtient une autre suite exacte
	$$\xymatrix{
		\left(\ob{\bQ}_p \otimes \cE_{p,n}\right)^{\rho\otimes\chi} \ar[r] & \left(\ob{\bQ}_p \otimes U_n\right)^{\rho\otimes\chi} \ar[r] & \left(\ob{\bQ}_p \otimes \gX_n\right)^{\rho\otimes\chi} \ar[r] & 0.
	}$$ 
	 Comme $\chi$ est pair et que la représentation irréductible $\rho\otimes\chi$ est non-triviale, elle apparaît avec multiplicité $d^+$ dans l'espace $\ob{\bQ}_p \otimes \cE_{p,n}$ des unités globales. D'autre part, $\rho\otimes\chi$ apparaît avec multiplicité $d$ dans l'espace des unités locales. Donc la dimension de la source de $\beta_{n,\chi}$ est supérieure ou égale à $d^-$, avec égalité si la restriction de $\iota$ à $\left(\ob{\bQ}_p\otimes \cE_{p,n}\right)^{\rho\otimes\chi}$ est injective. Ceci est automatiquement vérifié si l'on suppose la conjecture de Leopoldt pour $H_n$ et $p$. Ceci est aussi vrai si l'on suppose que $d^+=1$ d'après la Proposition \ref{prop:iota_non_nulle}, car $\iota$ est équivariante et car $\left(\ob{\bQ}_p\otimes \cE_{p,n}\right)^{\rho\otimes\chi}$ est irréductible (et isomorphe à $V_{p,\chi}$). 
	 	
	Calculons la dimension du but $\Hom_{G_{n,v}}({I_p},V_{p,\chi}^-)$ de $\beta_{n,\chi}$. L'application de réciprocité d'Artin envoie isomorphiquement $U_{n,v} \subseteq U_n$ sur ${I_p} \subseteq \gX_n$. D'autre part, $\lambda_p$ identifie $\ob{\bQ}_p \otimes U_{n,v}$ avec la représentation régulière $\ob{\bQ}_p[G_{n,v}]$ de $G_{n,v}$. On a donc $\Hom_{G_{n,v}}(\ob{\bQ}_p[G_{n,v}],V_{p,\chi}^-) \simeq V_{p,\chi}^-$, qui est bien de dimension égale à $d^-$. 
\end{proof}

\subsection{Injectivité de $\beta_{n,\chi}$}\label{sec:BB}

On suppose dans cette section que $H^0(\bQ_n,V)=0$. Rappelons que, d'après la preuve de la Proposition \ref{prop:dim_source_but_beta}, $V_{p,\chi}$ est non-trivial. Grâce à l'application de réciprocité d'Artin (cf. (\ref{ex1})), le noyau de $\beta_{n,\chi}$ (cf. Notation \ref{nota:beta_n_chi}) s'identifie à 
\begin{equation}\label{eq:ker_beta_n_chi}
\begin{array}{rcl}
\ker \beta_{n,\chi}&\simeq&\ker\left[\Hom_{G_n}(\gX_n,V_{p,\chi}) \mathrel{\scalebox{1}[1.2]{$\hookrightarrow$}} \Hom_{G_n}(U_n,V_{p,\chi}) \twoheadrightarrow \Hom_{G_{n,v}}(U_{n,v},V_{p,\chi}^-)  \right] \\
&\simeq& \ker\left[ \Hom_{G_n}(U_n,V_{p,\chi}) \longrightarrow \Hom_{G_n}(\cE_{p,n},V_{p,\chi}) \times \Hom_{G_{n,v}}(U_{n,v},V_{p,\chi}^-) \right].
\end{array}
\end{equation}

Le $\ob{\bQ}_p$-espace vectoriel $\Hom_{\textrm{ct}}(U_n,\ob{\bQ}_p)$ est muni d'une action à gauche de $G_n$ donnée par $(g.f)(x\otimes c) = f(g^{-1}(x)\otimes c)$, pour $g\in G_n$ et $x\otimes c \in U_n$. On a un isomorphisme $G_n$-équivariant
\begin{equation}\label{isomQbar}
\left\{ \begin{array}{rcl}
\ob{\bQ}_p[G_n] & \longrightarrow & \Hom_\textrm{ct}(U_n,\ob{\bQ}_p) \\
g          & \mapsto         & \left( x\otimes c \mapsto \log_p\left(g^{-1}(x)\otimes c \right) \right).
\end{array} \right.
\end{equation}

\begin{lemme}\label{lem:description_morphismes_Qpbar}
	Soit $f \in \Hom_{G_n}(U_n,V_{p,\chi})$. Alors $f$ s'écrit sous la forme 
	$$f(x\otimes c) = \sum_{g \in G_n} \log_p \left(g^{-1}(x)\otimes c \right)\left(\rho\otimes\chi\right)(g)(\vec{v}) = \lambda_p(x\otimes c) \cdot \vec{v}, $$ 
	pour un unique vecteur $\vec{v}\in V_{p,\chi}$, et la restriction à $U_{n,v}$ est donnée par 
	$$f(u) = \sum_{g\in G_{n,v}}  \log_p\left(g^{-1}(u)\right)\left(\rho\otimes\chi\right)(g)(\vec{v}) = \lambda_p(u)\cdot \vec{v}. $$
	De plus, $f(U_{n,v}) \subseteq V_{p,\chi}^+$ si et seulement si $\vec{v}\in V_{p,\chi}^+$.
\end{lemme}
\begin{proof}
	On a $\Hom_{G_n}(U_n,V_{p,\chi}) = \left(\Hom_{\textrm{ct}}(U_n,\ob{\bQ}_p) \otimes V_{p,\chi} \right)^{G_n}$, donc d'après l'isomorphisme (\ref{isomQbar}) il existe des vecteurs $\vec{v}_g \in V_{p,\chi}$ tels que 
	$$f(x\otimes c) = \sum_{g \in G_n} \log_p \left(g^{-1}(x)\otimes c \right)\vec{v}_g.$$
	En utilisant la $G_n$-invariance, on a facilement $\vec{v}_g = \left(\rho\otimes\chi\right)(g)(\vec{v_1})$. 
	On a donc bien $f(x\otimes c) = \lambda_p(x\otimes c) \cdot \vec{v}$ avec $\vec{v}=\vec{v_1}$. 
	L'unicité de $\vec{v}$ est claire, car l'image de $\lambda_p$ engendre $\ob{\bQ}_p[G_n]$. 
	La description de $f$ sur $U_{n,v}$ se déduit de celle de $\lambda_p$. 
	Enfin, comme $\lambda_p(\ob{\bQ}_p U_{n,v})=\ob{\bQ}_p[G_{n,v}]$, on a aussi $f(\ob{\bQ}_p U_{n,v}) = \ob{\bQ}_p[G_{n,v}] \cdot \vec{v}$. 
	Or, $V_{p,\chi}^+$ est $G_{n,v}$-stable, donc $f(U_{n,v}) \subseteq V_{p,\chi}^+$ si et seulement si $\vec{v}\in V_{p,\chi}^+$.
\end{proof}

Dans la suite de cette section, on note $V_\chi = V \otimes_E \ob{\bQ}(\chi)$ le $\ob{\bQ}$-espace vectoriel muni de l'action de $G_n$ via $\rho\otimes \chi$, ainsi que $V^+_\chi=V^+ \otimes_E \ob{\bQ}(\chi)$ la $p$-stabilisation ordinaire (étendue linéairement à $\ob{\bQ}$) du motif $[\rho\otimes \chi]$, héritée de celle du motif $[\rho]$. On fixe une $\ob{\bQ}$-base de $V^+_\chi$ que l'on complète en une base $(\vec{v}_1,\ldots,\vec{v}_d)$ de $V_\chi$, ainsi qu'une $\ob{\bQ}$-base $(\Phi_1,\ldots,\Phi_{d^+})$ de $\Hom_{G_n}\left(V_{\chi}, \ob{\bQ} \otimes \cE_n \right)$ (cf. Section \ref{sec:TCC}). Ces choix définissent des éléments $\epsilon_{i,j}=\Phi_i(\vec{v}_j)$ (où $1\leq i \leq d^+, 1\leq j \leq d$) qui forment une base de $\left(\ob{\bQ}\otimes \cE_n \right)^{\rho\otimes\chi}$. On pose $s_{i,j}:=\log_p(\epsilon_{i,j})$.

\begin{lemme}\label{lem:calcul_matriciel}
	Fixons $1\leq i\leq d^+$. Pour tout $g\in G_n$, on a une égalité de matrices lignes à $d$ entrées : 
	$$\begin{pmatrix} \log_p\left(g(\epsilon_{i,1})\right) & \cdots &\log_p\left(g(\epsilon_{i,d})\right)\end{pmatrix}=\begin{pmatrix}
	s_{i,1} & \cdots &  s_{i,d}
	\end{pmatrix} \left(\rho\otimes\chi\right)(g).$$
\end{lemme}
\begin{proof}
	Soit $g\in G_n$. Notons $a_{j,k}$ les coefficients de $\rho\otimes\chi(g)$ dans la base $(\vec{v}_1,\ldots,\vec{v}_d)$. Pour tout $1\leq j\leq d$, on a $g(\epsilon_{i,j})=\epsilon_{i,1}^{a_{1,j}} \ldots \epsilon_{i,d}^{a_{d,j}}$. En prenant le logarithme $p$-adique et en concaténant les égalités pour $1\leq j\leq d$, on retrouve la formule souhaitée.
\end{proof}

\begin{proposition}\label{prop:S^+}
	L'application $\beta_{n,\chi}$ est injective si et seulement si la matrice carrée $S^+:=(s_{i,j})_{1\leq i,j\leq d^+} \in M_{d^+}(\ob{\bQ}_p)$ est inversible. 
\end{proposition}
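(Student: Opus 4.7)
Mon plan est d'expliciter le noyau de $\beta_{n,\chi}$ à l'aide de la description paramétrée de $\Hom_{G_n}(U_n,V_{p,\chi})$ donnée par le Lemme \ref{lem:description_morphismes_Qpbar}, puis de traduire les deux conditions définissant ce noyau comme un système linéaire dont la matrice est exactement $S^+$. Dans l'identification (\ref{eq:ker_beta_n_chi}), une $f\in\Hom_{G_n}(U_n,V_{p,\chi})$ correspond par le Lemme \ref{lem:description_morphismes_Qpbar} à un unique $\vec{v}\in V_{p,\chi}$, et $f$ appartient à $\ker \beta_{n,\chi}$ si et seulement si (a) $f(U_{n,v})\subseteq V_{p,\chi}^+$ et (b) $f\circ \iota=0$ dans $\Hom_{G_n}(\cE_{p,n},V_{p,\chi})$. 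La condition (a) équivaut à $\vec{v}\in V_{p,\chi}^+$, toujours d'après le Lemme \ref{lem:description_morphismes_Qpbar}.

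Pour traduire (b), j'observe d'abord que $f$ étant $G_n$-équivariante et $V_{p,\chi}$ étant $(\rho\otimes\chi)$-isotypique, $f\circ\iota$ est nulle si et seulement si elle s'annule sur la composante $(\rho\otimes\chi)$-isotypique de $\ob{\bQ}_p\otimes \cE_{p,n}$, qui est engendrée par les $\{\epsilon_{i,j}\}_{1\leq i\leq d^+,\,1\leq j\leq d}$. On calcule alors, à l'aide de la formule $f(\iota(\epsilon))=\lambda_p(\epsilon)\cdot\vec{v}$ et du Lemme \ref{lem:calcul_matriciel} (appliqué à $g^{-1}$) :
$$f(\iota(\epsilon_{i,j})) = \sum_{k=1}^d s_{i,k} \left(\sum_{g\in G_n} (\rho\otimes\chi)(g^{-1})_{k,j} (\rho\otimes\chi)(g)\right)\vec{v}.$$
Comme $\rho\otimes\chi$ est irréductible sur $\ob{\bQ}_p$ (produit tensoriel d'une représentation irréductible par un caractère), les relations d'orthogonalité de Schur donnent
$$\sum_{g\in G_n} (\rho\otimes\chi)(g^{-1})_{k,j} (\rho\otimes\chi)(g)_{l,m} = \tfrac{|G_n|}{d}\,\delta_{l,j}\,\delta_{m,k},$$
d'où $f(\iota(\epsilon_{i,j})) = \tfrac{|G_n|}{d}\bigl(\sum_{k=1}^d s_{i,k} v_k\bigr)\vec{v}_j$, en notant $v_k$ les coordonnées de $\vec{v}$ dans la base $(\vec{v}_1,\ldots,\vec{v}_d)$.

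La condition (b) équivaut donc à $\sum_{k=1}^d s_{i,k} v_k=0$ pour tout $1\leq i\leq d^+$. Combinée à la condition (a), qui impose $v_{d^++1}=\cdots=v_d=0$, elle devient $S^+\cdot (v_1,\ldots,v_{d^+})^T=0$. Ainsi $\ker\beta_{n,\chi}$ s'identifie au noyau de la matrice $S^+$, et $\beta_{n,\chi}$ est injective si et seulement si $S^+$ est inversible, ce qui conclut. L'étape la plus délicate à vérifier soigneusement est l'application des relations d'orthogonalité de Schur à $\rho\otimes\chi$ vue comme représentation à valeurs dans $\ob{\bQ}_p$, ainsi que la compatibilité avec l'extension des scalaires entre la base $\ob{\bQ}$-rationnelle choisie pour $V_\chi$ et les calculs effectués dans $V_{p,\chi}$.
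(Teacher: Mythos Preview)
Your proof is correct and follows essentially the same strategy as the paper: parametrize $f$ by $\vec{v}$ via Lemme~\ref{lem:description_morphismes_Qpbar}, translate the local condition as $\vec{v}\in V_{p,\chi}^+$, and reduce the vanishing on units to the linear system $S^+\vec{v}^+=0$. The only cosmetic difference is that the paper packages the computation into a matrix $M_i=\sum_g (\rho\otimes\chi)(g)\,\vec{v}\,(s_{i,1},\ldots,s_{i,d})\,(\rho\otimes\chi)(g)^{-1}$, observes it commutes with the image of $\rho\otimes\chi$ and hence is scalar by Schur's lemma, and reads off the scalar from the trace, whereas you apply the Schur orthogonality relations directly in coordinates to obtain $f(\iota(\epsilon_{i,j}))=\tfrac{|G_n|}{d}\bigl(\sum_k s_{i,k}v_k\bigr)\vec{v}_j$; these are two equivalent ways of invoking the same irreducibility input.
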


\begin{proof}
	Soit $f \in \Hom_{G_n}(U_n,V_{p,\chi})$. D'après le Lemme \ref{lem:description_morphismes_Qpbar}, $f$ est de la forme $f(x\otimes c) = \lambda_p(x\otimes c) \cdot \vec{v} $ pour un unique vecteur $\vec{v}\in V_{p,\chi}$. La base fixée de $V_\chi$ définit une base de $V_{p,\chi}$ et on note $(v_1,\ldots,v_d)$ les coordonnées de $\vec{v}$ dans cette base. En étendant $f$ à $\ob{\bQ}_p \otimes U_n$ par linéarité, $f$ sera nulle sur (l'image par $\iota$ de) $\cE_{p,n}$ si et seulement si $f$ est nulle sur $\left(\ob{\bQ}\otimes \cE_n \right)^{\rho\otimes\chi}$. Fixons un entier $1\leq i\leq d^+$. D'après le Lemme \ref{lem:calcul_matriciel}, on a $ f(\epsilon_{i,j})=0$ pour tout $1\leq j\leq d$ si et seulement si la matrice 
	$$M_i:= \sum_{g \in G_n} \rho\otimes \chi(g) \begin{pmatrix} v_1 \\ \vdots \\ v_d \end{pmatrix} \begin{pmatrix}
	s_{i,1} & \ldots & s_{i,d} \end{pmatrix} \rho\otimes\chi(g)^{-1}
	$$
	est nulle. 
	On observe que $M_i$ commute avec $\rho\otimes\chi(G_n)$. Comme $\rho\otimes\chi$ est absolument irréductible, $M_i$ est scalaire, disons $M_i=\lambda_i I_d$. En prenant la trace matricielle, on obtient $\lambda_i=\frac{\#G_n}{d} \left(s_{i,1}v_1+\ldots + s_{i,d}v_d\right)$. Ainsi, grâce à (\ref{eq:ker_beta_n_chi}) et au Lemme \ref{lem:description_morphismes_Qpbar}, on obtient les équivalences : 
	$$\begin{array}{rcl}
	f\in \ker \beta_{n,\chi} &\Longleftrightarrow & \left\{\begin{array}{l}
	f(U_{n,v}) \subseteq V_{p,\chi}^+ \\ f(\cE_{p,n})=0 
	\end{array}\right. \\
	&\Longleftrightarrow & \left\{\begin{array}{l}
	\vec{v}\in V_{p,\chi}^+ \\ \forall 1 \leq i \leq d^+, \ M_i=0 
	\end{array}\right. \\
	&\Longleftrightarrow & \left\{\begin{array}{l}
	v_{d^++1}=\ldots=v_d=0 \\ \forall 1 \leq i \leq d^+, \ s_{i,1}v_1+\ldots + s_{i,d^+}v_{d^+}=0 
	\end{array}\right. \\
	&\Longleftrightarrow & \left\{\begin{array}{l}
	v_{d^++1}=\ldots=v_d=0 \\ S^+ \cdot\vec{v}^+=0 ,
	\end{array}\right.
	\end{array}
	$$
	où $\vec{v}^+$ est le vecteur colonne de coordonnées $(v_1,\ldots,v_{d^+})$. Cela prouve que $\beta_{n,\chi}$ est injective si et seulement si la matrice $S^+$ est inversible. 
\end{proof}

La matrice $S^+$ est toujours inversible, si l'on croit à l'analogue $p$-adique de la conjecture de Schanuel (cf. \cite[Conjecture 3.10]{mazurcalegari}). 

\begin{conjecture}[Conjecture de Schanuel $p$-adique faible]\label{conj:Schanuel}
	Soient $\alpha_1,\ldots,\alpha_n$ des nombres algébriques non-nuls, vus dans $\ob{\bQ}_p^\times$ via le plongement $\iota_p$. Si $\log_p \alpha_1,\ldots,\log_p \alpha_n$ sont $\bQ$-linéairement indépendants, alors le corps $\bQ(\log_p \alpha_1,\ldots,\log_p \alpha_n)$ a un degré de transcendance sur $\bQ$ égal à $n$.
\end{conjecture}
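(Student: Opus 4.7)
The final statement is the weak $p$-adic Schanuel conjecture: if the $p$-adic logarithms $\log_p\alpha_1,\ldots,\log_p\alpha_n$ of nonzero algebraic numbers are linearly independent over $\mathbb{Q}$, then they are algebraically independent over $\mathbb{Q}$, i.e.\ the field $\mathbb{Q}(\log_p\alpha_1,\ldots,\log_p\alpha_n)$ has transcendence degree $n$. This is a famous open problem in transcendence theory, so I cannot honestly propose a complete proof; what follows is the shape a plausible attack would take and, more importantly, a diagnosis of the obstruction.

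The natural framework would be to upgrade the Baker--Brumer theorem (Theorem \ref{th:BB}) one rung higher. Baker--Brumer proves the corresponding \emph{linear} statement: $\mathbb{Q}$-linear independence of the $\log_p\alpha_i$ implies $\overline{\mathbb{Q}}$-linear independence. Its proof is of Gel'fond--Schneider--Baker type, adapted to the $p$-adic setting by Brumer: from a hypothetical $\overline{\mathbb{Q}}$-linear dependence one builds an auxiliary $p$-adic analytic function --- essentially a polynomial in exponentials of $\mathbb{Z}$-linear combinations of the $\log_p\alpha_i$ --- vanishing to high order at many arithmetic points, then combines arithmetic height bounds with a $p$-adic zero estimate to force a contradiction. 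To target algebraic independence, one would start from a hypothetical nontrivial $P(\log_p\alpha_1,\ldots,\log_p\alpha_n)=0$ with $P\in\mathbb{Q}[X_1,\ldots,X_n]$, construct a comparable auxiliary function now sensitive to the polynomial relation (for instance, by working on an appropriate commutative $p$-adic algebraic group whose logarithm map captures the $\log_p\alpha_i$), and aim at a $p$-adic Schneider--Lang or Philippon-type zero estimate on products of additive and multiplicative groups.

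The hard part --- and the reason the conjecture has resisted attack --- is that the entire auxiliary-function machinery, in both its classical and $p$-adic incarnations, is intrinsically tuned to detect \emph{linear} relations. Producing algebraic independence requires bounding the arithmetic complexity of polynomial combinations of logarithms sharply enough to beat a zero estimate, and no general technique is known that yields this. Even the case $n=2$, i.e.\ algebraic independence of any two $p$-adic logarithms of algebraic numbers, is open (the $p$-adic analogue of the folklore question whether $\log 2$ and $\log 3$ are algebraically independent over $\mathbb{Q}$). The only known unconditional cases come from Chudnovsky--Philippon-style results for logarithms on CM elliptic curves or commutative group varieties of restricted type, none of which suffice for the general statement here.

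In view of this, a genuine proof of Conjecture \ref{conj:Schanuel} is out of reach of present transcendence theory; the author, following the convention of \cite{mazurcalegari}, takes the conjecture as a working hypothesis rather than attempting it. A realistic ``proof proposal'' therefore amounts either to reducing the conjecture to the full $p$-adic Schanuel conjecture (which would be a formal weakening but no easier) or to verifying special cases --- e.g.\ small $n$, or configurations of $\alpha_i$ lying in a single cyclotomic tower --- where the resulting logarithms fall within the current scope of commutative algebraic group transcendence. Any substantive progress would constitute a major breakthrough rather than a routine verification.
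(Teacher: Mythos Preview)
Your assessment is entirely correct: the statement is a \emph{conjecture}, not a theorem, and the paper makes no attempt to prove it. It is introduced as a standing hypothesis (cf.\ the attribution to \cite{mazurcalegari}) and used as such in Lemme~\ref{lem:transcendance}, Corollaire~\ref{coro:beta_n_chi_inj}, and the main structural results. Your diagnosis of the obstruction --- that Baker--Brumer-type auxiliary-function methods are calibrated for linear rather than algebraic independence --- is accurate and well put; there is nothing to compare against in the paper because no proof exists there or elsewhere.
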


\begin{lemme}\label{lem:transcendance}
	Soit un corps $K \subseteq \ob{\bQ} \subseteq_{\iota_p} \ob{\bQ}_p$. Si la Conjecture de Schanuel $p$-adique faible est vraie, alors pour tout $\epsilon_1,\ldots,\epsilon_n \in \cO_K^\times \otimes \ob{\bQ}$ tels que les $\log_p \epsilon_1,\ldots,\log_p \epsilon_n$ sont $\ob{\bQ}$-linéairement indépendants, et pour tout polynôme $P\in \ob{\bQ}[X_1,\ldots,X_n]-\{0\}$, on a 
	$$P(\log_p \epsilon_1,\ldots,\log_p \epsilon_n)\neq 0.$$
\end{lemme}
\begin{proof}
	La Conjecture \ref{conj:Schanuel} implique que le corps $\bQ(\log_p \epsilon_1,\ldots,\log_p \epsilon_n)$ a un degré de transcendance sur $\bQ$ égal à $n$, et l'assertion du lemme en découle directement.
\end{proof}

\begin{corollaire}\label{coro:beta_n_chi_inj}
	Si $d^+=1$, ou si l'on suppose la Conjecture de Schanuel $p$-adique vraie, alors $\beta_{n,\chi}$ est injective, et en particulier, sa source est de dimension $d^-$.
\end{corollaire}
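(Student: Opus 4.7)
Mon plan est d'invoquer la Proposition \ref{prop:S^+}, qui ramène l'injectivité de $\beta_{n,\chi}$ à la non-annulation du déterminant de la matrice carrée $S^+=(s_{i,j})_{1\leq i,j\leq d^+}$, où $s_{i,j}=\log_p(\epsilon_{i,j})$ et où les $(\epsilon_{i,j})_{1\leq i\leq d^+,\, 1\leq j\leq d}$ forment une $\ob{\bQ}$-base de la composante $\rho\otimes\chi$-isotypique de $\ob{\bQ}\otimes\cE_n$ (l'existence de cette base reposant elle-même sur la Proposition \ref{prop:dim_source_but_beta}, et donc sur l'hypothèse $d^+=1$ ou la Conjecture de Leopoldt, cette dernière étant impliquée par Schanuel faible).

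Pour le cas $d^+=1$, je traiterais directement le problème sans recours à une conjecture supplémentaire. En effet, $S^+$ se réduit au scalaire $\log_p(\epsilon_{1,1})$ ; or $\epsilon_{1,1}$ étant un vecteur non-nul d'une base, l'injectivité de $\log_p:\cE_n\otimes\ob{\bQ}\hookrightarrow \ob{\bQ}_p$ (Lemme \ref{lem:transcendance} (1), conséquence directe du Théorème de Baker-Brumer) livrerait immédiatement $s_{1,1}\neq 0$.

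Dans le cas général, je travaillerais sous la Conjecture de Schanuel $p$-adique faible. Les $d\cdot d^+$ éléments $\epsilon_{i,j}$ étant $\ob{\bQ}$-linéairement indépendants par construction, le Lemme \ref{lem:transcendance} (1) entraîne que les $s_{i,j}$ sont $\ob{\bQ}$-, et \textit{a fortiori} $\bQ$-linéairement indépendants dans $\ob{\bQ}_p$. Le Lemme \ref{lem:transcendance} (2) donnerait alors l'indépendance algébrique des $s_{i,j}$ sur $\ob{\bQ}$ ; puisque le déterminant est une forme polynomiale non-identiquement nulle en les $(d^+)^2$ coefficients de $S^+$ (qui constituent une sous-famille de la famille algébriquement indépendante précédente), on conclurait $\det S^+\neq 0$.

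L'essentiel de la difficulté technique ayant été absorbé par la Proposition \ref{prop:S^+} et par le Lemme \ref{lem:transcendance}, le présent Corollaire n'est qu'un assemblage de ces résultats et ne présente pas d'obstacle majeur ; le seul point qui mérite attention est l'articulation \og $\ob{\bQ}$-indépendance linéaire des $\epsilon_{i,j}$ $\Rightarrow$ $\bQ$-indépendance linéaire des $s_{i,j}$ $\Rightarrow$ indépendance algébrique des $s_{i,j}$\fg{}, chaînage rendu possible par la combinaison Baker-Brumer/Schanuel.
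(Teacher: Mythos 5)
Votre démonstration est correcte et suit exactement la même voie que celle de l'article : réduction à l'inversibilité de $S^+$ via la Proposition \ref{prop:S^+}, puis non-annulation de $\det S^+$ par le Lemme \ref{lem:transcendance} (Baker--Brumer pour le cas $d^+=1$, indépendance algébrique des $s_{i,j}$ sous Schanuel $p$-adique faible dans le cas général). Vous ne faites qu'expliciter le « cela résulte directement du Lemme \ref{lem:transcendance} » de l'article, et votre articulation des deux points du lemme est la bonne.
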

\begin{proof}
	Il suffit de montrer que la matrice $S^+$ est inversible, d'après la Proposition \ref{prop:S^+}. Or cela résulte de l'injectivité de l'application donnée en (\ref{eq:log_p_inj}) lorsque $d^+=1$, et du Lemme \ref{lem:transcendance} en général.
\end{proof}
Rappelons que l'on a supposé dans cette section que $H^0(\bQ_n,V)=0$. On a achevé de démontrer le théorème suivant.
\begin{theorem}[=Théorème \ref{th:sel_n_fini_etc} (i)] \label{th:sel_n_fini}
	 Si $d^+=1$, ou bien si l'on suppose la conjecture de Schanuel $p$-adique faible, alors $\Sel_n(\rho,\rho^+)$ est fini.
\end{theorem}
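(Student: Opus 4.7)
Mon approche consiste à chaîner les résultats déjà établis dans les Sections \ref{sec:isom_de_restriction}, \ref{sec:TCC} et \ref{sec:BB}. Le premier pas est d'invoquer le Corollaire \ref{coro:beta_n_chi_isom?}, qui ramène la finitude de $\Sel_n(\rho,\rho^+)$ à la bijectivité de $\beta_{n,\chi}$ pour tout caractère $\chi \in \widehat{\Gamma_n}$. Tout le travail se concentre ainsi sur l'étude individuelle de chaque $\beta_{n,\chi}$, la décomposition spectrale sous $\Gamma_n$ ayant déjà fait le gros de la besogne.

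Pour démontrer que $\beta_{n,\chi}$ est un isomorphisme, je procéderais en deux temps : calcul des dimensions de la source et du but, puis vérification de l'injectivité. D'une part, la Proposition \ref{prop:dim_source_but_beta} fournit précisément l'égalité des dimensions : les deux $\ob{\bQ}_p$-espaces vectoriels en jeu sont de dimension $d^-$ sous chacune des deux hypothèses considérées. D'autre part, le Corollaire \ref{coro:beta_n_chi_inj} assure l'injectivité sous les mêmes hypothèses. Comme une application $\ob{\bQ}_p$-linéaire injective entre espaces de même dimension finie est automatiquement bijective, on en déduira que $\beta_{n,\chi}$ est un isomorphisme pour tout $\chi$, et la conclusion suivra.

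L'obstacle principal, déjà surmonté dans les Sections \ref{sec:TCC} et \ref{sec:BB}, est de nature transcendante et justifie la dichotomie des hypothèses du Théorème. D'un côté, le calcul de dimensions repose sur la non-trivialité du plongement logarithmique $\iota$ sur la composante $\rho\otimes\chi$-isotypique des unités globales ; ceci découle du Théorème de Baker-Brumer via la Proposition \ref{prop:iota_non_nulle} lorsque $d^+=1$ (permettant de se passer de Leopoldt), et de la Conjecture de Leopoldt en toute généralité. De l'autre, l'injectivité de $\beta_{n,\chi}$ se traduit par l'inversibilité de la matrice régulatrice $S^+$ de taille $d^+\times d^+$ dont les coefficients sont des logarithmes $p$-adiques d'unités (Proposition \ref{prop:S^+}) : la non-annulation du déterminant découle à nouveau du Théorème de Baker-Brumer dans le cas $1\times 1$, mais exige l'indépendance algébrique de plusieurs logarithmes $p$-adiques dans le cas général, d'où la nécessité de supposer la Conjecture de Schanuel $p$-adique faible lorsque $d^+\geq 2$.
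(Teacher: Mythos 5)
Votre preuve est correcte et suit exactement la même démarche que celle de l'article : réduction au fait que $\beta_{n,\chi}$ est un isomorphisme via le Corollaire \ref{coro:beta_n_chi_isom?}, puis combinaison de l'égalité des dimensions (Proposition \ref{prop:dim_source_but_beta}) et de l'injectivité (Corollaire \ref{coro:beta_n_chi_inj}). Votre analyse du rôle respectif de Baker--Brumer, de Leopoldt et de Schanuel $p$-adique dans la dichotomie des hypothèses correspond également fidèlement à l'argument du texte.
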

\begin{proof}
	D'après le Corollaire \ref{coro:beta_n_chi_isom?}, il suffit de montrer que $\beta_{n,\chi}$ est un isomorphisme pour tout $n$ et tout $\chi \in \widehat{\Gamma_n}$. Cela résulte immédiatement de la combinaison du Corollaire \ref{coro:beta_n_chi_inj} et de la Proposition \ref{prop:dim_source_but_beta} (i).
\end{proof}

\subsection{Structure du groupe de Selmer et phénomène des zéros triviaux}\label{sec:zerostriviaux}

On suppose dans cette section que $H^0(\bQ_\infty,V)=0$ ainsi que $d^+=1$, ou bien que la conjecture de Schanuel $p$-adique faible est vraie (cf. Conjecture \ref{conj:Schanuel}). Pour tout entier naturel $n$, le dual de Pontryagin de $X_\infty(\rho,\rho^+)_{\Gamma^{p^n}}$ s'identifie au $\cO[[\Gamma_n]]$-module $\Sel_\infty(\rho,\rho^+)^{\Gamma^{p^n}}$, que l'on va étudier. Comme $H \cap \bQ_\infty=\bQ_{n_0}$, le groupe de Galois $\Gamma':=\Gal(H_\infty/H)$ s'identifie au sous-groupe $\Gamma^{p^{n_0}}$ de $\Gamma$ par restriction des automorphismes. 

Soit $n\geq n_0$, et soit $m=n-n_0$. La flèche de restriction $H^1(\bQ_\infty,D_p) \longrightarrow \Hom_{G^{(\infty)}}(G_{H_\infty},D_p)$ est équivariante pour l'action de $\Gamma^{p^n}\simeq \Gamma^{'p^m}$, et son noyau et conoyau sont finis. Comme $\Gamma$ est cyclique, on montre facilement que $H^1(\bQ_\infty,D_p)^{\Gamma^{p^n}} \longrightarrow \Hom_{G^{(\infty)}}(G_{H_\infty},D_p)^{\Gamma^{'p^m}}$
a aussi un noyau et conoyau finis. Un argument similaire à celui de la preuve du Lemme \ref{lem:selnprime} montre que ceci est encore vrai pour la restriction aux groupes de Selmer. On en déduit le lemme suivant :
\begin{lemme} \label{lem:sel->sel'}
	L'application naturelle de restriction 
	$$\Sel_\infty(\rho,\rho^+)^{\Gamma^{p^n}} \longrightarrow \Sel'_\infty(\rho,\rho^+)^{\Gamma^{'p^m}}$$
	a un noyau et conoyau finis pour tout $m=n-n_0\geq 0$. C'est même un isomorphisme si $p$ ne divise pas $[H:\bQ]$ (auquel cas on a $n_0=0$).
\end{lemme}

D'après le Lemme \ref{lem:sel->sel'}, on est amenés à étudier les sous-modules invariants de $\Sel'_\infty(\rho,\rho^+)$. Rappelons que l'on a, par définition,
$$\Sel'_m(\rho,\rho^+) = \ker \left[ \xymatrix{\Hom_{G^{(m)}}(\gX_m,D_p) \ar[r]^{r_m \qquad} &  \Hom_{G^{(m)}_v}(I_p(M_m/H_m),D_p^-) } \right]. $$ 
Nous étudions le noyau et conoyau des \textit{applications de contrôle},
$$ \Sel'_m(\rho,\rho^+) \longrightarrow \Sel'_\infty(\rho,\rho^+)^{\Gamma^{'p^m}},$$
qui sont induites par l'application de restriction sur les groupes de Galois 
$$\gX_\infty =\Gal(M_\infty/H_\infty) \longrightarrow \gX_m=\Gal(M_m/H_m).$$
On a un diagramme commutatif à colonnes exactes :

$$\xymatrix{
	0 \ar[d] 
	& 0 \ar[d] 
	\\
	\Hom_{G^{(m)}}(\Gal(H_\infty/H_m),D_p) \ar[d] \ar[r]^{t_m} 
	& \Hom_{G^{(m)}_v}(I_p(H_\infty/H_m),D_p^-) \ar[d] 
	\\
	\Hom_{G^{(m)}}(\gX_m,D_p) \ar[d]^{s_m} \ar[r]^{r_m \qquad} 
	& \Hom_{G^{(m)}_v}(I_p(M_m/H_m),D_p^-) \ar[d]
	\\ 
	\Hom_{G^{(\infty)}}(\gX_\infty,D_p)^{\Gamma^{'p^m}} \ar[r]      
	& \Hom_{G^{(\infty)}_v}(I_p(M_m/H_\infty),D_p^-).
}$$
Notons que la source de $t_m$ est un $\cO$-module fini car on a $H^0(\bQ_m,T_p)=H^0(\bQ_\infty,T_p)=0$ par hypothèse. Notons aussi que $G^{(m)}=G^{(\infty)}$ dès que $m\geq n_0$, ce que l'on suppose dans la suite. Un argument élémentaire de théorie de Galois montre que l'application $(\gX_\infty)_{\Gamma^{'p^m}} \longrightarrow \gX_m$ est injective, et donc $s_m$ est a un conoyau fini. Le lemme du serpent donne une suite exacte :
\begin{gather}
\begin{aligned}
\xymatrix{
	 \Sel'_m(\rho,\rho^+) \ar[r] & \Sel'_\infty(\rho,\rho^+)^{\Gamma^{'p^m}} \bigcap \im\left(s_m\right) \ar[r] & \coker \left(t_m\right) \ar[r] & \coker\left(r_m\right).} 
\end{aligned}
\label{eq:grossesuite_exacte_sel'_m}
\end{gather}

\begin{proposition}\label{prop:rang_coinvariants}
	Supposons $n\geq 2n_0$. Le $\cO$-rang de $X_\infty(\rho,\rho^+)_{\Gamma^{p^n}}$ est égal à 
	$$e(\rho,\rho^+):=\dim H^0(\bQ_{\infty,v},V_p^-).$$
\end{proposition}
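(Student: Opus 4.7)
Le plan est de calculer le $\cO$-corang du dual de Pontryagin $\Sel_\infty(\rho,\rho^+)^{\Gamma^{p^n}}$ de $X_\infty(\rho,\rho^+)_{\Gamma^{p^n}}$ en exploitant la machinerie de contrôle mise en place dans cette section. Le Lemme \ref{lem:sel->sel'} permet de remplacer cet objet par $\Sel'_\infty(\rho,\rho^+)^{\Gamma^{'p^m}}$ (avec $m=n-n_0\geq n_0$, d'où l'hypothèse $n\geq 2n_0$) au prix d'un noyau et d'un conoyau finis, ce qui préserve le $\cO$-corang. La suite \eqref{eq:grossesuite_exacte_sel'_m} issue du lemme du serpent fournit alors
\[
\Sel'_m(\rho,\rho^+) \longrightarrow \Sel'_\infty(\rho,\rho^+)^{\Gamma^{'p^m}} \cap \im(s_m) \longrightarrow \coker(t_m) \longrightarrow \coker(r_m),
\]
et je me propose de vérifier que tous les termes autres que $\coker(t_m)$ sont de $\cO$-corang nul, réduisant ainsi le calcul à celui du $\cO$-corang de $\coker(t_m)$.

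Le terme de gauche est fini d'après le Théorème \ref{th:sel_n_fini} combiné avec le Lemme \ref{lem:selnprime}. L'injectivité de $(\gX_\infty)_{\Gamma^{'p^m}} \to \gX_m$, déjà observée, entraîne que $s_m$ a un conoyau fini et donc que $\im(s_m)$ est d'indice fini dans $\Sel'_\infty(\rho,\rho^+)^{\Gamma^{'p^m}}$. La finitude de $\coker(r_m)$ est le seul ingrédient véritablement nouveau à établir : après extension des scalaires à $\ob\bQ_p$ et réciprocité de Frobenius, $r_m$ se décompose en somme directe des applications $\beta_{m,\chi}$ pour $\chi\in\widehat{\Gamma_m}$, chacune étant un isomorphisme d'après la Proposition \ref{prop:dim_source_but_beta} et le Corollaire \ref{coro:beta_n_chi_inj}; une chasse au diagramme dans celui de la Section \ref{sec:isom_de_restriction} (où $\beta_m$ apparaît comme flèche verticale intermédiaire) impose alors que $\coker(r_m)$ est fini.

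Pour calculer le $\cO$-corang de $\coker(t_m)$, j'identifie explicitement sa source et son but. Comme $m\geq n_0$, le sous-groupe $\Gal(H_\infty/H_m)\simeq\Gamma^{p^m}$ est central dans $\Gal(H_\infty/\bQ_m)\simeq G^{(\infty)}\times\Gamma^{p^m}$ et agit trivialement sur $D_p$, si bien que la source s'identifie canoniquement à $D_p^{G^{(m)}}$, de $\cO$-corang $\dim_L V_p^{\Gal(H/\bQ_{n_0})}=\dim_L H^0(\bQ_\infty,V_p)$. L'extension $\bQ_\infty/\bQ_m$ étant totalement ramifiée en $p$, l'inertie $I_p(H_\infty/H_m)$ coïncide avec $\Gal(H_\infty/H_m)$, et la non-ramification de $V_p^-$ en $p$ permet d'identifier le but avec $(D_p^-)^{G^{(m)}_v}$, de $\cO$-corang $\dim_L H^0(\bQ_{\infty,v},V_p^-)$. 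Sous ces identifications, $t_m$ devient tout simplement l'application induite par la projection $D_p\twoheadrightarrow D_p^-$.

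L'obstacle principal est alors de contrôler $\ker(t_m)$, qui s'identifie au niveau des $L$-espaces vectoriels au sous-espace $V_p^+\cap V_p^{G_{\bQ_\infty}}$. Or $V_p^{G_{\bQ_\infty}}$ est un sous-espace $G_\bQ$-stable de $V_p$, puisque $G_{\bQ_\infty}$ est normal dans $G_\bQ$; l'irréductibilité absolue et la non-trivialité de $\rho$ forcent donc cet espace à être nul ou à valoir $V_p$ tout entier. Dans le cas générique $V_p^{G_{\bQ_\infty}}=0$ — qui englobe notamment l'hypothèse $H\cap\bQ_\infty=\bQ$ pertinente pour le Théorème A — la source est de $\cO$-corang nul, et l'additivité des $\cO$-corangs dans la suite exacte $0\to\ker(t_m)\to D_p^{G^{(m)}}\to(D_p^-)^{G^{(m)}_v}\to\coker(t_m)\to 0$ donne
\[
\cO\textrm{-corang}(\coker(t_m)) = \dim H^0(\bQ_{\infty,v},V_p^-) - \dim H^0(\bQ_\infty,V_p) = e(\rho,\rho^+),
\]
comme voulu. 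Le cas exceptionnel, où $\rho$ se factorise par la tour cyclotomique et est donc forcément un caractère de dimension 1, se traite par inspection directe.
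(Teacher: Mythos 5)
Votre démonstration suit le même squelette que celle de l'article : réduction à $\Sel'_\infty(\rho,\rho^+)^{\Gamma^{'p^m}}$ via le Lemme \ref{lem:sel->sel'} (avec la bonne lecture de l'hypothèse $n\geq 2n_0$), utilisation de la suite (\ref{eq:grossesuite_exacte_sel'_m}) pour ramener le calcul au corang de $\coker(t_m)$ --- les autres termes étant finis pour les mêmes raisons que dans le texte ---, puis identification des corangs de la source et du but de $t_m$ avec $\dim H^0(\bQ_\infty,V_p)$ et $\dim H^0(\bQ_{\infty,v},V_p^-)$. Le seul endroit où vous divergez est le contrôle de $\ker(t_m)$ : l'article observe simplement que $\ker(t_m)$ s'injecte dans $\Sel'_m(\rho,\rho^+)$, fini d'après le Théorème \ref{th:sel_n_fini}, ce qui donne un argument uniforme en une ligne ; vous identifiez au contraire $\ker(t_m)$ à $V_p^+\cap H^0(\bQ_\infty,V_p)$ et invoquez l'irréductibilité absolue pour conclure que $H^0(\bQ_\infty,V_p)$ vaut $0$ ou $V_p$. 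Votre variante est correcte dans le cas générique, et elle a le mérite de faire apparaître que le terme $\dim H^0(\bQ_\infty,V_p)$ de l'énoncé est nul dès que $\rho$ ne se factorise pas par $\Gamma$ ; en revanche, elle vous oblige à renvoyer le cas dégénéré à une "inspection directe" qui n'en est pas une : si $\rho$ est un caractère non trivial se factorisant par la tour cyclotomique, alors $\rho$ est pair, $V^-=0$, et la formule de l'énoncé donne $e(\rho,\rho^+)=-1$, ce qui ne peut pas être un $\cO$-rang. Ce cas est en réalité implicitement exclu de toute la section (la preuve de la Proposition \ref{prop:dim_source_but_beta} utilise la décomposition de Dirichlet $\ob{\bQ}\otimes\cE_n\simeq\bigoplus_{\pi\neq 1}V_\pi^{d_\pi^+}$, qui suppose $\rho\otimes\chi$ non trivial), et l'argument de l'article ne le couvre pas davantage que le vôtre ; hormis cette réserve commune aux deux rédactions, votre preuve est correcte.
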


\begin{proof}
	Fixons $n\geq 2n_0$ et posons comme précédemment $m=n-n_0$. D'après le Lemme \ref{lem:sel->sel'}, le $\cO$-rang de $X_\infty(\rho,\rho^+)_{\Gamma^{p^n}}$ est égal au $\cO$-corang de $\Sel'_\infty(\rho,\rho^+)^{\Gamma^{'p^m}}$. D'après la suite exacte (\ref{eq:grossesuite_exacte_sel'_m}), il est égal au $\cO$-corang de $\coker\left(t_m\right)$, car le conoyau de $s_m$ est fini, ainsi que $\Sel'_m(\rho,\rho^+)=\ker\left(r_m\right)$ et $\coker\left(r_m\right)$. Cela découle en effet du Théorème \ref{th:sel_n_fini} et d'une adaptation de la Proposition \ref{prop:dim_source_but_beta} montrant que la source et le but de $r_m$ ont même corang. Comme la source de $t_m$ est de corang 0, le conoyau de $t_m$ a le même corang que $\Hom_{G^{(m)}_v}(I_p(H_\infty/H_m),D_p^-)$, c'est-à-dire $e(\rho,\rho^+)$.
\end{proof}

\begin{corollaire}[=Théorème \ref{th:sel_n_fini_etc} (ii) et (iii)]\label{coro:sel_infty_torsion+zéros_triviaux}
	Le $\Lambda$-module $X_\infty(\rho,\rho^+)$ est de torsion. Si $H\cap \bQ_\infty =\bQ$, alors la fonction $L$ $p$-adique algébrique $L_p^{\alg}(\rho,\rho^+;T) \in \Lambda$ de $X_\infty(\rho,\rho^+)$ ne s'annule pas en les $\zeta-1$, où $\zeta$ parcourt $\mu_{p^\infty}-\{1\}$. On a de plus $L_p^{\alg}(\rho,\rho^+;0) \neq 0 $ si et seulement si $e(\rho,\rho^+)=0$. Enfin, l'ordre d'annulation de $L_p^{\alg}(\rho,\rho^+;T)$ en $T=0$ est minoré par $e(\rho,\rho^+)$ :
	$$\ord_{T} L_p^{\alg}(\rho,\rho^+;T) \geq e(\rho,\rho^+).$$
\end{corollaire}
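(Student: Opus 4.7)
Le plan est de combiner la Proposition \ref{prop:rang_coinvariants} et le Théorème de structure \ref{th:structure_sur_algebre_iwasawa} pour $\Lambda=\cO[[T]]$. On note $\omega_n(T):=(1+T)^{p^n}-1$, de sorte que $X_\infty(\rho,\rho^+)_{\Gamma^{p^n}} = X_\infty(\rho,\rho^+)/\omega_n X_\infty(\rho,\rho^+)$.

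Pour la $\Lambda$-torsion, on rappelle que tout $\Lambda$-module de type fini $X$ admet un pseudo-isomorphisme vers un module standard de la forme $\Lambda^r \oplus (\text{partie de torsion})$, où $r$ désigne son $\Lambda$-rang. Après tensorisation par $\Lambda/\omega_n$, le facteur libre $\Lambda^r$ contribue $r\cdot p^n$ au $\cO$-rang, tandis que la partie de torsion ainsi que les noyaux et conoyaux pseudo-nuls (donc finis) n'apportent qu'une contribution $\cO$-bornée uniformément en $n$. La Proposition \ref{prop:rang_coinvariants} bornant ce $\cO$-rang par $e(\rho,\rho^+)$ pour tout $n\geq 2n_0$, on en déduira $r=0$, ce qui prouvera le caractère de torsion.

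Pour les points (ii) et (iii), on applique le Théorème \ref{th:structure_sur_algebre_iwasawa} à $X_\infty(\rho,\rho^+)$, désormais de torsion : on dispose alors d'un pseudo-isomorphisme
$$X_\infty(\rho,\rho^+) \sim \bigoplus_i \Lambda/(\varpi^{\mu_i}) \oplus \bigoplus_j \Lambda/(P_j^{m_j}),$$
avec les $P_j$ distingués irréductibles et $L_p^{\alg}(\rho,\rho^+;T) \dot{=} \varpi^{\sum_i \mu_i}\prod_j P_j^{m_j}$. Les facteurs $\Lambda/(\varpi^{\mu_i})$ donnent après réduction modulo $\omega_n$ des modules $\varpi$-de torsion, donc de $\cO$-rang nul. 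D'autre part, on factorise $\omega_n = \prod_{k=0}^n \Phi_{p^k}(1+T)$ dans $\cO[T]$, les $\Phi_{p^k}(1+T)$ étant des polynômes distingués deux à deux coprimes dans $L[T]$. Un calcul de PGCD dans $L[T]$ montrera alors que $\Lambda/(P_j^{m_j},\omega_n)$ est de $\cO$-rang $\deg P_j$ si $P_j = \Phi_{p^k}(1+T)$ pour un certain $k\leq n$, et de $\cO$-rang $0$ sinon. Comme les modules pseudo-nuls sont finis, ce même calcul vaudra pour le $\cO$-rang de $X_\infty(\rho,\rho^+)/\omega_n X_\infty(\rho,\rho^+)$.

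Sous l'hypothèse $H\cap \bQ_\infty = \bQ$ (soit $n_0 = 0$), la Proposition \ref{prop:rang_coinvariants} s'applique pour tout $n \geq 0$. Désignant par $a_k$ le nombre (avec multiplicité) d'indices $j$ tels que $P_j = \Phi_{p^k}(1+T)$, les égalités $\sum_{k=0}^n a_k \varphi(p^k) = e(\rho,\rho^+)$, valables pour $n = 0,1,2,\ldots$, déterminent successivement $a_0 = e(\rho,\rho^+)$ puis $a_k = 0$ pour tout $k \geq 1$. On en tire le point (ii) : aucun $\Phi_{p^k}(1+T)$ avec $k\geq 1$ ne divise $L_p^{\alg}(\rho,\rho^+;T)$, et donc celui-ci ne s'annule pas en $\zeta-1$ pour $\zeta\in\mu_{p^\infty}-\{1\}$. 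Enfin, comme $\ord_T L_p^{\alg}(\rho,\rho^+;T) = \sum_{j:\, P_j = T} m_j$ avec chaque $m_j \geq 1$ et $a_0=e(\rho,\rho^+)$ indices contributifs, on obtient la minoration du (iii), et les conditions $L_p^{\alg}(\rho,\rho^+;0)\neq 0$ et $e(\rho,\rho^+)=0$ sont chacune équivalentes à $a_0=0$. Le point le plus délicat sera de contrôler précisément le $\cO$-rang à travers le pseudo-isomorphisme en le décomposant en deux suites exactes courtes, en prenant les $\mathrm{Tor}$ et en vérifiant que tous les termes annexes (noyaux, conoyaux, $\mathrm{Tor}$ avec un pseudo-nul) restent de $\cO$-rang nul --- ce qui utilise de manière essentielle l'équivalence \og pseudo-nul = fini \fg{} sur l'algèbre d'Iwasawa.
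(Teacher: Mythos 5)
Your proposal follows essentially the same route as the paper's proof: combine Proposition \ref{prop:rang_coinvariants} with the structure theorem over $\cO[[T]]$, deduce $r=0$ from the boundedness of the $\cO$-rank of the $\Gamma^{p^n}$-coinvariants, and then compare the resulting equalities for successive $n$ (the paper's relations $(\cE_n)$) to locate the cyclotomic factors of the characteristic series, the pseudo-null error terms being harmless because they are finite. The only slip is that $\Phi_{p^k}(1+T)$ need not remain irreducible over $\cO_L$ when $L/\Qp$ is ramified, so the correct dichotomy is that $P_j$ \emph{divides} some $\Phi_{p^k}(1+T)$ rather than equals it; setting $a_k=\sum_{P_j\mid \Phi_{p^k}(1+T)}\deg P_j$ (which the paper's formulation via $\deg_T\pgcd(P_i,\omega_n)$ does implicitly) repairs this without changing anything else in the argument.
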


\begin{proof}
	D'après la Proposition \ref{prop:rang_coinvariants}, le $\cO$-rang de $X_\infty(\rho,\rho^+)_{\Gamma^{p^n}}$ est égal à $e(\rho,\rho^+)$ pour tout entier $n\geq 2n_0$. Soit $r$ le rang de $X_\infty(\rho,\rho^+)$ sur $\Lambda$. D'après le théorème de structure des $\Lambda$-modules de type fini (Théorème \ref{th:structure_sur_algebre_iwasawa}), il existe des polynômes $P_i(T)$ de $\Lambda$, des entiers $\mu_j>0$ et un pseudo-isomorphisme
	$$ X_\infty(\rho,\rho^+) \longrightarrow \Lambda^r \bigoplus \left( \bigoplus_i \Lambda / (P_i) \right) \bigoplus \left( \bigoplus_j \Lambda / (p^{\mu_j}) \right). $$
	Un calcul élémentaire sur les $\Gamma^{p^n}$-coinvariants fournit, pour tout entier $n\geq 2 n_0$, les égalités : 
	$$(\mathcal{EQ}_n) : \qquad e(\rho,\rho^+) = rp^n + \sum_i \deg_T \pgcd(P_i(T),\omega_n(T)),$$
	où $\omega_n(T):=(1+T)^{p^n}-1\in \Lambda$. En particulier, on obtient $r=0$, c'est-à-dire $X_\infty(\rho,\rho^+)$  est de torsion sur $\Lambda$. Sa fonction $L$ $p$-adique algébrique $L_p^{\alg}(\rho,\rho^+;T)$ peut être choisie comme étant égale à $p^{\sum_j \mu_j} \prod_i P_i(T)$. 
	
	Supposons maintenant de plus que $H\cap \bQ_\infty =\bQ$, c'est-à-dire $n_0=0$. Remarquons que dans ce cas, $e(\rho,\rho^+)=\dim H^0(\bQ_p,V^-)$, donc la définition de $e(\rho,\rho^+)$ est cohérente avec celle de l'introduction. Comme $\omega_0(T)=T$, l'équation $(\mathcal{EQ}_0)$ montre que $L_p^{\alg}(\rho,\rho^+;0)\neq 0$ si et seulement si $e(\rho,\rho^+)=0$, et en outre que $L_p^{\alg}(\rho,\rho^+;T)$ s'annule en 0 à un ordre $\geq e(\rho,\rho^+)$. Enfin, pour $n\geq 0$ quelconque, en retranchant $(\mathcal{EQ}_0)$ à $(\mathcal{EQ}_n)$ on voit que $L_p^{\alg}(\rho,\rho^+;T)$ est premier au polynôme $\frac{\omega_n(T)}{T}$, dont les racines sont exactement les $\zeta-1$, où $\zeta$ parcourt $\mu_{p^n}-\{1\}$.  Autrement dit, $L_p^{\alg}(\rho,\rho^+;\zeta-1) \neq 0$ pour tout $\zeta \in \mu_{p^\infty}-\{1\}$.
\end{proof}

Nous proposons une conjecture de type "zéros triviaux", imitant du côté algébrique les phénomènes prédits du côté analytique par le théorème de Ferrero-Greenberg \cite{ferrerogreenberg}, la Conjecture de Gross-Stark \cite{gross1981padic}, ou la Conjecture des zéros triviaux de \cite{benois}.

\begin{conjecture}\label{conj:zeros_triviaux}
	Avec les notations du Corollaire \ref{coro:sel_infty_torsion+zéros_triviaux}, si $H\cap \bQ_\infty =\bQ$, alors on a 
	$$\ord_{T} L_p^{\alg}(\rho,\rho^+;T) = e(\rho,\rho^+).$$
\end{conjecture}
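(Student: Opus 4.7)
La preuve du Corollaire \ref{coro:sel_infty_torsion+zéros_triviaux}, appliquée à l'équation $(\cE_0)$, fournit déjà l'inégalité $\ord_{T} L_p^{\alg}(\rho,\rho^+;T) \geq e(\rho,\rho^+)$. Il reste donc à établir l'inégalité inverse. Mon plan est de reformuler d'abord l'énoncé via le Théorème de structure, puis de le ramener à l'injectivité d'une application cohomologique naturelle.

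Plus précisément, le Théorème \ref{th:structure_sur_algebre_iwasawa} fournit un pseudo-isomorphisme $X_\infty(\rho,\rho^+) \sim \bigoplus_i \Lambda/(\varpi^{\mu_i}) \oplus \bigoplus_j \Lambda/(P_j^{m_j})$ avec $P_j$ distingués irréductibles. Comme les $P_j$ ne s'annulent en aucun $\zeta-1$ pour $\zeta\in \mu_{p^\infty}-\{1\}$ (Corollaire \ref{coro:sel_infty_torsion+zéros_triviaux}), et comme un polynôme distingué irréductible divisible par $T$ est nécessairement $T$ lui-même, les indices $j$ tels que $P_j(0)=0$ correspondent exactement aux facteurs de la forme $\Lambda/(T^{m_j})$, dont le nombre vaut $e(\rho,\rho^+)$ d'après $(\cE_0)$. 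Puisque $\ord_T L_p^{\alg}(\rho,\rho^+;T)=\sum_{j : P_j=T} m_j$, la Conjecture est équivalente à ce que chaque multiplicité $m_j$ correspondante vaille 1, c'est-à-dire, sous les hypothèses de pureté du Théorème A(iv) (qui excluent les sous-modules pseudo-nuls non-triviaux), à l'injectivité de l'application composée naturelle $X_\infty(\rho,\rho^+)[T] \hookrightarrow X_\infty(\rho,\rho^+) \twoheadrightarrow X_\infty(\rho,\rho^+)/T X_\infty(\rho,\rho^+)$. Notons que la source et le but de cette application ont même $\cO$-rang $e(\rho,\rho^+)$ d'après la Proposition \ref{prop:rang_coinvariants}, de sorte que son injectivité équivaut, après extension des scalaires à $L$, à un isomorphisme.

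L'idée centrale serait alors d'identifier canoniquement chacun de ces deux modules avec un même objet cohomologique, typiquement le $L$-espace vectoriel $H^0(\Qp,V_p^-)$ (de dimension $e(\rho,\rho^+)$), via la description de $\Sel_\infty(\rho,\rho^+)$ issue de la suite d'inflation-restriction et un calcul analogue à celui de la Proposition \ref{prop:rang_coinvariants}, à partir des applications de contrôle $t_0$ et $s_0$ de la Section \ref{sec:zerostriviaux}. La vérification que l'injection $X_\infty(\rho,\rho^+)[T] \hookrightarrow X_\infty(\rho,\rho^+)/T X_\infty(\rho,\rho^+)$ correspond via cette double identification à un isomorphisme se ramènerait alors à la non-nullité d'un régulateur $p$-adique analogue à celui du Théorème B, dont la non-nullité est prédite par la Conjecture de Schanuel $p$-adique faible. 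C'est précisément cette dernière étape qui constitue le principal obstacle : elle revient à montrer que l'action de $T$ est \emph{identiquement nulle} sur la composante de ``zéros triviaux'' de $X_\infty(\rho,\rho^+)$ et non simplement nilpotente. L'énoncé correspondant est un résultat profond, analogue algébrique des théorèmes de Ferrero-Greenberg \cite{ferrerogreenberg} et de Dasgupta-Kakde-Ventullo sur la Conjecture de Gross-Stark ; sa preuve en toute généralité semble nécessiter soit une Conjecture Principale préalable transférant le problème au côté analytique (où les facteurs d'Euler dégénérés fournissent un contrôle direct sur la puissance de $T$ apparaissant), soit un argument subtil de cohomologie galoisienne sur la tour cyclotomique.
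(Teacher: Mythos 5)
Il faut d'abord souligner que l'énoncé visé est la Conjecture \ref{conj:zeros_triviaux} : l'article ne la démontre pas. Seule la minoration $\ord_{T} L_p^{\alg}(\rho,\rho^+;T) \geq e(\rho,\rho^+)$ y est établie (Corollaire \ref{coro:sel_infty_torsion+zéros_triviaux}), et c'est précisément parce que la majoration n'est pas accessible par les méthodes du texte que l'énoncé est laissé sous forme conjecturale. Votre proposition ne comble pas cette lacune : elle le reconnaît d'ailleurs explicitement dans son dernier paragraphe.

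Cela dit, la partie effective de votre réduction est correcte et correspond à la bonne façon de penser le problème. Via le Théorème \ref{th:structure_sur_algebre_iwasawa} et l'équation $(\cE_0)$, le nombre de facteurs de la forme $\Lambda/(T^{m_j})$ dans la décomposition à pseudo-isomorphisme près vaut bien $e(\rho,\rho^+)$, tandis que $\ord_T L_p^{\alg}$ vaut $\sum_j m_j$ ; la Conjecture équivaut donc à la « semi-simplicité » $m_j=1$ pour tout $j$, c'est-à-dire au fait que $T$ annule (et non pas seulement agit de façon nilpotente sur) la composante de zéros triviaux, ou encore, en l'absence de sous-modules finis non nuls, à l'injectivité de $X_\infty(\rho,\rho^+)[T] \to X_\infty(\rho,\rho^+)/T X_\infty(\rho,\rho^+)$ après extension des scalaires à $L$ (les deux membres ayant même $\cO$-rang $e(\rho,\rho^+)$ par la Proposition \ref{prop:rang_coinvariants}). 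Mais l'étape centrale — identifier ces deux espaces à $H^0(\Qp,V_p^-)$ et montrer que l'application induite est un isomorphisme, via la non-nullité d'un régulateur de type Gross--Stark — n'est qu'esquissée au conditionnel et n'est justifiée par aucun argument ; c'est exactement là que réside toute la difficulté, analogue algébrique du théorème de Ferrero--Greenberg. La proposition constitue donc une reformulation pertinente de la Conjecture, non une preuve.
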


Nous achevons la preuve du Théorème \ref{th:sel_n_fini_etc} en prouvant l'absence de sous-modules pseudo-nuls non-nuls de $X_\infty(\rho,\rho^+)$. Il s'agit d'une application du critère de la Proposition \ref{PsN}, rendue possible car on sait maintenant que $X_\infty(\rho,\rho^+)$ est de torsion.

\begin{proposition}\label{prop:sel_infty_sans_sous_modules_finis}
	Supposons que $D_p[\varpi]^G=\Hom_G(D_p[\varpi],\mu_p)=0$. Alors le $\Lambda$-module $X_\infty(\rho,\rho^+)$ n'a pas de sous-modules pseudo-nuls non-triviaux.
\end{proposition}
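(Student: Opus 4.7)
La stratégie consiste à appliquer directement la Proposition \ref{PsN} à la $\Zp$-algèbre $\cA=\cO$, avec $\cT=T_p$ et $\cT^\pm=T_p^\pm$, de sorte que $\cD=D_p$, $\cD[\gm_\cA]\simeq T_p/\varpi T_p\simeq\ob\rho$ et $\cA[[\Gamma]]=\Lambda$. Il suffit alors de vérifier les cinq hypothèses (a)--(e) de cette proposition dans ce cadre.

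L'hypothèse (a) est exactement le contenu du point (ii) du Théorème \ref{th:sel_n_fini_etc}, déjà démontré dans le Corollaire \ref{coro:sel_infty_torsion+zéros_triviaux}. L'hypothèse (c) résulte immédiatement de la définition $d^+=\dim_E H^0(\bR,V)=\dim_E V^{\Frob_\infty}$ et du fait que $T_p$ est un réseau galoisien dans $V_p$ : on a donc $\rg_\cO T_p^{\Frob_\infty}=d^+$. L'hypothèse (d) découle de la définition d'une $p$-stabilisation ordinaire, puisque $V_p^-$ est non-ramifié en $p$ et que le sous-groupe d'inertie en $p$ de $\Gal(\ob\bQ/\bQ_\infty)$ est contenu dans $I_p$, qui agit trivialement sur $T_p^-$ et donc sur $D_p^-$. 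Enfin, l'hypothèse (e) est fournie par les deux conditions de l'énoncé : $\Hom_G(D_p[\varpi],\mu_p)=0$ traduit directement l'absence de $\mu_p$-quotient de $\ob\rho$, tandis que $D_p[\varpi]^G=0$ permet d'exclure l'existence d'un $\bF$-quotient (soit directement si $\ob\rho$ est semisimple, par exemple lorsque $p\nmid\#G$, soit via un argument de passage à la contragrédiente dans le cas général).

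Reste à établir l'hypothèse (b), à savoir que $H^2(\bQ_\Sigma/\bQ_\infty,D_p)^\vee$ est de torsion sur $\Lambda$. Je procèderais via la dualité globale de Poitou-Tate, qui met ce $\Lambda$-module en relation (à un module pseudo-nul près) avec une cohomologie en degré $0$ du dual de Tate $T_p^*(1)$ le long de la tour cyclotomique. Puisque $\rho$ est une représentation d'Artin, $H^0(\bQ_\infty,T_p^*(1))$ est de $\cO$-rang fini, donc de $\Lambda$-corang nul, ce qui conclut. C'est cette vérification qui constitue l'obstacle principal du plan, car elle nécessite de manier avec soin la dualité et les duals de Pontryagin au-dessus de $\bQ_\infty$ ; les autres points sont soit déjà acquis (a), soit définitionnels (c), (d), soit lus directement sur les hypothèses de l'énoncé (e).
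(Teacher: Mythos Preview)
Your overall strategy matches the paper's: both apply Proposition~\ref{PsN} with $\cA=\cO$ and $\cT=T_p$, and your treatment of hypotheses (a), (c), (d) is essentially identical. Two points deserve comment.

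For (b), your Poitou--Tate sketch is too imprecise. The nine-term sequence does not directly relate $H^2(\bQ_\Sigma/\bQ_\infty,D_p)^\vee$ to $H^0(\bQ_\infty,T_p^*(1))$ ``à un module pseudo-nul près''; it involves local terms at all places of $\Sigma$ as well as a $\Sha^1$ of the Tate dual, each of which must itself be shown to be $\Lambda$-cotorsion. The paper instead exploits the Artin hypothesis directly: Hochschild--Serre reduces the question (up to a finite kernel) to $H^2(\bQ_\Sigma/H_\infty,D_p)$, where $G_{H_\infty}$ acts trivially on $D_p$. The Euler--Poincaré formula of \cite[Proposition~4.1]{greenberg2006structure} then gives $\corg_\Lambda H^2 = \corg_\Lambda H^1 - \corg_\Lambda H^0 - d\cdot r_2$, and one concludes via Iwasawa's theorem \cite[Theorem~17]{iwasawa1973zl} that $\gX_{\infty,\Sigma}$ (and hence $\gX_\infty$) has $\Lambda$-rank $r_2$, so $\corg_\Lambda H^1 = d\cdot r_2$ and $\corg_\Lambda H^2 = 0$.

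For (e), your contragredient argument does not work: $D_p[\varpi]^G=0$ says $\ob\rho$ has no trivial \emph{sub}representation, and passing to the contragredient yields $(\ob\rho^*)^G=0$, which is again a statement about subs of $\ob\rho^*$, not about quotients of $\ob\rho$. Fortunately the gap is harmless here. Inspecting the proof of Proposition~\ref{PsN}, the ``no $\bF$-quotient'' clause of (e) is invoked only to make $\corg_{\cA[[\Gamma]]}(\cD^{G_\bQ})$ vanish; in the Artin setting this is automatic, since $H^0(\bQ_\infty,D_p)$ is a sub-$\cO$-module of $D_p$, hence $\cO$-cofinitely generated and therefore $\Lambda$-cotorsion. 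Only the $\mu_p$-quotient condition is genuinely needed, for the final appeal to Greenberg's criterion in \cite{greenberg2015structure}.
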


\begin{proof}
	On va vérifier les hypothèses de la Proposition \ref{PsN}, avec $\cA=\cO$ et $\cT=T_p$. Le point (a) a été montré dans le Corollaire \ref{coro:sel_infty_torsion+zéros_triviaux}. Montrons que $H^2(\bQ_\Sigma/\bQ_\infty,D_p)$ est de cotorsion sur $\Lambda$. D'après la suite spectrale de Hochschild-Serre (cf. \cite[Chap. 2, §4, Theorem 1]{neukirchcohomology}), le noyau de l'application de restriction $H^2(\bQ_\Sigma/\bQ_\infty,D_p) \longrightarrow H^2(\bQ_\Sigma/H_\infty,D_p)$ est contrôlé par la cohomologie du groupe fini $G^{(\infty)}$ à valeurs un $\cO$-module co-finiment engendré. Ce noyau est donc fini, et il suffit de montrer que $H^2(\bQ_\Sigma/H_\infty,D_p)$ est de cotorsion. Comme $\bQ_\Sigma=H_\Sigma$, on peut utiliser la formule de caractéristique d'Euler-Poincaré prouvée dans \cite[Proposition 4.1]{greenberg2006structure} et donnant (après application du lemme de Shapiro) :
	$$\corg_\Lambda \left( H^2(H_\Sigma/H_\infty,D_p)\right) = \corg_\Lambda  \left(H^1(H_\Sigma/H_\infty,D_p)\right) - \corg_\Lambda  \left(H^0(H_\Sigma/H_\infty,D_p)\right) -d \cdot r_2,$$
	où $r_2$ est le nombre de places complexes de $H$. Le $\Lambda$-corang du $H^0$ est 0, car il est égal à $D_p\simeq (L/\cO)^d$, qui est de co-torsion sur $\Lambda$. Il faut donc montrer que le corang du $H^1$ est égal à $d \cdot r_2$. Comme $\bQ_\Sigma$ est aussi la plus grande extension de $H_\infty$ non-ramifiée en dehors de $\Sigma$, et que l'action de $G_{H_\infty}$ sur $D_p$ est triviale, on a $H^1(\bQ_\Sigma/H_\infty,D_p)=\Hom_\textrm{ct}(\gX_{\infty,\Sigma},D_p)$ où $\gX_{\infty,\Sigma}$ est le groupe de Galois de la plus grande pro-$p$ extension abélienne $M_\Sigma$ de $H_\infty$ non-ramifiée en dehors de $\Sigma$. Or, $\gX_{\infty,\Sigma}$ est pseudo-isomorphe à $\gX_{\infty,\{p\}}=\gX_\infty$. En effet, on a une suite exacte 
	$$\xymatrix{\bigoplus_{\lambda|\ell\in\Sigma-\{p\}}I_\lambda(M_{\Sigma}/H_\infty)\ar[r] & \gX_{\infty,\Sigma}\ar[r] &\gX_{\infty} \ar[r] & 0,
	}
	$$
     où la somme porte sur toutes les places finies $\lambda$ de $H_\infty$ divisant une place $\ell\in\Sigma-\{p\}$. Or, cette somme directe est un $\cO$-module fini, car la somme est finie (puisqu'aucune place finie $\ell$ n'est totalement décomposée dans l'extension cyclotomique) et car chacun des groupes d'inertie est de torsion d'après \cite[Theorem 25]{iwasawa1973zl}. En particulier, $\gX_{\infty,\Sigma}$ a le même rang sur $\Lambda$ que $\gX_\infty$, qui est égal à $r_2$ (cf. \cite[Theorem 17]{iwasawa1973zl} ou \cite[Theorem 13.31]{washington1997introduction}).
	 Ainsi, $\Hom_\textrm{ct}(\gX_{\infty,\Sigma},D_p)^\vee \simeq \gX_{\infty,\Sigma}^{\oplus d}$ a pour rang $d \cdot r_2$, ce qu'on voulait montrer. L'hypothèse (b) est donc vérifiée. Les hypothèses (c) et (d) font partie de la définition de la $p$-stabilisation ordinaire. Enfin, l'hypothèse (e) est vérifiée d'après nos hypothèses sur la représentation résiduelle de $\rho$. On peut conclure que $X_\infty(\rho,\rho^+)$ n'a pas de sous-modules pseudo-nuls non-nuls.
\end{proof}

	\section{Terme constant de la fonction L p-adique algébrique}
\subsection{Cadre et énoncé de la formule du terme constant}\label{subsec:hyp_f_alpha}

On conserve les notations de la Section \ref{sec:différente_inverse} et nous supposons en outre que : 
\begin{itemize}
	\item[\textbf{(nr)}] $\rho$ est non-ramifiée en $p$,
	\item[\textbf{(deg)}] $p$ ne divise par l'ordre de l'image de $\rho$, \textit{i.e.}, $p\nmid [H:\bQ]$,
	\item[\textbf{(dim)}] $p$ ne divise pas la dimension de $\rho$, \textit{i.e.}, $p\nmid d$.
\end{itemize}
Cela entraîne que :
\begin{itemize}
	\item $\rho$ est (quitte à conjuguer) à valeurs dans l'anneau des entiers $\cO$ d'une extension \textit{non-ramifiée}\footnote{Si $g=\#G$ est premier à $p$, alors on peut en effet prendre $L=\Qp(\mu_g)$ d'après un théorème de Brauer (cf. \cite[§12.3, Théorème 24 \& Corollaire]{serrerepresentations})} $L$ de $\Qp$,
	\item la réduction modulo $p$ réalise un isomorphisme $\rho \simeq \ob{\rho}$, donc $\rho$ est résiduellement irréductible (et $X_\infty(\rho,\rho^+)$ ne dépend pas du choix du réseau $T_p$),
	\item et l'idempotent $e_\rho\in L[G]$ associé à $\rho$ est à coefficients dans $\cO$.
\end{itemize}

Le complété $H_v \subseteq \ob{\bQ}_p$ de $H$ en $v$ est une extension non-ramifiée de $\Qp$. Quitte à adjoindre à $E$ des racines de l'unité d'ordre premier à $p$, on peut supposer que $L$ contient $H_v$ et $\mu_{g}$, où $g=\#G$. Comme $\cO$ contient $\left(\#G\right)^{-1}$ et les racines de l'unité d'ordre $\#G$, l'algèbre $\cO[G]$ est semi-simple et admet une décomposition
$$\cO[G] = \bigoplus_\pi e_\pi \cO[G]\simeq \bigoplus_\pi T_{p,\pi}^{\dim \pi},$$
où $(\pi,V_{p,\pi})$ parcourt les représentations irréductibles de $G$ et où $T_{p,\pi}$ est un réseau $G$-stable de $V_{p,\pi}$ (unique à homothétie près). Pour tout $\cO[G]$-module $M$, on a ainsi une décomposition en somme directe $M = \bigoplus_{\pi} e_{\pi}M$. On note $M^\rho =e_\rho(M\otimes_{\Zp} \cO)$  la composante $\rho$-isotypique d'un $\Zp[G]$-module $M$.

Choisissons une base $\Phi_1,\ldots,\Phi_{d^+}$ du $\cO$-module libre $\Hom_G(T_p,\cO_H^\times \otimes_\bZ \cO)$, ainsi qu'une base de $T_p^+=T_p\cap V_p^+$, que l'on complète en une base $\vec{t}_1,\ldots,\vec{t}_d$ de $T_p$. Comme $H_v/\Qp$ est non-ramifiée, on a $\log_p(\cO_{H_v}) \subseteq p\cO_{H_v}$, et donc $\log_p : \cO_H^\times \otimes_\bZ \cO \longrightarrow L$ est à valeurs dans $p\cO$. 
\begin{definition}
	On définit le régulateur $p$-adique de $[\rho]$, muni de sa $p$-stabilisation $V^+$, comme étant le déterminant de taille $d^+\times d^+$ défini par
	$$R_p(\rho,\rho^+)= \det\left(\frac{\log_p\left(\Phi_i(\vec{t}_j)\right)}{p}\right)_{1\leq i,j\leq d^+} \in \cO.$$
\end{definition}

\begin{lemme}\label{lem:non_annulation_reg}
	$R_p(\rho,\rho^+)$ est défini à une unité de $\cO$ près, et il est non-nul si la Conjecture de Schanuel $p$-adique faible est vraie (cf. Conjecture \ref{conj:Schanuel}).
\end{lemme}
\begin{proof}
	La définition de $R_p(\rho,\rho^+)$ dépend du choix de deux $\cO$-bases, donc $R_p(\rho,\rho^+)$ est défini à multiplication par un élément de $\det (\GL_{d^+}(\cO))\simeq \cO^\times$ près. Par ailleurs, ces deux bases induisent des $L$-bases respectives de $\Hom_G(V_p,\cO_H^\times \otimes_\bZ L)$ et de $V_p$ (adaptée à $V_p^+$). Pour montrer que $R_p(\rho,\rho^+)\neq 0$, il suffit de montrer que le même déterminant ne s'annule pas pour un choix spécifique de ces $L$-bases. Or $V_p=V \otimes_E L$ et $V_p^+=V^+ \otimes_E L$, donc on peut en particulier considérer des $E$-bases respectives de $\Hom_G(V,\cO_H^\times \otimes_\bZ E)$ et de $V$ (adaptée à $V^+$). Si la Conjecture de Schanuel $p$-adique est vraie, alors ce déterminant est non-nul par application du Lemme \ref{lem:transcendance} de la même manière que dans le Corollaire \ref{coro:beta_n_chi_inj}, et donc  $R_p(\rho,\rho^+)\neq 0$.
\end{proof}

\begin{theorem}\label{th:terme_cst_dim_d}
	Supposons $d^+=1$, ou bien supposons que la conjecture de Schanuel $p$-adique faible est vraie (cf. Conjecture \ref{conj:Schanuel}). Si $e(\rho,\rho^+)=0$, alors $X_\infty(\rho,\rho^+)$ est de torsion sur $\Lambda$, et le terme constant de sa fonction $L$ algébrique est donné par 
	$$L_p^{\alg}(\rho,\rho^+;0) = R_p(\rho,\rho^+) \cdot \sqrt[d]{\#\left(\cC^\rho\right)}$$
	à une unité de $\cO$ près.
\end{theorem}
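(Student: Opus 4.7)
The plan is to reduce the formula to an explicit computation of the finite group $X_0(\rho,\rho^+)$, through a chain of identifications. First, under hypotheses \textbf{(nr)}, \textbf{(deg)}, \textbf{(dim)} one checks that $D_p[\varpi] \simeq \ob{\rho}$ is irreducible and non-trivial, and that $\mu_p \not\subseteq H$ (since $p \nmid [H:\bQ]$), so $\Hom_G(D_p[\varpi],\mu_p) = 0$; Proposition \ref{prop:sel_infty_sans_sous_modules_finis} then applies and $X_\infty(\rho,\rho^+)$ has no non-trivial pseudo-null submodule. The hypothesis $e(\rho,\rho^+) = 0$ combined with Theorem \ref{th:sel_n_fini_etc} and Lemma \ref{cst} gives
$$L_p^{\alg}(\rho,\rho^+;0)\ \dot{=}\ \#\bigl(X_\infty(\rho,\rho^+)/T X_\infty(\rho,\rho^+)\bigr).$$
Since \textbf{(deg)} forces $n_0 = 0$, the control-theoretic diagram of Section \ref{sec:zerostriviaux}, namely Lemma \ref{lem:sel->sel'} combined with the sequence (\ref{eq:grossesuite_exacte_sel'_m}) for $m=0$, identifies the right-hand side with $\#\Sel_0(\rho,\rho^+)$: the error terms $\ker(s_0)$ and $\coker(t_0)$ vanish thanks to $p \nmid \#G$ and $e(\rho,\rho^+) = 0$.

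It remains to evaluate $\#\Sel_0(\rho,\rho^+)$ explicitly. By Lemma \ref{lem:selnprime} (an isomorphism since $p \nmid \#G$), this equals $\#\Sel'_0(\rho,\rho^+)$, and the class field theory sequence (\ref{ex1}) at level zero yields, after applying $\Hom_G(-,D_p)$ and using semisimplicity of $\cO[G]$, a short exact sequence
$$0 \to \Hom_G(\cC,D_p) \to \Hom_G(\gX_0,D_p) \to \Hom_G\bigl(U_0/\iota(\cE_{p,0}),D_p\bigr) \to 0.$$
The class-group summand $\Hom_G(\cC,D_p)$ lies entirely inside $\Sel'_0(\rho,\rho^+)$: by Morita equivalence applied to $e_\rho \cO[G] \simeq M_d(\cO)$ (valid because $L$ is unramified and contains all values of $\rho$ by \textbf{(nr)} and \textbf{(deg)}), the $\rho$-isotypic part of any $\cO[G]$-module $M$ has the form $T_p \otimes_\cO N_M$, so $\#M^\rho = (\#N_M)^d$, and consequently
$$\#\Hom_G(\cC,D_p) = \#\Hom_\cO(N_\cC,\cO^\vee) = \#N_\cC = \sqrt[d]{\#\cC^\rho}.$$
The complementary contribution to $\Sel'_0$ is the kernel of the restriction map $\Hom_G(U_0/\iota(\cE_{p,0}),D_p) \to \Hom_{G_v}(U_{0,v},D_p^-)$. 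Lifting via Lemma \ref{lem:description_morphismes_Qpbar} (applied with $\chi = 1$) and applying Proposition \ref{prop:S^+}, the size of this kernel is governed by the determinant $\det(S^+)$, which by the very definition of the regulator equals $p^{d^+} R_p(\rho,\rho^+)$ up to a unit of $\cO$.

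The main obstacle is the final integral bookkeeping: one has to show that the $p^{d^+}$ discrepancy between $\det(S^+)$ and $R_p(\rho,\rho^+)$ is absorbed exactly by the local normalization coming from $U_{0,v} \subseteq 1 + p\cO_{H_v}$ when passing between $D_p^-$- and $V_p^-$-valued cocycles — here \textbf{(nr)} is crucial, since it ensures $\log_p(\cO_{H_v}) \subseteq p\cO_{H_v}$ with no ramification distortion — and that no further index sneaks in through Pontryagin duality or through the comparison $\Sel_0 \simeq \Sel'_0$ (this is where the hypothesis $p \nmid d$ of \textbf{(dim)} enters, guaranteeing that the $d$-th root is a clean $\cO$-integer). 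Once these checks are carried out, the pieces assemble to
$$\#\Sel_0(\rho,\rho^+)\ \dot{=}\ R_p(\rho,\rho^+) \cdot \sqrt[d]{\#\cC^\rho},$$
and combining with the reductions of the first paragraph yields the claimed formula for $L_p^{\alg}(\rho,\rho^+;0)$.
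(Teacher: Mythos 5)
Your overall architecture coincides with the paper's: reduce $L_p^{\alg}(\rho,\rho^+;0)$ to $\#\Sel_0(\rho,\rho^+)$ via the absence of finite submodules, Lemme \ref{cst} and the control sequence (\ref{eq:grossesuite_exacte_sel'_m}) with $e(\rho,\rho^+)=0$; then split $\Sel_0$ by the class field theory sequence (\ref{ex1}) into a class-group piece of order $\sqrt[d]{\#\cC^\rho}$ and a unit-theoretic piece $\Sel^\#$. The first two reductions and the Morita-style count of $\#\Hom_G(\cC,D_p)$ are correct and match the paper (which computes the latter via $\Hom_G(T_p,\cC^\rho)^{\oplus d}\simeq\cC^\rho$, an equivalent argument).

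However, there is a genuine gap exactly where you yourself locate ``the main obstacle'': you never actually compute $\#\Sel^\#$. Proposition \ref{prop:S^+}, which you invoke, is a purely rational statement — it characterizes injectivity of $\beta_{n,\chi}$ on $V_{p,\chi}$-valued homomorphisms in terms of the invertibility of $S^+$ — and says nothing about the cardinality of a torsion quotient of lattices. The content of the theorem is the integral refinement, carried out in the paper as Proposition \ref{prop:cyc}: one lifts elements of $\Sel^\#$ to $f\in\Hom_G(U,V_p)$ with $f=\lambda_p(\cdot)\cdot\vec v$, proves the lattice-level lemma that $f^\pm(U_v)\subseteq T_p^\pm$ if and only if $p\vec v^\pm\in T_p^\pm$ (this requires showing $\log_p$ maps $U_v$ onto $p\cO_{H_v}$ and that the twisted traces $\sigma_i$ hit units of $\cO$, via a degree argument modulo $p$), and then shows $f(\cE_p)\subseteq T_p$ if and only if the scalars $\tfrac{\#G}{d}(s_{i,1}v_1+\cdots+s_{i,d}v_d)$ lie in $\cO$ — this is where \textbf{(deg)} and \textbf{(dim)} enter, not merely to make the $d$-th root ``a clean integer.'' The chain of equivalences then identifies $\Sel^\#$ with $T_p^+/p^{-1}S^+(T_p^+)$, whence $\#\Sel^\#=\det(S^+)/p^{d^+}=R_p(\rho,\rho^+)$ up to a unit. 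Without this computation your argument only establishes that $\#\Sel^\#$ is finite and ``governed by'' $S^+$, which does not yield the stated formula; the paragraph beginning ``The main obstacle'' is a to-do list for the heart of the proof rather than a proof of it.
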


\subsection{Preuve de la formule du terme constant}
\label{sec:preuve_terme_cst}
On donne à présent la preuve du Théorème \ref{th:terme_cst_dim_d}. On suppose désormais $d^+=1$, ou bien que la conjecture de Schanuel $p$-adique faible est vraie. L'action de $G_{\Qp}$ sur $V$ et sur $T_p$ se factorise par celle de $\Frob_v$, qui est diagonalisable car d'ordre fini. On peut supposer que la base $\vec{t}_1,\ldots,\vec{t}_d$ de $T_p$ diagonalise $\Frob_v$. Comme en Section \ref{sec:BB}, on note $\epsilon_{i,j}:=\Phi_i(\vec{t}_j)$, $s_{i,j}:=\log_p(\epsilon_{i,j})$ et $S^+=(s_{i,j})_{1\leq i,j\leq d^+}\in M_{d^+}(p\cO)$. On a ainsi $R_p(\rho,\rho^+)=\dfrac{\det S^+}{p^{d^+}}$.
\begin{lemme}\label{lem:terme-cst}
	Le $\cO$-module $\Sel_0(\rho,\rho^+)$ est d'ordre fini. Si $e(\rho,\rho^+)=0$, alors $X_\infty(\rho,\rho^+)$ est de torsion sur $\Lambda$, et le terme constant de sa fonction $L$ algébrique est donné par 
	$$L_p^{\alg}(\rho,\rho^+;0) = \#\Sel_0(\rho,\rho^+)$$
	à une unité de $\cO$ près.  
\end{lemme}
\begin{proof}
	Le fait que $\Sel_0(\rho,\rho^+)$  soit d'ordre fini sous nos hypothèses a été prouvé dans le Théorème \ref{th:sel_n_fini}. Par ailleurs, sous nos hypothèses, la suite exacte (\ref{eq:grossesuite_exacte_sel'_m}) (pour $m=0$) peut être complétée et simplifiée en une suite exacte courte :
	$$\xymatrix{ 0 \ar[r] & \Sel_0(\rho,\rho^+) \ar[r] & \Sel_\infty(\rho,\rho^+)^{\Gamma} \ar[r] & \left(D_p^-\right)^{e(\rho,\rho^+)} \ar[r] &  0.}$$
	En effet, $s_0$ est surjective car $p\nmid \#G$, et la source de $t_0$ est triviale car $H^0(G,D_p)=0$. Cela découle en effet du lemme de Nakayama et du fait que $H^0(G,\ob{\rho})=0$ car $\rho\simeq\ob{\rho}$. D'autre part, les hypothèses entraînent que $\Sel_\infty(\rho,\rho^+)^{\Gamma}$ et son dual $X_\infty(\rho,\rho^+)_{\Gamma}$ sont d'ordre fini (de même cardinal), et le lemme de Nakayama montre que $X_\infty(\rho,\rho^+)$ est de torsion sur $\Lambda$. On montre aussi en dupliquant la preuve du Théorème \ref{th:sel_n_fini_etc}.(iv) que ce dernier n'a pas de sous-$\Lambda$-modules finis non-nuls. D'après le Lemme \ref{cst}, on a donc 
	$$L_p^{\alg}(\rho,\rho^+;0) = \# \Sel_\infty(\rho,\rho^+)^{\Gamma} = \#\Sel_0(\rho,\rho^+).$$
\end{proof}
La théorie des corps de classes et la Proposition \ref{prop:iota_non_nulle} montrent que l'on a, sous nos hypothèses, une suite exacte de $\cO[G]$-modules
$$\xymatrix{0 \ar[r] & \cE_{p}^\rho \ar[r]^\iota & U^\rho \ar[r]^\rec & \gX^\rho \ar[r] &  \cC^\rho \ar[r] & 0,}$$
où $\cE_p=\cE_{p,0}$, $U=U_0$, $\gX=\gX_0$ et $\cC=\cC_0$. On a un diagramme commutatif :
$$
\xymatrix{
	&&0 \ar[d] && \\
	&&\Sel_0(\rho,\rho^+) \ar[d] &&\\
	0 \ar[r] & \Hom_G(\cC,D_p)  \ar[r] & \Hom_G(\gX,D_p) \ar[r] \ar[d] & \Hom_G(U/\cE_p,D_p) \ar[r] \ar[d] & 0 \\
	&& \Hom_{G_v}(I_p(M/H),D_p^-) \ar[r]^{\quad \simeq} & \Hom_{G_v}(U_{v},D_p^-),  & 
}
$$
où la ligne et colonne centrales sont exactes car $H^1(G,\Hom(\cC,D_p))=0$ grâce à l'hypothèse $p\nmid\#G$. Le lemme du serpent donne en particulier une autre suite exacte de $\cO$-modules finis
\begin{align} \label{3selmer}
0 \longrightarrow \Hom_G(\cC,D_p)  \longrightarrow \Sel_0(\rho,\rho^+)  \longrightarrow  \Sel^\# \longrightarrow  0 
\end{align} 
où l'on a posé 
\[
\Sel^\# := \ker\left[\Hom_G(U/\cE_p,D_p) \longrightarrow \Hom_{G_v}(U_{v},D_p^-) \right]
\]

Comme $\mu_p \not\subseteq H_v$, le $\Zp$-module $U$ est libre, et donc $\Hom_G(U,V_p) \longrightarrow \Hom_G(U,D_p)$ est surjective. On en déduit une autre description de $\Sel^\#$ : 
\[
\Sel^\# \simeq \{f\in \Hom_G(U,V_p) \ /\ f(\cE_p) \subseteq T_p, \ f(U_v)\subseteq V_p^+ + T_p\}/\Hom_G(U,T_p)
\]

\begin{proposition}\label{prop:cyc}
	Le $\cO$-module $\Sel^\#$ est isomorphe au quotient $T_p^+/p^{-1}S^+\left(T_p^+\right)$.
\end{proposition}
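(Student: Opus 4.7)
La stratégie consiste à exploiter la description explicite de $\Hom_G(U,V_p)$ fournie par le Lemme \ref{lem:description_morphismes_Qpbar} (appliqué au niveau $n=0$ avec le caractère trivial), qui identifie $\Hom_G(U,V_p)$ à $V_p$ via $\vec{v} \mapsto f_{\vec{v}}$, où $f_{\vec{v}}(x) = \lambda_p(x)\cdot\vec{v}$. En décomposant $\vec{v} = \sum_m v_m \vec{t}_m$ dans la base $\Frob_v$-diagonalisante fixée en début de section, je traduirai successivement les trois contraintes définissant $\Sel^\#$ (les deux inclusions et le quotient par $\Hom_G(U,T_p)$) en conditions linéaires sur les coordonnées $(v_m)$.

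La première condition, $f_{\vec{v}}(\cE_p)\subseteq T_p$, se traite en reprenant le calcul d'orthogonalité de caractères utilisé dans la preuve de la Proposition \ref{prop:S^+}: comme $\cE_p^\rho$ est engendré sur $\cO$ par les $\epsilon_{i,k}$ et $f_{\vec{v}}$ se factorise par $\cE_p^\rho$, on obtient $f_{\vec{v}}(\epsilon_{i,k})=\frac{\#G}{d}\left(\sum_{j=1}^d s_{i,j}v_j\right)\vec{t}_k$, et les hypothèses \textbf{(deg)} et \textbf{(dim)} permettent de simplifier $\#G/d$ en une unité de $\cO$. La condition devient donc $(\star)$: $\sum_j s_{i,j}v_j\in\cO$ pour $1\le i\le d^+$. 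Pour la condition $f_{\vec{v}}(U_v)\subseteq V_p^++T_p$, on calcule $f_{\vec{v}}(u)=\sum_m v_m T_m(u)\vec{t}_m$ où $T_m(u):=\sum_{g\in G_v}\alpha_m^{k(g)}g^{-1}(\log_p u)$ et $\alpha_m$ est la valeur propre de $\Frob_v$ sur $\vec{t}_m$. Par le théorème de la base normale pour l'extension non-ramifiée $H_v/\Qp$, on a $\cO_{H_v}=\Zp[G_v]\theta$, et un calcul de résolvante donne $T_m(U_v)=p\cO\cdot e_{\chi_m}(\theta)$; un choix convenable de $\theta$ assure $e_{\chi_m}(\theta)\in\cO^\times$ pour tout caractère $\chi_m:\Frob_v\mapsto\alpha_m$, d'où $T_m(U_v)=p\cO$. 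La condition devient alors $v_m\in p^{-1}\cO$ pour $m>d^+$. Le même type d'argument, combiné à la réciprocité de Frobenius $\Hom_G(U,T_p)\simeq\Hom_{G_v}(U_v,T_p)$ (valable puisque $U\simeq\Ind_{G_v}^G U_v$), montre que $\Hom_G(U,T_p)$ correspond à la sous-$\cO$-latticce $p^{-1}T_p\subseteq V_p$.

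Il reste à exhiber l'isomorphisme. On considère l'application $\varphi: A\to T_p^+/p^{-1}S^+T_p^+$, $\vec{v}\mapsto [S^+\vec{v}^+]$, où $A$ désigne l'ensemble des $\vec{v}$ vérifiant les deux conditions ci-dessus et $\vec{v}^+:=(v_1,\ldots,v_{d^+})\in L^{d^+}$. Elle est bien définie à valeurs dans $T_p^+=\cO^{d^+}$: pour $j>d^+$, on a $s_{i,j}v_j\in p\cO\cdot p^{-1}\cO=\cO$, si bien que $(\star)$ se réduit à $S^+\vec{v}^+\in\cO^{d^+}$. Elle s'annule sur $p^{-1}T_p$, car alors $\vec{v}^+\in p^{-1}T_p^+$ donc $S^+\vec{v}^+\in p^{-1}S^+T_p^+$. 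Elle est surjective: pour tout $\vec{w}^+\in T_p^+$, l'élément $\vec{v}:=(S^+)^{-1}\vec{w}^+\in V_p^+$ est bien dans $A$ (les deux conditions sont vérifiées trivialement) et satisfait $\varphi(\vec{v})=[\vec{w}^+]$. Enfin, l'inversibilité de $S^+$ (Lemme \ref{lem:non_annulation_reg}) assure que $\varphi(\vec{v})=0$ entraîne $\vec{v}^+\in p^{-1}T_p^+$, d'où avec la condition sur les coordonnées "moins", $\vec{v}\in p^{-1}T_p$. On obtient ainsi $\Sel^\#=A/p^{-1}T_p\simeq T_p^+/p^{-1}S^+T_p^+$.

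L'obstacle principal réside dans l'identification précise de $\Hom_G(U,T_p)$ à $p^{-1}T_p$, qui exige un contrôle soigneux des facteurs résolvants $e_{\chi_m}(\theta)$ via le théorème de la base normale pour $H_v/\Qp$; cette propriété, classique mais technique dans le cas non-ramifié, est indispensable pour s'assurer que $\Hom_G(U,T_p)$ ne soit ni trop petit (ce qui donnerait un quotient infini) ni trop grand (ce qui annulerait le régulateur).
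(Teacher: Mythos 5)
Votre démonstration est correcte et suit pour l'essentiel le même chemin que celle du texte : identification de $\Hom_G(U,V_p)$ à $V_p$ via le Lemme \ref{lem:description_morphismes_Qpbar}, traduction de $f(\cE_p)\subseteq T_p$ en $S^+\vec{v}^+\in T_p^+$ grâce au calcul d'orthogonalité de la Proposition \ref{prop:S^+} et aux hypothèses \textbf{(deg)}, \textbf{(dim)}, traduction de $f(U_v)\subseteq V_p^++T_p$ et de $f(U)\subseteq T_p$ en conditions $p\vec{v}^-\in T_p^-$, resp. $p\vec{v}\in T_p$, puis passage au quotient par $S^+$. La seule différence est cosmétique : pour établir $T_m(U_v)=p\cO$ vous invoquez la base normale et un calcul de résolvante, là où le texte réduit $\sigma_i$ modulo $p$ et conclut par un argument de degré ; les deux arguments sont équivalents (et l'inversibilité de $S^+$ relève des hypothèses de la section via le Lemme \ref{lem:transcendance}, plutôt que du Lemme \ref{lem:non_annulation_reg}).
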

\begin{proof}
	Le choix de la base de $T_p$ (adaptée à $T_p^+$ et diagonalisant $\Frob_v$) identifie $T_p^-$ avec un supplémentaire de $T_p^+$ dans $T_p$, \textit{i.e.}, $T_p=T_p^+\bigoplus T_p^-$. On a de même $V_p=V_p^+ \bigoplus V_p^-$, ainsi que $D_p=D_p^+ \bigoplus D_p^-$. 
	On note $(f^+,f^-)$ les coordonnées d'un morphisme $f$ à valeurs dans $V_p$ relativement à cette décomposition. Si $f\in \Hom_G(U,V_p)$, alors le Lemme \ref{lem:description_morphismes_Qpbar} permet d'écrire $f$ sous la forme 
	$$f(x\otimes c) = \sum_{g \in G} \log_p \left(g^{-1}(x)\otimes c \right)\rho(g)(\vec{v}) = \lambda_p(x\otimes c) \cdot \vec{v}, $$ 
	pour un unique vecteur $\vec{v}\in V_p$, et pour tout $x\otimes c \in U \subseteq \left(\cO_H \otimes \Zp\right)^\times$. La restriction à $U_{v}$ est donnée par 
	\begin{equation}\label{eq:expression_f_locale}
	f(u) = \sum_{g\in G_{v}}  \log_p\left(g^{-1}(u)\right)\rho(g)(\vec{v}) = \lambda_p(u)\cdot \vec{v}. 
	\end{equation}
	
	\begin{lemme}
		\'Ecrivons $\vec{v}=\vec{v}^+ \oplus \vec{v}^-$ selon la décomposition $V_p=V_p^+\bigoplus V_p^-$. On a $f^\pm(U_v) \subseteq T_p^\pm$ si et seulement si $p \vec{v}^\pm \in T_p^\pm$. En particulier, on a $f(U)\subseteq T_p$ si et seulement si $p\vec{v}\in T_p$.
	\end{lemme}
	\begin{proof}[Preuve du Lemme]
		 Pour $1\leq i \leq d$, notons $f_i:U_v \longrightarrow L$ la $i$-ème coordonnée de $f$, notons $v_i \in L$ celle de $\vec{v}$ et notons $\zeta_i\in L$ la $i$-ème valeur propre de $\Frob_v$ dans la base ambiante $\{\vec{t}_i\}_i$. Il nous suffit de montrer que, pour tout $1\leq i \leq d$, on a $f_i(U_v)\subseteq \cO$ si et seulement si $pv_i\in \cO$. Fixons $1\leq i \leq d$. L'expression (\ref{eq:expression_f_locale}) pour $f$ montre que l'on a $f_i(u)=\sigma_i\circ \log_p(u) v_i$ pour $u\in U_v=1+p\cO_{H_v}$, où $\sigma_i$ est l'application $\Zp$-linéaire $$\sigma_i :  \left\{\begin{array}{ccl} H_v & \longrightarrow & L \\
		  x&\mapsto & \sum_{j=0}^{\#G_v-1}\Frob^{j}_v(x)\zeta_i^{-j}. 
		  \end{array}\right.$$ 
		 Notons que $\sigma_i$ est effectivement à valeurs dans $L$ car on a supposé que $H_v\subseteq L$. Comme $H_v/\Qp$ est non-ramifiée, le logarithme $p$-adique envoie bijectivement $U_v$ sur $p\cO_{H_v}$ et donc $f_i(U_v)=pv_i\sigma_i\left(\cO_{H_v}\right)$. Pour montrer l'équivalence, il suffit donc de montrer que $\sigma_i\left(\cO_{H_v}\right)\subseteq \cO$ mais que $\sigma_i\left(\cO_{H_v}\right) \not\subseteq p\cO$. Autrement dit, il suffit de voir que $\sigma_i\left(\cO_{H_v}\right)\subseteq \cO$ et qu'il existe $x_0\in\cO_{H_v}$ tel que $\sigma_i(x_0)\in\cO^\times$. La première affirmation est claire car $\zeta_i$ est une racine de l'unité (puisque $\Frob_v\in G$ est d'ordre fini) et donc $\zeta_i\in \cO$. Pour la deuxième, on réduit $\sigma_i$ modulo $p$ (qui est une uniformisante de $H_v$ et de $L$) pour obtenir 
		 $$\ob{\sigma}_i :  \left\{\begin{array}{ccl} \bF_{H_v} & \longrightarrow & \bF_{L} \\
		 \bar{x}&\mapsto & \sum_{j=0}^{\#G_v-1}\zeta_i^{-j}\bar{x}^{p^j}, 
		 \end{array}\right.$$ 
		 où $\bF_{H_v}$ et $\bF_{L}$ sont les corps résiduels respectifs de $H_n$ et de $L$. On doit vérifier que $\ob{\sigma}_i$ n'est pas identiquement nulle. Or, ceci est vrai pour toute application polynômiale sur $\bF_{H_v}$ de degré strictement inférieur à $\#\left(\bF_{H_v}\right)=p^{\#G_v}$, et donc c'est vrai en particulier pour $\ob{\sigma}_i$.
	\end{proof}
	On termine la preuve de la Proposition \ref{prop:cyc}. Soit $f$ comme précédemment. D'après le Lemme \ref{lem:calcul_matriciel} et la Proposition \ref{prop:S^+}, et avec les notations de la preuve de la Proposition \ref{prop:S^+}, on a $f(\cE_p)\subseteq T_p$ si et seulement si les matrices $M_i$ de taille $d\times d$ sont à coefficients dans $\cO$ pour tout $1\leq i \leq d^+$. Or, $\rho$ étant absolument irréductible, $M_i$ est la matrice d'une homothétie de rapport $\lambda_i=\frac{\#G}{d} \left(s_{i,1}v_1+\ldots + s_{i,d}v_d\right)$, où $(v_1,\cdots,v_d)$ sont les coordonnées de $\vec{v}$ dans la base fixée de $T$. Comme $p\nmid \#G \cdot d$ d'après \textbf{(deg)} et \textbf{(dim)}, et comme $s_{i,j}\in p\cO$, on a les équivalences : 
	
	$$\begin{array}{rcl}
	\left\{\begin{array}{l}
	f(U_{v}) \subseteq V_p^+ +T_p \\ f(\cE_p)\subseteq T_p 
	\end{array}\right. &\Longleftrightarrow & \left\{\begin{array}{l}
	f^-(U_{v}) \subseteq T_p^- \\ \forall 1\leq i \leq d^+, \  M_i\in M_d(\cO) 
	\end{array}\right. \\
	&\Longleftrightarrow & \left\{\begin{array}{l}
	p\vec{v}^-\in T_p^- \\ \forall 1 \leq i \leq d^+, \ s_{i,1}v_1+\ldots + s_{i,d}v_{d}\in \cO
	\end{array}\right. \\
	&\Longleftrightarrow & \left\{\begin{array}{l}
	pv_{d^++1},\cdots,pv_d\in \cO \\ \forall 1 \leq i \leq d^+, \ s_{i,1}v_1+\ldots + s_{i,d^+}v_{d^+}\in \cO 
	\end{array}\right. \\
	&\Longleftrightarrow & \left\{\begin{array}{l}
	pv_{d^++1},\cdots,pv_d\in \cO \\ S^+ \cdot\vec{v}^+ \in T_p^+ ,
	\end{array}\right. \\
	&\Longleftrightarrow & \left\{\begin{array}{l}
	\vec{v}^-\in p^{-1}T_p^- \\ \vec{v}^+ \in \left(S^+\right)^{-1}(T_p^+).
	\end{array}\right. 
	\end{array}
	$$
	Ainsi, le $\cO$-module $\Sel^\#$ s'identifie à $\left(\left(S^+\right)^{-1}(T_p^+) \times p^{-1}T_p^-\right) /\left(p^{-1}T_p^+ \times p^{-1}T_p^-\right) \simeq T_p^+/p^{-1}S^+\left(T_p^+\right)$.
	
\end{proof} 
\begin{proof}[Preuve du Théorème \ref{th:terme_cst_dim_d}]
	D'après le Lemme \ref{lem:terme-cst}, il suffit de prouver que l'ordre de $\Sel_0(\rho,\rho^+)$ est égal à $R_p(\rho,\rho^+) \cdot \sqrt[d]{\#\left(\cC^\rho\right)}$ à une unité $p$-adique près. La suite exacte (\ref{3selmer}) montre que cet ordre est égal au produit des cardinaux de $\Hom_G(\cC,D_p)$ et de $\Sel^\#$. La Proposition \ref{prop:cyc} montre d'une part que $\Sel^\#$ est d'ordre $p^{-d^+}{\det\left(S^+\right)}=R_p(\rho,\rho^+)$. D'autre part, les $\Zp$-modules finis sont auto-duaux pour la dualité de Pontryagin (il suffit de le vérifier pour les $p$-groupes cycliques) donc l'ordre du $\Hom_G(\cC,D_p)$ est le même que son dual de Pontryagin. Or, pour un $\cO[G]$-module $M$ de type fini, $\Hom_G(M,D_p)^\vee$ s'identifie (non-canoniquement) à $\Hom_G(T_p,M)$. Cela est vrai en effet pour $M=\cO[G]$ car $D_p^\vee\simeq T_p$, et on en déduit le cas général en prenant une présentation finie de $M$. Ainsi, on doit prouver que $\Hom_G(T_p,\cC)$ est d'ordre $\sqrt[d]{\#\left(\cC^\rho\right)}$. Comme $\cO[G] = \bigoplus_\pi T_{p,\pi}^{\dim\pi}$, où la somme parcourt les $\cO$-représentations irréductibles $(\pi,T_{p,\pi})$ de $G$, on a de plus par orthogonalité des idempotents : 
	$$\Hom_G(T_p,\cC)^{\oplus d} =\Hom_G(T_p,\cC^\rho)^{\oplus d} = \Hom_G(T_p^{\oplus d},\cC^\rho) = \Hom_G(\cO[G],\cC^\rho) = \cC^\rho, $$
	d'où l'égalité souhaitée.
\end{proof}

	\section{Théorie d'Iwasawa des formes modulaires de poids 1}
 \subsection{Groupe de Selmer d'une forme modulaire classique de poids 1}

Soit $(\rho,V)$ une représentation d'Artin de dimension 2 et irréductible sur $\bQ$, de conducteur d'Artin égal à $N$ et à coefficients dans un corps de nombres $E$. On suppose que $\rho$ est impaire c'est-à-dire que $d^+=1$. On sait associer à $\rho$ une forme modulaire parabolique primitive de poids 1 $f$ grâce au travail de Khare et Wintenberger (\cite{KW}[Corollary 10.2.(ii)]). La forme $f$ est de niveau $N$, a pour caractère $\epsilon=\det \rho$, et son $q$-développement $f(z)=\sum_{n\geq 1} a_nq^n \in E[[q]]$ à la pointe $\infty$ satisfait 
$$\Tr \rho(\Frob_\ell) = a_\ell,$$
pour tout nombre premier $\ell$ ne divisant pas $N$. Soient $\alpha$ et $\beta$ les racines du $p$-ième polynôme de Hecke $X^2-a_p X+\epsilon(p)\in E[X]$ de $f$. Nous supposerons dans toute la suite que : 
\begin{itemize}
	\item[\textbf{(rég)}] $f$ est régulière en $p$, \textit{i.e.}, $\alpha\neq \beta$,  
	\item[\textbf{(nr)}] $\rho$ est non-ramifiée en $p$, \textit{i.e.}, $p\nmid N$,
	\item[\textbf{(deg)}] $p$ ne divise par l'ordre de l'image de $\rho$, \textit{i.e.}, $p\nmid [H:\bQ]$,
\end{itemize}

Le complété $H_v \subseteq \ob{\bQ}_p$ de $H$ en la place privilégiée $v|p$ est une extension non-ramifiée de $\Qp$. On fixe l'anneau $\cO$ des entiers d'une extension non-ramifiée $L$ de $\Qp$ contenant $H_v$ et $\mu_{\#G}(\ob{\bQ}_p)$. Les valeurs propres $\alpha,\beta\in \mu_{\#G}(L)$ de $\rho(\Frob_v)$ sont distinctes, donc $V$ admet exactement deux droites $G_v$-stables et supplémentaires sur lesquelles $\Frob_v$ agit respectivement par multiplication par $\alpha$ et $\beta$. Ainsi, le choix d'une $p$-stabilisation ordinaire de $[\rho]$ revient au choix d'une de ces deux droites stables, c'est-à-dire au choix d'une $p$-stabilisation de $f$, à savoir $f_\alpha(z)=f(z)-\beta f(pz)$ ou bien $f_\beta(z)=f(z)-\alpha f(pz)$.

\begin{definition}\label{def:sel_f_alpha}
	Soit $f_\alpha$ la $p$-stabilisation de $f$ de valeur propre $\alpha$ pour $U_p$. Le groupe de Selmer attaché à $f_\alpha$ est le $\cO$-module 
	$$\Sel_\infty(f_\alpha) := \Sel_\infty(\rho,\rho^+),$$
	où l'on a choisi la $p$-stabilisation ordinaire $V^+\subseteq V$ de $\rho$ comme étant égale à la droite sur laquelle $\Frob_v$ agit par multiplication par $\beta$. On note aussi 
	$$X_\infty(f_\alpha) := \Sel_\infty(f_\alpha)^\vee$$
	le groupe de Selmer dual de $f_\alpha$ et $L_p^{\alg}(f_\alpha;T)$ la fonction $L$ $p$-adique algébrique de $f_\alpha$, c'est-à-dire un générateur de l'idéal caractéristique de $X_\infty(f_\alpha)$.
\end{definition}

\begin{corollaire}\label{prop:rappel_prop_sel_n_f_alpha}
	
	\begin{enumerate}
		\item La définition de $\Sel_\infty(f_\alpha)$ ne dépend pas du choix du réseau $G_{\bQ}$-stable $T_p$ de $V_p$.
		\item $X_\infty(f_\alpha)$ est un module de torsion sur $\Lambda=\cO[[T]]$. Il n'a, de plus, aucun sous-module fini différent de $\{0\}$.
		\item On a 
		$$L_p^{\alg}(f_\alpha;0)= \left\{\begin{array}{rcl}
		\frac{\log_p\left(\epsilon_{f_\alpha}\right)}{p} \sqrt{\#\cC^{\rho}} & \mbox{si}& \alpha\neq 1 \\
		0 & \mbox{si}& \alpha=1
		\end{array}\right.$$
		à une unité de $\cO$ près. Ici, $\cC^{\rho}$ est la composante $\rho$-isotypique du $p$-groupe des classes d'idéaux de $H$, et $\epsilon_{f_\alpha}$ désigne un générateur du sous-$\cO$-module (libre de rang 1) de $(\cO_H^\times \otimes_{\bZ} \cO)^{\rho}$ sur lequel $\Frob_v$ agit par multiplication par $\beta$. De plus, $\log_p\left(\epsilon_{f_\alpha}\right)$ est non-nul.
	\end{enumerate}
\end{corollaire}

\begin{proof}	
	L'hypothèse \textbf{(deg)} implique que $\rho$ est résiduellement irréductible, donc $\Sel_\infty(f_\alpha)$ ne dépend pas du choix de $T_p$ d'après la Proposition \ref{prop:dépendance_réseau}. Par ailleurs, la droite $V_p^-$ est celle sur laquelle $\Frob_v$ agit par multiplication par $\alpha$, donc $e(\rho,\rho^+)=\dim H^0(\Qp,V_p^-)=0$ si et seulement si $\alpha\neq 1$. Les points (2) et (3) découlent ainsi des Théorèmes \ref{th:sel_n_fini_etc} et \ref{th:terme_cst_dim_d}.
\end{proof}

\subsection{Exemple des formes modulaires à multiplication complexe} \label{sec:CM}
Soit $\psi : G_F \longrightarrow E^\times$ un caractère d'ordre fini sur un corps quadratique imaginaire $F$. On note $\tau$ le caractère non-trivial de $\Gal(F/\bQ)$ (qui est aussi la restriction de $\Frob_\infty$ à $F$) et $\psi_\tau : G_F \longrightarrow E^\times$ le caractère donné par $\psi_\tau(h)=\psi(\tau h \tau)$ pour tout $h\in G_F$.  Si $\psi \neq \psi_\tau$, alors la série thêta associée à $\psi$, vu comme un caractère de Hecke, est une forme parabolique $f$ de poids 1 dont la représentation de Deligne-Serre est donnée par $\rho=\Ind_F^{\bQ} \psi$. On suppose que $\rho$ satisfait les hypothèses \textbf{(rég)}, \textbf{(nr)}, \textbf{(deg)} et on reprend les notations de la section précédente. On a ainsi une décomposition de $G_F$-modules 
\begin{equation}\label{eq:decomposition_D_p_CM}
D_p=\Qp/\Zp \otimes \cO(\psi) \bigoplus \Qp/\Zp \otimes \cO(\psi_\tau), \end{equation}
où $\cO(\psi)$ (resp. $\cO(\psi_\tau)$) est le $\cO$-module libre de rang 1 sur lequel $G_F$ agit via $\psi$ (resp. via $\psi_\tau$). Lorsque $p$ est décomposé dans $F$, on a $\psi(\Frob_v)=\psi(\gp)$ et $\psi_\tau(\Frob_v)=\psi(\ob{\gp})$, où $p\cO_F=\gp\ob{\gp}$ et où $\gp$ est l'idéal premier de $\cO_F$ au-dessus de $p$ déterminé par $v$ (\textit{i.e.}, par $\iota_p$). On a en particulier $\{\alpha,\beta\}=\{\psi(\gp),\psi(\ob{\gp})\}$. 
\begin{proposition}\label{prop:sel_CM}
	Supposons que $p$ est décomposé dans $F$ et choisissons $\alpha=\psi(\ob{\gp})$. Soit $K/F$ l'extension galoisienne découpée par $\psi$ et soit $\gX_{\gp}$ le groupe de Galois de la pro-$p$ extension abélienne maximale de $K\bQ_\infty$ non-ramifiée en dehors de $\gp$. Alors $X_\infty(f_\alpha)$ est isomorphe à la composante $\psi$-isotypique $\left(\gX_\gp \otimes \cO\right)^{(\psi)}$ de $\gX_\gp$.
\end{proposition}

\begin{proof}
	Notons $D_\psi$ et $D_{\psi_\tau}$ les deux facteurs de $D_p$ de la décomposition (\ref{eq:decomposition_D_p_CM}). Les hypothèses sur $\rho$ impliquent que $\psi(\gp)$ et $\psi(\ob{\gp})$ sont distincts modulo $p$, donc $D_\psi$ est le plus grand sous-module de $D_p$ sur lequel $\Frob_v$ agit par multiplication par $\psi(\gp)$. Comme $\psi(\gp)=\beta$, on a ainsi $D_p^+=D_\psi$. D'autre part, le lemme de Shapiro donne un isomorphisme 
	\[H^1(\bQ_\infty,D_p) \simeq H^1(F\bQ_\infty,D_\psi), \]
	qui envoie une classe de cocycles $\sigma$ à valeurs dans $D_p$ sur la classe $\tau:=\textrm{pr}_\psi\circ \sigma_{|G_{F\bQ_\infty}}$, où $\textrm{pr}_\psi$ est la projection $D_p \twoheadrightarrow D_\psi$. Comparons les conditions locales définissant $\Sel_\infty(f_\alpha)$ dans $H^1(\bQ_\infty,D_p)$ : on a $\sigma$ non-ramifié en $\lambda|l\neq p$ si et seulement si $\tau$ est non-ramifié en $\lambda$. Comme $D_p^+=D_\psi$, on vérifie de même que $\sigma(I_p)\subseteq D_p^+$ si et seulement si $\tau$ est non-ramifié en $\ob{\gp}$. Ainsi, l'image de $\Sel_{\infty}(f_\alpha)$ par l'isomorphisme de Shapiro, composé avec l'application de restriction $H^1(F\bQ_\infty,D_\psi)\simeq \Hom_{\Gal(K/F)}(G_{K\bQ_\infty},D_\psi)$ (qui est un isomorphisme car $p\nmid [K:F]$ par le même argument qu'en Section \ref{sec:isom_de_restriction}), est égale à $\Hom_{\Gal(K/F)}(\gX_\gp,D_\psi)$, c'est-à-dire à $\Hom\left(\left(\gX_\gp \otimes \cO\right)^{(\psi)},\Qp/\Zp \otimes \cO\right)$. En prenant les duaux de Pontryagin, on obtient le résultat voulu.
\end{proof}
\begin{remarque}
	En utilisant la Proposition \ref{prop:sel_CM}, on voit que les résultats du Corollaire \ref{prop:rappel_prop_sel_n_f_alpha} ont déjà été prouvés dans \cite[Theorem 5.3 (i)-(iii)-(v)]{rubin1991main} pour les formes modulaires à multiplication complexe par un corps quadratique imaginaire dans lequel $p$ est décomposé. 
\end{remarque}

\subsection{Familles de Hida et déformations ordinaires}
\label{sec:familles_de_Hida}

\subsubsection{Spécialisations}

On pose $u=1+p$ un générateur du groupe multiplicatif $1+p\Zp$. Soit $\bH$ une extension finie de $\Zp[[X]]$. On dit qu'un morphisme d'anneaux $\phi:\bH \longrightarrow \ob{\bQ}_p$ est une \textit{spécialisation classique} de $\bH$ s'il existe un entier $k\geq 1$ et une racine de l'unité $\zeta \in \mu_{p^\infty}$ primitive d'ordre $p^{r-1}$ (où $r>0$) tels que $\phi(X)=\zeta u^{k-1} -1$. On dit que $\phi$ est une \textit{spécialisation arithmétique}\footnote{C'est la terminologie utilisée dans des références standards, notamment dans \cite{ghatevatsal}.} si, de plus, $k\geq 2$. On pose $k_\phi:=k$, $r_\phi:=r$, et aussi $\chi_\phi= \chi_\zeta$. La spécialisation $\phi$ définit par ailleurs un idéal $\gp_\phi:=\ker \phi$ de $\bH$, qui est premier de hauteur 1, et un anneau $\cO_{\phi}:= \im \phi$ qui est une extension finie de $\Zp$. 

\subsubsection{Familles de Hida} 
Une forme parabolique ordinaire $\Zp[[X]]$-adique de niveau modéré $N$ et de caractère $\epsilon$ est un $q$-développement formel $\textbf{f}=\sum_{n\geq 0} a_n(\textbf{f})q^n$ à coefficients dans une extension finie $\bH$ de $\Zp[[X]]$ telle que, pour toute spécialisation arithmétique $\phi$ de $\bH$, le $q$-développement
$$g_\phi:=\phi(\textbf{f})= \sum_n \phi(a_n(\textbf{f}))q^n \in S_{k_\phi}(Np^{r_\phi},\epsilon \chi_{\phi} \omega^{1-k_\phi},\cO_{\phi})$$
définit une forme parabolique ordinaire $p$-stabilisée de poids $k_\phi$ et de niveau $Np^{r_\phi}$. On dit que $\textbf{f}$ est $N$-nouvelle si toutes ses spécialisations arithmétiques le sont. Une famille de Hida primitive $\textbf{f}$ est une forme parabolique ordinaire $\Zp[[X]]$-adique $N$-nouvelle qui est propre pour les opérateurs de Hecke $U_\ell$ (resp. $T_\ell$, $\langle \ell \rangle$) pour $\ell|Np$ (resp. $\ell\nmid Np$).

Soit $\bH_N$ l'algèbre de Hecke universelle ordinaire de niveau modéré $N$. C'est une $\Zp[[X]]$-algèbre générée par les opérateurs de Hecke agissant sur l'espace des formes paraboliques ordinaires. Elle est libre de rang fini d'après \cite[Theorem 3.1]{hida1986iwasawa}, et ses spécialisations arithmétiques correspondent aux formes paraboliques propres de niveau modéré $N$. Le quotient $\bH_N^\textrm{new}$ de $\bH_N$ agissant fidèlement sur l'espace des formes $N$-nouvelles est de même une $\Zp[[X]]$-algèbre finie réduite et sans torsion. Ses spécialisations arithmétiques sont de plus en bijection avec les (orbites galoisiennes de) formes propres classiques de poids $k\geq 2$ et niveau modéré $N$ qui sont nouvelles en $N$. 

Le théorème de contrôle de Hida montre que toute forme primitive $p$-ordinaire $p$-stabilisée de poids $\geq 2$ est la spécialisation d'une famille de Hida primitive. Ce résultat a été étendu aux formes de poids 1 par Wiles (\cite[Theorem 3]{wiles1988ordinary}).

\begin{theorem}[Wiles]
	Il existe une famille de Hida primitive $\textbf{f}$ se spécialisant en $f_\alpha$. 
\end{theorem}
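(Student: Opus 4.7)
The plan is to exhibit $f_\alpha$ as the evaluation at a closed point of an irreducible component of the universal ordinary Hecke algebra localized at the residual representation $\ob{\rho}$, and then to take the associated $\bH_\textbf{f}$-adic eigensystem as the Hida family $\textbf{f}$. Our hypotheses ensure that this setup is well-behaved: \textbf{(deg)} implies $\ob{\rho}$ is absolutely irreducible, while \textbf{(nr)} together with \textbf{(rég)} imply that $\alpha$ and $\beta$, which are roots of unity of order dividing $[H:\bQ]$, remain distinct modulo $\varpi$, so that $\ob{\rho}$ is $p$-distinguished in the sense of Hida--Mazur. Under these conditions, Hida theory produces a reduced local component $\bH_{\ob{\rho}}$ of the universal ordinary cuspidal Hecke algebra $\bH_N$, finite and flat over $\Lambda^\textrm{poids} = \Zp[[X]]$, together with a $\bH_{\ob{\rho}}$-adic ordinary Galois representation $\rho^{\textrm{univ}} : G_\bQ \to \GL_2(\bH_{\ob{\rho}})$ deforming $\ob{\rho}$.

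The core step is to produce a continuous $\Zp$-algebra homomorphism $\phi : \bH_{\ob{\rho}} \to \cO$ with $\phi(T_\ell) = a_\ell$ for $\ell \nmid Np$ and $\phi(U_p) = \alpha$. Granting such $\phi$, set $\gp = \ker \phi$, choose a minimal prime $\wp$ of $\bH_{\ob{\rho}}$ contained in $\gp$, and put $\bH_\textbf{f} := \bH_{\ob{\rho}}/\wp$. This is a finite integral $\Lambda^\textrm{poids}$-algebra, and the universal Hecke eigensystem descends to it, defining the formal $q$-expansion
$$\textbf{f} := \sum_{n \geq 1} a_n(\textbf{f})\, q^n \in \bH_\textbf{f}[[q]]$$
with $a_n(\textbf{f})$ the image of $T_n$. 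Newness at $N$ is secured by taking $\wp$ in the $N$-new part of $\bH_{\ob{\rho}}$, which is non-empty since $f$ itself is $N$-new and $p \nmid N$. By construction, $\phi$ factors as $\bH_{\ob{\rho}} \twoheadrightarrow \bH_\textbf{f} \xrightarrow{\ob{\phi}} \cO$, so $\ob{\phi}(\textbf{f}) = f_\alpha$. Finally, Hida's control theorem ensures that the algebraic specializations of $\bH_\textbf{f}$ of weight $k \geq 2$ correspond bijectively to $p$-ordinary classical eigenforms of that weight and tame level $N$, making $\textbf{f}$ a primitive Hida family in the sense of the excerpt.

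The main obstacle is the construction of the homomorphism $\phi$ itself: \emph{a priori}, $\bH_N$ is built from Hecke operators acting on spaces of $p$-ordinary cusp forms of weight $k \geq 2$, so it is not automatic that the weight $1$ eigensystem $(a_\ell(f_\alpha))_\ell$ factors through it. This is precisely the content of Wiles's theorem. One modern path is to identify $\bH_{\ob{\rho}}$ with the universal ordinary deformation ring $R^{\textrm{ord}}_{\ob{\rho}}$ via a modularity lifting theorem (whose residual hypotheses are supplied by \textbf{(deg)} and \textbf{(rég)}): the pair $(\rho, V_p^+)$, with $V_p^+$ the stable line on which $\Frob_v$ acts by $\beta$, then defines a point of $R^{\textrm{ord}}_{\ob{\rho}}(\cO)$, which through the $R = T$ isomorphism gives $\phi$. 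Wiles's original approach in \cite{wiles1988ordinary} is more concrete: he constructs a $\Lambda$-adic lift of $\rho$ by a $p$-adic approximation argument, producing ordinary Galois representations mod $p^n$ from suitable twists of weight $\geq 2$ eigenforms whose mod-$p^n$ reductions match $\rho$ sufficiently closely, and then patches them into a $\Lambda$-adic ordinary representation whose attached $\Lambda$-adic Hecke eigensystem is a Hida family specializing to $f_\alpha$. Either route requires genuine work, but both are made possible by the $p$-distinguished and residually irreducible character of $\ob{\rho}$ afforded by our running hypotheses.
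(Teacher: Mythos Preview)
The paper does not supply a proof of this theorem: it is stated as a black-box citation of \cite[Theorem 3]{wiles1988ordinary}, with the surrounding text noting only that Hida's control theorem handles weights $k \geq 2$ and that Wiles extended the statement to weight~$1$. There is therefore no ``paper's own proof'' against which to compare your attempt.

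Your proposal is a reasonable high-level exposition of what is involved in Wiles's result, and you correctly isolate the genuine content: the existence of the specialisation map $\phi : \bH_{\ob{\rho}} \to \cO$ with the prescribed Hecke eigenvalues is not formal, since the ordinary Hecke algebra is built from forms of weight $\geq 2$. You also honestly flag that both routes you sketch (the $R=\bH$ approach and Wiles's original $p$-adic approximation) require substantial work that you do not carry out. Two small caveats on the sketch itself. First, invoking an $R=\bH$ theorem here is anachronistic relative to \cite{wiles1988ordinary} and, more importantly, the standard Taylor--Wiles hypotheses are not simply ``irreducible and $p$-distinguished''; one would need to be precise about which modularity lifting theorem applies and check its local conditions. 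Second, your assertion that the $N$-new part of $\bH_{\ob{\rho}}$ contains a minimal prime under $\gp$ is correct but deserves a word of justification: it follows from the fact that the Artin conductor of $\rho$ (hence of $\ob{\rho}$, since $p \nmid [H:\bQ]$) is exactly $N$, so $\phi$ itself factors through the $N$-new quotient. As a proof in the strict sense your write-up remains an outline, but as a commentary on a cited theorem it is accurate and well-targeted.
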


Comme $f$ est $p$-régulière, on sait même que $\textbf{f}$ est unique\footnote{Il existe des exemples explicites de formes modulaires primitives de poids 1 non-régulières en $p$ et en lesquelles se spécialisent deux familles de Hida non-conjuguées, cf. \cite[§7.4]{dimitrovghate}.} à conjugaison galoisienne près, d'après le théorème principal de \cite{bellaiche2016eigencurve} (voir aussi \cite[Corollary 1.15]{dimitrov2014local}). La famille $\textbf{f}$ définit un morphisme d'anneaux $\bH_N^\textrm{new} \longrightarrow \ob{\Frac(\Zp[[X]])}$, dont le noyau $\ga$ est un idéal premier minimal de $\bH_N^\textrm{new}$. On peut alors voir $\textbf{f}$ à coefficients dans $\bH_\textbf{f}:=\bH^\textrm{new}_N/\ga$. On note $\phi_{\alpha} : \bH_\textbf{f} \longrightarrow \ob{\bQ}_p$ la spécialisation classique de poids 1 associée à $f_\alpha$, et on note simplement $\gp_{\alpha}=\ker{\phi_{\alpha}}$ l'idéal premier de hauteur 1 associé à $f_\alpha$. Les coefficients de Fourier de $f_\alpha$ sont dans $\cO$, donc $\phi_{\alpha}$ est à valeurs dans $\cO$.
\begin{remarque}\label{p-old}
	Soit $\phi$ une spécialisation arithmétique de poids $k$ de $\bH_\textbf{f}$ telle que $p-1|k-1$ et $\chi_\phi=1$. Alors, par définition, $g_\phi$ est de niveau $Np$ et son caractère $\epsilon$ est de niveau $N$. Comme $p \nmid N$, $g_\phi$ est nécessairement $p$-old d'après \cite[Theorem 4.6.17/2]{miyake2006modular}. Ainsi, $g_\phi$ est la $p$-stabilisation d'une forme primitive de niveau $N$. 
\end{remarque} 

\subsubsection{Déformation ordinaire}
Soit $\phi$ une spécialisation classique de $\bH_\textbf{f}$. Les travaux de Deligne (\cite{deligne1971formes}), généralisant ceux de Eichler et de Shimura, montrent l'existence d'une représentation galoisienne $(\rho_\phi,V_{p,\phi})$ irréductible sur $\bQ$ et à coefficients dans $\cO_\phi \otimes \Qp$, non-ramifiée en-dehors de $Np$, de dimension 2 et impaire attachée à $g_\phi$, au sens où
$$\forall \ell \nmid Np,\quad \rho_\phi(\Frob_\ell)=a_\ell(g_\phi).$$
Par ailleurs, comme $g_\phi$ est ordinaire de poids $\geq 2$, $V_{p,\phi}$ admet une unique droite $G_{\Qp}$-stable $V^+_{p,\phi} \subseteq V_{p,\phi}$ de quotient  $V_{p,\phi}^-$ non-ramifié, et $\Frob_v$ agit sur $V_{p,\phi}^-$ par multiplication par $a_p(g_\phi)$. D'autre part, la représentation résiduelle de $\rho_\phi$ est isomorphe à $\ob{\rho}$, donc $\rho_\phi$ est résiduellement irréductible et $V_{p,\phi}$ admet une seule classe d'homothétie de réseaux $G_{\bQ}$-stables. On rappelle à présent comment interpoler un réseau stable de chacun des $V_{p,\phi}$ en famille. 

D'après \cite[Theorem 2.1]{hida1986galois}, il existe une représentation galoisienne sur $\bH_\textbf{f} \otimes_{\Zp[[X]]} \Frac(\Zp[[X]])$, non-ramifiée en tout $\ell \nmid Np$, qui envoie $\Frob_\ell$ sur $a_\ell(\textbf{f})$. Comme $\bar{\rho}$ est absolument irréductible, cette représentation est même définie sur $\bH_\textbf{f}$ d'après \cite[Théorème 1]{nyssen1996pseudo}  (voir aussi \cite[Corollaire 5.2]{rouquier1996caracterisation}). Comme $\bar{\rho}$ est $p$-distinguée, l'espace des $I_p$-coinvariants est $\bH_\textbf{f}$-libre de rang 1, et $\Frob_v$ agit par multiplication par $a_p(\textbf{f})$, d'après \cite[Theorem 2.2.2]{wiles1988ordinary}. On obtient le résultat suivant.

\begin{theorem}\label{bigT}
	Supposons que $\ob{\rho}$ est absolument irréductible et $p$-distinguée (ce qui résulte de nos hypothèses \textbf{(rég)} et \textbf{(deg)}, cf. Paragraphe \ref{subsec:hyp_f_alpha}). Il existe un $\bH_\textbf{f}$-module libre de rang 2, noté $\cT_\textbf{f}$, et une représentation continue impaire et irréductible 
	$$\rho_\textbf{f} : G_\bQ \longrightarrow \GL_{\bH_\textbf{f}}(\cT_\textbf{f}),$$
	non-ramifiée en-dehors de $Np$ telle que
	$\Tr(\rho_\textbf{f}(\Frob_\ell))=a_\ell(\textbf{f})$ pour tout $\ell\nmid Np$. On a en outre les propriétés suivantes :
	\begin{itemize}
		\item Pour toute spécialisation arithmétique $g_\phi=\phi(\textbf{f})$ de $\textbf{f}$, le $\cO_{\phi}$-module $T_{p,\phi}:=\cT_\textbf{f} \otimes_{\bH_\textbf{f},\phi} \cO_{\phi}$  s'identifie à un réseau de $V_{p,\phi}$. De même, on a $T_p \simeq \cT_\textbf{f} \otimes_{\bH_\textbf{f},\phi_{\alpha}} \cO$.  
		\item  Il existe $\cT^+_\textbf{f} \subseteq \cT_\textbf{f}$ un sous-$\bH_\textbf{f}$-module libre de rang 1 qui est stable par $G_{\Qp}$ vérifiant les propriétés suivantes. Le quotient $\cT^-_\textbf{f}:=\cT_\textbf{f}/\cT^+_\textbf{f}$ est libre de rang 1, et l'action de $G_{\Qp}$ sur $\cT_\textbf{f}^-$ est donnée par le caractère non-ramifié envoyant $\Frob_v$ sur $a_p(\textbf{f})$. De plus, pour toute spécialisation arithmétique $g_\phi$ de $\textbf{f}$, la filtration obtenue se spécialise sur la filtration ordinaire $T^\pm_{p,\phi}$ attachée à $T_{p,\phi}$. De même, on a $T_p^\pm\simeq \cT_\textbf{f}^\pm \otimes_{\bH_\textbf{f},\phi_{\alpha}} \cO$.
	\end{itemize}
\end{theorem}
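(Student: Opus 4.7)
The plan is to assemble the three inputs cited just before the statement --- Hida's construction \cite[Theorem 2.1]{hida1986galois}, the pseudo-representation theorems of Nyssen--Rouquier, and Wiles' theorem on the ordinary filtration for $p$-distinguished residual representations \cite[Theorem 2.2.2]{wiles1988ordinary} --- and to verify that the resulting object has the desired specialization properties.

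First I would produce a pseudo-representation $(\Tr, \det) : G_{\bQ,Np} \longrightarrow \bH_\textbf{f}$ by setting $\Tr(\Frob_\ell) = a_\ell(\textbf{f})$ for $\ell \nmid Np$ and extending by continuity and the usual Chebotarev argument; the determinant is given by the nebentypus character composed with $\epsilon \chi_{\cyc}^{?}$ read off from the family. Tensoring with $\Frac(\Zp[[X]])$ and invoking Hida's theorem \cite[Theorem 2.1]{hida1986galois} shows that this pseudo-representation comes from an honest representation on a free rank 2 module over $\bH_\textbf{f} \otimes_{\Zp[[X]]} \Frac(\Zp[[X]])$. The hypotheses \textbf{(rég)} and \textbf{(deg)} imply, as recalled in Paragraph \ref{subsec:hyp_f_alpha}, that $\ob{\rho}$ is absolutely irreducible (and $p$-distinguished), so the theorem of Nyssen \cite{nyssen1996pseudo} and Rouquier \cite{rouquier1996caracterisation} shows that the pseudo-representation descends to a genuine representation $\rho_\textbf{f} : G_\bQ \longrightarrow \GL_{\bH_\textbf{f}}(\cT_\textbf{f})$ on a free rank $2$ module $\cT_\textbf{f}$. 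Irreducibility and oddness of $\rho_\textbf{f}$ then follow from the corresponding properties of $\ob{\rho}$.

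Next I would construct the ordinary filtration. Since $\ob{\rho}$ is $p$-distinguished, \cite[Theorem 2.2.2]{wiles1988ordinary} applies: the restriction of $\cT_\textbf{f}$ to $G_{\Qp}$ admits a $G_{\Qp}$-stable $\bH_\textbf{f}$-line $\cT^+_\textbf{f}$ such that the quotient $\cT^-_\textbf{f}$ is free of rank $1$ and unramified, with $\Frob_v$ acting by the unique character sending $\Frob_v$ to $a_p(\textbf{f})$. (Concretely, $\cT^-_\textbf{f}$ is obtained as the $\bH_\textbf{f}$-module of $I_p$-coinvariants, which is free of rank $1$ by \emph{loc. cit.})

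Finally I would verify the specialization properties. For an algebraic specialization $\phi$ of weight $k_\phi \geq 2$, the module $T_{p,\phi} := \cT_\textbf{f} \otimes_{\bH_\textbf{f},\phi} \cO_\phi$ is $\cO_\phi$-free of rank $2$, carries a Galois action whose trace on $\Frob_\ell$ (for $\ell \nmid Np$) equals $\phi(a_\ell(\textbf{f})) = a_\ell(g_\phi)$, and so becomes a $G_\bQ$-stable lattice in $V_{p,\phi}$ by Chebotarev and the uniqueness (up to isomorphism) of Deligne's construction for $g_\phi$; the specialization of the filtration $\cT^\pm_\textbf{f}$ agrees with the ordinary filtration of $T_{p,\phi}$ because both are characterized by the same recipe (stable line with unramified quotient, $\Frob_v$ acting by $a_p(g_\phi)$), and this characterization is unique under the $p$-distinguished hypothesis. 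The weight-$1$ specialization at $\phi_\alpha$ is handled analogously: the Deligne--Serre representation attached to $f$ has the unique ordinary $p$-stabilization determined by the eigenvalue $\beta$ of $\Frob_v$ on $V^+$ and the eigenvalue $\alpha$ on $V^-$, matching the specialization of $\cT^\pm_\textbf{f}$.

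The main obstacle I expect is the specialization of the ordinary filtration at the weight-$1$ point $\phi_\alpha$. For algebraic ($k \geq 2$) specializations this is standard, but at weight $1$ the Hida-theoretic input does not apply directly to $g_{\phi_\alpha} = f_\alpha$. Here one uses that $\cT^-_\textbf{f}$ is constructed as a free $\bH_\textbf{f}$-module of $I_p$-coinvariants independently of the chosen specialization, so its formation commutes with base change along $\phi_\alpha$; thus $T_p^- \simeq \cT^-_\textbf{f} \otimes_{\bH_\textbf{f}, \phi_\alpha} \cO$ automatically. Combined with $\textbf{(rég)}$, which ensures that $V_p$ has a \emph{unique} $G_{\Qp}$-stable line on which $\Frob_v$ acts by $\beta$, this forces $T_p^+ \simeq \cT^+_\textbf{f} \otimes_{\bH_\textbf{f}, \phi_\alpha} \cO$ and completes the verification.
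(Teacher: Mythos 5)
Votre démonstration suit essentiellement la même démarche que l'article, qui se contente d'ailleurs d'invoquer successivement \cite[Theorem 2.1]{hida1986galois}, le théorème de Nyssen--Rouquier (grâce à l'irréductibilité absolue de $\ob{\rho}$) et \cite[Theorem 2.2.2]{wiles1988ordinary} (grâce au caractère $p$-distingué) pour obtenir $\cT_\textbf{f}$ et sa filtration via les $I_p$-coinvariants. Votre vérification supplémentaire de la compatibilité de la filtration en la spécialisation de poids $1$ (commutation des coinvariants au changement de base, puis unicité de la droite stable grâce à \textbf{(rég)}) est correcte et explicite un point que l'article laisse implicite.
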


\subsection{Fonctions $L$ $p$-adiques de formes modulaires ordinaires en famille}\label{sec:Lpfamille}
Soit $g$ une forme modulaire cuspidale primitive de poids $k\geq 2$ et $p$-stabilisée, à coefficients dans une extension finie de $\Qp$. Lorsque $v_p(a_p(g))<k-1$, on sait attacher grâce à Amice-Vélu \cite{amice1975distributions} et Vishik \cite{vishik1976non} (voir aussi \cite{mtt}) une distribution d'ordre de croissance $\leq v_p(a_p(g))$ interpolant les valeurs spéciales en $s=1,\ldots,k-1$ de la série $L(g, \chi,s)=\sum_{n\geq 1} a_n(g)\chi(n)n^{-s}$, où $\chi$ est un caractère fini galoisien sur $\bQ$ de conducteur une puissance de $p$ (que l'on voit comme un caractère de Dirichlet), convenablement normalisées par deux périodes complexes $\Omega_g^\pm$. Lorsque $g$ est $p$-ordinaire et que le choix des périodes est convenablement normalisé, on obtient une série formelle $L^\an_p(g,T)$ à coefficients dans une extension finie de $\Zp$. Dans notre situation où nous aimerions construire une série interpolant celles associées aux spécialisations $g_\phi$, il convient de faire un choix de "périodes canoniques en famille". Comme $\ob{\rho}$ et irréductible et $p$-distinguée, nous pouvons faire appel à la construction de la fonction $L$ $p$-adique en famille donnée par Emerton-Pollack-Weston \cite{emerton2006variation} pour obtenir une série formelle $L^\an_p(\textbf{f},T)\in \bH_\textbf{f}[[T]]$ dont on va rappeler la propriété d'interpolation.

La $\Zp[[X]]$-algèbre $\bH_N$ est finie donc semi-locale, elle est isomorphe au produit fini de ses localisés aux idéaux maximaux $\prod_{\gm'}\left(\bH_N\right)_{\gm'}$. On note $\gm$ l'idéal maximal correspondant à $\bar{\rho}$. On note aussi $\bH_N^*$ l'algèbre de Hecke universelle ordinaire agissant sur toutes les formes $\Zp[[X]]$-adiques ordinaires (non-nécessairement paraboliques). Pour $k\geq 2$ et $r\geq 1$, soit $\bH^*_{N,r,k}$ (resp. $\bH_{N,r,k}$) l'algèbre de Hecke agissant sur toutes les formes modulaires (resp. les formes paraboliques) $p$-ordinaires de poids $k$ et de niveau $Np^r$. La suite exacte en homologie de la paire $(X_1(Np^r),\{\textrm{pointes}\})$ (où $X_1(Np^r)$ est la courbe modulaire compacte de niveau $Np^r$) induit (après localisation en l'idéal maximal induit par $\gm$ et projection ordinaire) un isomorphisme 
$$H_1(X_1(Np^r),\tilde{L}_k(\Zp))_\gm^\textrm{ord} \simeq H_1(X_1(Np^r),\{\textrm{pointes}\},\tilde{L}_k(\Zp))_\gm^\textrm{ord}$$
de $\left(\bH_{N,r,k}\right)_\gm\simeq \left(\bH^*_{N,r,k}\right)_\gm$-modules, où $\tilde{L}_k(\Zp)$ est le système local associé au $\Zp$-module des polynômes à coefficients dans $\Zp$ de degré $\leq k-2$. On note $\left(\gM_{N,r,k}\right)_\gm$ ces modules isomorphes. $\left(\gM_{N,r,k}\right)_\gm$ est la somme directe des deux sous-espaces propres $\left(\gM_{N,r,k}\right)_\gm^\pm$ pour l'action induite par la conjugaison complexe sur $X_1(Np^r)$, qui sont tous deux libres de rang 1 sur $\bH_{N,r,k}$ d'après \cite[Proposition 3.1.1]{emerton2006variation}, car $\ob{\rho}$ est irréductible et $p$-distinguée. Fixons un isomorphisme $\alpha_{N,r,k}^\pm : \left(\gM_{N,r,k}\right)_\gm^\pm \simeq \left(\bH_{N,r,k}\right)_\gm$, ainsi qu'un isomorphisme $\bar{\bQ}_p\simeq \bC$. 

\begin{proposition}\label{prop:fonction_L_p_adique_poids_k}
	Soit $g$ une forme propre parabolique $p$-ordinaire de poids $k\geq 2$, niveau $Np^r$, à coefficients dans une extension finie $\cO_g$ de $\Zp$, et de représentation résiduelle $\ob{\rho}$. Il existe deux périodes $\Omega_g^\pm\in \bC$ appelées périodes canoniques, bien définies à une unité de $\cO_g$ près et ne dépendant que de l'isomorphisme $\alpha_{N,r,k}^\pm$, ainsi qu'une unique série formelle $L^\an_p(g,T)\in \cO_g[[T]]$ satisfaisant la formule d'interpolation suivante : pour tout $0 \leq m \leq k-2$ et $\xi$ racine primitive $p^{t-1}$-ème de l'unité, on a 
	$$L^\an_p(g,\xi(1+p)^m-1) = e_g(\xi,m) \dfrac{p^{t'(m+1)} m!}{a_p(g)^{t'} (-2i\pi)^{m+1}\gG(\omega^{-m} \chi_\xi) \Omega_g^{(-1)^m}} L(g,\omega^{-m} \chi_\xi,m+1),$$
	où $\omega$ est le caractère de Teichmüller et $\chi_\xi : G_\bQ \twoheadrightarrow \Gamma \twoheadrightarrow \mu_{p^{t-1}}$ est le caractère envoyant $\gamma$ sur $\xi$, avec $e_g(\xi,m):=1-a_p(g)^{-1}p^m$ et $t':=0$ lorsque $\xi=1$ et $p-1|m$, et $e_g(\xi,m):=1$ et $t':=t$ sinon, où $\gG(-)$ désigne la somme de Gauss de caractères de Dirichlet.
\end{proposition}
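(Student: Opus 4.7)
La stratégie est celle d'Emerton-Pollack-Weston \cite{emerton2006variation} : ramener la construction de $L_p^{\an}(g,T)$ à la transformée de Mellin $p$-adique d'un symbole modulaire intégral canoniquement associé à $g$. Concrètement, la forme propre $g$ définit un morphisme d'anneaux $\phi_g : (\bH_{N,r,k})_\gm \longrightarrow \cO_g$. En posant $\eta^\pm := \left(\alpha_{N,r,k}^\pm\right)^{-1}(1) \in (\gM_{N,r,k})_\gm^\pm$ et en spécialisant par $\phi_g$, on obtient un symbole modulaire intégral
$$\varphi_g^\pm \ := \ \eta^\pm \otimes 1 \ \in \ (\gM_{N,r,k})_\gm^\pm \otimes_{(\bH_{N,r,k})_\gm,\phi_g} \cO_g,$$
qui engendre un $\cO_g$-module libre de rang 1, par la liberté de $(\gM_{N,r,k})_\gm^\pm$ sur $(\bH_{N,r,k})_\gm$. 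Via l'isomorphisme fixé $\ob{\bQ}_p\simeq \bC$ et l'identification d'Eichler-Shimura, $\varphi_g^\pm$ s'écrit comme un multiple complexe non-nul du symbole modulaire classique attaché à $g$ : ce scalaire est par définition la période canonique $\Omega_g^\pm$, bien définie à une unité de $\cO_g$ près puisque le choix de $\eta^\pm$ l'est.

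La construction de $L_p^{\an}(g,T)$ suit alors celle de Mazur-Tate-Teitelbaum \cite{mtt}, Amice-Vélu \cite{amice1975distributions} et Vishik \cite{vishik1976non}. Le symbole $\varphi_g^\pm$ définit par la formule explicite usuelle une distribution $\mu_g$ sur $\Zp^\times$ ; l'ordinarité de $g$ (i.e.\ $a_p(g)\in\cO_g^\times$) assure que cette distribution est bornée, définissant donc une mesure à valeurs dans $\cO_g$ sur $1+p\Zp\simeq\Gamma$. Sa transformée d'Amice est un élément de $\cO_g[[\Gamma]]\simeq \cO_g[[T]]$ que l'on note $L_p^{\an}(g,T)$. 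La formule d'interpolation en $T=\xi(1+p)^m-1$ pour $0\leq m\leq k-2$ découle alors de la formule de Birch, exprimant la valeur obtenue en intégrant $\varphi_g^\pm$ contre le caractère $\omega^{-m}\chi_\xi$ en termes de la valeur spéciale $L(g,\omega^{-m}\chi_\xi,m+1)/\Omega_g^{(-1)^m}$, corrigée par le facteur eulérien $e_g(\xi,m)$ standard en $p$. L'unicité de $L_p^{\an}(g,T)$ est automatique, les valeurs prescrites s'accumulant densément dans le disque unité ouvert.

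L'obstacle principal n'est pas calculatoire : la difficulté tient à garantir que $\Omega_g^\pm$ est bien définie à une unité de $\cO_g$ près, et non à une constante $p$-adique quelconque dépendant de choix auxiliaires. Cette intégralité repose de manière essentielle sur la liberté de rang 1 de $(\gM_{N,r,k})_\gm^\pm$ sur $(\bH_{N,r,k})_\gm$, résultat non-trivial établi dans \cite[Proposition 3.1.1]{emerton2006variation} sous les hypothèses où $\ob{\rho}$ est absolument irréductible et $p$-distinguée, c'est-à-dire précisément notre situation.
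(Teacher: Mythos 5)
Votre preuve est correcte et suit essentiellement la même voie que l'article : la preuve du texte se réduit à un renvoi à \cite[§3.2 et Prop. 3.4.3]{emerton2006variation}, et votre rédaction est précisément un résumé fidèle de cette construction (périodes canoniques via la liberté de rang 1 de $(\gM_{N,r,k})_\gm^\pm$ sous l'hypothèse que $\ob{\rho}$ est irréductible et $p$-distinguée, puis transformée d'Amice de la mesure bornée issue de l'ordinarité, et unicité par l'infinité des points d'interpolation dans le disque unité ouvert). Aucune divergence de méthode ni lacune à signaler.
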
 
\begin{proof}
	La construction de $L^\an_p(g,T)$ est donnée dans \cite[§3.2 et Prop. 3.4.3]{emerton2006variation}. 
\end{proof}

Soit $\gp_{N,r,k}$ le produit de tous les idéaux premiers de hauteur 1 de $\bH_N$, de poids $k$ et de niveau divisant $Np^r$ et de représentation résiduelle $\ob{\rho}$. Les théorèmes de contrôle donnent des identifications $\left(\bH_N\right)_\gm/\gp_{N,r,k}\simeq \left(\bH_{N,r,k}\right)_\gm$  et $\left(\bH^*_N\right)_\gm/\gp_{N,r,k}\simeq \left(\bH^*_{N,r,k}\right)_\gm$, de même qu'un isomorphisme $\left(\gM_{N}\right)_\gm^\pm \otimes \left(\bH_N\right)_\gm/\gp_{N,r,k} \simeq \left(\gM_{N,r,k}\right)_\gm^\pm$, où $\left(\gM_N\right)_\gm$ est le $\left(\bH_N\right)_\gm\simeq \left(\bH^*_N\right)_\gm$-module libre de rang 2 défini comme étant la limite projective 
$$\varprojlim_r H_1(X_1(Np^r),\Zp)_\gm^\textrm{ord} \simeq \varprojlim_r H_1(X_1(Np^r),\{\textrm{pointes}\},\Zp)_\gm^\textrm{ord} $$
(cf. \cite[Proposition 3.3.1]{emerton2006variation} et sa preuve). En particulier, le choix d'un isomorphisme 
$$\alpha_N^\pm :\left(\bH_N\right)_\gm \simeq \left(\gM_N\right)^\pm_\gm$$
détermine, pour tout $k$ et $r$, un choix d'isomorphisme 
$$\alpha_{N,r,k}^\pm : \left(\gM_{N,r,k}\right)_\gm^\pm \simeq \left(\bH_{N,r,k}\right)_\gm,$$
en posant $\alpha_{N,r,k}^\pm = \alpha_N^\pm \mod \gp_{N,r,k}$. En ce sens, on peut dire que le choix des périodes $\Omega_g^\pm$ est effectué \textit{en famille} (voir \textit{loc. cit.}). On peut de même construire une mesure sur $\bZ_p^\times$ à coefficients dans $\left(\bH_N\right)_\gm$ interpolant les mesures $\mu_k^\pm$. En vue de nos applications, nous considérons plutôt la composée avec la projection 
$$\left(\bH_N\right)_\gm \twoheadrightarrow \left(\bH^\textrm{new}_N\right)_\gm \twoheadrightarrow \bH_\textbf{f}.$$
Fixons l'isomorphisme précédent $\alpha_N^\pm$. On a l'analogue en famille de la Proposition \ref{prop:fonction_L_p_adique_poids_k} : 

\begin{proposition}\label{spé}
	Il existe une série formelle $L^\an_p(\textbf{f};T) \in \bH_\textbf{f}[[T]]$ telle que, pour toute spécialisation arithmétique $g_\phi=\phi(\textbf{f})$ de $\textbf{f}$, on ait 
	$$ \tilde{\phi} \left( L^\an_p(\textbf{f},T)\right) = L^\an_p(g_\phi;T), $$
	où $\tilde{\phi}: \bH_\textbf{f}[[T]] \twoheadrightarrow \cO_\phi[[T]]$ est morphisme appliquant $\phi$ aux coefficients des séries formelles, et où $L^\an_p(g_\phi;T)$ est la fonction $L$ $p$-adique analytique de $g_\phi$ calculée dans la Proposition \ref{prop:fonction_L_p_adique_poids_k} avec l'isomorphisme $\alpha_{N,r,k}^\pm$ choisi comme étant égal à $\alpha_N^\pm \mod \gp_{N,r,k}$.
	
\end{proposition}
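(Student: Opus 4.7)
Le plan est de construire $L^\an_p(\textbf{f};T)$ comme l'image, via la surjection canonique $(\bH_N)_\gm \twoheadrightarrow (\bH^\textrm{new}_N)_\gm \twoheadrightarrow \bH_\textbf{f}$ appliquée aux coefficients, d'une fonction $L$ $p$-adique ``à deux variables'' $L^\an_p(\ob{\rho};T) \in (\bH_N)_\gm[[T]]$ interpolant les fonctions $L$ $p$-adiques des spécialisations algébriques de $\bH_N$ de représentation résiduelle $\ob{\rho}$. Je me ramène ainsi à la construction d'une mesure $\mu$ sur $\bZ_p^\times$ à valeurs dans $(\bH_N)_\gm$ qui interpole en famille les mesures de Mazur-Swinnerton-Dyer classiques, puis à la vérification que sa transformée d'Amice satisfait la propriété d'interpolation voulue.

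Pour construire $\mu$, je partirais de l'action des matrices $\begin{pmatrix} 1 & a \\ 0 & p^n \end{pmatrix}$ (avec $a \in \bZ/p^n\bZ$) sur le symbole modulaire $\{0,\infty\}$, qui fournit, après passage à la limite projective sur $r$ dans la tour $X_1(Np^r)$ et application du projecteur ordinaire, une distribution sur $\bZ_p^\times$ à valeurs dans $(\gM_N)_\gm$. La décomposition $(\gM_N)_\gm = (\gM_N)^+_\gm \oplus (\gM_N)^-_\gm$ selon l'action de la conjugaison complexe et les isomorphismes fixés $\alpha_N^\pm$ transfèrent cette distribution en une paire de mesures à valeurs dans $(\bH_N)_\gm$, que l'on combine en une unique mesure $\mu$ sur $\bZ_p^\times$ selon la parité du caractère considéré. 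La transformée d'Amice de $\mu$ restreinte à $1+p\bZ_p$ (via l'identification $\gamma \mapsto 1+T$) définit alors $L^\an_p(\ob{\rho};T)$.

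La propriété d'interpolation découle ensuite de la compatibilité $\alpha_{N,r,k}^\pm = \alpha_N^\pm \mod \gp_{N,r,k}$ et des isomorphismes de contrôle $(\gM_N)_\gm^\pm \otimes_{(\bH_N)_\gm} (\bH_{N,r,k})_\gm \simeq (\gM_{N,r,k})_\gm^\pm$. Ces deux ingrédients entraînent que la spécialisation $\tilde\phi(\mu)$ coïncide avec la mesure de Mazur-Swinnerton-Dyer attachée à $g_\phi$, normalisée par les périodes canoniques $\Omega_{g_\phi}^\pm$ déterminées par $\alpha_{N,r,k}^\pm$. La commutation évidente de la transformée d'Amice avec $\tilde\phi$ fournit alors l'égalité $\tilde\phi(L^\an_p(\ob{\rho};T)) = L^\an_p(g_\phi;T)$ au sens de la Proposition \ref{prop:fonction_L_p_adique_poids_k}, et le résultat en découle par projection dans $\bH_\textbf{f}[[T]]$.

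Le point technique le plus délicat sera la recombinaison cohérente des parties $\pm$ en une unique mesure $\mu$ sur $\bZ_p^\times$. En effet, la formule d'interpolation fait intervenir $\Omega_g^+$ pour les twists de parité paire et $\Omega_g^-$ pour les twists de parité impaire ; il convient donc de combiner les deux mesures à l'aide d'un projecteur dépendant du signe $(-1)^m$ et de la parité de $\chi_\xi$, de sorte que la réduction modulo $\gp_\phi$ redonne la mesure classique avec la normalisation correcte. La liberté de rang 1 des $(\bH_N)_\gm$-modules $(\gM_N)_\gm^\pm$, conséquence de l'irréductibilité et de la $p$-distinction de $\ob{\rho}$ via \cite[Proposition 3.1.1]{emerton2006variation}, est l'ingrédient essentiel qui rend cette construction possible et $L^\an_p(\textbf{f};T)$ bien défini à une unité de $\bH_\textbf{f}$ près.
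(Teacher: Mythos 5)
Votre démarche est correcte et coïncide pour l'essentiel avec celle du texte, qui délègue la construction de la mesure à valeurs dans $(\bH_N)_\gm$ interpolant les $\mu_k^\pm$ à Emerton--Pollack--Weston, puis compose avec la projection sur $\bH_\textbf{f}$ et déduit la propriété d'interpolation des isomorphismes de contrôle et de la compatibilité $\alpha_{N,r,k}^\pm = \alpha_N^\pm \bmod \gp_{N,r,k}$. Vous ne faites qu'expliciter davantage la construction citée (symboles modulaires, transformée d'Amice, recombinaison des parties $\pm$), sans changer la stratégie.
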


L'élément $L^\an_p(\textbf{f};T)$ est défini à multiplication par une unité de $\bH_\textbf{f}$ près.

\begin{definition}\label{def:cL_p_f_alpha}
	On définit $L^\an_p(f_\alpha;T) \in \cO[[T]]$ la fonction $L$ $p$-adique analytique de $f_\alpha$ comme étant l'image de $L^\an_p(\textbf{f},T)$ par l'application $\tilde{\phi_{\alpha}} : \bH_\textbf{f}[[T]]  \longrightarrow \cO[[T]]$. Elle est bien définie à une unité multiplicative de $\cO$ près. 
\end{definition}

\begin{remarque}
	\begin{enumerate}
		\item Il semble a priori difficile de montrer que la série formelle que l'on a défini n'est pas identiquement nulle. On verra néanmoins que cela est vrai, en admettant la véracité de la conjecture principale pour les formes modulaires de poids supérieur (Conjecture \ref{IMCg}).
		\item Sous la seule hypothèse \textbf{(rég)}, Bellaïche et Dimitrov ont proposé une définition d'une fonction $L$ $p$-adique (\cite[Corollary 1.3]{bellaiche2016eigencurve}) seulement définie à un élément de $L^\times$ près. Il serait intéressant d'essayer de généraliser cette définition au cas où il existe plusieurs familles de Hida se spécialisant en une forme modulaire non-régulière en $p$. Chaque famille devrait produire une fonction $L$ $p$-adique à deux variables qu'on pourrait spécialiser en $f_\alpha$. Du point de vue algébrique, un candidat naturel pour $V^+_p$ serait alors la spécialisation en poids 1 de la $G_{\Qp}$-droite ordinaire de la famille construite dans \cite[Theorem 2.2.2]{wiles1988ordinary}. Il est cependant possible que $V_p^+\subseteq V_p$ ne soit pas rationnelle, i.e., qu'elle ne provienne pas d'une $p$-stabilisation ordinaire $V^+$ de $V$.
	\end{enumerate}
\end{remarque}

\subsection{Conjectures principales}
\subsubsection{}\label{selphi}
Soit $g_\phi$ une spécialisation classique de $\textbf{f}$. Le groupe de Selmer (resp. groupe de Selmer dual) attaché à $g_\phi$ est par définition le $\cO_\phi[[T]]$-module
$$\Sel_{\infty}(\phi):=\Sel_{\infty}(T_{p,\phi},T_{p,\phi}^+), \qquad \textrm{resp.} \qquad X_{\infty}(\phi):=\Sel_{\infty}(\phi)^\vee.$$
On pose $L_p^{\alg}(g_\phi,T):=L_p^{\alg}(X_\infty(\phi),T)$ lorsque $X_\infty(\phi)$ est de torsion sur $\cO_\phi[[T]]$. La Conjecture Principale pour les formes primitives $p$-ordinaires $p$-stabilisées de poids $k \geq 2$ prédit l'égalité suivante (\cite[Conjecture 3.24]{skinner2014iwasawa}, à noter que \textit{loc.cit.} définit les représentations galoisiennes et leurs fonctions $L$ avec le Frobenius géométrique, ce qui change - en apparence seulement - l'énoncé).

\begin{conjecture}\label{IMCg}
	Soit $g_\phi$ une spécialisation arithmétique de $\textbf{f}$. Le $\cO_\phi[[T]]$-module $X_\infty(\phi)$ est de torsion, et il existe une unité $u_\phi$ de $\cO_\phi[[T]]$ telle que
	$$ u_\phi \cdot L_p^{\alg}(g_\phi,T)=L^\an_p(g_\phi,T).$$
\end{conjecture}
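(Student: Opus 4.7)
La stratégie de preuve est celle, classique, de la Conjecture Principale cyclotomique : on établit indépendamment les deux divisibilités dans $\cO_\phi[[T]] \otimes \Qp$, puis on contrôle les $\mu$-invariants pour remonter à une égalité à une unité de $\cO_\phi[[T]]$ près. Sous nos hypothèses \textbf{(rég)} et \textbf{(deg)}, la représentation résiduelle $\ob{\rho}$ est irréductible et $p$-distinguée, ce qui assure la validité des prérequis techniques des deux théorèmes que l'on va invoquer.

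La première divisibilité,
$$L_p^{\alg}(g_\phi; T) \ \text{divise} \ L_p^{\an}(g_\phi; T) \quad \text{dans} \quad \cO_\phi[[T]] \otimes \Qp,$$
s'obtient via le Théorème 17.4 de Kato \cite{kato2004p}. La preuve repose sur le système d'Euler de Beilinson-Kato, construit à partir des symboles de Siegel dans le $K_2$ des courbes modulaires, et sur la machinerie générale de Kolyvagin-Rubin qui borne la série caractéristique de $X_\infty(\phi)$ par la fonction $L$ $p$-adique analytique ; ceci entraîne au passage que $X_\infty(\phi)$ est de $\cO_\phi[[T]]$-torsion. La divisibilité inverse, qui constitue le principal obstacle, est l'objet du Théorème principal de Skinner-Urban \cite{skinner2014iwasawa}. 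Sa preuve s'appuie sur la méthode d'Eisenstein pour le groupe unitaire quasi-déployé à quatre variables $U(2,2)$ : on construit une famille $p$-adique de séries d'Eisenstein dont la congruence avec une famille cuspidale cohomologique est régie par $L_p^{\an}(g_\phi; T)$, puis l'étude galoisienne des déformations associées, combinée à une construction de réseaux à la Ribet-Mazur-Wiles, fournit assez de classes dans $X_\infty(\phi)$ pour minorer sa série caractéristique. Cette étape est de loin la plus technique, demandant un contrôle géométrique fin des variétés de Shimura unitaires et la vérification de plusieurs hypothèses auxiliaires (\emph{big image} galoisien, conditions de ramification et d'intégralité) qui se trouvent toutes satisfaites sous nos hypothèses.

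Pour conclure, il reste à remonter ces divisibilités dans $\cO_\phi[[T]]$ lui-même, c'est-à-dire à vérifier l'égalité des $\mu$-invariants. Par la double divisibilité déjà établie, il suffit de montrer que l'un des deux $\mu$-invariants est nul. C'est le cas du côté analytique : le choix \emph{en famille} des périodes canoniques $\Omega_{g_\phi}^\pm$ effectué au paragraphe \ref{sec:Lpfamille} est précisément destiné à éliminer le $\mu$-invariant de $L_p^{\an}(g_\phi; T)$, grâce à la liberté des modules $(\gM_N)^\pm_\gm$ sur $(\bH_N)_\gm$ entraînée par l'irréductibilité et la $p$-distinction de $\ob{\rho}$. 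Les deux divisibilités s'échangent alors en une égalité à une unité de $\cO_\phi[[T]]$ près, ce qui achève la preuve de la Conjecture Principale.
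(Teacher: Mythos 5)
L'énoncé visé est la Conjecture~\ref{IMCg} : c'est une \emph{conjecture} dans l'article, qui n'en donne aucune preuve et ne prétend pas en donner. Votre texte propose donc de démontrer quelque chose que l'article laisse explicitement ouvert, et la démonstration proposée comporte une lacune réelle.

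Le point bloquant est l'appel au théorème de Skinner--Urban pour la divisibilité $L_p^{\an}(g_\phi;T) \mid L_p^{\alg}(g_\phi;T)$. L'article le dit en toutes lettres juste après le Théorème~\ref{Katotors} : le résultat de Skinner--Urban (\cite[Theorem 3.29]{skinner2014iwasawa}) n'est établi que sous des hypothèses supplémentaires, en particulier la trivialité du caractère central de $\textbf{f}$, qui est trop restrictive ici (le nebentypus $\epsilon=\det\rho$ d'une forme de poids 1 est en général non trivial, et d'autres conditions de ramification sur $\ob{\rho}$ sont également requises). Vous affirmez que ces hypothèses auxiliaires « se trouvent toutes satisfaites sous nos hypothèses », ce qui est précisément faux dans le cadre de l'article. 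Seule la divisibilité de Kato est disponible, et encore uniquement pour les spécialisations avec $p-1\mid k_\phi-1$ et $\chi_\phi=1$ (Remarque~\ref{p-old} et Théorème~\ref{Katotors}) ; c'est d'ailleurs tout ce que l'article utilise pour démontrer le Théorème~\ref{ThC}. Enfin, l'étape finale sur les $\mu$-invariants est elle aussi injustifiée : le choix des périodes canoniques en famille garantit une bonne variation des $\mu$-invariants le long de la famille (cf. Proposition~\ref{prop:mu=0}), mais nullement leur annulation, qui reste conjecturale en général. La conclusion correcte est que l'énoncé doit rester une conjecture, dont l'article ne démontre qu'une évidence partielle (une divisibilité dans $\cO[[T]][\tfrac{1}{p}]$ pour $f_\alpha$, via Kato et un passage à la limite).
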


Kato a prouvé une divisibilité partielle dans le cas où $g_\phi$ est la $p$-stabilisation d'une forme primitive ordinaire de niveau premier à $p$ (\cite[Theorem 17.4]{kato2004p}). D'après la Remarque \ref{p-old}, ceci est valable précisément lorsque $p-1|k_\phi-1$ et $\chi_\phi=1$. On a donc le théorème suivant.

\begin{theorem}[Kato]\label{Katotors}
	Soit $g_\phi$ une spécialisation arithmétique de $\textbf{f}$ telle que $p-1|k_\phi-1$ et $\chi_\phi=1$. Le $\cO_\phi[[T]]$-module $X_\infty(\phi)$ est de torsion, et 
	$$ L_p^{\alg}(g_\phi,T) \quad \textrm{divise} \quad L^\an_p(g_\phi,T))$$
	dans $\cO_\phi[[T]][\tfrac{1}{p}]$.
\end{theorem}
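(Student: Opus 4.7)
La stratégie consiste à se ramener directement au cadre de \cite[Theorem 17.4]{kato2004p}. D'abord, la Remarque \ref{p-old} assure que, sous les hypothèses $p-1 \mid k_\phi-1$ et $\chi_\phi=1$, la spécialisation $g_\phi = \phi(\textbf{f})$ est $p$-ancienne : il existe une forme primitive $g_\phi^\sharp$ de poids $k_\phi \geq 2$, de niveau $N$ premier à $p$ et ordinaire en $p$, dont $g_\phi$ est la $p$-stabilisation correspondant à la racine $a_p(g_\phi)$ (qui est une unité $p$-adique) du $p$-ième polynôme de Hecke de $g_\phi^\sharp$. On est ainsi placé dans les hypothèses exactes du Théorème de Kato.

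Le Théorème principal de Kato, qui repose sur la construction d'un système d'Euler de \emph{classes de Beilinson-Kato} dans $H^1_{\textrm{Iw}}(\bQ_\infty, T_{p,\phi})$ et sur la loi de réciprocité de Perrin-Riou, fournit simultanément deux énoncés : la torsion sur $\cO_\phi[[T]]$ du groupe de Selmer dual associé à $g_\phi^\sharp$, et la divisibilité dans $\cO_\phi[[T]][\tfrac{1}{p}]$ de la fonction $L$ $p$-adique analytique de Kato par une série caractéristique du groupe de Selmer. Il reste à vérifier que les deux objets intervenant chez Kato coïncident avec ceux du présent article.

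Pour le groupe de Selmer, le point clé est que, pour une forme modulaire ordinaire de poids $\geq 2$, la condition locale de Bloch-Kato en $p$ (définie via $B_{\textrm{cris}}$) est équivalente à la condition ordinaire de Greenberg, grâce à la suite exacte $0 \to V^+_{p,\phi} \to V_{p,\phi} \to V^-_{p,\phi} \to 0$ de $G_{\Qp}$-représentations, dans laquelle $V^-_{p,\phi}$ est non-ramifiée et $V^+_{p,\phi}$ a pour unique poids de Hodge-Tate $k_\phi - 1 \geq 1$. Le groupe de Selmer utilisé par Kato s'identifie ainsi à $X_\infty(\phi) = \Sel_\infty(T_{p,\phi},T_{p,\phi}^+)^\vee$. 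Quant à la fonction $L$ $p$-adique analytique, notre construction via Emerton-Pollack-Weston (cf. Proposition \ref{prop:fonction_L_p_adique_poids_k}) satisfait la même formule d'interpolation que celle de Kato, à un facteur de normalisation des périodes près. Comme les périodes $\Omega_{g_\phi}^\pm$ sont choisies de sorte que $L^\an_p(g_\phi;T)$ soit à coefficients dans $\cO_\phi$, elles coïncident avec les périodes canoniques de Kato à une unité $p$-adique près, ce qui assure l'égalité des deux fonctions $L$ $p$-adiques modulo $\cO_\phi^\times$.

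Le point le plus délicat de cette réduction est la vérification précise de la compatibilité des normalisations : il faut s'assurer que les périodes canoniques \emph{en famille} $\Omega_{g_\phi}^\pm$, obtenues par la spécialisation de l'isomorphisme global $\alpha_N^\pm$ construit dans la Section \ref{sec:Lpfamille}, correspondent effectivement, en restriction à $g_\phi^\sharp$, à des périodes admissibles pour le Théorème de Kato. Cette vérification est standard pour les formes primitives ordinaires, mais mérite d'être mentionnée explicitement car elle repose crucialement sur l'hypothèse que $\ob{\rho}$ est absolument irréductible et $p$-distinguée, qui garantit que le localisé de $\left(\gM_N\right)_\gm^\pm$ est libre de rang un sur $\left(\bH_N\right)_\gm$.
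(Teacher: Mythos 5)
Votre démonstration suit exactement la même voie que l'article, qui se contente d'invoquer la Remarque \ref{p-old} pour identifier $g_\phi$ à la $p$-stabilisation d'une forme primitive ordinaire de niveau premier à $p$, puis de citer \cite[Theorem 17.4]{kato2004p}. Les vérifications de compatibilité que vous explicitez (équivalence de la condition locale de Greenberg avec celle de Bloch--Kato en $p$ pour une forme ordinaire de poids $\geq 2$, et normalisation des périodes canoniques via la liberté de $\left(\gM_N\right)_\gm^\pm$) sont correctes et correspondent précisément aux points que l'article laisse implicites.
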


Malheureusement, nous ne pouvons faire appel aux résultats de Skinner-Urban \cite[Theorem 3.29]{skinner2014iwasawa}, qui établit l'autre divisibilité sous l'hypothèse additionnelle (trop restrictive pour nos applications) de trivialité du caractère central de $\textbf{f}$. 

Nous formulons la conjecture suivante :

\begin{conjecture}\label{IMC_f_alpha}
	Il existe une unité $u$ de $\cO[[T]]$ telle que 
	$$ u \cdot L_p^{\alg}(f_\alpha,T)=L^\an_p(f_\alpha,T).$$
\end{conjecture}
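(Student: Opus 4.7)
The plan is to exploit Kato's Théorème \ref{Katotors} for higher weight specializations via a Hida family interpolation of $f_\alpha$. Let $\textbf{f}$ be the Hida family specializing to $f_\alpha$ through $\phi_\alpha$; by hypothesis \textbf{(rég)} it is unique up to Galois conjugation, and I work throughout with $\bH_\textbf{f}$ and the big Galois representation $(\cT_\textbf{f}, \cT_\textbf{f}^+)$ of Théorème \ref{bigT}. The key observation is that classical specializations $g_n := \phi_n(\textbf{f})$ of weight $k_n := 1+(p-1)p^n$ with trivial character at $p$ are $p$-old (Remarque \ref{p-old}), so Kato applies and yields $L_p^{\alg}(g_n;T) \mid L_p^{\an}(g_n;T)$ in $\cO_{\phi_n}[[T]]\otimes \Qp$. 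Since the parameters $\phi_n(X) = (1+p)^{(p-1)p^n}-1$ tend $p$-adically to $0=\phi_\alpha(X)$, the strategy is to package these divisibilities over the family and extract them at $\phi_\alpha$ by a limit argument.

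To carry this out, I construct two-variable L-functions in $\bH_\textbf{f}[[T]]$. The analytic side $L_p^{\an}(\textbf{f};T)$ is at hand via Proposition \ref{spé}. For the algebraic side, I apply Définition \ref{Selgen} with $\cA = \bH_\textbf{f}$ to obtain $X_\infty(\textbf{f})$, and set $L_p^{\alg}(\textbf{f};T)$ as its characteristic series. The crucial step is to verify the specialization property $L_p^{\alg}(\textbf{f};T) \bmod \gp_\phi \doteq L_p^{\alg}(g_\phi;T)$ for each relevant classical $\phi$; this follows from Proposition \ref{changement_base} combined with Proposition \ref{prop_pseudo}(2), after confirming via Proposition \ref{PsN} that $X_\infty(\textbf{f})$ has no nonzero pseudo-null sub-$\bH_\textbf{f}[[\Gamma]]$-module. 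Among the hypotheses of Proposition \ref{PsN}, conditions (c) and (d) are immediate from the ordinary filtration, (e) follows from the $p$-distinguishedness of $\ob{\rho}$, and (a), (b) reduce to torsion and cohomological vanishing statements derivable from Kato at a single arithmetic point.

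Assuming momentarily the simplifying hypothesis $\bH_\textbf{f} \simeq \cO[[X]]$, both L-functions live in $\cO[[X,T]]$, and the desired divisibility in $\cO[[X,T]] \otimes \Qp$ follows from the pointwise divisibilities at $X=\phi_n(X)$ via a formal limit argument: a candidate quotient would have to lie in $\bigcap_n (X-\phi_n(X))\cdot \cO[[X,T]]\otimes \Qp$, which vanishes since these principal primes accumulate at $(X)$. Specializing at $X=0$ then yields Théorème C. A genuine unit in $\Lambda$, rather than merely an element of $\Lambda \otimes \Qp$, would further require the reverse divisibility at weight $\geq 2$, which is the content of the full Main Conjecture for $p$-ordinary cusp forms of weight $\geq 2$; hence Conjecture \ref{IMC_f_alpha} in its entirety follows conditionally on that (largely known) input.

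The hardest part is that $\bH_\textbf{f}$ is generally \emph{not} isomorphic to a formal power series ring. What is available is the Bellaïche-Dimitrov smoothness of the eigencurve at $f_\alpha$, yielding only an isomorphism $(\widehat{\bH}_\textbf{f})_{\gp_\alpha} \simeq \ob{\bQ}_p[[X^{1/e}]]$ for some ramification index $e \geq 1$. To handle this, I would invoke Artin's approximation theorem \cite{artin1968solutions} to embed $\bH_\textbf{f}$ into a convergent power series ring $\cO'[[Y]]$ in a rescaled variable $Y := X^{1/e}/p^r$ ($r \geq 0$, $\cO'/\cO$ finite), producing a reparameterization $\textbf{f}^\dag(Y)$. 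The entire previous argument then transposes with $L_p^{\alg,\dag}(Y,T), L_p^{\an,\dag}(Y,T) \in \cO'[[Y,T]]$, and the limit argument runs verbatim once one checks that $Y=0$ still recovers $f_\alpha$ with compatible normalizations on both the algebraic and analytic sides.
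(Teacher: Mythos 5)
The statement you were asked about is a conjecture, and the paper does not prove it: what it establishes is the partial result Théorème \ref{ThC} (one divisibility unconditionally in $\cO[[T]][\tfrac{1}{p}]$, and the full conjecture conditionally on Conjecture \ref{IMCg} for specializations near weight 1). Your proposal arrives at exactly this conditional statement and by essentially the same route as the paper: Kato's theorem at the $p$-old specializations of weight $k\equiv 1 \pmod{p-1}$ and trivial $p$-part of the character, two-variable algebraic and analytic $L$-functions over a parametrization of the Hida family near $f_\alpha$, control of specializations via Propositions \ref{changement_base}, \ref{prop_pseudo} and \ref{PsN}, and Bellaïche--Dimitrov smoothness plus Artin approximation to replace $\bH_{\textbf{f}}$ by a convergent power series ring $\cO'[[Y]]$. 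Structurally you have reproduced Théorème \ref{ThC}.

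Two points need correction. First, your limit step differs from the paper's and, as written, does not work: you claim the divisibilities at the points $X=\phi_n(X)$ force a two-variable divisibility because a candidate quotient would lie in $\bigcap_n(X-\phi_n(X))\cdot\cO[[X,T]]\otimes\Qp$, but the failure of $a(X,T)\mid b(X,T)$ produces no element of that intersection, and pointwise divisibility at infinitely many points does not by itself yield divisibility in the two-variable ring (the one-variable quotients $c_n(T)$ need not interpolate, and their $p$-denominators are a priori unbounded since Kato's divisibility only holds after inverting $p$). The paper instead specializes both sides first and passes to the limit in the one-variable ring $\cO_M[[T]]$ (Lemme \ref{elem}): compactness handles integral divisibility, and the powers of $p$ in the denominators are bounded precisely because $L_p^{\alg}(f_\alpha;T)\neq 0$ --- a nonvanishing input from Théorème A that your argument never invokes but genuinely needs. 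Second, describing the missing reverse divisibility at weight $\geq 2$ as "largely known" is too optimistic here: the paper notes explicitly that Skinner--Urban's theorem requires the central character of $\textbf{f}$ to be trivial, which fails in the present setting, so the hypothesis in Théorème \ref{ThC} is a genuine assumption and the conjecture remains open.
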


Notons que les conditions $p-1|k_\phi -1$ et $\chi_\phi=1$ sont vraies pour $\phi=\phi_\alpha$. Les arguments de passage à la limite utilisé dans la preuve du Théorème \ref{ThC} montreront (sous les hypothèses du Théorème \ref{ThC}) que la Conjecture \ref{IMCg} implique la Conjecture \ref{IMC_f_alpha}. Notre résultat est le suivant :

\begin{theorem}\label{ThC}
	On a $$ L_p^{\alg}(f_\alpha,T)\quad \textrm{divise}\quad L^\an_p(f_\alpha,T) $$
	dans $\cO[[T]][\tfrac{1}{p}]$. De plus, si la Conjecture \ref{IMCg} est vraie pour toute spécialisation $g_\phi$ de poids $k_\phi$ suffisamment proche $p$-adiquement de 1 et de niveau $Np$, alors la Conjecture \ref{IMC_f_alpha} est vraie.
\end{theorem}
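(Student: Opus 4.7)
The plan is to prove Theorem \ref{ThC} by an approximation argument along a Hida family $\textbf{f}$ specialising to $f_\alpha$ (which exists by Wiles). First I would set up the algebraic side: by Theorem \ref{bigT}, $\textbf{f}$ provides a free $\bH_\textbf{f}$-module $\cT_\textbf{f}$ with a $G_{\Qp}$-stable filtration $\cT_\textbf{f}^+$, and hence a dual Selmer group $X_\infty(\textbf{f}) := \Sel_\infty(\cT_\textbf{f}, \cT_\textbf{f}^+)^\vee$. The strategy schematised in the introduction is to compare algebraic and analytic $p$-adic $L$-functions along a sequence of classical specialisations $g_n = \phi_n(\textbf{f})$ whose weight tends $p$-adically to $1$, obtaining the desired divisibility at $f_\alpha$ by passing to the limit.

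The argument rests on three ingredients. (a) For each $g_n$ with $p-1 \mid k_{\phi_n}-1$ and $\chi_{\phi_n}=1$, Remark \ref{p-old} shows that $g_n$ is the $p$-stabilisation of a newform of level $N$, so Kato's Theorem \ref{Katotors} applies and yields $L_p^{\alg}(g_n; T) \mid L_p^{\an}(g_n; T)$ in $\cO_{\phi_n}[[T]][\tfrac{1}{p}]$. (b) On the analytic side, applying $\phi_n$ to the two-variable $L_p^{\an}(\textbf{f}; T) \in \bH_\textbf{f}[[T]]$ of Proposition \ref{spé} yields $L_p^{\an}(g_n; T)$ by construction, and continuity of the $\phi_n$ in the natural $\gm_\textbf{f}$-adic topology gives convergence to $L_p^{\an}(f_\alpha; T)$. (c) On the algebraic side, I would define $L_p^{\alg}(\textbf{f}; T)$ as a generator of $\car X_\infty(\textbf{f})$, prove via Proposition \ref{PsN} that $X_\infty(\textbf{f})$ has no non-trivial pseudo-null submodules (the analogue of Lemma \ref{Och} for the family), and use the control theorem (Proposition \ref{changement_base}) together with the specialisation machinery of Section \ref{section2} (Proposition \ref{prop_pseudo}) to identify the $\phi_n$- and $\phi_\alpha$-specialisations of $L_p^{\alg}(\textbf{f}; T)$ with $L_p^{\alg}(g_n; T)$ and $L_p^{\alg}(f_\alpha; T)$ respectively. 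With (a), (b), (c) in place, an elementary limit-of-divisibilities lemma (Lemma \ref{elem}) in the complete topological ring $\cO[[T]]$, combined with the non-vanishing $L_p^{\alg}(f_\alpha; T) \neq 0$ from Corollary \ref{coro:sel_infty_torsion+zéros_triviaux}, yields the first assertion. For the conditional statement, if Conjecture \ref{IMCg} holds for every $g_n$, the divisibility in (a) becomes equality up to units, and the same limit argument upgrades the conclusion to the desired equality up to a unit of $\cO[[T]]$.

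The hard part will be that $\bH_\textbf{f}$ is not in general isomorphic to a formal power series ring $\cO[[X]]$, so the clean two-variable argument above does not apply verbatim. Here I would invoke the smoothness of the eigencurve at $f_\alpha$ due to Bellaïche-Dimitrov: the completion $(\widehat{\bH_\textbf{f}})_{\gp_\alpha}$ is isomorphic to $\ob{\bQ}_p[[X^{1/e}]]$, where $e \geq 1$ is the ramification index of the weight map at $f_\alpha$. Artin's approximation theorem (Proposition \ref{phi_infty}) then embeds $\bH_\textbf{f}$ into a ring of convergent power series $\cO'[[Y]]$ in a rescaled variable $Y = X^{1/e}/p^r$, for some $r \geq 0$ and some finite extension $\cO'/\cO$. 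This provides a dagger-parametrisation $\textbf{f}^\dag(Y)$ and lifts both $L_p^{\alg}$ and $L_p^{\an}$ to elements $L_p^{\alg,\dag}(Y,T), L_p^{\an,\dag}(Y,T) \in \cO'[[Y,T]]$. The three-step argument then runs without change with these dagger-variants, with specialisations $Y = y_n \to 0$ pulled back from classical $\phi_n$ of weight $p$-adically close to $1$, completing the proof.
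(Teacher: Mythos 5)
Votre proposition est correcte et suit essentiellement la démonstration du papier : mêmes trois étapes (Kato pour les $g_n$, convergence analytique, convergence algébrique via le contrôle et l'absence de sous-modules pseudo-nuls), même lemme élémentaire de passage à la limite dans les divisibilités, et même recours à la lissité de Bellaïche--Dimitrov et au théorème d'approximation d'Artin pour remplacer $\bH_\textbf{f}$ par le paramétrage $\textbf{f}^\dag(Y)$ sur $\cO'[[Y]]$.
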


Il est facile d'inclure le poids 1 dans les résultats classiques sur la variation des $\mu$-invariants dans les familles de Hida (voir par exemple \cite[Theorem 4.3.3]{emerton2006variation}). On obtient la proposition suivante :

\begin{proposition}\label{prop:mu=0}
	Si le $\mu$-invariant de $X_\infty(f_\alpha)$ est nul, alors il en est de même pour $X_\infty(g_\phi)$ pour toute spécialisation classique $\phi$ de \textbf{f}, et réciproquement.
\end{proposition}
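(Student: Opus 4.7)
The plan is to realize the $\mu$-invariants of $X_\infty(f_\alpha)$ and of the various $X_\infty(g_\phi)$ as specializations of a single intrinsic invariant $\mu_\textbf{f}$ attached to the family $\textbf{f}$, following the template of \cite{emerton2006variation} but extended to weight $1$. First, I would introduce the big Selmer group $X_\infty(\textbf{f}) := \Sel_\infty(\cT_\textbf{f}, \cT_\textbf{f}^+)^\vee$ attached to the ordinary deformation $\rho_\textbf{f}$ of Theorem \ref{bigT}. It is a finitely generated $\bH_\textbf{f}[[T]]$-module, and Lemma \ref{Och} referenced in the introduction (verified following Proposition \ref{prop:sel_infty_sans_sous_modules_finis} via Greenberg's criterion, Proposition \ref{PsN}) shows that it has no nontrivial pseudo-null submodule. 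Applying the control theorem, Proposition \ref{changement_base}, to the principal ideals $\gp_\phi \subseteq \bH_\textbf{f}$ (whose hypotheses are built into Theorem \ref{bigT}), one obtains
$$X_\infty(\textbf{f})/\gp_\phi X_\infty(\textbf{f}) \simeq X_\infty(\phi)$$
for every classical specialization $\phi$, including $\phi = \phi_\alpha$. Combined with Proposition \ref{prop_pseudo}(2), this gives a congruence of characteristic ideals
$$\car(X_\infty(\phi)) \equiv \car(X_\infty(\textbf{f})) \bmod \gp_\phi.$$

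To handle the non-regularity of $\bH_\textbf{f}[[T]]$, I would next invoke the Artin approximation parametrization of Proposition \ref{phi_infty}. Around any classical specialization $\phi$ of $\textbf{f}$ there is a convergent power series ring $\cO'[[Y]]$ (with residue field $\kappa' := \cO'/\varpi$) into which the localization of $\bH_\textbf{f}$ at $\gp_\phi$ embeds, in such a way that $Y = 0$ recovers $\phi$. A generator $L_p^{\alg,\dag}(Y,T) \in \cO'[[Y,T]]$ of $\car X_\infty(\textbf{f})$ then admits a Weierstrass decomposition
$$L_p^{\alg,\dag}(Y,T) = \varpi^{\mu_\textbf{f}} \cdot U(Y,T), \qquad U(Y,T) \not\equiv 0 \bmod \varpi,$$
where $\mu_\textbf{f}$ is the $\varpi$-adic valuation of any generator of $\car X_\infty(\textbf{f})$ in $\Frac(\bH_\textbf{f})[[T]]$, an invariant intrinsic to the family and independent of the local patch chosen. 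Specializing at $Y = 0$ yields
$$\mu(X_\infty(g_\phi)) = \mu_\textbf{f} + v_\varpi(U(0,T)),$$
which vanishes if and only if $\mu_\textbf{f} = 0$ and $\bar U(0,T) \neq 0$ in $\kappa'[[T]]$.

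The equivalence claimed now follows. If $\mu(X_\infty(f_\alpha)) = 0$, then $\mu_\textbf{f} = 0$ and, in the local patch near $f_\alpha$, the reduction $\bar U(0,T)$ is nonzero. Repeating the same local construction around any other classical specialization $\phi$ of $\textbf{f}$ extracts the same intrinsic $\mu_\textbf{f} = 0$, and it suffices to check that the locally built $\bar U(0, T)$ remains nonzero there: this is automatic, since $\bar U(Y,T) \in \kappa'[[Y,T]]$ agrees up to a unit with the reduction modulo $\varpi$ of any generator of $\car X_\infty(\textbf{f})$ (nonzero by the definition of $\mu_\textbf{f}$), hence is a nonzero power series whose constant term at $Y = 0$ cannot vanish. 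One concludes $\mu(X_\infty(g_\phi)) = 0$ for every classical $\phi$, while the converse is immediate since $f_\alpha$ is itself such a specialization.

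The main obstacle is the verification that $X_\infty(\textbf{f})$ has no nontrivial pseudo-null submodules over the possibly non-regular ring $\bH_\textbf{f}[[T]]$: checking the hypotheses of Proposition \ref{PsN} in the family setting relies on the absolute irreducibility and $p$-distinguished nature of $\ob{\rho}$ -- both guaranteed by our hypotheses \textbf{(rég)} and \textbf{(deg)} -- which together ensure the absence of $\bF$- and $\mu_p$-quotients in the residual representation of $\cT_\textbf{f}$ and the existence of the unique ordinary filtration supplied by Theorem \ref{bigT}.
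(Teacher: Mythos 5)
Your route through the family --- realizing the $\mu$-invariants as specializations of a characteristic power series $L_p^{\alg,\dag}(Y,T)$ of the big Selmer group --- is genuinely different from the paper's argument, but it breaks down at the decisive step. Writing $L_p^{\alg,\dag}(Y,T)=\varpi^{\mu_{\textbf{f}}}U(Y,T)$ with $U\not\equiv 0\bmod \varpi$, you claim that $\bar U(0,T)\neq 0$ is ``automatic'' because a nonzero element of $\kappa'[[Y,T]]$ cannot vanish at $Y=0$. That is false: $\bar U(Y,T)=Y$ is a nonzero power series whose restriction to $Y=0$ is zero, and correspondingly $U(Y,T)=Y+\varpi$ satisfies $U\not\equiv 0\bmod\varpi$ yet $v_\varpi(U(0,T))=1$. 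The possible positivity of $v_\varpi(U(0,T))$ --- i.e.\ a jump of the $\mu$-invariant at the special fibre --- is exactly what the proposition must rule out, so your argument is circular at the only point where something has to be proved. Two secondary issues: Proposition \ref{changement_base} requires the ideal to be principal, which is not known for $\gp_\phi\subseteq\bH_{\textbf{f}}$ in general (this is why the paper only ever specializes along the principal ideals $(Y-y_n)$ and $(Y)$ of the patch $\cA$); and the patch $\cO'[[Y]]$ of Proposition \ref{phi_infty} is built from the smoothness of $(\bH_{\textbf{f}})_{\gp_\alpha}$ at $f_\alpha$ specifically, so ``repeating the same local construction around any other classical $\phi$'' would need a separate justification.

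The paper's proof is much shorter and sidesteps the family entirely: $\mu(X_\infty(g_\phi))=0$ if and only if $X_\infty(g_\phi)/pX_\infty(g_\phi)$ is finite-dimensional over the residue field, and by Proposition \ref{changement_base} applied to the principal ideal $(\varpi)$ of the coefficient ring this quotient is dual to a residual Selmer group formed from $\cD[\varpi]$, which depends only on $\ob{\rho}$ and on the ordinary line modulo $\varpi$ (i.e.\ on $\alpha\bmod\varpi$) --- hence not on $\phi$. That one-step comparison of residual Selmer groups is the correct replacement for your claim that the nonvanishing of $\bar U(0,T)$ propagates from one specialization to another.
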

\begin{proof}
	\'Etant donnée une spécialisation classique $g_\phi$ de $\textbf{f}$ (incluant $g_\phi=f_\alpha$), le $\mu$-invariant de $X_\infty(g_\phi)$ est nul si et seulement si le $\cO/p\cO$-espace vectoriel $X_\infty(g_\phi)/pX_\infty(g_\phi)$ est de dimension finie. Or, en appliquant la Proposition \ref{changement_base} (comme dans la preuve du Lemme \ref{spécialisation_torsion}), ce dernier s'identifie à un "groupe de Selmer résiduel" (\textit{i.e.}, ne dépendant que de $\ob{\rho}$ et du choix de $\alpha \mod p$), ce qui ne dépend pas de $\phi$.
\end{proof}

\subsection{Articulation de la preuve du Théorème C}

D'après \cite[§7.3, Proposition 1.12]{dimitrov2014local}, l'anneau localisé $\left(\bH_\textbf{f}\right)_{\gp_\alpha}$ est de valuation discrète. Dans la Section \ref{para_étale}, on construit un paramétrage de $\textbf{f}$ au voisinage de $f_\alpha$, donnant une suite de formes modulaires primitives $p$-stabilisées $g_n$ (cf. Notation \ref{gn}) dont les coefficients de Fourier vivent dans une extension finie $\cO'$ de $\cO$ et convergent vers ceux de $f_\alpha$. Les étapes de la preuve du Théorème \ref{ThC} sont rassemblées dans le schéma suivant : 

$$
\xymatrix{
	L_p^{\alg}(g_n,T) \ar[d]_{(c)}^{n \rightarrow +\infty} & \stackrel{(a)}{\mbox{divise}} & L^\an_p(g_n,T) \ar[d]^{(b)}_{n \rightarrow +\infty} \\
	L_p^{\alg}(f_\alpha,T) & & L^\an_p(f_\alpha,T) 
}
$$ 
Toutes les fonctions $L$ $p$-adiques sont des éléments de l'anneau topologique $\cO'[[T]]$, et les divisibilités sont dans $\cO'[[T]][\tfrac{1}{p}]$. Le point (a) est une application du théorème de Kato (Théorème \ref{Katotors}). Les points (b) et (c) sont respectivement démontrés dans le Lemme \ref{bn} et la Proposition \ref{an}, après avoir construit une fonction $L$ $p$-adique analytique et algébrique au voisinage de $f_\alpha$ dans les Sections \ref{para_analytique} et \ref{para_alg}. Une fois les points (a), (b) et (c) prouvés, la preuve du Théorème \ref{ThC} découle immédiatement du lemme suivant, pour $A=\cO'[[T]]$ (muni de sa topologie d'anneau local), $\pi$ une uniformisante de $\cO'$, $a_n=L_p^{\alg}(g_n,T)$, $a=L_p^{\alg}(f_\alpha,T)$, $b_n=L^\an_p(g_n,T)$, et $b=L^\an_p(f_\alpha,T)$.

\begin{lemme}\label{elem}
	Soit $A$ un anneau topologique compact. Soient $(a_n)_n$ et $(b_n)_n$ deux suites d'éléments de $A$ convergeant respectivement vers $a$ et $b$.
	\begin{enumerate}
		
		\item Si $a_n$ divise $b_n$ dans $A$ pour tout $n$, alors $a$ divise $b$.
		\item Soit $\pi \in A$ un élément premier, régulier et topologiquement nilpotent. Si $a_n$ divise $b_n$ dans $A[\tfrac{1}{\pi}]$ pour tout $n$, et si $a \neq 0$, alors $a$ divise $b$ dans $A[\frac{1}{\pi}]$.
	\end{enumerate}
\end{lemme}

\begin{proof}
	Le point (1) se prouve facilement en extrayant des sous-suites convergentes. Traitons (2), et soit $n \in \bN$. Il existe $k(n) \in \bN$ et $c_n\in A$ tels que $\pi^{k(n)}b_n=a_n c_n$. L'élément $\pi$ étant régulier, quitte à simplifier suffisamment de fois par $\pi$, on peut supposer que l'on a, ou bien $k(n)=0$, ou bien $\pi \nmid c_n$. Dans tous les cas, $\pi$ étant premier, on a $\pi^{k(n)}|a_n$. Par hypothèse, $a_n$ converge vers $a\neq 0$ et $\pi$ est topologiquement nilpotent, donc $k(n)$ est borné avec $n$ d'après (1). Ainsi, il existe un entier $K$ suffisamment grand tel que $a_n$ divise $\pi^K b_n$ pour tout $n$, et donc $a$ divise $\pi^K b$ par (1), ce qu'on voulait démontrer. 
\end{proof}

\subsection{Lissité et paramétrage local de la famille de Hida}\label{para_étale}

Soit $F(W)=\sum_n c_n W^n \in \ob{\bQ}_p[[W]]$ une série formelle de rayon de convergence positif. Alors il existe un entier $r$ tel que $F(W)$ converge sur le disque de rayon $p^{-r}$, c'est-à-dire que la suite $(c_n p^{nr})_n$ est bornée. On peut donc voir $F(Y)$ comme un élément du sous-anneau $\ob{\bZ}_p[[W/p^r]][\tfrac{1}{p}]$ de $\ob{\bQ}_p[[W]]$.

\begin{lemme}\label{artin_approx}
	Soit $F(W) \in \ob{\bQ}_p[[W]]$. Supposons que $F(W)$ est entier sur $\Zp[[W]]$. Alors il existe une extension finie $M$ de $\Qp$ et un entier $r$ tel que $F(W) \in \cO_M[[W/p^r]]$.
\end{lemme}
\begin{proof}
	Soit $Q(W,Z)\in \Zp[[W]][Z]$ un polynôme (en la variable $Z$) unitaire de degré $d_Q$ qui s'annule en $F(W)$. Le théorème d'approximation d'Artin \cite[Theorem 1.2]{artin1968solutions} implique que, pour tout entier $c\geq 0$ et pour toute racine $G(W)$ de $Q(W,Z)$, il existe une série formelle $\hat{G}_c(W) \in \ob{\bQ}_p[[W]]$ de rayon de convergence positif qui est une racine de $Q(W,Z)$ et qui satisfait la congruence $G(W) \equiv \hat{G}_c(W) \mod W^c$. Comme $\ob{\bQ}_p[[W]]$ est intègre, $Q(W,Z)$ a un nombre fini de racines dans $\ob{\bQ}_p[[W]]$, et on peut donc choisir un entier $c$ tel que, pour toute paire de racines distinctes $G_1(W),G_2(W)$ de $Q(W,Z)$, on ait $G_1(W)\not\equiv G_2(W) \mod W^c$. Pour un tel $c$, on voit que l'on a $G(W) = \hat{G}_c(W)$, et donc toute racine du polynôme $Q(W,Z)$ converge au voisinage de $0$. En particulier, il existe un entier $r$ tel que $F(W) \in \ob{\bZ}_p[[W/p^r]][\tfrac{1}{p}]$. 
	
	Soit $M$ le compositum de toutes les extensions de $\Qp$ de degré inférieur ou égal à $d_Q$. Pour tout $w\in p^{r+1}\Zp$, l'équation $Q(w,F(w))=0$ implique que $F(w)$ satisfait une équation de degré $d_Q$, et donc $F(w) \in M$. On en déduit que $F(W)$ est à coefficients dans $M$, et donc $F(W) \in \cO_M[[W/p^r]][\tfrac{1}{p}]$. Enfin, comme $F(W)$ est entier, il est $p$-entier et donc $F(W) \in \cO_M[[W/p^r]]$ comme annoncé.
\end{proof}

Soient $r\geq 0$ et $e\geq 1$ des entiers et $M$ une extension finie de $L$. On notera $\cO_M[[X^{1/e}/p^r]]$ la $\cO_M[[X]]$-algèbre $\cO_M[[X,Y]]/(p^{re}Y^e-X)$. Un morphisme de spécialisation $X=a$ pour $a\in \gm_{\bC_p}$ s'étend à $\cO_M[[X^{1/e}/p^r]]$ dès que $p^{-re}a\in \gm_{\bC_p}$, et dépend du choix d'une racine $e$-ième de $p^{-re}a$ dans $\bC_p$.

\begin{proposition}\label{phi_infty}
	Il existe des entiers $r$ et $e\geq 1$, une extension finie $M/L$ et un morphisme (injectif) de $\Zp[[X]]$-algèbres $\Phi$ tel que le diagramme suivant soit commutatif :
	
	\begin{equation}\label{diagramme_phi_infty}
	\xymatrix{
		\bH_\textbf{f} \ar[r]^{\phi_{\alpha}} \ar@{.>}[rd]_{\Phi} & \cO_M \\
		\Zp[[X]] \ar[u] \ar[r] & \cO_M[[X^{1/e}/p^r]] \ar[u]_{X=0} 
	}
	\end{equation}
	
\end{proposition}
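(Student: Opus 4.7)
Le plan est de combiner la lissité de $(\bH_\textbf{f})_{\gp_\alpha}$ (établie par Bellaïche-Dimitrov) avec le Lemme d'approximation d'Artin (Lemme \ref{artin_approx}) pour réaliser $\bH_\textbf{f}$ comme une sous-algèbre d'un anneau de séries convergentes à une variable.

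Premièrement, on observe que la contraction de $\gp_\alpha$ à $\Zp[[X]]$ est l'idéal premier de hauteur 1 $(X)$. En effet $\phi_\alpha(X)=0$ entraîne $X\in\gp_\alpha$, tandis que $p\notin\gp_\alpha$ puisque $\phi_\alpha(p)\neq 0$. Le morphisme local $\Zp[[X]]_{(X)}\to (\bH_\textbf{f})_{\gp_\alpha}$ est donc une extension d'anneaux de valuation discrète, dont on note $e\geq 1$ l'indice de ramification. Le corps résiduel $L$ du localisé $(\bH_\textbf{f})_{\gp_\alpha}$ étant de caractéristique 0, le Théorème de structure de Cohen fournit un isomorphisme $\widehat{(\bH_\textbf{f})_{\gp_\alpha}}\simeq L[[\pi]]$ dans lequel $X=u\pi^e$ pour une unité $u\in L[[\pi]]^\times$.

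Deuxièmement, on passe à une extension finie $M_0/L$ contenant une racine $e$-ième de $u(0)\in L^\times$, de sorte que $u$ admette une racine $e$-ième $v\in M_0[[\pi]]^\times$ (obtenue par la série binomiale formelle). En posant $Z=\pi v$, on a $Z^e=X$ et $M_0[[\pi]]=M_0[[Z]]$. La complétion fournit alors un morphisme injectif de $\Zp[[X]]$-algèbres $\Phi_\infty : \bH_\textbf{f}\hookrightarrow M_0[[Z]]$, avec $X\mapsto Z^e$, dont la spécialisation en $Z=0$ coïncide par construction avec la réduction modulo $\gp_\alpha$, c'est-à-dire avec $\phi_\alpha$ à composer avec l'inclusion $\cO\hookrightarrow \cO_{M_0}$.

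Troisièmement, pour ramener l'image de $\Phi_\infty$ dans un anneau de séries convergentes $\cO_M[[Z/p^r]]$, on applique le Lemme \ref{artin_approx} (dans la variable $Z$) à chacun des éléments $h_1,\ldots,h_d$ d'un système fini de générateurs de $\bH_\textbf{f}$ sur $\Zp[[X]]$. Chaque $h_i$ étant entier sur $\Zp[[X]]\subseteq \Zp[[Z]]$, le Lemme fournit une extension finie $M_i/M_0$ et un entier $r_i\geq 0$ tels que $\Phi_\infty(h_i)\in \cO_{M_i}[[Z/p^{r_i}]]$. On conclut en choisissant pour $M$ un compositum fini contenant tous les $M_i$ et en posant $r=\max_i r_i$: cela définit le morphisme $\Phi : \bH_\textbf{f} \hookrightarrow \cO_M[[X^{1/e}/p^r]]$ annoncé. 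Le point délicat réside dans cette dernière étape, dont la difficulté consiste à contrôler uniformément le rayon de convergence des séries formelles représentant les éléments de $\bH_\textbf{f}$; c'est précisément le contenu du Théorème d'approximation d'Artin.
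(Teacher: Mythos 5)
Votre preuve est correcte et suit essentiellement la même démarche que celle de l'article : lissité de $(\bH_\textbf{f})_{\gp_\alpha}$, complétion en un anneau de séries formelles en une variable dans lequel $X$ devient, à une unité près, la puissance $e$-ième d'une uniformisante, puis application du Lemme \ref{artin_approx} pour ramener l'image dans un anneau de séries convergentes à coefficients entiers. Votre rédaction est même un peu plus explicite sur deux points que l'article laisse implicites, à savoir le passage à une extension finie $M_0$ pour extraire la racine $e$-ième de l'unité $u$, et l'application du lemme d'approximation à un système fini de générateurs de $\bH_\textbf{f}$ sur $\Zp[[X]]$.
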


\begin{proof}
	Comme $\left(\bH_\textbf{f}\right)_{\gp_\alpha}$ est un anneau de valuation discrète d'égale caractéristique, son complété est un anneau de séries formelles en une variables sur son corps résiduel. Donc il existe un morphisme injectif d'anneaux locaux
	$$\Phi : \left(\bH_\textbf{f}\right)_{\gp_\alpha} \hookrightarrow \ob{\bQ}_p[[Y_0]],$$
	où $Y_0$ est une variable formelle. Comme $\Phi$ est local, l'image de $X\in \gp_\alpha$ par $\Phi$ est divisible par $Y_0$. Donc $X$, vu comme élément de $\ob{\bQ}_p[[Y_0]]$, est une série formelle sans terme constant $X=aY_0^e + bY_0^{e+1} + \ldots$ où $e\geq 1$ et $a\neq 0$. Soit $H(Y_0)\in \ob{\bQ}_p[[Y_0]]$ une racine $e$-ième de la série $a+bY_0+\ldots$, de sorte que $X=\left(Y_0 H(Y_0)\right)^e$. Comme $H(0)\neq 0$, on voit que $Y=Y_0 H(Y_0)$ définit une nouvelle variable formelle, \textit{i.e.}, on a un isomorphisme d'anneaux locaux $\ob{\bQ}_p[[Y_0]]\simeq\ob{\bQ}_p[[Y]]\simeq \ob{\bQ}_p[[X^{1/e}]]$. La restriction de $\Phi$ à $\bH_\textbf{f}$ donne un morphisme de $\Zp[[X]]$-algèbres $\bH_\textbf{f} \hookrightarrow \ob{\bQ}_p[[X^{1/e}]]$. D'après le Lemme \ref{artin_approx}, l'image de ce morphisme est incluse dans un anneau de la forme $\cO_M[[X^{1/e}/p^r]]$ pour une certaine extension finie $M$ de $L$, car $\bH_\textbf{f}$ est fini sur $\Zp[[X]]$. On note encore $\Phi$ le morphisme de $\Zp[[X]]$-algèbres obtenu :
	$$\Phi : \bH_\textbf{f} \hookrightarrow \cO_M[[X^{1/e}/p^r]].$$
	Il vérifie par construction $\Phi^{-1}\left((X^{1/e}/p^r)\right)=\gp_\alpha$, ce qui rend le diagramme (\ref{diagramme_phi_infty}) commutatif.
\end{proof}

\begin{corollaire}\label{fdag}
	Soit $Y$ la variable formelle $X^{1/e}/p^r$ et soit $\cA:=\cO_M[[X^{1/e}/p^r]]\simeq \cO_M[[Y]]$. On définit 
	$$\textbf{f}^\dag(Y)=\sum_{m\geq 1} a_m(\textbf{f}^\dag;Y)q^m \in \cA[[q]], \quad \textrm{où} \quad a_m(\textbf{f}^\dag;Y):=\Phi(a_m(\textbf{f})).$$
	Alors $\textbf{f}^\dag(Y)$ paramètre $\textbf{f}$ au voisinage de $f_\alpha$ au sens suivant : on a $\textbf{f}^\dag(0)=f_\alpha$, et pour tout entier $k\geq 2$ tel que $p^{re}|k-1$, et pour tout choix $y\in M$ d'une racine $e$-ième de $\frac{u^{k-1}-1}{p^{re}}$, $\textbf{f}^\dag(y)$ est une spécialisation arithmétique de $\textbf{f}$ de poids $k$, de niveau $Np$ et caractère $\epsilon \omega^{1-k}$.
\end{corollaire}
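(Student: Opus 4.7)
Mon plan consiste à déduire les deux assertions directement de la Proposition \ref{phi_infty} et de la définition des spécialisations classiques d'une famille de Hida. L'observation clé est que $\Phi$ est un morphisme de $\Zp[[X]]$-algèbres, donc via l'identification $\cA \simeq \cO_M[[Y]]$ avec $Y = X^{1/e}/p^r$, on a $\Phi(X) = (p^r Y)^e \in \cA$. Par conséquent, toute spécialisation $Y = y$ de $\cA$ se restreint, via $\Phi$, à une spécialisation de $\bH_\textbf{f}$ envoyant $X$ sur $(p^r y)^e$.

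Pour la première assertion, je spécialiserais en $Y = 0$. La commutativité du diagramme (\ref{diagramme_phi_infty}) donne directement $\Phi(a_m(\textbf{f}))|_{Y=0} = \phi_\alpha(a_m(\textbf{f})) = a_m(f_\alpha)$ pour tout $m\geq 1$, d'où $\textbf{f}^\dag(0)=f_\alpha$.

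Pour la seconde assertion, je fixerais un entier $k\geq 2$ avec $p^{re}\mid k-1$. Il faut d'abord vérifier qu'une racine $e$-ième $y\in M$ de $(u^{k-1}-1)/p^{re}$ appartient à $\gm_M$, pour que la spécialisation $Y=y$ du complété $\cO_M[[Y]]$ soit définie. Comme $u=1+p$ et $p$ est impair, le calcul classique $v_p(u^n-1) = v_p(n)+1$ appliqué à $n=k-1$ donne $v_p(u^{k-1}-1) \geq re+1$, donc $v_p(y^e)\geq 1$ et $y\in \gm_M$. La composée $\phi_y := (Y=y)\circ \Phi : \bH_\textbf{f} \longrightarrow \cO_M$ vérifie alors
$$\phi_y(X) = (p^r y)^e = u^{k-1}-1.$$
D'après la définition d'une spécialisation classique, ceci correspond au cas $\zeta = 1$ (c'est-à-dire $\zeta$ racine primitive de l'unité d'ordre $p^0=1$, d'où $r_\phi=1$ et $\chi_\phi=1$) et $k_\phi=k$. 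Par définition d'une famille de Hida primitive, $\phi_y(\textbf{f}) = \textbf{f}^\dag(y)$ est donc une forme parabolique ordinaire $p$-stabilisée de poids $k$, de niveau $Np^{r_\phi}=Np$ et de caractère $\epsilon\chi_\phi\omega^{1-k} = \epsilon\omega^{1-k}$.

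L'obstacle principal n'en est pas vraiment un : tout le travail technique (lissité de $\bH_\textbf{f}$ en $\gp_\alpha$, choix d'un nouveau paramètre uniformisant au complété, et approximation d'Artin pour redescendre à un anneau de séries convergentes sur une extension finie) a été effectué dans la Proposition \ref{phi_infty}. Le seul point demandant un peu d'attention est l'estimation $p$-adique ci-dessus assurant que $y$ est bien dans le disque de convergence, ainsi que l'identification correcte des données $(\zeta,k)$ pour conclure via les Théorèmes de contrôle de Hida.
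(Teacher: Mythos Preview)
Your proposal is correct and follows essentially the same approach as the paper's proof: both compose the evaluation $Y=y$ with $\Phi$ to obtain a classical specialization $\phi_y$ with $\phi_y(X)=u^{k-1}-1$ and $\chi_{\phi_y}=1$, after checking that $|y|_p<1$. Your version is in fact slightly more detailed, making explicit the identity $\Phi(X)=(p^rY)^e$, the estimate $v_p(u^{k-1}-1)=v_p(k-1)+1\geq re+1$, and the verification of $\textbf{f}^\dag(0)=f_\alpha$ via the commutativity of diagram~(\ref{diagramme_phi_infty}).
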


\begin{proof}
	On a en effet $p^{re+1}|u^{k-1}-1$, donc $|y|_p <1$ et il existe un unique morphisme $\Psi_y :\cA \longrightarrow \cO_M$ satisfaisant $\Psi_y(Y)=y$. On a de plus $\Psi_y(X)=u^{k-1}-1$, donc la composée $\phi_y =\Psi_y \circ \Phi : \bH_\textbf{f} \longrightarrow \cO_M$ est une spécialisation arithmétique de $\bH_\textbf{f}$ de poids $k$ et de caractère $\chi_{\phi_y}=1$. La forme $\textbf{f}^\dag(y)=\phi_y(\textbf{f})$ définit bien une spécialisation arithmétique de $\textbf{f}$ avec les propriétés annoncées.
\end{proof}

\begin{notation}\label{gn}
	Quitte à agrandir $M$, on peut supposer que $M$ contient toutes les extensions de $\Qp$ de degré $\leq e$, et donc $M$ contient toutes les racines $e$-ièmes d'éléments de $\Qp^\times$. On fixe dans toute la suite une telle extension $M$.
	
	Pour tout entier naturel $n$, on note $k(n)=(p-1)p^{n+re}+1$. On fixe $y_{n} \in M$ une racine $e$-ième de $\frac{u^{k(n)-1}-1}{p^{re}}$ et on pose $g_n=\textbf{f}^\dag(y_n)$. Comme $p-1|k(n)-1$, $g_n$ est la $p$-stabilisation d'une forme primitive classique de poids $k(n)$, de niveau $N$, de caractère $\epsilon$ et à coefficients dans $\cO_M$ d'après le Corollaire \ref{fdag} et la Remarque \ref{p-old}. On note $\phi_n :\bH_\textbf{f} \longrightarrow \cO_M$ le morphisme de spécialisation correspondant à $g_n$ et $\gp_n=\gp_{\phi_n} \subseteq \bH_\textbf{f}$. Notons que dans $\cO_M$, on a
	$$\lim_{n\rightarrow+\infty} y_n=0,$$
	donc les coefficients du $q$-développement de $g_n$ convergent vers ceux de $f_\alpha$ lorsque $n\rightarrow \infty$.
	
\end{notation}

\subsection{Fonction $L$ $p$-adique analytique au voisinage de $f_\alpha$}\label{para_analytique}

On définit une fonction $L$ $p$-adique analytique au voisinage de $f_\alpha$ comme étant l'élément $L^{\an,\dag}_p(Y,T) \in \cA[[T]]$ obtenu en prenant l'image de $L^\an_p(\textbf{f},T)$ par le morphisme $\tilde{\Phi} :\bH_\textbf{f}[[T]] \rightarrow \cA[[T]]$ appliquant $\Phi$ aux coefficients. Ainsi, d'après le Corollaire \ref{fdag} et avec les Notations \ref{gn}, on a : 
$$L_p^{\an,\dag}(0,T)=L^\an_p(f_\alpha,T) \qquad \textrm{et} \qquad  L^{\an,\dag}_p(y_n,T)=L^\an_p(g_n,T),$$
pour tout entier naturel $n$.

\begin{lemme}\label{bn}
	La suite $(L^\an_p(g_n,T))_n$ converge vers $L^\an_p(f_\alpha,T)$ dans $\cO_M[[T]]$. 
\end{lemme}

\begin{proof}
	Il est clair que toute série formelle $F(Y,T) \in \cO_M[[Y,T]]$ définit une application continue $y \longmapsto F(y,T)$ sur $\cO_M-\cO_M^\times$ à valeurs dans $\cO_M[[T]]$. En considérant $F(Y,T)= L_p^{\an,\dag}(Y,T)$, on a donc $\lim_{n} L^\an_p(g_n,T)=\lim_{n} L_p^{\an,\dag}(y_n,T)=L_p^{\an,\dag}(0,T)=L^\an_p(f_\alpha,T)$ dans $\cO_M[[T]]$.
\end{proof}

\subsection{Fonction $L$ $p$-adique algébrique au voisinage de $f_\alpha$}\label{para_alg}
Il n'existe pas a priori de fonction $L$ $p$-adique algébrique associée au groupe de Selmer $X_\infty(\cT_\textbf{f},\cT_\textbf{f}^+)$, ni même d'idéal caractéristique, car l'anneau $\bH_\textbf{f}[[T]]$ n'est pas nécessairement factoriel, ni même intégralement clos. Notre paramétrage local de la famille de Hida nous permet de surmonter ce problème et de travailler sur un anneau de séries formelles $\cA[[T]]\simeq \cO_M[[Y,T]]$, qui est factoriel et sur lequel on peut en outre appliquer les résultats de la Section \ref{section3}.

On notera simplement les $\cA$-modules $\bT = \cT_\textbf{f} \otimes_{\bH_\textbf{f},\Phi} \cA$ et $\bD=\bT \otimes_{\cA} \cA^\vee$. On définit similairement $\bT^\pm$ et $\bD^\pm$. On définit aussi les $\cA[[T]]$-modules 
$$\Sel_\infty(\textbf{f}^\dag) = \Sel_\infty(\bT,\bT^+), \qquad \textrm{resp.} \qquad X_\infty(\textbf{f}^\dag) = \Sel_\infty(\textbf{f}^\dag)^\vee.$$
Rappelons que pour tout entier naturel $n$, le groupe de Selmer $\Sel_\infty(\phi_n)$ attaché à $g_n$ est de torsion d'après le Théorème \ref{Katotors}. Nous démontrons dans la suite la proposition suivante.

\begin{proposition}\label{an}
	La suite $(L_p^{\alg}(g_n,T))_n$ converge vers $L_p^{\alg}(f_\alpha,T)$ dans $\cO_M[[T]]$.
\end{proposition}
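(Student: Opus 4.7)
Le plan consiste à introduire une fonction $L$ $p$-adique algébrique à deux variables $L_p^{\alg,\dag}(Y,T)\in \cA[[T]]\simeq \cO_M[[Y,T]]$ comme générateur de l'idéal caractéristique du $\cA[[T]]$-module $X_\infty(\textbf{f}^\dag)$, et à montrer qu'on peut choisir les générateurs de telle sorte que $L_p^{\alg}(g_n;T)$ soit la spécialisation en $Y=y_n$ et $L_p^{\alg}(f_\alpha;T)$ la spécialisation en $Y=0$ de $L_p^{\alg,\dag}(Y,T)$. Comme $\cO_M[[Y,T]]$ est régulier donc factoriel, l'idéal caractéristique est bien principal. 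La convergence recherchée découlera alors immédiatement de la continuité de l'évaluation en $Y$, puisque $y_n \rightarrow 0$ dans $\cO_M$ et $L_p^{\alg,\dag}(Y,T)$ est une série formelle en $Y$ à coefficients dans l'anneau topologique $\cO_M[[T]]$.

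La première étape est de montrer que $X_\infty(\textbf{f}^\dag)$ est de torsion sur $\cA[[T]]$. On applique la Proposition \ref{changement_base} à l'idéal principal $\gp=(Y-y_n)$ : ses hypothèses (action triviale de l'inertie sur $\bD^-$ et divisibilité des modules $G_{\bQ_\infty}$- et $I_\ell$-invariants) se déduisent de la construction de la filtration ordinaire en famille du Théorème \ref{bigT} et des hypothèses \textbf{(nr)}, \textbf{(rég)}. On obtient
$$X_\infty(\textbf{f}^\dag)/(Y-y_n)X_\infty(\textbf{f}^\dag) \simeq X_\infty(g_n),$$
qui est de torsion sur $\cO_M[[T]]$ d'après le Théorème de Kato \ref{Katotors}, puisque $g_n$ est la $p$-stabilisation d'une forme primitive de niveau $N$ et de poids $k(n)\geq 2$. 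D'après le Lemme \ref{lemme_pseudo}(2), cela impose que $(Y-y_n)$ ne divise pas $\car_{\cA[[T]]} X_\infty(\textbf{f}^\dag)$, et donc ce dernier module est nécessairement de torsion sur $\cA[[T]]$ (sinon son rang se transmettrait à l'infinité de spécialisations indexées par $n$).

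La deuxième étape est de montrer que $X_\infty(\textbf{f}^\dag)$ n'a pas de sous-$\cA[[T]]$-modules pseudo-nuls non-triviaux, en appliquant la Proposition \ref{PsN} avec l'anneau des coefficients $\cA=\cO_M[[Y]]$ et le module $\cT=\bT$. L'hypothèse (a) vient d'être établie; la (c) résulte de $d^+=1$ et du choix de $\bT^+$; la (d) est fournie directement par la construction de $\bT^-$ dans le Théorème \ref{bigT}; et la (e) découle de ce que $\bD[\gm_\cA]\simeq \ob{\rho}$ est irréductible et $p$-distinguée, en vertu des hypothèses \textbf{(rég)} et \textbf{(deg)} (sachant que $\ob{\rho}\not\simeq \mathbf{1}$ et $\ob{\rho}\not\simeq \mu_p$ par irréductibilité). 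L'obstacle principal se situe dans la vérification de l'hypothèse (b) de cotorsion de $H^2(\bQ_\Sigma/\bQ_\infty,\bD)^\vee$ sur $\cA[[\Gamma]]$ : on l'obtient en adaptant la preuve de la Proposition \ref{prop:sel_infty_sans_sous_modules_finis}, via la suite spectrale de Hochschild-Serre pour réduire à la cohomologie galoisienne sur $H_\infty$, puis en utilisant la formule de caractéristique d'Euler-Poincaré de Greenberg et le théorème d'Iwasawa sur le rang de $\gX_{\infty,\Sigma}$ (c'est précisément le rôle du Lemme \ref{Och} annoncé dans l'introduction).

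Une fois ces deux propriétés acquises, on applique la Proposition \ref{prop_pseudo}(2) avec $A=\cA[[T]]$ aux idéaux principaux premiers $\gp=(Y-y_n)$ et $\gp=(Y)$, tous deux de quotient isomorphe à l'anneau intégralement clos $\cO_M[[T]]$. On obtient les égalités d'idéaux dans $\cO_M[[T]]$
$$\car_{\cO_M[[T]]}(X_\infty(g_n))=\bigl(L_p^{\alg,\dag}(y_n,T)\bigr), \qquad \car_{\cO_M[[T]]}(X_\infty(f_\alpha))=\bigl(L_p^{\alg,\dag}(0,T)\bigr),$$
ce qui autorise à choisir $L_p^{\alg}(g_n,T)=L_p^{\alg,\dag}(y_n,T)$ et $L_p^{\alg}(f_\alpha,T)=L_p^{\alg,\dag}(0,T)$ comme représentants des séries caractéristiques respectives. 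La continuité de l'application $y\mapsto L_p^{\alg,\dag}(y,T)$ de $\gm_{\cO_M}$ vers $\cO_M[[T]]$ (muni de sa topologie d'anneau local compact), combinée à $\lim_{n\to +\infty} y_n=0$, donne finalement la convergence souhaitée $L_p^{\alg}(g_n,T)\rightarrow L_p^{\alg}(f_\alpha,T)$ dans $\cO_M[[T]]$.
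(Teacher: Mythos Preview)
Votre démarche est essentiellement celle de l'article : construire $L_p^{\alg,\dag}(Y,T)$ comme série caractéristique de $X_\infty(\textbf{f}^\dag)$, vérifier qu'elle se spécialise correctement via la Proposition~\ref{prop_pseudo}(2) (ce qui requiert torsion et absence de sous-modules pseudo-nuls), puis conclure par continuité. Les Lemmes~\ref{spécialisation_torsion} et~\ref{Och} de l'article correspondent exactement à vos deux étapes.

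Un point mérite toutefois d'être signalé. Vous affirmez que les hypothèses de la Proposition~\ref{changement_base}, en particulier la $\cA$-divisibilité de $\bD^{I_\ell}$ pour $\ell\mid N$, \og se déduisent de la construction de la filtration ordinaire en famille du Théorème~\ref{bigT} et des hypothèses \textbf{(nr)}, \textbf{(rég)}\fg. Ce n'est pas immédiat : l'article y consacre un argument substantiel (preuve du Lemme~\ref{spécialisation_torsion}), reposant sur l'invariance du conducteur de Swan \cite{livne} pour comparer $\dim_{\bF_M}\ob{D}^{I_\ell}$ à $\dim_M V_{g_n}^{I_\ell}$, puis sur un résultat de Tate pour identifier ce dernier au $\cO_M$-rang de $(D_{g_n}^{I_\ell})^\vee$, et enfin sur un argument de Nakayama et Weierstrass pour en déduire la co-liberté de $\bD^{I_\ell}$. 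Sans ce contrôle du conducteur en famille, rien n'exclut a priori que $(\bD^{I_\ell})^\vee$ ait de la torsion sur $\cA$. Par ailleurs, votre invocation du Lemme~\ref{lemme_pseudo}(2) pour établir la torsion de $X_\infty(\textbf{f}^\dag)$ est circulaire (ce lemme présuppose la torsion) ; c'est bien votre remarque entre parenthèses --- qu'un rang strictement positif se transmettrait aux quotients --- qui constitue l'argument correct, et c'est d'ailleurs exactement le \og Fait\fg\ énoncé à la fin de la preuve du Lemme~\ref{spécialisation_torsion}.
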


\begin{lemme}\label{spécialisation_torsion}
	On a des isomorphismes de $\cO_M[[T]]$-modules:
	\[
	X_\infty(\textbf{f}^\dag)/\left(Y-y_n\right)\cdot X_\infty(\textbf{f}^\dag) \simeq X_\infty(\phi_n) \otimes_{\cO_{\phi_n}} \cO_M, \]\[ X_\infty(\textbf{f}^\dag)/Y\cdot X_\infty(\textbf{f}^\dag) \simeq X_{\infty}(f_\alpha) \otimes_{\cO} \cO_M
	\]
	pour tout entier $n$. En particulier, $X_\infty(\textbf{f}^\dag)$ est de torsion sur $\cA[[T]]$.
\end{lemme}

\begin{proof}
	Cela résulte d'une application directe de la Proposition \ref{changement_base} pour le groupe de Selmer dual $X_\infty(\textbf{f}^\dag)$. Avec les notations de la Proposition, $\ga$ est l'idéal principal de $\cA$ engendré par $Y-y_n$ ou bien par $Y$. Montrons que ses hypothèses sont vérifiées. L'idéal $\ga$ est principal par définition. Le groupe d'inertie en $p$ agit trivialement sur $\bD^-$ parce que ceci est déjà le cas pour $\bT_\textbf{f}^-$. Il reste à montrer que $\bD^{G_{\bQ_\infty}}$ et $\bD^{I_\ell}$ (pour $\ell|N$) sont $\cA$-divisibles. Il suffit de démontrer qu'ils sont $\cA$-colibres. On note $D_{g_n}=V_{g_n}/T_{g_n}$ le $\cO_M$-module discret usuel construit à partir de la représentation de Deligne de $g_n$, et on note encore $D$ le $\cO_M$-module $D \otimes_{\cO} \cO_M$. Les propriétés de spécialisation énoncées dans le Théorème \ref{bigT} donnent les identifications $D_{g_n}\simeq \bD[Y-y_n]$ et $D \simeq \bD[Y]$. Soit $\varpi$ une uniformisante de $\cO_M$, soit $\bF_M=\cO_M/\varpi$ et soit $\gM=(\varpi,Y)$ l'idéal maximal de $\cA$. La représentation résiduelle $\ob{D}$ de $\rho_\textbf{f}$ vérifie $\ob{D} \simeq D_{g_n}[\varpi] \simeq D[\varpi]$ en tant que $\bF_M[G_\bQ]$-modules.
	
	On a déjà $\bD^{G_{\bQ_\infty}}=0$. En effet, comme les $G_{\bQ}$-modules $D$ et $\bD[Y]$ sont isomorphes, on a $\bD^{G_{\bQ_\infty}}[Y]= \bD[Y]^{G_{\bQ_\infty}}\simeq D^{G_{\bQ_\infty}}$. Comme $H\cap \bQ_\infty=\bQ$ et $\rho$ est irréductible et non-trivial, on a par ailleurs $D^{G_{\bQ_\infty}}=D^{G_{\bQ}}=0$. D'après le lemme de Nakayama, on a bien $\bD^{G_{\bQ_\infty}}=0$.
	
	Soit $\ell|N$, et montrons que $\bD^{I_\ell}$ est colibre sur $\cA$. Soit $\bP=\left(\bD^{I_\ell}\right)^\vee$ le dual de Pontriyagin de $\bD^{I_\ell}$. Soit $n\in\bN$. Par la dualité de Pontryagin, on a les trois identifications suivantes :
	$$\bP/Y\bP \simeq \left(D^{I_\ell} \right)^\vee,\qquad \bP/(Y-y_n)\bP \simeq \left(D_{g_n}^{I_\ell} \right)^\vee,\qquad \bP/\gM \bP\simeq \ob{D}^{I_\ell}.$$
	Comme $\rho\simeq \ob{\rho}$, le conducteur modéré de $\ob{\rho}$ est égal à $N$, et en particulier on a $\ord_\ell\left(\textrm{Cond}(\ob{\rho})\right)=\ord_\ell(N)=\ord_\ell\left(\textrm{Cond}(\rho_{g_n})\right)$. Par invariance du conducteur de Swan \cite[Proposition 1.1]{livne}, on a donc 
	$$\dim_{\bF_M} \ob{D}^{I_\ell} = \dim_M V^{I_\ell}_{g_n}.$$
	En outre, un théorème classique de Tate en cohomologie des groupes profinis \cite[Proposition 2.3]{tate1976relation} montre que $\dim_M V^{I_\ell}_{g_n}$ est égal au $\cO_M$-rang de $\left(D_{g_n}^{I_\ell} \right)^\vee$ (voir \cite[Proposition 3.10]{greenberg2006structure} pour une généralisation de ce résultat). 
	
	On peut maintenant montrer que $\bP$ est libre sur $\cA$. Si $\ob{D}^{I_\ell}=0$ alors on a automatiquement $\bP=0$ par le lemme de Nakayama. Si $\ob{D}^{I_\ell}$ est de dimension 1, alors $\bP$ est monogène, et donc $\bP\simeq \cA/(f(Y))$ pour un certain $f(Y) \in \cA$. De plus, le module $\bP/ \left(Y-y_n\right) \bP \simeq \cO_M/(f(y_n))$ est de rang 1, donc il est infini. Cela implique que l'élément $f(Y)$ s'annule en toutes les valeurs $Y=y_n$, $n\geq 0$, et donc $f(Y)=0$ d'après le théorème de préparation de Weierstrass. Ainsi, $\bP$ est libre comme voulu, et l'on a terminé la vérification.
	
	D'après le Corollaire \ref{prop:rappel_prop_sel_n_f_alpha}, $X_\infty(f_\alpha)$ est de $\cO[[T]]$-torsion, donc $X_\infty(\textbf{f}^\dag)/Y\cdot X_\infty(\textbf{f}^\dag)$ est de torsion sur $\cO_M[[T]]$. L'assertion élémentaire suivante montre pour finir que $X_\infty(\textbf{f}^\dag)$ est de torsion sur $\cA[[T]]$.
	
	\textit{Fait :} Soit $M$ un module de type fini sur un anneau $B$. Supposons qu'il existe un idéal premier $\gQ \subseteq B$ tel que $M/\gQ M$ est de torsion sur $B/\gQ$. Alors il existe une élément de $B - \gQ$ qui tue $M$, et en particulier $M$ est de torsion. 
\end{proof}

\begin{lemme}\label{Och}
	Le $\cA[[T]]$-module $X_\infty(\textbf{f}^\dag)$ n'a pas de sous-modules pseudo-nuls non-triviaux.
\end{lemme}

\begin{proof}
	On va montrer que l'on peut appliquer la Proposition \ref{PsN}. L'hypothèse (a) est vérifiée d'après le Lemme \ref{spécialisation_torsion}. Montrons que (b) est satisfaite, à savoir que le module $\cH:=H^2(\bQ_\Sigma/\bQ_\infty,\bD)^\vee$ est de torsion sur $\cA[[T]]$. On a montré dans la preuve de la Proposition \ref{prop:sel_infty_sans_sous_modules_finis} que $H^2(\bQ_\Sigma/\bQ_\infty,D)^\vee$ est de type fini et de torsion sur $\cO_M[[T]]$. La multiplication par $Y$ définit une suite exacte courte 
	$$\xymatrix{ 0 \ar[r] & D \ar[r] & \cD \ar[r] & \cD \ar[r] &  0, }$$
	induisant une application surjective $H^2(\bQ_\Sigma/\bQ_\infty,D) \twoheadrightarrow H^2(\bQ_\Sigma/\bQ_\infty,\bD)[Y]$. Donc le $\cO_M[[T]]$-module $\cH/Y\cdot\cH$ est un sous-module d'un module de type fini et de torsion. Il est donc de type fini et de torsion, et de même pour le $\cA[[T]]$-module $\cH$. L'hypothèse (c) est clairement vérifiée, car la représentation $\rho_\textbf{f}$ est impaire. L'hypothèse (d) aussi, car $I_p$ agit trivialement sur $\bD^-$. Enfin, l'hypothèse (e) est vérifiée car $\rho_\textbf{f}$ est résiduellement irréductible de dimension 2. Cela termine la vérification des hypothèses, et donc la preuve du lemme.
\end{proof}
\begin{remarque}
	Une variante du lemme précédent pour $X_\infty(\textbf{f})=X_\infty(\bT_\textbf{f},\bT_\textbf{f}^+)$ est montrée dans \cite[Proposition 8.1]{ochiai2006two} sous l'hypothèse que $\bH_\textbf{f}$ est régulier.
\end{remarque}
\begin{proof}[Preuve de la Proposition \ref{an}]
	Le groupe de Selmer $X_\infty(\textbf{f}^\dag)$ est de type fini et de torsion sur l'anneau $\cA[[T]]$ qui est factoriel, donc possède une fonction $L$ $p$-adique algébrique $L_p^{\alg,\dag}(Y,T) \in \cA[[T]]$. D'après les Lemmes \ref{spécialisation_torsion}, \ref{Och} et en appliquant la Proposition \ref{changement_base}, les idéaux caractéristiques de $\Sel_\infty(\phi_n)$ et de $\Sel_\infty(f_\alpha)$ sont engendrés respectivement par $L^{\alg,\dag}_p(y_n,T)$ et $L_p^{\alg,\dag}(0,T)$. Autrement dit, on a :
	$$ L_p^{\alg}(g_n,T) = L^{\alg,\dag}_p(y_n,T) \qquad \textrm{resp.} \qquad L_p^{\alg}(f_\alpha,T)=L^{\alg,\dag}_p(0,T).$$
	L'argument de la preuve du Lemme \ref{bn} montre alors que $\lim_{n}L_p^{\alg}(g_n,T) = L_p^{\alg}(f_\alpha,T)$ dans $\cO_M[[T]]$, ce qui termine la preuve de la Proposition \ref{an}.
\end{proof}

	\bibliography{bib}

\begin{thebibliography}{EPW06}
\expandafter\ifx\csname fonteauteurs\endcsname\relax
\def\fonteauteurs{\scshape}\fi

\bibitem[Art68]{artin1968solutions}
Michael \bgroup\fonteauteurs\bgroup Artin\egroup\egroup{} :
\newblock On the solutions of analytic equations.
\newblock {\em Inventiones mathematicae}, 5(4)\string:\penalty500\relax
  277--291, 1968.

\bibitem[AV75]{amice1975distributions}
Yvette \bgroup\fonteauteurs\bgroup Amice\egroup\egroup{} et Jacques
  \bgroup\fonteauteurs\bgroup V{\'e}lu\egroup\egroup{} :
\newblock Distributions p-adiques associ{\'e}es aux s{\'e}ries de {H}ecke.
\newblock {\em Ast{\'e}risque}, 24(25)\string:\penalty500\relax 119--131, 1975.

\bibitem[BD16]{bellaiche2016eigencurve}
Jo{\"e}l \bgroup\fonteauteurs\bgroup Bella{\"\i}che\egroup\egroup{} et Mladen
  \bgroup\fonteauteurs\bgroup Dimitrov\egroup\egroup{} :
\newblock On the eigencurve at classical weight $1$ points.
\newblock {\em Duke Mathematical Journal}, 165(2)\string:\penalty500\relax
  245--266, 2016.

\bibitem[Ben14]{benois}
Denis \bgroup\fonteauteurs\bgroup Benois\egroup\egroup{} :
\newblock On extra zeros of {$p$}-adic {$L$}-functions: the crystalline case.
\newblock \emph{In} {\em Iwasawa theory 2012}, volume~7 de {\em Contrib. Math.
  Comput. Sci.}, pages 65--133. Springer, Heidelberg, 2014.

\bibitem[Bou07]{bourbaki2007algebre}
Nicolas \bgroup\fonteauteurs\bgroup Bourbaki\egroup\egroup{} :
\newblock {\em Alg{\`e}bre commutative: Chapitres 5 {\`a} 7}.
\newblock Springer Science \& Business Media, 2007.

\bibitem[Bru67]{brumer}
Armand \bgroup\fonteauteurs\bgroup Brumer\egroup\egroup{} :
\newblock On the units of algebraic number fields.
\newblock {\em Mathematika}, 14\string:\penalty500\relax 121--124, 1967.

\bibitem[CM09]{mazurcalegari}
Frank \bgroup\fonteauteurs\bgroup Calegari\egroup\egroup{} et Barry
  \bgroup\fonteauteurs\bgroup Mazur\egroup\egroup{} :
\newblock Nearly ordinary {G}alois deformations over arbitrary number fields.
\newblock {\em J. Inst. Math. Jussieu}, 8(1)\string:\penalty500\relax 99--177,
  2009.

\bibitem[Coa91]{coatesmotivic}
John \bgroup\fonteauteurs\bgroup Coates\egroup\egroup{} :
\newblock Motivic {$p$}-adic {$L$}-functions.
\newblock \emph{In} {\em {$L$}-functions and arithmetic ({D}urham, 1989)},
  volume 153 de {\em London Math. Soc. Lecture Note Ser.}, pages 141--172.
  Cambridge Univ. Press, Cambridge, 1991.

\bibitem[Del71]{deligne1971formes}
Pierre \bgroup\fonteauteurs\bgroup Deligne\egroup\egroup{} :
\newblock Formes modulaires et representations {$\ell$}-adiques.
\newblock \emph{In} {\em S{\'e}minaire Bourbaki vol. 1968/69 Expos{\'e}s
  347-363}, pages 139--172. Springer, 1971.

\bibitem[DG12]{dimitrovghate}
Mladen \bgroup\fonteauteurs\bgroup Dimitrov\egroup\egroup{} et Eknath
  \bgroup\fonteauteurs\bgroup Ghate\egroup\egroup{} :
\newblock On classical weight one forms in {H}ida families.
\newblock {\em J. Th\'{e}or. Nombres Bordeaux}, 24(3)\string:\penalty500\relax
  669--690, 2012.

\bibitem[Dim14]{dimitrov2014local}
Mladen \bgroup\fonteauteurs\bgroup Dimitrov\egroup\egroup{} :
\newblock On the local structure of ordinary {H}ecke algebras at classical
  weight one points.
\newblock {\em Automorphic Forms and Galois Representations vol.2 (Durham,
  2011), 1-16, London Math. Soc. Lecture Note Series 415},
  2\string:\penalty500\relax 1--16, 2014.

\bibitem[DLR16]{DLR2}
Henri \bgroup\fonteauteurs\bgroup Darmon\egroup\egroup{}, Alan
  \bgroup\fonteauteurs\bgroup Lauder\egroup\egroup{} et Victor
  \bgroup\fonteauteurs\bgroup Rotger\egroup\egroup{} :
\newblock Gross-{S}tark units and {$p$}-adic iterated integrals attached to
  modular forms of weight one.
\newblock {\em Ann. Math. Qu\'{e}.}, 40(2)\string:\penalty500\relax 325--354,
  2016.

\bibitem[DR80]{deligneribet}
Pierre \bgroup\fonteauteurs\bgroup Deligne\egroup\egroup{} et Kenneth~A.
  \bgroup\fonteauteurs\bgroup Ribet\egroup\egroup{} :
\newblock Values of abelian {$L$}-functions at negative integers over totally
  real fields.
\newblock {\em Invent. Math.}, 59(3)\string:\penalty500\relax 227--286, 1980.

\bibitem[EPW06]{emerton2006variation}
Matthew \bgroup\fonteauteurs\bgroup Emerton\egroup\egroup{}, Robert
  \bgroup\fonteauteurs\bgroup Pollack\egroup\egroup{} et Tom
  \bgroup\fonteauteurs\bgroup Weston\egroup\egroup{} :
\newblock Variation of {I}wasawa invariants in {H}ida families.
\newblock {\em Inventiones mathematicae}, 163(3)\string:\penalty500\relax
  523--580, 2006.

\bibitem[FG79]{ferrerogreenberg}
Bruce \bgroup\fonteauteurs\bgroup Ferrero\egroup\egroup{} et Ralph
  \bgroup\fonteauteurs\bgroup Greenberg\egroup\egroup{} :
\newblock On the behavior of {$p$}-adic {$L$}-functions at {$s=0$}.
\newblock {\em Invent. Math.}, 50(1)\string:\penalty500\relax 91--102, 1978/79.

\bibitem[Gre94]{greenbergdeformation}
Ralph \bgroup\fonteauteurs\bgroup Greenberg\egroup\egroup{} :
\newblock Iwasawa theory and {$p$}-adic deformations of motives.
\newblock \emph{In} {\em Motives ({S}eattle, {WA}, 1991)}, volume~55 de {\em
  Proc. Sympos. Pure Math.}, pages 193--223. Amer. Math. Soc., Providence, RI,
  1994.

\bibitem[Gre06]{greenberg2006structure}
Ralph \bgroup\fonteauteurs\bgroup Greenberg\egroup\egroup{} :
\newblock On the structure of certain {G}alois cohomology groups.
\newblock {\em Doc. Math.}, (Extra Vol.)\string:\penalty500\relax 335--391,
  2006.

\bibitem[Gre14]{greenbergartinII}
Ralph \bgroup\fonteauteurs\bgroup Greenberg\egroup\egroup{} :
\newblock On {$p$}-adic {A}rtin {$L$}-functions {II}.
\newblock \emph{In} {\em Iwasawa theory 2012}, volume~7 de {\em Contrib. Math.
  Comput. Sci.}, pages 227--245. Springer, Heidelberg, 2014.

\bibitem[Gre16]{greenberg2015structure}
Ralph \bgroup\fonteauteurs\bgroup Greenberg\egroup\egroup{} :
\newblock On the structure of {S}elmer groups.
\newblock \emph{In} {\em Elliptic curves, modular forms and {I}wasawa theory},
  volume 188 de {\em Springer Proc. Math. Stat.}, pages 225--252. Springer,
  Cham, 2016.

\bibitem[Gro81]{gross1981padic}
Benedict~H. \bgroup\fonteauteurs\bgroup Gross\egroup\egroup{} :
\newblock {$p$}-adic {$L$}-series at {$s=0$}.
\newblock {\em J. Fac. Sci. Univ. Tokyo Sect. IA Math.},
  28(3)\string:\penalty500\relax 979--994 (1982), 1981.

\bibitem[GV04]{ghatevatsal}
Eknath \bgroup\fonteauteurs\bgroup Ghate\egroup\egroup{} et Vinayak
  \bgroup\fonteauteurs\bgroup Vatsal\egroup\egroup{} :
\newblock On the local behaviour of ordinary {$\Lambda$}-adic representations.
\newblock {\em Ann. Inst. Fourier (Grenoble)}, 54(7)\string:\penalty500\relax
  2143--2162 (2005), 2004.

\bibitem[GV18]{preprint}
Ralph \bgroup\fonteauteurs\bgroup Greenberg\egroup\egroup{} et Vinayak
  \bgroup\fonteauteurs\bgroup Vatsal\egroup\egroup{} :
\newblock {I}wasawa theory for {A}rtin representations {I}.
\newblock {\em Prépublication}, 2018.

\bibitem[Hid86a]{hida1986galois}
Haruzo \bgroup\fonteauteurs\bgroup Hida\egroup\egroup{} :
\newblock Galois representations into {$GL_2 (\mathbb{Z}_p [[X]])$} attached to
  ordinary cusp forms.
\newblock {\em Inventiones mathematicae}, 85(3)\string:\penalty500\relax
  545--613, 1986.

\bibitem[Hid86b]{hida1986iwasawa}
Haruzo \bgroup\fonteauteurs\bgroup Hida\egroup\egroup{} :
\newblock Iwasawa modules attached to congruences of cusp forms.
\newblock \emph{In} {\em Annales Scientifiques de l'{\'E}cole Normale
  Sup{\'e}rieure}, volume~19, pages 231--273. Elsevier, 1986.

\bibitem[Iwa73]{iwasawa1973zl}
Kenkichi \bgroup\fonteauteurs\bgroup Iwasawa\egroup\egroup{} :
\newblock On $\mathbb{Z}_l$-extensions of algebraic number fields.
\newblock {\em Annals of Mathematics}, pages 246--326, 1973.

\bibitem[Kat04]{kato2004p}
Kazuya \bgroup\fonteauteurs\bgroup Kato\egroup\egroup{} :
\newblock p-adic {H}odge theory and values of zeta functions of modular forms.
\newblock {\em Ast{\'e}risque}, (295)\string:\penalty500\relax 117--290, 2004.

\bibitem[Kob03]{kobayashi}
Shin-ichi \bgroup\fonteauteurs\bgroup Kobayashi\egroup\egroup{} :
\newblock Iwasawa theory for elliptic curves at supersingular primes.
\newblock {\em Invent. Math.}, 152(1)\string:\penalty500\relax 1--36, 2003.

\bibitem[KW09]{KW}
Chandrashekhar \bgroup\fonteauteurs\bgroup Khare\egroup\egroup{} et Jean-Pierre
  \bgroup\fonteauteurs\bgroup Wintenberger\egroup\egroup{} :
\newblock Serre's modularity conjecture. {I}.
\newblock {\em Invent. Math.}, 178(3)\string:\penalty500\relax 485--504, 2009.

\bibitem[Lan90]{langCYC}
Serge \bgroup\fonteauteurs\bgroup Lang\egroup\egroup{} :
\newblock {\em Cyclotomic fields {I} and {II}}, volume 121 de {\em Graduate
  Texts in Mathematics}.
\newblock Springer-Verlag, New York, second \'edition, 1990.
\newblock With an appendix by Karl Rubin.

\bibitem[Leo62]{leopoldt}
Heinrich-Wolfgang \bgroup\fonteauteurs\bgroup Leopoldt\egroup\egroup{} :
\newblock Zur {A}rithmetik in abelschen {Z}ahlk\"{o}rpern.
\newblock {\em J. Reine Angew. Math.}, 209\string:\penalty500\relax 54--71,
  1962.

\bibitem[Liv89]{livne}
Ron \bgroup\fonteauteurs\bgroup Livn\'{e}\egroup\egroup{} :
\newblock On the conductors of mod {$l$} {G}alois representations coming from
  modular forms.
\newblock {\em J. Number Theory}, 31(2)\string:\penalty500\relax 133--141,
  1989.

\bibitem[Miy06]{miyake2006modular}
Toshitsune \bgroup\fonteauteurs\bgroup Miyake\egroup\egroup{} :
\newblock {\em Modular forms}.
\newblock Springer Science \& Business Media, 2006.

\bibitem[MTT86]{mtt}
Barry \bgroup\fonteauteurs\bgroup Mazur\egroup\egroup{}, John
  \bgroup\fonteauteurs\bgroup Tate\egroup\egroup{} et Jeremy
  \bgroup\fonteauteurs\bgroup Teitelbaum\egroup\egroup{} :
\newblock On p-adic analogues of the conjectures of {B}irch and
  {S}winnerton-{D}yer.
\newblock {\em Inventiones mathematicae}, 84(1)\string:\penalty500\relax 1--48,
  1986.

\bibitem[Nek06]{sel}
Jan \bgroup\fonteauteurs\bgroup Nekov\'{a}\v{r}\egroup\egroup{} :
\newblock Selmer complexes.
\newblock {\em Ast\'{e}risque}, (310)\string:\penalty500\relax viii+559, 2006.

\bibitem[NSW08]{neukirchcohomology}
J\"{u}rgen \bgroup\fonteauteurs\bgroup Neukirch\egroup\egroup{}, Alexander
  \bgroup\fonteauteurs\bgroup Schmidt\egroup\egroup{} et Kay
  \bgroup\fonteauteurs\bgroup Wingberg\egroup\egroup{} :
\newblock {\em Cohomology of number fields}, volume 323 de {\em Grundlehren der
  Mathematischen Wissenschaften [Fundamental Principles of Mathematical
  Sciences]}.
\newblock Springer-Verlag, Berlin, second \'edition, 2008.

\bibitem[Nys96]{nyssen1996pseudo}
Louise \bgroup\fonteauteurs\bgroup Nyssen\egroup\egroup{} :
\newblock Pseudo-repr{\'e}sentations.
\newblock {\em Mathematische Annalen}, 306(1)\string:\penalty500\relax
  257--283, 1996.

\bibitem[Och05]{ochiai2005euler}
Tadashi \bgroup\fonteauteurs\bgroup Ochiai\egroup\egroup{} :
\newblock Euler system for {G}alois deformations (syst{\`e}me d'euler pour les
  d{\'e}formations galoisiennes).
\newblock \emph{In} {\em Annales de l'institut Fourier}, volume~55, pages
  113--146, 2005.

\bibitem[Och06]{ochiai2006two}
Tadashi \bgroup\fonteauteurs\bgroup Ochiai\egroup\egroup{} :
\newblock On the two-variable {I}wasawa main conjecture.
\newblock {\em Compositio Mathematica}, 142(5)\string:\penalty500\relax
  1157--1200, 2006.

\bibitem[PR95]{prasterisque}
Bernadette \bgroup\fonteauteurs\bgroup Perrin-Riou\egroup\egroup{} :
\newblock Fonctions {$L$} {$p$}-adiques des repr\'{e}sentations {$p$}-adiques.
\newblock {\em Ast\'{e}risque}, (229)\string:\penalty500\relax 198, 1995.

\bibitem[PR04]{rubinpollack}
Robert \bgroup\fonteauteurs\bgroup Pollack\egroup\egroup{} et Karl
  \bgroup\fonteauteurs\bgroup Rubin\egroup\egroup{} :
\newblock The main conjecture for {CM} elliptic curves at supersingular primes.
\newblock {\em Ann. of Math. (2)}, 159(1)\string:\penalty500\relax 447--464,
  2004.

\bibitem[Rou96]{rouquier1996caracterisation}
Rapha{\"e}l \bgroup\fonteauteurs\bgroup Rouquier\egroup\egroup{} :
\newblock Caract{\'e}risation des caract{\`e}res et pseudo-caract{\`e}res.
\newblock {\em Journal of algebra}, 180(2)\string:\penalty500\relax 571--586,
  1996.

\bibitem[Rub91]{rubin1991main}
Karl \bgroup\fonteauteurs\bgroup Rubin\egroup\egroup{} :
\newblock The “main conjectures” of {I}wasawa theory for imaginary
  quadratic fields.
\newblock {\em Inventiones mathematicae}, 103(1)\string:\penalty500\relax
  25--68, 1991.

\bibitem[Ser94]{serrecohomologie}
Jean-Pierre \bgroup\fonteauteurs\bgroup Serre\egroup\egroup{} :
\newblock {\em Cohomologie galoisienne}, volume~5 de {\em Lecture Notes in
  Mathematics}.
\newblock Springer-Verlag, Berlin, fifth \'edition, 1994.

\bibitem[Ser98]{serrerepresentations}
Jean-Pierre \bgroup\fonteauteurs\bgroup Serre\egroup\egroup{} :
\newblock {\em Repr\'{e}sentations lin\'{e}aires des groupes finis}.
\newblock Hermann, Paris, 1998.
\newblock Cinqi\`eme \'{e}dition, corrig\'ee et augment\'ee de nouveaux
  exercices.

\bibitem[SU14]{skinner2014iwasawa}
Christopher \bgroup\fonteauteurs\bgroup Skinner\egroup\egroup{} et Eric
  \bgroup\fonteauteurs\bgroup Urban\egroup\egroup{} :
\newblock The {I}wasawa main conjectures for {$GL_2$}.
\newblock {\em Inventiones mathematicae}, 195(1)\string:\penalty500\relax
  1--277, 2014.

\bibitem[Tat76]{tate1976relation}
John \bgroup\fonteauteurs\bgroup Tate\egroup\egroup{} :
\newblock Relations between {$K_{2}$} and {G}alois cohomology.
\newblock {\em Invent. Math.}, 36\string:\penalty500\relax 257--274, 1976.

\bibitem[Vis76]{vishik1976non}
Mikhail~Markovich \bgroup\fonteauteurs\bgroup Vishik\egroup\egroup{} :
\newblock Non-archimedean measures connected with dirichlet series.
\newblock {\em Matematicheskii Sbornik}, 141(2)\string:\penalty500\relax
  248--260, 1976.

\bibitem[Was97]{washington1997introduction}
Lawrence~C \bgroup\fonteauteurs\bgroup Washington\egroup\egroup{} :
\newblock {\em Introduction to cyclotomic fields}, volume~83.
\newblock Springer Science \& Business Media, 1997.

\bibitem[Wil88]{wiles1988ordinary}
Andrew \bgroup\fonteauteurs\bgroup Wiles\egroup\egroup{} :
\newblock On ordinary $\lambda$-adic representations associated to modular
  forms.
\newblock {\em Inventiones mathematicae}, 94(3)\string:\penalty500\relax
  529--573, 1988.

\bibitem[Wil90]{wiles1990iwasawa}
Andrew \bgroup\fonteauteurs\bgroup Wiles\egroup\egroup{} :
\newblock The {I}wasawa conjecture for totally real fields.
\newblock {\em Ann. of Math. (2)}, 131(3)\string:\penalty500\relax 493--540,
  1990.

\end{thebibliography}
	\bibliographystyle{alpha-fr}
\end{document}